\newcommand{\de}{\partial}
\newcommand{\db}{\overline{\partial}}
\newcommand{\ddbar}{\sqrt{-1} \partial \overline{\partial}}
\newcommand{\Ric}{\mathrm{Ric}}
\newcommand{\ov}[1]{\overline{#1}}
\newcommand{\mn}{\sqrt{-1}}
\newcommand{\tr}[2]{\mathrm{tr}_{#1}{#2}}
\newcommand{\ti}[1]{\tilde{#1}}
\newcommand{\vp}{\varphi}
\newcommand{\vol}{\mathrm{Vol}}
\newcommand{\diam}{\mathrm{diam}}
\newcommand{\Null}{\mathrm{Null}}
\newcommand{\ve}{\varepsilon}
\renewcommand{\leq}{\leqslant}
\renewcommand{\geq}{\geqslant}
\renewcommand{\le}{\leqslant}
\renewcommand{\ge}{\geqslant}
\numberwithin{equation}{section}
\begin{document}
\newtheorem{claim}{Claim}
\newtheorem{theorem}{Theorem}[section]
\newtheorem{lemma}[theorem]{Lemma}
\newtheorem{corollary}[theorem]{Corollary}
\newtheorem{proposition}[theorem]{Proposition}
\newtheorem{question}[theorem]{Question}
\newtheorem{conjecture}[theorem]{Conjecture}
\newtheorem{problem}[theorem]{Problem}

\theoremstyle{definition}
\newtheorem{remark}[theorem]{Remark}
\newtheorem{example}[theorem]{Example}

\title[KAWA lecture notes on the K\"ahler-Ricci flow]{KAWA lecture notes on the K\"ahler-Ricci flow}
\author{Valentino Tosatti}
\address{Department of Mathematics, Northwestern University, 2033 Sheridan Road, Evanston, IL 60208}
\email{tosatti@math.northwestern.edu}
\thanks{The author is supported in part by NSF grant DMS-1308988 and by a Sloan Research Fellowship.}

\begin{abstract}
These lecture notes provide an introduction to the study of the K\"ahler-Ricci flow on compact K\"ahler manifolds, and a detailed exposition of some recent developments.
\end{abstract}
\maketitle
\tableofcontents

\section{Introduction}

The Ricci flow on a compact K\"ahler manifold $X$, starting at a K\"ahler metric $\omega_0$, preserves the K\"ahler condition in the sense that the evolved metrics are still K\"ahler. It is then customary to call this flow the K\"ahler-Ricci flow, and to write it as an evolution equation of K\"ahler forms as
\begin{equation}\label{krf}
\left\{
                \begin{aligned}
                  &\frac{\de}{\de t}\omega(t)=-\Ric(\omega(t))\\
                  &\omega(0)=\omega_0.
                \end{aligned}
              \right.
\end{equation}
The theory of the K\"ahler-Ricci flow is rather well-developed, and the key feature is that the behavior of the flow deeply reflects the complex structure of the manifold $X$.

In particular, there is a conjectural picture of the behavior of the K\"ahler-Ricci flow for {\em any} initial data $(X,\omega_0)$. Furthermore, as advocated by the work of Song-Tian \cite{SoT, SoT2, SoT3, So2}, in the case when $X$ is projective and the class $[\omega_0]$ is rational, the behavior is intimately related to the Minimal Model Program in algebraic geometry \cite{KMM}. This is in stark contrast with the general Ricci flow on compact Riemannian manifolds, where formulating such a conjectural picture seems completely hopeless in (real) dimensions larger than $3$.

In these lecture notes we will explain this conjectural picture in detail, and prove several results which go some way towards achieving this picture. After reviewing some preliminary notions and setting up notation in Section \ref{sectprel}, the first result that we consider is a cohomological characterization of the maximal existence time of the flow from \cite{Ca, Ts, Ts2, TiZ}, which we prove in Section \ref{sectmax}. Next, in Section \ref{sectfin} we discuss finite time singularities, both volume noncollapsed and volume collapsed, in particular giving a characterization of the singularity formation set, due to Collins and the author \cite{CT}. In Section \ref{sectinf} we study the case when the flow exists for all positive time, and we investigate the convergence properties at infinity, giving a detailed exposition of the collapsing results proved in \cite{SoT, SoT2, FZ, TWY, HT, TZ}. Lastly, in Section \ref{sectopen} we collect some well-known open problems on the K\"ahler-Ricci flow.

There are already two excellent set of lecture notes on the K\"ahler-Ricci flow, by Song-Weinkove \cite{SWL} and Weinkove \cite{We}. While preparing these notes, I have benefitted greatly from these references, and in fact the exposition in Section \ref{sectmax} follows \cite{SWL,We} rather closely (I decided to keep this material here because many similar arguments are used in later sections). On the other hand, in Sections \ref{sectfin} and \ref{sectinf}, which form the bulk of these notes, I have decided to focus on rather recent results which are not contained in \cite{SWL,We}.

It is not possible to cover the complete theory of the K\"ahler-Ricci flow in a short set of lecture notes, so I had to make a selection of which material to present, based on my own limited knowledge, and many important results on the K\"ahler-Ricci flow are not covered here. In particular, nothing is said about the convergence properties of the normalized K\"ahler-Ricci flow on Fano manifolds, which is a vast research area by itself (see e.g. \cite{CW,PS} and references therein). I also do not mention weak solutions of the K\"ahler-Ricci flow \cite{SoT3,EGZ3,DL}, the K\"ahler-Ricci flow for conical metrics \cite{CW2,Ed,Sh}, the K\"ahler-Ricci flow on noncompact K\"ahler manifolds \cite{Ch, Shi}, or the Chern-Ricci flow \cite{Gi0, TW,TW2,TWY0} (a generalization of the K\"ahler-Ricci flow to possibly non-K\"ahler complex manifolds).
Still, my hope is that these notes will somewhat complement \cite{SWL,We} by providing a view of some more recent developments in this field.\\

{\bf Acknowledgments. }These lecture notes are an expanded version of the mini-course ``The K\"ahler-Ricci flow'', given by the author at the $6^{\rm th}$ KAWA Winter School on March 23-26, 2015 at the Centro De Giorgi of Scuola Normale Superiore in Pisa.
The author is very grateful to M. Abate, J. Marzo, J. Raissy, P. Thomas and A. Zeriahi for the kind invitation to give a mini-course at KAWA, and to prepare these lecture notes. Many thanks also to M. Alexis, G. Edwards, Y. Li, B. Weinkove, X. Yang, Y. Zhang and to the referees for useful comments on a preliminary version. These notes were mostly written while the author was visiting the Yau Mathematical Sciences Center of Tsinghua University in Beijing, which he would like to thank for the hospitality.

\section{Preliminaries}\label{sectprel}
In these notes we assume that the reader is already familiar with the basic theory of compact K\"ahler manifolds, and we will not review all the necessary basic material. The reader can consult \cite{GH,Hu} for comprehensive introductions, or \cite{SWL,We} for a quick introduction which is tailored to the K\"ahler-Ricci flow.

\subsection{$(1,1)$ classes and the K\"ahler cone}
Let $X^n$ be a compact complex manifold, of complex dimension $n$. A closed real $(1,1)$ form $\omega$ on $X$ is called a K\"ahler metric if it is positive definite, in the sense that if we write $$\omega=\mn \sum_{i,j=1}^ng_{i\ov{j}}dz_i \wedge d\ov{z}_j,$$
in local holomorphic coordinates $\{z_i\}$ on $X$, then for each point $x\in X$ the $n\times n$ Hermitian matrix
$$g_{i\ov{j}}(x),$$
is positive definite. We will write $\omega>0$, and we will say that $X$ (or $(X,\omega)$) is a K\"ahler manifold.

In this case, $\omega$ defines a cohomology class $[\omega]$ inside
$$H^{1,1}(X,\mathbb{R})=\frac{\{\textrm{closed real }(1,1) \textrm{ forms on }X\}}{\ddbar C^\infty(X,\mathbb{R})}.$$
If $\alpha$ is a closed real $(1,1)$ form on $X$ we will write $[\alpha]$ for its class in $H^{1,1}(X,\mathbb{R})$.

Recall that when $X$ admits a K\"ahler metric then the following $\de\db$-Lemma holds (see e.g. \cite{GH}):
\begin{lemma}\label{dd}
Let $X$ be a compact K\"ahler manifold, and $\alpha$ an exact real $(1,1)$ form on $X$. Then there exists $\vp\in C^\infty(X,\mathbb{R})$, unique up to addition of a constant, such that
$$\alpha=\ddbar\vp.$$
\end{lemma}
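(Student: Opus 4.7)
The plan is to reduce the statement to a straightforward application of Hodge theory on the compact Kähler manifold $(X,\omega)$, using the fact that on such manifolds $\bar\partial$-harmonic forms are automatically $d$-closed (and even $\partial$-closed), which is a consequence of the Kähler identities.

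First I would use exactness to write $\alpha = d\beta$ for some smooth real $1$-form $\beta$, and decompose $\beta = \beta^{1,0} + \beta^{0,1}$ with $\beta^{0,1} = \overline{\beta^{1,0}}$. Expanding $d = \partial + \db$ and looking at the $(0,2)$-component of $\alpha = d\beta$, which vanishes because $\alpha$ is of type $(1,1)$, gives $\db \beta^{0,1} = 0$ (and by conjugation $\de \beta^{1,0} = 0$). Next I would invoke the $\db$-Hodge decomposition on $(0,1)$-forms to write
\[
\beta^{0,1} = \db u + h,
\]
where $u \in C^\infty(X,\mathbb{C})$ and $h$ is a $\db$-harmonic $(0,1)$-form. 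By conjugation, $\beta^{1,0} = \de \bar u + \bar h$.

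The key point is now to compute $\alpha = d\beta$ using that $h$ is harmonic: on a compact Kähler manifold the Kähler identities force $\de h = \db h = 0$, and likewise for $\bar h$. Therefore
\[
\alpha = d(\de \bar u + \db u + h + \bar h) = \db \de \bar u + \de \db u = \de \db (u - \bar u).
\]
Since $u - \bar u$ is purely imaginary, writing $u - \bar u = \mn v$ with $v$ a smooth real function yields $\alpha = \mn \de \db v$, so $\vp := v$ does the job.

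For uniqueness, if $\mn \de \db \vp = 0$ for some $\vp \in C^\infty(X,\mathbb{R})$, then tracing with respect to any Kähler metric $\omega$ on $X$ gives $\Delta_\omega \vp = 2\, \mathrm{tr}_\omega(\mn \de \db \vp) = 0$, so $\vp$ is harmonic on the compact manifold $X$ and thus constant. The main (and only) nontrivial ingredient is the Hodge decomposition together with the Kähler identity ensuring that $\db$-harmonic forms are also $\de$-closed; everything else is a direct computation and type considerations.
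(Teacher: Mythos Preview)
Your argument is correct and is the standard Hodge-theoretic proof of the $\partial\bar\partial$-lemma. Note, however, that the paper does not actually give a proof of this statement: it simply recalls the result and refers the reader to \cite{GH}. So there is no ``paper's own proof'' to compare against; your writeup supplies exactly the kind of argument one finds in the cited reference.

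One cosmetic remark: with the paper's conventions (see \S\ref{sectprel}), the complex Laplacian is defined as $\Delta_\omega\vp=\tr{\omega}{(\ddbar\vp)}$, without the factor of $2$ you wrote. This does not affect the uniqueness argument, since in either normalization $\ddbar\vp=0$ forces $\vp$ to be harmonic and hence constant on the compact manifold $X$.
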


Thanks to the $\de\db$-Lemma, we can identify $H^{1,1}(X,\mathbb{R})$ with the subspace of $H^2(X,\mathbb{R})$ of de Rham classes which have a representative which is a real $(1,1)$ form. In particular, $H^{1,1}(X,\mathbb{R})$ is a finite-dimensional real vector space.

Then we define the K\"ahler cone of $X$ to be
$$\mathcal{C}_X=\{[\alpha]\in H^{1,1}(X,\mathbb{R})\ |\ \textrm{there exists }\omega\textrm{ K\"ahler metric on }X\textrm{ with }[\omega]=[\alpha]\}.$$

This is an open, convex cone inside $H^{1,1}(X,\mathbb{R})$. Indeed $\mathcal{C}_X$ being a cone means that if we are given $[\alpha]\in\mathcal{C}_X$ and $\lambda\in\mathbb{R}_{>0}$ then $\lambda[\alpha]\in\mathcal{C}_X$, which is obvious. The convexity of $\mathcal{C}_X$ follows immediately from the fact that if $\omega_1$ and $\omega_2$ are K\"ahler metrics on $X$ and $0\leq\lambda\leq 1$, then $\lambda\omega_1+(1-\lambda)\omega_2$ is also a K\"ahler metric. To show that $\mathcal{C}_X$ is open, we fix closed real $(1,1)$ forms $\{\alpha_1,\dots,\alpha_k\}$ on $X$ such that $\{[\alpha_1],\dots,[\alpha_k]\}$ is a basis of $H^{1,1}(X,\mathbb{R})$. Given a K\"ahler class $[\alpha]\in\mathcal{C}_X$ we can write
$[\alpha]=\sum_{i=1}^k\lambda_i[\alpha_i],$ for some $\lambda_i\in\mathbb{R}$. Since $[\alpha]\in\mathcal{C}_X$, there exists a function $\vp$ such that
$$\sum_{i=1}^k\lambda_i\alpha_i+\ddbar\vp>0.$$
Since $X$ is compact, it follows that
$$\sum_{i=1}^k\ti{\lambda}_i\alpha_i+\ddbar\vp>0,$$
for all $\ti{\lambda}_i$ sufficiently close to $\lambda_i$ ($1\leq i\leq k$), and so all $(1,1)$ classes in a neighborhood of $[\alpha]$ contain a K\"ahler metric.

Furthermore we have that $\mathcal{C}_X\cap(-\mathcal{C}_X)=\emptyset$. Indeed if $\omega$ is a K\"ahler metric on $X$ and the class $-[\omega]$ is also K\"ahler, then there is a K\"ahler metric $\ti{\omega}=-\omega+\ddbar\vp$ for some function $\vp$, and so $\ddbar\vp=\omega+\ti{\omega}>0$, everywhere on $X$. This is impossible, since $\ddbar\vp\leq 0$ at a maximum point of $\vp$.

A class $[\alpha]\in\ov{\mathcal{C}_X}$ is called nef. In other words, a nef class is a limit of K\"ahler classes.

\begin{lemma}\label{nef2}
Let $(X,\omega)$ be a compact K\"ahler manifold. Then a class $[\alpha]\in H^{1,1}(X,\mathbb{R})$ is nef if and only if for every $\ve>0$ there exists $\vp_\ve\in C^\infty(X,\mathbb{R})$ such that
\begin{equation}\label{nef1}
\alpha+\ddbar\vp_\ve>-\ve\omega.
\end{equation}
\end{lemma}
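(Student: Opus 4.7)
The plan is to treat the two implications separately, with $(\Leftarrow)$ being a direct consequence of the definition and $(\Rightarrow)$ requiring a mild Hodge-theoretic argument. For the reverse direction, I would note that the hypothesis reads as $\alpha + \ve\omega + \ddbar\vp_\ve > 0$, so the class $[\alpha] + \ve[\omega] = [\alpha + \ve\omega]$ contains a K\"ahler representative and hence lies in $\mathcal{C}_X$ for every $\ve>0$. Since $[\alpha] + \ve[\omega] \to [\alpha]$ in the finite-dimensional space $H^{1,1}(X,\mathbb{R})$ as $\ve\to 0$, the class $[\alpha]$ belongs to $\ov{\mathcal{C}_X}$, i.e.\ is nef.

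For the forward direction, I start from the definition of nef to fix K\"ahler classes $[\alpha_k]\to[\alpha]$ with K\"ahler representatives $\alpha_k$. Given $\ve>0$, the goal is to produce a potential $\vp_\ve$ such that $\alpha + \ddbar\vp_\ve$ differs from some $\alpha_k$ by an error that is small in $C^0$ relative to $\ve\omega$. To arrange this, I fix closed real $(1,1)$-forms $\beta_1,\dots,\beta_N$ whose classes form a basis of $H^{1,1}(X,\mathbb{R})$ (the same device used in the openness proof for $\mathcal{C}_X$ just above). Writing $[\alpha_k - \alpha] = \sum_{i=1}^N b_{k,i}[\beta_i]$, the real coefficients $b_{k,i}$ tend to zero, so the closed real $(1,1)$-form $\eta_k := \sum_i b_{k,i}\beta_i$ represents the class $[\alpha_k - \alpha]$ and satisfies $\eta_k \to 0$ smoothly, and in particular uniformly on $X$.

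Since $\alpha_k - \alpha - \eta_k$ is an exact real $(1,1)$-form, Lemma \ref{dd} yields a smooth function $\psi_k$ with $\alpha_k - \alpha - \eta_k = \ddbar\psi_k$. Rearranging gives
\[
\alpha + \ddbar\psi_k + \ve\omega = \alpha_k + (\ve\omega - \eta_k).
\]
Because $\omega$ is a strictly positive $(1,1)$-form with a uniform pointwise lower bound on its eigenvalues (by compactness of $X$) and $\|\eta_k\|_{C^0}\to 0$, for all $k$ sufficiently large (depending on $\ve$) the form $\ve\omega - \eta_k$ is pointwise positive definite. Added to the K\"ahler form $\alpha_k$, the right-hand side is then a strictly positive $(1,1)$-form, so setting $\vp_\ve := \psi_k$ for such a $k$ produces the desired inequality $\alpha + \ddbar\vp_\ve > -\ve\omega$.

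The main obstacle is the initial passage from the purely cohomological convergence $[\alpha_k]\to[\alpha]$ to $C^0$-convergence of actual representatives: a priori, the K\"ahler forms $\alpha_k$ need not approach $\alpha$ in any pointwise sense. The finite-dimensionality of $H^{1,1}(X,\mathbb{R})$ together with the fixed-basis trick resolves this cleanly, reducing the analytic content of the argument to nothing more than the $\de\db$-Lemma and the pointwise positivity of $\omega$.
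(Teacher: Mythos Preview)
Your proof is correct and follows essentially the same approach as the paper's: both directions match, and in the forward implication you use the same fixed-basis trick to upgrade cohomological convergence to smooth convergence of representatives, then apply the $\de\db$-Lemma. The only difference is notational (your $\alpha_k$, $\eta_k$, $\psi_k$ correspond to the paper's $\alpha+\beta_i$, $\ti{\beta}_i$, $\vp_i$).
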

\begin{proof}
Condition \eqref{nef1} is equivalent to $[\alpha+\ve\omega]\in\mathcal{C}_X$, for all $\ve>0$, which certainly implies that $[\alpha]\in \ov{\mathcal{C}_X}$. Conversely, if
$[\alpha]$ is nef then there is a sequence $\{\beta_i\}$ of closed real $(1,1)$ forms such that $\alpha+\beta_i>0$ for all $i$, and $[\beta_i]\to 0$ in $H^{1,1}(X,\mathbb{R})$.
As before we fix closed real $(1,1)$ forms $\{\alpha_1,\dots,\alpha_k\}$ on $X$ such that $\{[\alpha_1],\dots,[\alpha_k]\}$ is a basis of $H^{1,1}(X,\mathbb{R})$, and for each $i$ write
$$[\beta_i]=\sum_{j=1}^k \lambda_{ij}[\alpha_j],$$
with $\lambda_{ij}\in\mathbb{R}$. Since $[\beta_i]\to 0$, and $\{[\alpha_1],\dots,[\alpha_k]\}$ is a basis, we conclude that $\lambda_{ij}\to 0$ as $i\to\infty$, for each fixed $j$.
If we let
$$\ti{\beta}_i=\sum_{j=1}^k \lambda_{ij}\alpha_j,$$
then the forms $\ti{\beta}_i$ converge smoothly to zero, as $i\to\infty$, and we can find functions $\vp_i$ such that
$\beta_i=\ti{\beta}_i+\ddbar\vp_i$. For every $\ve>0$ we choose $i$ sufficiently large so that $\ti{\beta}_i<\ve\omega$ on $X$, and so
$$\alpha+\ve\omega+\ddbar\vp_i>\alpha+\ti{\beta}_i+\ddbar\vp_i=\alpha+\beta_i>0,$$
which proves \eqref{nef1}.
\end{proof}

\begin{corollary}\label{sum}
Let $X$ be a compact K\"ahler manifold and two real $(1,1)$ classes $[\alpha]\in \overline{\mathcal{C}_X}$ and $[\beta]\in\mathcal{C}_X$. Then $[\alpha]+[\beta]\in\mathcal{C}_X$.
\end{corollary}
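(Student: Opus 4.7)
The plan is to use Lemma \ref{nef2} directly, with the chosen reference K\"ahler metric being one representing the class $[\beta]$ itself. Concretely, since $[\beta]\in\mathcal{C}_X$, pick a K\"ahler metric $\omega$ with $[\omega]=[\beta]$; I will use this $\omega$ as the auxiliary metric in the nefness characterization applied to $[\alpha]$.

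By Lemma \ref{nef2} (with $\varepsilon$ chosen to be any number in $(0,1)$, say $\varepsilon=1/2$), I obtain a smooth function $\vp_\varepsilon$ on $X$ satisfying
\[
\alpha+\ddbar\vp_\varepsilon>-\varepsilon\omega.
\]
Adding $\omega$ to both sides gives
\[
\alpha+\omega+\ddbar\vp_\varepsilon>(1-\varepsilon)\omega>0,
\]
so the closed real $(1,1)$-form $\alpha+\omega+\ddbar\vp_\varepsilon$ is a K\"ahler metric. Since this form represents the class $[\alpha]+[\omega]=[\alpha]+[\beta]$, it follows that $[\alpha]+[\beta]\in\mathcal{C}_X$, as desired.

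There is no real obstacle here: the content of the corollary is just that the K\"ahler cone is open and contains $\mathcal{C}_X$ in its interior (viewed from the closure), and the quantitative form \eqref{nef1} provided by Lemma \ref{nef2} exactly supplies the $\vp_\varepsilon$ correction needed to absorb the loss of positivity of a nef class. The only care needed is to apply Lemma \ref{nef2} with the specific K\"ahler form $\omega$ representing $[\beta]$ (rather than some unrelated reference metric), so that adding $\omega$ back in reverses the $-\varepsilon\omega$ deficit.
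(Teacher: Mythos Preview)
Your proof is correct and takes essentially the same approach as the paper: both use Lemma \ref{nef2} to produce a potential $\vp_\ve$ making $\alpha+\ddbar\vp_\ve>-\ve\omega$, then add back a K\"ahler representative of $[\beta]$. Your version is in fact slightly cleaner, since by choosing the reference metric $\omega$ in Lemma \ref{nef2} to already represent $[\beta]$ you avoid the paper's intermediate step of comparing $\beta$ to $2\ve\omega$.
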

\begin{proof}
We may suppose that $\beta>0$ is a K\"ahler metric, and so $\beta>2\ve\omega$ for some $\ve$ small enough. Since $[\alpha]$ is nef, Lemma \ref{nef2} gives us a function $\vp_\ve$ such that $\alpha+\ddbar\vp_\ve>-\ve\omega$, and so
$$\alpha+\beta+\ddbar\vp_\ve>\ve\omega>0.$$
\end{proof}
A nef class $[\alpha]$ is called nef and big if
$$\int_X\alpha^n>0.$$

\subsection{Ricci curvature and first Chern class}
Given a K\"ahler metric
$\omega=\mn \sum_{i,j=1}^ng_{i\ov{j}}dz_i \wedge d\ov{z}_j$ on $X$, we define the Christoffel symbols of the Chern connection of $\omega$ to be
$$\Gamma_{ij}^k=g^{k\ov{\ell}}\de_i g_{j\ov{\ell}},$$
which satisfy that  $\Gamma_{ij}^k=\Gamma_{ji}^k$ because $\omega$ is closed. Using these, we can define the covariant derivative $\nabla$ with the usual formulae (see e.g. \cite{SWL}).
The Riemann curvature tensor $\mathrm{Rm}$ of $\omega$ is the tensor with components
$$R^j_{ik\ov{\ell}}=-\de_{\ov{\ell}}\Gamma_{ki}^j,$$
and we will also consider the tensor where we lower one index
$$R_{i\ov{j}k\ov{\ell}}=g_{p\ov{j}} R^p_{ik\ov{\ell}},$$
and a direct calculation gives
$$R_{i\ov{j}k\ov{\ell}}=-\de_k\de_{\ov{\ell}}g_{i\ov{j}}+g^{p\ov{q}}\de_k g_{i\ov{q}}\de_{\ov{\ell}}g_{p\ov{j}}.$$
If $\xi,\eta\in T^{1,0}X$ are $(1,0)$ tangent vectors, we define the bisectional curvature in the direction of $\xi,\eta$ to be
$$\mathrm{Rm}(\xi,\ov{\xi},\eta,\ov{\eta})=R_{i\ov{j}k\ov{\ell}}\xi^i\ov{\xi^j}\eta^k\ov{\eta^\ell}\in\mathbb{R}.$$
The Ricci curvature tensor is defined to be
$$R_{i\ov{j}}=R_{k\ov{\ell}i\ov{j}}g^{k\ov{\ell}},$$
and another direct calculation gives the crucial formula
\begin{equation}\label{ric}
R_{i\ov{j}}=-\de_i\de_{\ov{j}}\log\det(g_{p\ov{q}}).
\end{equation}
The scalar curvature $R$ is then defined to be
$$R=g^{i\ov{j}}R_{i\ov{j}}.$$
We define the Ricci form of $\omega$ to be
$$\Ric(\omega)=\mn \sum_{i,j=1}^nR_{i\ov{j}}dz_i \wedge d\ov{z}_j,$$
which by \eqref{ric} is locally equal to $-\ddbar\log\det(g_{p\ov{q}}).$ Therefore $\Ric(\omega)$ is a closed real $(1,1)$ form. If $\ti{\omega}$ is another K\"ahler metric then
$$\Ric(\omega)-\Ric(\ti{\omega})=\ddbar\log\frac{\det\ti{g}}{\det g},$$
where $\log\frac{\det\ti{g}}{\det g}$ is the globally defined smooth function which equals
$$\log\frac{\det(\ti{g}_{p\ov{q}})}{\det(g_{p\ov{q}})}$$
in any local holomorphic coordinate chart.
If we use the K\"ahler volume element $\omega^n$, then we also have that
$$\log\frac{\det\ti{g}}{\det g}=\log\frac{\ti{\omega}^n}{\omega^n}.$$
Therefore the cohomology class
$$[\Ric(\omega)]\in H^{1,1}(X,\mathbb{R}),$$
is independent of $\omega$, and we set
$$c_1(X)=\frac{1}{2\pi}[\Ric(\omega)],$$
the first Chern class of $X$.
Also, if we denote by
$$K_X=\Lambda^n (T^{1,0}X)^*,$$
the canonical bundle of $X$, then the first Chern class of $K_X$ satisfies $c_1(K_X)=-c_1(X)$.

If $\Omega$ is a smooth positive volume form on $X$, then in local holomorphic coordinates we can write
$$\Omega=f (\mn)^ndz_1\wedge d\ov{z}_1\wedge\dots\wedge dz_n\wedge d\ov{z}_n,$$
where $f$ is a smooth positive locally defined function, and we let
$$\Ric(\Omega)=-\ddbar\log f.$$
It is easy to see that $\Ric(\Omega)$ gives a well-defined global closed real $(1,1)$ form, and that its cohomology class in $H^{1,1}(X,\mathbb{R})$ does not depend on the choice of $\Omega$. Taking $\Omega=\omega^n$ for some K\"ahler metric $\omega$, we immediately see that
$$\Ric(\omega^n)=\Ric(\omega),$$
and so $[\Ric(\Omega)]=2\pi c_1(X)$ for any smooth positive volume form $\Omega$. Sometimes we may also write $\Ric(\Omega)=-\ddbar\log\Omega$.

\subsection{Some more notation}
If $\alpha$ is a real $(1,1)$ form on $X$, and $\omega$ a K\"ahler metric, we will write
$$\tr{\omega}{\alpha}=g^{i\ov{j}}\alpha_{i\ov{j}},$$
and it is easy to see that
$$n\alpha\wedge\omega^{n-1}=(\tr{\omega}{\alpha})\omega^n.$$
In particular, if $f\in C^\infty(X,\mathbb{R})$,
$$\tr{\omega}{(\ddbar f)}=g^{i\ov{j}}\de_i\de_{\ov{j}}f=\Delta f,$$
where $\Delta$ is the complex Laplacian of the metric $\omega$ (if we want to emphasize the metric, we will also write $\Delta_{\omega}$). At a maximum point of $f$, we have that $\ddbar f\leq 0$, and so also $\Delta f\leq 0$.

We also have
$$\tr{\omega}{(\mn \de f\wedge\db f)}=g^{i\ov{j}}\de_i f\de_{\ov{j}}f=|\de f|^2_g,$$
where $g$ denotes the Hermitian metric defined by the K\"ahler metric $\omega$.

Next, we define the $C^k$ norms on smooth functions $(k\geq 0)$, with respect to $\omega$, by
$$\|f\|_{C^k(X,g)}=\sum_{p+q\leq k, 0\leq p\leq q}\sup_X|\nabla{}^p\ov{\nabla}{}^q f|_g,$$
where
$$|\nabla{}^p\ov{\nabla}{}^q f|^2_g=g^{i_1\ov{k_1}}\cdots g^{\ell_q \ov{j_q}}  \nabla_{i_1}\cdots\nabla_{i_p}\nabla_{\ov{j_1}}\dots \nabla_{\ov{j_q}}f\nabla_{\ov{k_1}}\cdots\nabla_{\ov{k_p}}\nabla_{\ell_1}\dots \nabla_{\ell_q}f.$$
We only sum on $p\leq q$ to avoid repetition of terms (since $|\nabla{}^q\ov{\nabla}{}^p f|_g=|\nabla{}^p\ov{\nabla}{}^q f|_g$ because $f$ is real-valued).
We will also abbreviate
\begin{equation}\label{abbrev}
\sum_{p+q\leq k, 0\leq p\leq q}|\nabla{}^p\ov{\nabla}{}^q f|_g=\sum_{0\leq j\leq k}|\nabla^j_{\mathbb{R}}f|_g.
\end{equation}
Similarly we can define the $C^k$ norms on tensors (if the tensor is not real, we sum over all $p,q\geq 0, p+q\leq k$).

We will also briefly use H\"older space $C^{k,\alpha}(X,g)$, where $k\in\mathbb{N}$ and $0<\alpha<1$. This is composed of functions $f:X\to\mathbb{R}$
such that the norm
\[\begin{split}
\|f\|_{C^{k,\alpha}(X,g)}&=\sum_{i\leq k}\|\nabla^i_{\mathbb{R}} f\|_{C^0(X,g)}+\sup_{x\neq y \in X}\frac{|\nabla^k_{\mathbb{R}} f(x)-\nabla^k_{\mathbb{R}} f(y)|_{g}}{d(x,y)^\alpha}
\end{split}\]
is finite (we assume of course that $f$ is sufficiently differentiable so that these derivatives make sense),
where $\nabla_{\mathbb{R}}$ is the real covariant derivative of $g$, $d(x,y)$ is the $g$-distance between $x,y\in X$, and in the expression $|\nabla^k_{\mathbb{R}} f(x)-\nabla^k_{\mathbb{R}} f(y)|_{g}$ we
are using parallel transport with respect to $g$ to compare the values of these two tensors, which are at different points in $X$.

\subsection{Analytic subvarieties}
We now quickly cover the basics about analytic subvarieties of a compact complex manifold, see \cite[p.12-14]{GH} for more details.
A closed subset $V\subset X$ is called an analytic subvariety of $X$ if for every point $x\in V$ we can find an open neighborhood $x\in U\subset X$ and holomorphic functions
$\{f_1,\dots,f_N\}$ on $U$ such that
$$V\cap U=\{y\in U\ |\ f_1(y)=\dots=f_N(y)=0\}.$$
A point $x\in V$ is called regular, or smooth, if near $x$ the subvariety $V$ is a complex submanifold of $X$. A point which is not regular is called singular.
The set of regular points is denoted by $V_{reg}$ and its complement by
$V_{sing}=V\backslash V_{reg}$. The singular locus $V_{sing}$ is itself an analytic subvariety of $X$, and it is properly contained in $V$.
A subvariety $V$ is called irreducible if we cannot write $V=V_1\cup V_2$ where $V_1,V_2$ are analytic subvarieties which are not equal to $V$. In this case, $V_{reg}$ is connected, and so it is is a complex submanifold of $X$ of a well-defined dimension, which we call $\dim V$.

If $V$ is not irreducible, then we can write $V=V_1\cup\dots \cup V_N$ where the $V_j$ are irreducible analytic subvarieties of $X$, called the irreducible components of $V$. In this case, we set $\dim V$ to be the maximum of $\dim V_j$. With these definitions, we have that $\dim V=0$ if and only if $V$ is a finite set of point.

A fundamental result of Lelong (see \cite[p.32]{GH}) shows that if $V$ is an irreducible analytic subvariety of $X$ of dimension $k>0$, and $\alpha$ is a smooth real $(k,k)$ form on $X$, then the integral
$$\int_V\alpha:=\int_{V_{reg}}\alpha,$$
is finite. Furthermore, for any smooth real $(k-1,k-1)$ form $\beta$ on $X$ we have
$$\int_V\ddbar \beta=0,$$
see \cite[p.33]{GH}.
Therefore if $[\alpha]$ is a real $(1,1)$ class on $X$, we may unambiguously write
$$\int_V\alpha^k.$$
Furthermore, if $[\alpha]\in \mathcal{C}_X$, and we fix a K\"ahler metric $\omega\in[\alpha]$, then
$$\int_V\alpha^k=\int_V\omega^k=k!\vol(V,\omega)>0,$$
see \cite[p.31]{GH}, where $\vol(V,\omega)$ denotes the real $2k$-dimensional volume of $V_{reg}$ with respect to $\omega$ (which is finite). Passing to the limit, we obtain that if $[\alpha]\in \ov{\mathcal{C}_X}$, and $V\subset X$ is any irreducible positive-dimensional analytic subvariety, then
$$\int_V\alpha^{\dim V}\geq 0.$$
For a nef $(1,1)$ class $[\alpha]\in \ov{\mathcal{C}_X}$ we then define its null locus to be
\begin{equation}\label{null}
\Null(\alpha)=\bigcup_{\int_V\alpha^{\dim V}=0} V,
\end{equation}
where the union is over all irreducible positive-dimensional analytic subvarieties $V\subset X$ with $\int_V\alpha^{\dim V}=0$.
The set $\Null(\alpha)$ is in fact an analytic subvariety of $X$ (in general not irreducible), as follows for example from \cite[Theorem 1.1]{CT}. We have that
$\Null(\alpha)=X$ if and only if $\int_X\alpha^n=0$, and otherwise $\Null(\alpha)$ is a proper analytic subvariety of $X$.

\subsection{Kodaira dimension}
Let $X$ be a compact complex manifold. We consider the space of global pluricanonical forms, namely
$$H^0(X,K_X^{\otimes\ell}),$$
where $\ell\geq 1$. If $H^0(X,K_X^{\otimes\ell})=0$ for all $\ell\geq 1$, then we say that the Kodaira dimension of $X$ is $-\infty$, and we write $\kappa(X)=-\infty$. If this is not the case, then we let
$$\kappa(X)=\limsup_{\ell\to\infty} \frac{\log \dim H^0(X,K_X^{\otimes\ell})}{\log\ell}.$$
It can be proved that either $\kappa(X)=-\infty$ or otherwise $0\leq\kappa(X)\leq n$, and in fact we have
$$C^{-1}\ell^{\kappa(X)}\leq \dim H^0(X,K_X^{\otimes\ell})\leq C\ell^{\kappa(X)},$$
for some constant $C>0$ and all $\ell$ such that $H^0(X,K_X^{\otimes\ell})\neq 0$ (see \cite[Corollary 2.1.38]{Laz}).
Furthermore, we have that $\kappa(X)=0$ if and only if $\dim H^0(X,K_X^{\otimes\ell})\leq 1$ for all $\ell\geq 1$, and it equals $1$ for at least one value of $\ell$.

Two compact complex manifolds $X,Y$ are called bimeromorphic if we can find proper analytic subvarieties $V_1\subset X, V_2\subset Y$ and a biholomorphism $\Phi:X\backslash V_1\to Y\backslash V_2$.
If two compact complex manifolds are bimeromorphic, then they have the same Kodaira dimension.

A compact K\"ahler manifold is called uniruled if for every point $x\in X$ there exists a rational curve $x\in C\subset X$, i.e. a non-constant holomorphic map $f:\mathbb{CP}^1\to X$ with image $C$ containing $x$. Uniruled manifolds have $\kappa(X)=-\infty$, and the converse is also conjectured to be true.

\subsection{Gromov-Hausdorff convergence}
Let $(X,d_X), (Y,d_Y)$ be compact metric spaces. Given $\ve>0$ we say that their Gromov-Hausdorff distance is less than or equal to $\ve$ if there are two maps
$F:X\to Y$ and $G:Y\to X$ (not necessarily continuous) such that
\begin{equation}\label{un1}
|d_X(x_1,x_2)-d_Y(F(x_1),F(x_2))|\leq \ve,
\end{equation}
for all $x_1,x_2\in X$,
\begin{equation}\label{un2}
|d_Y(y_1,y_2)-d_X(G(y_1),G(y_2))|\leq \ve,
\end{equation}
for all $y_1,y_2\in Y$,
\begin{equation}\label{un3}
d_X(x,G(F(x)))\leq \ve,
\end{equation}
for all $x\in X$, and
\begin{equation}\label{un4}
d_Y(y,F(G(y)))\leq \ve,
\end{equation}
for all $y\in Y$.
We then say that a family $(X_t,d_t), t\in[0,\infty),$ of compact metric spaces converge to a compact metric space $(Y,d_Y)$ in the Gromov-Hausdorff topology if for all $\ve>0$ there is $T\geq 0$ such that the Gromov-Hausdorff distance between $(X_t,d_t)$ and $(Y,d_Y)$ is at most $\ve$ for all $t\geq T$. We refer the reader to \cite{Ro} for more about this notion.

\section{Maximal existence time}\label{sectmax}

\subsection{The maximal existence time of the K\"ahler-Ricci flow}
Let $\omega(t)$ be a solution of the K\"ahler-Ricci flow \eqref{krf} on a compact K\"ahler manifold $X$, with $t\in [0,T), 0<T\leq \infty$. Taking the cohomology class of \eqref{krf} we see that
$$\frac{\de}{\de t}[\omega(t)]=-[\Ric(\omega(t))]=-2\pi c_1(X),$$
where the right-hand side is independent of $t$. It follows that
$$[\omega(t)]=[\omega_0]-2\pi tc_1(X),$$
and so
$$[\omega_0]-2\pi tc_1(X)\in\mathcal{C}_X,$$
for $t\in [0,T)$.
The converse is the content of the following theorem proved in \cite{Ca, Ts, Ts2, TiZ}.
\begin{theorem}\label{tz}
Let $(X^n,\omega_0)$ be a compact K\"ahler manifold. Then the K\"ahler-Ricci flow \eqref{krf} has a unique smooth solution $\omega(t)$ defined on the maximal time interval $[0,T), 0<T\leq\infty$, where $T$ is given by
\begin{equation}\label{coho}
T=\sup\{ t>0\ |\ [\omega_0]-2\pi tc_1(X)\in\mathcal{C}_X\}.
\end{equation}
\end{theorem}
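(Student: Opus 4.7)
One direction is free: as observed just before the statement, $[\omega(t)] = [\omega_0] - 2\pi t c_1(X)$ for every $t$ at which the flow exists, so writing $T'$ for the supremum on the right-hand side of \eqref{coho}, we have $T \le T'$ automatically. The plan is to prove the reverse inequality $T \ge T'$ by reducing \eqref{krf} to a strictly parabolic complex Monge--Amp\`ere equation on a scalar potential, and then deriving enough a priori estimates to rule out finite-time breakdown for every $t < T'$.

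For the reduction, fix any $T'' < T'$; it is enough to show the flow exists smoothly on $[0,T'']$. Pick a K\"ahler representative $\eta$ of $[\omega_0] - 2\pi T'' c_1(X) \in \mathcal{C}_X$ and set
$$\hat\omega_t = \tfrac{T''-t}{T''}\omega_0 + \tfrac{t}{T''}\eta, \qquad t \in [0,T''],$$
a smooth family of K\"ahler metrics with $[\hat\omega_t] = [\omega_0] - 2\pi t c_1(X)$. Using Lemma \ref{dd} applied to closed $(1,1)$-forms in $2\pi c_1(X)$, choose a smooth positive volume form $\Omega$ on $X$ with $\Ric(\Omega) = (\omega_0-\eta)/T''$, so that $\tfrac{d}{dt}\hat\omega_t = -\Ric(\Omega)$. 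Seeking the solution as $\omega(t) = \hat\omega_t + \ddbar\varphi(t)$ and using \eqref{ric}, the flow \eqref{krf} becomes equivalent to
\begin{equation}\label{tzma}
\frac{\de\varphi}{\de t} = \log\frac{(\hat\omega_t + \ddbar\varphi)^n}{\Omega}, \qquad \varphi(0) = 0,
\end{equation}
under the positivity constraint $\hat\omega_t + \ddbar\varphi > 0$. The linearization of the right-hand side at a K\"ahler solution is $\Delta_{\omega(t)}$, strictly elliptic, so standard parabolic theory produces a unique smooth short-time solution and a well-defined maximal existence time $T$.

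It then suffices, arguing by contradiction, to show that if $T < T''$ one obtains uniform $C^\infty$ bounds on $\varphi$ on $[0,T)$: these force $\omega(t)$ to converge smoothly as $t \to T$ to a K\"ahler metric from which the flow restarts, contradicting maximality. I would establish the estimates in the classical hierarchy familiar from Yau's resolution of the Calabi conjecture: a $C^0$-bound on $\varphi$ and on $\dot\varphi$ by applying the maximum principle to \eqref{tzma} and to its time derivative; an upper bound on $\tr{\hat\omega_t}{\omega(t)}$ from a parabolic Maximum-principle computation applied to $\log\tr{\hat\omega_t}{\omega} - A\varphi$ for $A$ large, which combined with the $\dot\varphi$-bound yields two-sided metric equivalence of $\omega(t)$ with $\hat\omega_t$; and finally parabolic Evans--Krylov together with Schauder bootstrapping for the higher-order bounds. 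The main technical obstacle is the second-order estimate: because $\hat\omega_t$ is time-dependent, the usual Aubin--Yau computation of $\Delta_{\omega}\log\tr{\hat\omega_t}{\omega}$ produces extra terms involving $\tfrac{d}{dt}\hat\omega_t$ which must be absorbed into the favorable contribution coming from the bisectional curvature of the reference family, and $A$ must be chosen large in terms of uniform data of $\hat\omega_t$ on $[0,T'']$ to dominate them. Uniqueness of the flow on $[0,T)$ follows in parallel from uniqueness for smooth solutions of \eqref{tzma}.
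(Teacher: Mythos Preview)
Your outline is correct and follows the same route as the paper: reduce to a parabolic complex Monge--Amp\`ere equation for a potential, then establish $C^0$, $\dot\varphi$, trace, and higher-order estimates by maximum-principle arguments. Two minor points of execution differ: the paper takes the \emph{fixed} metric $\omega_0$ (not $\hat\omega_t$) as the reference in the second-order estimate, applying the maximum principle to $\log\tr{\omega_0}{\omega}+C_0(t\dot\varphi-\varphi-nt)$, which sidesteps exactly the time-dependence terms you flag; and the $\dot\varphi$ bound needs more than the maximum principle on the differentiated equation alone (since $(\partial_t-\Delta)\dot\varphi=\tr{\omega}{\chi}$ has no sign) --- the paper instead uses the test quantity $(T'-t)\dot\varphi+\varphi+nt$ for the lower bound and the scalar-curvature evolution $(\partial_t-\Delta)R\ge 0$ for the upper bound.
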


Here and in the rest of these notes, when we say that $[0,T)$ is maximal we really mean forward maximal. It may be possible that the flow \eqref{krf} has a solution also for some negative time, but this is in general not the case, and we will not discuss backwards solvability in these notes.

This theorem has the following useful corollary.

\begin{corollary}\label{nefff}
Under the same assumptions as in Theorem \ref{tz}, we have that $T=\infty$ if and only if $-c_1(X)\in\ov{\mathcal{C}_X}$.
\end{corollary}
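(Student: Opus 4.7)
The plan is to read off the corollary directly from the cohomological formula \eqref{coho} in Theorem \ref{tz}, using only that $\mathcal{C}_X$ is an open convex cone and the addition result in Corollary \ref{sum}. Concretely, set $T$ as in \eqref{coho}; the forward direction will use a rescaling-and-limit argument to extract the nefness of $-c_1(X)$, while the reverse direction will write $[\omega_0]-2\pi t c_1(X)$ as (K\"ahler) $+$ (nef) and invoke Corollary \ref{sum}.

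\smallskip

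\textbf{Forward direction.} Suppose $T=\infty$. Then for every $t>0$ we have $[\omega_0]-2\pi t c_1(X)\in\mathcal{C}_X$. Since $\mathcal{C}_X$ is a cone, dividing by $2\pi t>0$ yields
\[
\frac{1}{2\pi t}[\omega_0]-c_1(X)\in\mathcal{C}_X.
\]
Letting $t\to\infty$, the first term goes to $0$ in $H^{1,1}(X,\mathbb{R})$, so $-c_1(X)$ is a limit of K\"ahler classes, i.e. $-c_1(X)\in\overline{\mathcal{C}_X}$.

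\smallskip

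\textbf{Reverse direction.} Suppose $-c_1(X)\in\overline{\mathcal{C}_X}$. Since $\overline{\mathcal{C}_X}$ is a cone, for every $t>0$ the class $-2\pi t c_1(X)$ is nef. Then
\[
[\omega_0]-2\pi t c_1(X)=[\omega_0]+(-2\pi t c_1(X))
\]
is the sum of a K\"ahler class and a nef class, so by Corollary \ref{sum} it lies in $\mathcal{C}_X$. As this holds for every $t>0$, the supremum \eqref{coho} is $+\infty$, i.e. $T=\infty$.

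\smallskip

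There is no real obstacle here: the corollary is a formal consequence of Theorem \ref{tz} together with the cone structure of $\mathcal{C}_X$ and $\overline{\mathcal{C}_X}$. The only (minor) point to be careful about is that $\mathcal{C}_X$ is open but not closed, which is exactly the reason Corollary \ref{sum} is needed in the reverse direction (otherwise one might only conclude that $[\omega_0]-2\pi t c_1(X)\in\overline{\mathcal{C}_X}$).
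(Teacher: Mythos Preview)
Your proof is correct and follows essentially the same approach as the paper's own proof: both directions use the same rescaling-and-limit argument and the same appeal to Corollary \ref{sum}, only with the two implications presented in the opposite order.
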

Note that the condition $-c_1(X)\in\ov{\mathcal{C}_X}$ is independent of the initial metric $\omega_0$. It is equivalent to the fact that $K_X$ is nef, and this is also sometimes stated by saying that $X$ is a smooth minimal model.
\begin{proof}
If $-c_1(X)\in\ov{\mathcal{C}_X}$ then $-2\pi tc_1(X)\in\ov{\mathcal{C}_X}$ too, for all $t\geq 0$. Since $[\omega_0]\in\mathcal{C}_X$, we conclude from Corollary \ref{sum} that
$[\omega_0]-2\pi tc_1(X)\in\mathcal{C}_X$, and so $T=\infty$ thanks to Theorem \ref{tz}.

If conversely $T=\infty,$ then for all $t>0$ we have
$$\frac{1}{2\pi t}[\omega_0]-c_1(X)=\frac{1}{2\pi t}[\omega(t)]\in\mathcal{C}_X,$$
and letting $t\to\infty$ we immediately obtain that $-c_1(X)\in\ov{\mathcal{C}_X}$.
\end{proof}

\subsection{Reduction to a parabolic complex Monge-Amp\`ere equation}
We now start the proof of Theorem \ref{tz}. We set
$T=\sup\{ t>0\ |\ [\omega_0]-2\pi tc_1(X)\in\mathcal{C}_X\}.$
As we discussed earlier, it is clear that no solution of \eqref{krf} can exist for $t\geq T$, and so it is enough to show that \eqref{krf} has a unique smooth solution defined on $[0,T)$.
Fix any $0<T'<T$ (so in particular $T'<\infty$). By definition we have that $[\omega_0]-2\pi T'c_1(X)\in\mathcal{C}_X$, so we can choose a K\"ahler metric $\eta$ in this class.
We define
\begin{equation}\label{1}
\chi=\frac{1}{T'}(\eta-\omega_0),
\end{equation}
so $\chi$ is a closed real $(1,1)$ form cohomologous to $-2\pi c_1(X)$, and
\begin{equation}\label{2}
\hat{\omega}_t=\omega_0+t\chi=\frac{1}{T'}((T'-t)\omega_0+t\eta),
\end{equation}
which is a K\"ahler metric for all $t\in [0,T']$. Fix $\Omega'$ any smooth positive volume form on $X$. Then $\Ric(\Omega')$ is a closed real $(1,1)$ form cohomologous to $2\pi c_1(X)$, and so there is a smooth function $F$ such that $\chi=-\Ric(\Omega')+\ddbar F$. We then define
$$\Omega=e^F\Omega',$$
which is a smooth positive volume form with
\begin{equation}\label{3}
\Ric(\Omega)=-\chi.
\end{equation}
\begin{lemma}\label{krflow}
A smooth family $\omega(t)$ of K\"ahler metrics on $[0,T')$ solves the K\"ahler-Ricci flow \eqref{krf} if and only if there is a smooth family of smooth functions $\vp(t), t\in [0,T')$ such that $\omega(t)=\hat{\omega}_t+\ddbar\vp(t)$ and we have
\begin{equation}\label{ma}
\left\{
                \begin{aligned}
                  &\frac{\de}{\de t}\vp(t)=\log\frac{(\hat{\omega}_t+\ddbar\vp(t))^n}{\Omega}\\
                  &\vp(0)=0\\
                  &\hat{\omega}_t+\ddbar\vp(t)>0.
                \end{aligned}
              \right.
\end{equation}
\end{lemma}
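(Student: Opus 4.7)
My plan is to prove the equivalence by two direct computations, both resting on the local identity $-\Ric(\omega)=\ddbar\log\omega^n$ (which follows from \eqref{ric}) and on the defining property \eqref{3} of the reference volume form, which can be rewritten as $\ddbar\log\Omega=-\Ric(\Omega)=\chi$. Combining these, if $\omega$ is any K\"ahler metric on $X$ one has the global identity
\[
\ddbar\log\frac{\omega^n}{\Omega}=-\Ric(\omega)-\chi,
\]
and this is the only non-trivial ingredient in the whole lemma.

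For the ``if'' direction, I suppose that $\vp(t)$ solves \eqref{ma} and set $\omega(t):=\hat{\omega}_t+\ddbar\vp(t)$, which is a smooth family of K\"ahler metrics by the positivity condition. Differentiating in $t$ and using $\de_t\hat{\omega}_t=\chi$ together with the identity displayed above, I would get
\[
\frac{\de}{\de t}\omega(t)=\chi+\ddbar\frac{\de\vp}{\de t}=\chi+\ddbar\log\frac{\omega(t)^n}{\Omega}=-\Ric(\omega(t)),
\]
and the initial condition $\omega(0)=\omega_0$ follows at once from $\vp(0)=0$ and $\hat{\omega}_0=\omega_0$.

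For the ``only if'' direction, I first note that $[\omega(t)]=[\omega_0]-2\pi t c_1(X)=[\hat{\omega}_t]$ for every $t\in[0,T')$, so by Lemma \ref{dd} the difference $\omega(t)-\hat{\omega}_t$ is $\ddbar$ of some function, determined up to a time-dependent constant. Rather than invoking the $\de\db$-Lemma pointwise in $t$ (which would give potentials only up to an unpinned constant), I would define
\[
\vp(t):=\int_0^t \log\frac{\omega(s)^n}{\Omega}\,ds,
\]
which is smooth and satisfies $\vp(0)=0$ and $\frac{\de\vp}{\de t}=\log(\omega(t)^n/\Omega)$ by construction. To conclude that $\omega(t)=\hat{\omega}_t+\ddbar\vp(t)$, I would consider the closed real $(1,1)$ form $\alpha(t):=\omega(t)-\hat{\omega}_t-\ddbar\vp(t)$; then $\alpha(0)=0$ and
\[
\frac{\de}{\de t}\alpha(t)=-\Ric(\omega(t))-\chi-\ddbar\log\frac{\omega(t)^n}{\Omega}=0
\]
by the key identity, so $\alpha\equiv 0$ and the third line of \eqref{ma} then follows from $\omega(t)>0$.

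The only genuinely delicate point is the choice of potentials in the ``only if'' direction: the $\de\db$-Lemma alone produces potentials up to a time-dependent constant, and one must pin this constant down so that the resulting family satisfies the scalar equation $\frac{\de\vp}{\de t}=\log(\omega^n/\Omega)$. The integral formula above is precisely the normalization that achieves this, and once it is in place the remaining verifications reduce to the single identity relating $\ddbar\log(\omega^n/\Omega)$ to $-\Ric(\omega)-\chi$.
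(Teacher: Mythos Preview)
Your proof is correct and follows essentially the same approach as the paper: both directions use the identity $\ddbar\log(\omega^n/\Omega)=-\Ric(\omega)-\chi$, and in the ``only if'' direction you and the paper both define $\vp(t)=\int_0^t\log(\omega(s)^n/\Omega)\,ds$ and then verify that $\omega(t)-\hat{\omega}_t-\ddbar\vp(t)$ has vanishing time derivative and vanishes at $t=0$. Your explicit remark about why the integral formula pins down the additive constant is a nice clarification, but the mathematical content is the same.
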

Equation \eqref{ma} is called a parabolic complex Monge-Amp\`ere equation.
\begin{proof}
For the ``if'' direction, we set $\omega(t)=\hat{\omega}_t+\ddbar\vp(t)$ and compute
$$\frac{\de}{\de t}\omega(t)=\chi+\ddbar\log \frac{\omega(t)^n}{\Omega}=\chi+\Ric(\Omega)-\Ric(\omega(t))=-\Ric(\omega(t)),$$
and since clearly $\omega(0)=\hat{\omega}_0=\omega_0$, we conclude that $\omega(t)$ solves \eqref{krf}.

For the ``only if'' direction, given a solution $\omega(t)$ of \eqref{krf} on $[0,T')$, we define
$$\vp(t)=\int_0^t\log \frac{\omega(s)^n}{\Omega}ds,$$
for $t\in[0,T')$. We clearly have that
$$\frac{\de}{\de t}\vp(t)=\log\frac{\omega(t)^n}{\Omega}, \quad \vp(0)=0.$$
We compute
$$\frac{\de}{\de t}\left(\omega(t)-\hat{\omega}_t-\ddbar\vp(t)\right)=-\Ric(\omega(t))-\chi+\Ric(\omega(t))-\Ric(\Omega)=0,$$
and so $\omega(t)-\hat{\omega}_t-\ddbar\vp(t)$ is a smooth family of real $(1,1)$ forms which satisfy
$$\frac{\de}{\de t}\left(\omega(t)-\hat{\omega}_t-\ddbar\vp(t)\right)=0,\quad \left(\omega(t)-\hat{\omega}_t-\ddbar\vp(t)\right)|_{t=0}=0,$$
and so we must have $\omega(t)-\hat{\omega}_t-\ddbar\vp(t)\equiv 0$ on $X\times [0,T')$, and so \eqref{ma} holds.
\end{proof}

We can now prove the uniqueness in Theorem \ref{tz}.
\begin{theorem}\label{uniq}
Suppose $\omega_1(t)$ and $\omega_2(t)$ are two solutions of \eqref{krf} on the same time interval $[0,T')$. Then $\omega_1(t)=\omega_2(t)$ for all $t\in [0,T')$.
\end{theorem}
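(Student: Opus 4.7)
The plan is to reduce to the scalar parabolic complex Monge--Amp\`ere equation \eqref{ma} via Lemma \ref{krflow} and then apply the maximum principle. The construction in Lemma \ref{krflow} is canonical: once we fix the data $\hat\omega_t$ and $\Omega$, each solution $\omega(t)$ of \eqref{krf} on $[0,T')$ determines a unique potential $\vp(t)$, namely the one given by the integral formula in the ``only if'' direction, which forces $\vp(0)=0$. Hence uniqueness of the solution to \eqref{ma} with $\vp(0)=0$ will imply uniqueness of the solution to \eqref{krf}.

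So suppose $\vp_1, \vp_2$ are two solutions of \eqref{ma} on $[0,T')$, and set $\psi=\vp_1-\vp_2$, $\omega_i(t)=\hat\omega_t+\ddbar\vp_i(t)$. Subtracting the two Monge--Amp\`ere equations gives
\begin{equation*}
\frac{\partial \psi}{\partial t}=\log\frac{\omega_1(t)^n}{\omega_2(t)^n},
\end{equation*}
with $\psi(0)=0$. Fix $T_0\in(0,T')$ and $\ve>0$, and consider the function $\psi-\ve t$ on the compact space-time cylinder $X\times[0,T_0]$. If its maximum is attained at some $(x_0,t_0)$ with $t_0>0$, then at this point one has $\ddbar\psi(x_0,t_0)\leq 0$ and $\frac{\partial\psi}{\partial t}(x_0,t_0)\geq\ve$. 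The inequality $\ddbar\psi\leq 0$ means $\omega_1(x_0,t_0)\leq \omega_2(x_0,t_0)$ as Hermitian forms, so $\omega_1^n\leq\omega_2^n$ at that point and hence $\frac{\partial\psi}{\partial t}(x_0,t_0)=\log(\omega_1^n/\omega_2^n)\leq 0$, contradicting $\frac{\partial\psi}{\partial t}(x_0,t_0)\geq\ve>0$. Therefore the maximum of $\psi-\ve t$ must be attained at $t=0$, where it equals $0$, so $\psi\leq \ve t$ on $X\times[0,T_0]$. Letting $\ve\to 0$ and then $T_0\to T'$ yields $\psi\leq 0$ on $X\times[0,T')$; the symmetric argument (minimum of $\psi+\ve t$, using concavity in the other direction) gives $\psi\geq 0$, hence $\psi\equiv 0$ and $\vp_1=\vp_2$.

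Translating back through Lemma \ref{krflow}, this gives $\omega_1(t)=\omega_2(t)$ on $[0,T')$, as desired. The only mild technical point is the non-differentiability of $\sup_X\psi(t)$ in $t$, which is precisely the reason for introducing the auxiliary perturbation $-\ve t$: it turns the informal statement ``at a spatial maximum the time derivative of $\sup_X\psi$ has the wrong sign'' into a clean comparison using the space-time maximum on a compact cylinder. Once the reduction to \eqref{ma} is in hand, no other step presents a real obstacle, because the sign $\log(\omega_1^n/\omega_2^n)\leq 0$ at a point where $\omega_1\leq\omega_2$ is immediate from the monotonicity of determinant on positive Hermitian matrices.
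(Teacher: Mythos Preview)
Your proof is correct and follows essentially the same approach as the paper: reduce to the scalar equation \eqref{ma} via Lemma \ref{krflow}, take the difference $\psi$ of the two potentials, and apply the maximum principle to $\psi\mp\ve t$ on a compact time slab to force $\psi\equiv 0$. The only differences are cosmetic (your $\psi$ has the opposite sign, and you phrase the maximum-principle step slightly differently), and your closing remark about why the $-\ve t$ perturbation is needed is a nice pedagogical addition.
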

\begin{proof}
Thanks to Lemma \ref{krflow} we can write
$$\omega_1(t)=\hat{\omega}_t+\ddbar\vp_1(t), \quad \omega_2(t)=\hat{\omega}_t+\ddbar\vp_2(t),$$
where $\vp_1(t),\vp_2(t)$ both solve \eqref{ma} for $t\in[0,T')$. Our goal is to show that $\vp_1(t)=\vp_2(t)$ for all $t\in [0,T')$.

If we write $\psi(t)=\vp_2(t)-\vp_1(t)$ then we have
$$(\omega_1(t)+\ddbar\psi(t))^n=\omega_2(t)^n=e^{\dot{\vp}_2(t)}\Omega=e^{\dot{\psi}(t)}\omega_1(t)^n,$$
using \eqref{ma}.
Here and in the following we write
$$\dot{\psi}(t)=\frac{\de}{\de t}\psi(t).$$
 In other words, the function $\psi(t)$ satisfies
\[\left\{
                \begin{aligned}
                  &\frac{\de}{\de t}\psi(t)=\log\frac{(\omega_1(t)+\ddbar\psi(t))^n}{\omega_1(t)^n}\\
                  &\psi(0)=0\\
                  &\omega_1(t)+\ddbar\psi(t)>0.
                \end{aligned}
              \right.
\]
Then, for every $\ve>0$, the function $\ti{\psi}(t)=\psi(t)-\ve t$ satisfies
$$\frac{\de}{\de t}\ti{\psi}(t)=\log\frac{(\omega_1(t)+\ddbar\ti{\psi}(t))^n}{\omega_1(t)^n}-\ve,$$
and we can now apply the maximum principle. Fix any $0<T''<T'$, and let the maximum of $\ti{\psi}(t)$ on $X\times [0,T'']$ be achieved at
$(x,t)$. If $t>0$ then at $(x,t)$ we have
$$0<\omega_1(t)+\ddbar\ti{\psi}(t)\leq \omega_1(t),$$
and so
$$ (\omega_1(t)+\ddbar\ti{\psi}(t))^n\leq \omega_1(t)^n,$$
and
$$0\leq \frac{\de}{\de t}\ti{\psi}(t)=\log\frac{(\omega_1(t)+\ddbar\ti{\psi}(t))^n}{\omega_1(t)^n}-\ve\leq -\ve,$$
a contradiction. Therefore we must have $t=0$, and so $\ti{\psi}(x,t)=\psi(x,0)=0$. Since $(x,t)$ was a maximum point, we conclude that
$$\ti{\psi}(t)\leq 0$$
on $X\times[0,T'']$,
or in other words
$$\psi(t)\leq \ve t$$
on $X\times[0,T'']$, and since $T''<T'$ was arbitrary, the same holds on $[0,T')$. Letting $\ve\to 0$ we conclude that
$$\psi(t)\leq 0,$$
on $X\times[0,T').$ Applying the same argument to $\psi(t)+\ve t$, and looking at its minimum point, we conclude that $\psi(t)$ is identically zero.
\end{proof}

\subsection{Existence for a short positive time}
We are now ready to prove a short-time existence theorem, originally due to Hamilton \cite{Ha} for the Ricci flow on general compact Riemannian manifolds. The K\"ahler setting allows for a much simpler proof.

\begin{theorem}\label{short}
Let $(X^n,\omega_0)$ be a compact K\"ahler manifold. Then there exists $\ve>0$ and a unique smooth solution $\omega(t)$ of the K\"ahler-Ricci flow \eqref{krf} defined on $[0,\ve)$.
\end{theorem}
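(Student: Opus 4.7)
The plan is to invoke Lemma \ref{krflow} to convert the K\"ahler-Ricci flow into the parabolic complex Monge-Amp\`ere equation \eqref{ma}; together with the uniqueness already proved in Theorem \ref{uniq}, this reduces the problem to producing a smooth short-time solution $\varphi(t)$ of \eqref{ma}.

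The key structural observation is that \eqref{ma} is quasilinear strictly parabolic near the initial data. Linearizing its right-hand side in $\varphi$ at a function satisfying $\omega := \hat\omega_t + \ddbar\varphi > 0$ yields the operator $\psi \mapsto \tr{\omega}{\ddbar\psi} = \Delta_\omega \psi$, which is second-order uniformly elliptic; combined with the $\dot\varphi$ on the left, the equation is strictly parabolic wherever this positivity holds. At $\varphi = 0$, $t = 0$ the condition is automatic since $\hat\omega_0 = \omega_0$ is K\"ahler, so parabolicity persists on an open neighborhood in $C^{2+\alpha,1+\alpha/2}(X \times [0,\epsilon])$.

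To extract an actual solution I would apply the inverse function theorem in the parabolic H\"older spaces $C^{2+\alpha,1+\alpha/2}(X \times [0,\epsilon])$ and $C^{\alpha,\alpha/2}(X \times [0,\epsilon])$. Consider
$$\Phi(\varphi) = \dot\varphi - \log\frac{(\hat\omega_t + \ddbar\varphi)^n}{\Omega},$$
defined on the open subset of $\{\varphi \in C^{2+\alpha,1+\alpha/2}(X \times [0,\epsilon]) : \varphi(\cdot,0) = 0\}$ on which $\hat\omega_t + \ddbar\varphi > 0$. Its derivative at any $\varphi$ in this set is the linear parabolic operator $\psi \mapsto \dot\psi - \Delta_{\hat\omega_t + \ddbar\varphi}\psi$ acting on functions vanishing at $t = 0$, which by classical linear parabolic Schauder theory on the closed manifold $X$ is an isomorphism onto $C^{\alpha,\alpha/2}(X \times [0,\epsilon])$. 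Since $\Phi(0) = -\log(\hat\omega_t^n/\Omega)$ does not tend to zero with $\epsilon$, to place $0$ in the image I would first introduce the approximate solution $\varphi_{\mathrm{app}}(x,t) = t\log(\omega_0^n/\Omega)$. A direct estimate gives $\Phi(\varphi_{\mathrm{app}})(x,0) = 0$ and $\|\Phi(\varphi_{\mathrm{app}})\|_{C^{\alpha,\alpha/2}(X \times [0,\epsilon])} = O(\epsilon^{1-\alpha/2})$, which can be made arbitrarily small by shrinking $\epsilon$; applying the inverse function theorem near $\varphi_{\mathrm{app}}$ then yields a $C^{2+\alpha,1+\alpha/2}$ function $\varphi$ with $\Phi(\varphi) = 0$ and $\varphi(\cdot,0) = 0$ on $X \times [0,\epsilon]$.

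Finally, a standard parabolic bootstrap upgrades $\varphi$ to a smooth solution: differentiating \eqref{ma} in space and time produces linear parabolic equations satisfied by the derivatives of $\varphi$, with coefficients controlled by the regularity already established, and iterating Schauder estimates provides arbitrary smoothness. The only genuinely nontrivial input in this argument is the isomorphism property of the linearized parabolic operator between the parabolic H\"older spaces; this is entirely classical and the kind of result the author is likely to cite rather than reprove.
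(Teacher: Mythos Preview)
Your approach is correct and follows the same high-level strategy as the paper: reduce to \eqref{ma} via Lemma~\ref{krflow}, invoke the inverse function theorem for a nonlinear parabolic operator between parabolic H\"older spaces, linearize to get $\dot\psi-\Delta_{\omega}\psi$, appeal to classical linear parabolic theory for the isomorphism, and bootstrap. The difference lies in how the smallness needed for the IFT is arranged.

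You work on the $\ve$-dependent domain $X\times[0,\ve]$, take the first-order approximate solution $\vp_{\rm app}=t\log(\omega_0^n/\Omega)$, and shrink $\ve$ so that $\|\Phi(\vp_{\rm app})\|_{C^{\alpha,\alpha/2}}=O(\ve^{1-\alpha/2})$ falls inside the IFT radius. This is clean, but note that for it to work you implicitly need the Schauder constant for $(\partial_t-\Delta_{\hat\omega_t+\ddbar\vp_{\rm app}})^{-1}$ and the Lipschitz bound on $D\Phi$ to be \emph{uniform in $\ve$}; you should say so explicitly, since otherwise the IFT ball could shrink as fast as $\Phi(\vp_{\rm app})$. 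The paper instead fixes the time interval $[0,1]$, builds an approximate solution $\hat\vp$ whose Taylor series in $t$ matches the formal solution to order $\lfloor k/2\rfloor+1$, and then uses a time-shift trick: setting $\hat h=\partial_t\hat\vp-E(\hat\vp)$ and $h_\ve(t)=0$ on $[0,\ve]$, $h_\ve(t)=\hat h(t-\ve)$ on $[\ve,1]$, one has $h_\ve\to\hat h$ in $C^{k-2,\alpha}(X\times[0,1])$, and the IFT at the single point $\hat\vp$ (in fixed spaces) produces a solution of $\partial_t\vp=E(\vp)+h_\ve$ which solves the original equation on $[0,\ve)$. Your route is more direct; the paper's buys the convenience of never changing the function spaces at the cost of the higher-order Taylor matching and the $h_\ve$ device.
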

\begin{proof}
Let $T>0$ be defined as in \eqref{coho}, fix any $0<T'<T$, fix a K\"ahler metric $\eta$ in $[\omega_0]-2\pi T'c_1(X)$, and define $\chi, \hat{\omega}_t$ and $\Omega$ as in \eqref{1}, \eqref{2} and \eqref{3}. Since we have already proved uniqueness in Theorem \ref{uniq}, our goal is to produce a solution $\vp(t)$ of \eqref{ma} defined on $[0,\ve)$ for some $\ve>0$ (thanks to Lemma \ref{krflow}). Up to rescaling the time parameter, we may assume that $T'\geq 1$.

Fix an integer $k\geq 2$ and a real number $0<\alpha<1$, and let
$U_t\subset C^{k,\alpha}(X,g_0)$ be the open set given by all functions $\psi\in C^{k,\alpha}(X,g_0)$ such that $\hat{\omega}_t+\ddbar\psi>0$ everywhere on $X$.
This is an open set which contains the origin, and for every $t\in [0,T']$ we can define an operator $E_t:U_t\to C^{k-2,\alpha}(X,g_0)$ by
$$E_t(\psi)=\log\frac{(\hat{\omega}_t+\ddbar\psi)^n}{\Omega}.$$
To take care of the dependence on $t$ (which we will just restrict to $[0,1]$) we consider the parabolic H\"older space $C^{k,\alpha}(X\times[0,1],g_0)$ of functions $u:X\times[0,1]\to\mathbb{R}$
such that the norm
\[\begin{split}
\|u\|_{C^{k,\alpha}(X\times[0,1],g_0)}&=\sum_{i+2j\leq k}\|\nabla^i_{\mathbb{R}} \de_t^j u\|_{C^0(X\times[0,1],g_0)}\\
&+\sum_{i+2j=k}\sup_{x\neq y \in X, t\neq s\in [0,1]}\frac{|\nabla^i_{\mathbb{R}} \de_t^j u(x,t)-\nabla^i_{\mathbb{R}} \de_t^j u(y,s)|_{g_0}}{(d(x,y)^2+|t-s|)^{\frac{\alpha}{2}}}
\end{split}\]
is finite (we assume of course that $u$ is sufficiently differentiable in $X$ and $t$ so that these derivatives make sense),
where $\nabla_{\mathbb{R}}$ is the real covariant derivative of $g_0$ (see \eqref{abbrev}), $d(x,y)$ is the $g_0$-distance between $x,y\in X$, and in the expression $|\nabla^i_{\mathbb{R}} \de_t^j u(x,t)-\nabla^i_{\mathbb{R}} \de_t^j u(y,s)|_{g_0}$ we
are using parallel transport with respect to $g_0$ to compare the values of these two tensors, which are at different points in $X$ (see e.g. \cite{Kr, Li} for more on these spaces).

These are Banach spaces, and we let $U\subset  C^{k,\alpha}(X\times[0,1],g_0)$ be the subset
of all functions $\psi\in C^{k,\alpha}(X\times[0,1],g_0)$ such that $\hat{\omega}_t+\ddbar\psi(t)>0$ on $X\times[0,1]$, which is again an open set containing the origin.
We then define an operator $E:U\to C^{k-2,\alpha}(X\times[0,1],g_0)$ by
$$E(\psi)(t)=\log\frac{(\hat{\omega}_t+\ddbar\psi(t))^n}{\Omega}.$$
If we can find $\ve>0$ and a function $\vp\in U\subset C^{k,\alpha}(X\times[0,1],g_0)$ such that
\begin{equation}\label{want}
\left\{
                \begin{aligned}
                  &\frac{\de}{\de t}\vp(t)=E(\vp)(t)\\
                  &\vp(0)=0,
                \end{aligned}
              \right.
\end{equation}
on $X\times[0,\ve)$ then standard parabolic PDE theory (differentiating \eqref{want} and applying e.g. \cite[Chapter 8]{Kr}) implies that $\vp$ is smooth on $X\times[0,\ve)$, and so is our desired solution of \eqref{ma}.

To achieve this, we first note that if we have such a solution $\vp(t)$ (suppose that it is smooth) then its time derivatives
$$\frac{\de^\ell}{\de t^\ell}\vp(0),$$
for all $\ell\geq 0$ are equal to certain smooth functions $F_\ell$ which are expressible purely in terms of the given data $\omega_0,\chi,\Omega.$ For example
$$F_0=0,\quad F_1=\log\frac{\omega_0^n}{\Omega}, \quad F_2=-\tr{\omega_0}{\Ric(\omega_0)}=-R(\omega_0),$$
and so on. The case of general $\ell$ follows easily by differentiating the flow equation \eqref{ma}, noting that all time derivatives of $\hat{\omega}_t$ and $\ddbar\vp(t)$ are so expressible. We choose a function $\hat{\vp}\in C^{k+1}(X\times[0,1],g_0)$ (so in particular in $C^{k,\alpha}$) such that
$$\frac{\de^\ell}{\de t^\ell}\hat{\vp}(0)=F_\ell,$$
for all $0\leq \ell\leq \left \lfloor{\frac{k}{2}}\right \rfloor+1$, and such that $\hat{\vp}$ lies inside $U$. In other words,
 the Taylor series of $\hat{\vp}$ in $t$ at $t=0$ matches the one of a solution $\vp$ (if it exists) up to order $\left \lfloor{\frac{k}{2}}\right \rfloor+1$.
Let $\hat{h}=\frac{\de}{\de t}\hat{\vp}-E(\hat{\vp})$, for $t\in [0,1]$, so that $\hat{h}$ is by construction a function in $C^{k-2,\alpha}(X\times[0,1],g_0)$, whose Taylor series in $t$ at $t=0$ vanishes up to order $\left \lfloor{\frac{k}{2}}\right \rfloor$. For a given $\ve>0$ let
$h_\ve(t)$ be equal to $0$ for $0\leq t\leq \ve$ and equal to $\hat{h}(t-\ve)$ for $\ve\leq t\leq 1$.
Then by construction we have that $h_\ve\in C^{k-2,\alpha}(X\times[0,1],g_0)$ and
\begin{equation}\label{norm}
\|h_\ve-\hat{h}\|_{C^{k-2,\alpha}(X\times[0,1],g_0)}\to 0, \quad {\mathrm as\ }\ve\to 0,
\end{equation}
because $\hat{h}\in C^{k-2,\alpha}(X\times[0,1],g_0)$.
We then wish to perturb $\hat{\vp}$ to another function $\vp\in U\subset C^{k,\alpha}(X\times[0,1],g_0)$ which solves
\begin{equation}\label{have}
\left\{
                \begin{aligned}
                  &\frac{\de}{\de t}\vp(t)=E(\vp)(t)+h_\ve(t)\\
                  &\vp(0)=0,
                \end{aligned}
              \right.
\end{equation}
on $X\times[0,1]$, for some small $\ve>0$,
because if we can do this then $\vp$ solves \eqref{want} on $X\times[0,\ve)$ since $h_\ve(t)=0$ for $0\leq t\leq \ve$. This is a standard application of the Inverse Function Theorem in Banach spaces together with the theory of linear parabolic PDEs. Indeed consider the operator
$$\mathcal{E}:U\to C^{k-2,\alpha}(X\times[0,1],g_0)\times C^{k,\alpha}(X,g_0),$$
$$\mathcal{E}(\psi)=\left(\frac{\de}{\de t}\psi - E(\psi), \psi(0)\right).$$
Then $\mathcal{E}$ defines a Fr\'echet differentiable map between Banach spaces, and its Gateaux derivative at $\psi\in U$ in the direction $\eta\in C^{k,\alpha}(X\times[0,1],g_0)=T_{\psi}U$ is given by
\begin{equation}\label{linear}
D_{\psi}\mathcal{E}(\eta)=\left(\frac{\de}{\de t}\eta-D_\psi E(\eta),\eta(0)\right),
\end{equation}
where $D_\psi E(\eta)$ is given by
$$D_\psi E(\eta)=\frac{\de}{\de s}\bigg|_{s=0} E(\psi+s\eta)=\tr{\hat{\omega}_t+\ddbar\psi(t)}{(\ddbar\eta(t))}=\Delta_{\hat{\omega}_t+\ddbar\psi(t)}\eta(t),$$
for all $t\in [0,1]$.
Given any point $(h,\eta_0)\in C^{k-2,\alpha}(X\times[0,1],g_0)\times C^{k,\alpha}(X,g_0)$, the condition that
$D_\psi \mathcal{E}(\eta)=(h,\eta_0)$ is equivalent to the linear parabolic PDE
\begin{equation}\label{linear2}
\left\{
                \begin{aligned}
                  &\frac{\de}{\de t}\eta(t)=\Delta_{\hat{\omega}_t+\ddbar\psi(t)}\eta(t)+h(t)\\
                  &\eta(0)=\eta_0,
                \end{aligned}
              \right.
\end{equation}
for $t\in [0,1]$.
It follows that the map
$$D_{\psi}\mathcal{E}:C^{k,\alpha}(X\times[0,1],g_0)\to C^{k-2,\alpha}(X\times[0,1],g_0)\times C^{k,\alpha}(X,g_0),$$
is an isomorphism of Banach spaces thanks to the existence, uniqueness and continuous dependence on the initial data for the linear parabolic PDE \eqref{linear2} (see e.g. \cite[Chapter 8]{Kr}).
The Inverse Function Theorem in Banach spaces then implies that $\mathcal{E}$ is a local isomorphism, near any point in $U$. Since our function $\hat{\vp}$ solves
\begin{equation}\label{have2}
\left\{
                \begin{aligned}
                  &\frac{\de}{\de t}\hat{\vp}(t)=E(\hat{\vp})(t)+\hat{h}(t)\\
                  &\hat{\vp}(0)=0,
                \end{aligned}
              \right.
\end{equation}
on $X\times[0,1]$, and recalling \eqref{norm}, we see that there exists $\ve>0$ small enough and $\vp\in U$ solving \eqref{want}, as desired.
\end{proof}

\subsection{A priori estimates and completion of proof of Theorem \ref{tz}}
Thanks to Theorem \ref{short} we now have a solution $\omega(t)$ of \eqref{krf} for some short time $[0,\ve), \ve>0$. We may take then the largest possible $\ve$, and call it $T_{{\rm max}}$, which satisfies $0<T_{{\rm max}}\leq\infty$, and depends only on $\omega_0$. Recall that to prove Theorem \ref{tz} we have to show that in fact we have a solution on $[0,T)$ where $T$ is given by \eqref{coho}, and that earlier we have fixed $0<T'<T$. If we have that $T_{{\rm max}}\geq T'$ then we are done, since $T'<T$ is arbitrary, so the goal is to show that if $T_{{\rm max}}<T'$ (in particular, $T_{{\rm max}}<\infty$) then we obtain a contradiction.

The key to deriving the contradiction are the following {\rm a priori} estimates.
\begin{theorem}\label{estimates}
For every $k\geq 0$ there is a constant $C_k$, which depends only on $k,\omega_0$, such that
\begin{equation}\label{est1}
\|\vp(t)\|_{C^k(X,g_0)}\leq C_k,
\end{equation}
\begin{equation}\label{est2}
\omega(t)\geq C_0^{-1}\omega_0,
\end{equation}
for all $t\in [0,T_{{\rm max}})$.
\end{theorem}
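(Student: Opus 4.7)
The plan is to establish the standard hierarchy of parabolic a priori estimates for \eqref{ma}, namely $C^0(\vp) \Rightarrow C^0(\dot\vp) \Rightarrow C^2(\vp) \Rightarrow C^k(\vp)$ for every $k$, and to extract \eqref{est2} from the combined $C^0$- and $C^2$-bounds. All constants are permitted to depend on the fixed auxiliary data $T'$, $\eta$, $\hat\omega_t$, $\Omega$ already chosen in the proof.

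For the $C^0$-bound on $\vp$ I would apply the maximum principle to $\vp \mp At$. At an interior maximum of $\vp-At$ one has $\omega(t)\leq \hat\omega_t$ and hence $\dot\vp\leq\log(\hat\omega_t^n/\Omega)$, which contradicts $\de_t(\vp-At)\geq 0$ once $A$ exceeds $\sup_{X\times[0,T']}|\log(\hat\omega_t^n/\Omega)|$; the lower bound is symmetric. For the $C^0$-bound on $\dot\vp$, a short computation gives
\[
(\de_t-\Delta_{\omega(t)})\dot\vp = \tr{\omega(t)}{\chi}, \qquad (\de_t-\Delta_{\omega(t)})\vp = \dot\vp - n + \tr{\omega(t)}{\hat\omega_t}.
\]
The upper bound then follows from the maximum principle applied to $\dot\vp-A\vp$ with $A$ so large that $A\hat\omega_t+\chi\geq 0$: at an interior maximum one gets $\dot\vp\leq n$, hence $\dot\vp\leq C$ after inserting the $C^0$-bound on $\vp$. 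The slick quantity for the lower bound is $Q=(T'-t)\dot\vp + \vp + nt$, which a direct calculation shows satisfies $(\de_t-\Delta_{\omega(t)})Q = \tr{\omega(t)}{\eta}>0$ since $\eta=\hat\omega_{T'}$ is K\"ahler; a minimum of $Q$ must therefore occur on $\{t=0\}$, where $Q=T'\log(\omega_0^n/\Omega)\geq -C$, and since $T'-t\geq T'-\Tmax>0$ on our interval this rearranges to $\dot\vp\geq -C'$.

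The main step is the Laplacian bound, obtained from the Aubin--Yau parabolic Schwarz lemma
\[
(\de_t-\Delta_{\omega(t)})\log\tr{\omega_0}{\omega(t)} \leq C_0\, \tr{\omega(t)}{\omega_0},
\]
where $C_0$ depends only on a lower bound for the bisectional curvature of $\omega_0$. Applying the maximum principle to $\log\tr{\omega_0}{\omega(t)} - A\vp$ for $A$ chosen large (and using $\hat\omega_t\geq c\omega_0$ on $[0,T']$ so that the negative term $-A\tr{\omega(t)}{\hat\omega_t}$ absorbs $C_0\tr{\omega(t)}{\omega_0}$) yields $\tr{\omega(t)}{\omega_0}\leq C$ at the maximum. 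Combining this with the determinant identity $\omega(t)^n = e^{\dot\vp}\Omega$ and the two-sided bound on $\dot\vp$, an elementary eigenvalue argument bounds $\tr{\omega_0}{\omega(t)}$ from above and simultaneously produces the lower bound $\omega(t)\geq C_0^{-1}\omega_0$ claimed in \eqref{est2}.

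Once $c\omega_0\leq\omega(t)\leq C\omega_0$, the equation \eqref{ma} is uniformly parabolic. The complex parabolic Evans--Krylov theorem then upgrades the $C^{1,1}$-bound on $\vp$ to a uniform $C^{2,\alpha}$-bound, after which differentiating \eqref{ma} in space and time and iteratively invoking parabolic Schauder theory yields \eqref{est1} for every $k$. I expect the technical core to be the Schwarz lemma computation in the $C^2$ step: the only delicate algebraic point is arranging the signs so that the otherwise dangerous term $C_0\tr{\omega(t)}{\omega_0}$ is absorbed, while the Evans--Krylov promotion functions as a standard but non-trivial black-box of linear/nonlinear parabolic PDE theory.
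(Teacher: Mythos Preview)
Your proposal is correct and follows the same overall architecture as the paper ($C^0(\vp)\Rightarrow C^0(\dot\vp)\Rightarrow$ trace bound $\Rightarrow$ two-sided metric equivalence $\Rightarrow$ higher order). There are a few minor differences worth noting. For the upper bound on $\dot\vp$ the paper takes a different route: it derives the scalar curvature lower bound $R\geq -C$ from $(\de_t-\Delta)R\geq 0$ and then integrates $\ddot\vp=-R$ in time. Your approach via the maximum principle on $\dot\vp-A\vp$ is equally valid and arguably more elementary, but the sign condition you need is $A\hat\omega_t\geq\chi$ (so that $\tr{\omega(t)}{(\chi-A\hat\omega_t)}\leq 0$ at the maximum), not $A\hat\omega_t+\chi\geq 0$ as written. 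For the $C^2$ step the paper uses the auxiliary quantity $t\dot\vp-\vp-nt$, which satisfies $(\de_t-\Delta)(t\dot\vp-\vp-nt)=-\tr{\omega(t)}{\omega_0}$ exactly, rather than your $-A\vp$; both work, yours relying on the uniform lower bound $\hat\omega_t\geq c\,\omega_0$ on $[0,T']$. For the higher-order estimates the paper packages Evans--Krylov plus bootstrapping into a citation of the local parabolic estimates of Sherman--Weinkove, and then converts $C^k$ bounds on $\omega(t)$ into $C^k$ bounds on $\vp(t)$ via \emph{elliptic} Schauder for $\Delta_{\omega_0}\vp=\tr{\omega_0}{\omega(t)}-\tr{\omega_0}{\hat\omega_t}$; your direct parabolic Evans--Krylov plus Schauder sketch is the standard content underlying that black box.
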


Indeed, assuming Theorem \ref{estimates} we can now complete the proof of Theorem \ref{tz}.

\begin{proof}[Proof of Theorem \ref{tz}]
Observe that the flow equation \eqref{ma} together with \eqref{est1}, \eqref{est2} implies that
\begin{equation}\label{est3}
\left\|\frac{\de^\ell}{\de t^\ell}\vp(t)\right\|_{C^k(X,g_0)}\leq C_{k,\ell},
\end{equation}
for all $k,\ell\geq 0$ and for some uniform constants $C_{k,\ell}$.

The Ascoli-Arzel\`a Theorem implies that for every $k\geq 0$ the embedding $C^{k+1}(X,g_0)\hookrightarrow C^k(X,g_0)$ is compact. Therefore the bounds \eqref{est1}, together with a diagonal argument, show that given any sequence $t_j\to T_{{\rm max}}$ there exists a subsequence $t_{j_k}$ and a smooth function $\vp_{T_{{\rm max}}}$ such that $\vp(t_{j_k})$ converges to $\vp_{T_{{\rm max}}}$ in $C^\ell(X,g_0)$ for all $\ell\geq 0$ (at this point the function $\vp_{T_{{\rm max}}}$ may depend on the chosen sequence). Now \eqref{est3} in particular implies that $\sup_X |\dot{\vp}(t)|\leq C$ for all $t\in [0,T_{{\rm max}})$, for some constant $C$ which depends only on the initial data, and so
\begin{equation}\label{time}
\frac{\de}{\de t}(\vp(t)-Ct)\leq 0,
\end{equation}
on $X\times [0,T_{{\rm max}})$. The functions $\vp(t)-Ct$ are therefore nonincreasing in $t$ and uniformly bounded below (by \eqref{est1} and the fact that $T_{{\rm max}}<\infty$), and so they have a unique pointwise limit as $t\to T_{{\rm max}}$, which is necessarily equal to $\vp_{T_{{\rm max}}}$ since this is the $C^\ell$ (in particular uniform) limit of the sequence $\vp(t_{j_k})$. Therefore the limit $\vp_{T_{{\rm max}}}$ is unique, and an elementary argument implies that $\vp(t)\to \vp_{T_{{\rm max}}}$ as $t\to T_{{\rm max}}$ in $C^\ell(X,g_0)$ for all $\ell\geq 0$.
Indeed, if this was not the case then we could find a sequence $t_j\to T_{{\rm max}}$ and an $\ell\geq 0$ such that the functions $\vp(t_j)$ do not converge to $\vp_{T_{{\rm max}}}$ in $C^\ell(X,g_0)$, but we have shown that we can then extract a subsequence $t_{j_k}$ so that $\vp(t_{j_k})$ does converge to $\vp_{T_{{\rm max}}}$ in $C^\ell(X,g_0)$, a contradiction.

Therefore the metrics $\omega(t)=\hat{\omega}_t+\ddbar\vp(t)$ converge smoothly to the $(1,1)$ form $\omega(T_{{\rm max}})=\hat{\omega}_{T_{{\rm max}}}+\ddbar \vp_{T_{{\rm max}}}$, which is positive definite (i.e. a K\"ahler metric) thanks to \eqref{est2}.

We can then use Theorem \ref{short} to solve the parabolic complex Monge-Amp\`ere equation
\begin{equation}\label{maafter}
\left\{
                \begin{aligned}
                  &\frac{\de}{\de t}\vp(t)=\log\frac{(\hat{\omega}_t+\ddbar\vp(t))^n}{\Omega}\\
                  &\vp(T_{{\rm max}})=\vp_{T_{{\rm max}}}\\
                  &\hat{\omega}_t+\ddbar\vp(t)>0,
                \end{aligned}
              \right.
\end{equation}
for $t\in [T_{{\rm max}},T_{{\rm max}}+\ve)$, and for some $\ve>0$ (note that in that proof we had the initial value of $\vp$ equal to zero, while now it is $\vp_{T_{{\rm max}}}$, but the proof there works for this case as well). Therefore $\omega(t):=\hat{\omega}_t+\ddbar\vp(t)$ for $t\in [T_{{\rm max}},T_{{\rm max}}+\ve)$ defines a solution of \eqref{krf} on this time interval, with initial metric equal to $\omega(T_{{\rm max}})$

Lastly, we remark that \eqref{est3} together with a similar argument as before (using Ascoli-Arzel\`a, a diagonal argument, and the analog of \eqref{time} to show uniqueness of the limit) shows that for every $\ell\geq 0$ we have that as $t\to T_{{\rm max}}$ the function
$\frac{\de^\ell}{\de t^\ell}\vp(t)$ converges smoothly to the same function that one gets from differentiating \eqref{maafter} and setting $t=T_{{\rm max}}$. This means that if we define $\vp(t)$ for all $t\in [0,T_{{\rm max}}+\ve)$ by piecing together the flow \eqref{ma} on $[0,T_{{\rm max}})$ together with the flow \eqref{maafter} for $t\in [T_{{\rm max}},T_{{\rm max}}+\ve)$, then the resulting function $\vp(t)$ is smooth in all variables, and gives a solution of the K\"ahler-Ricci flow \eqref{ma} on $[0,T_{{\rm max}}+\ve)$. This is a contradiction to the maximality of $T_{{\rm max}}$.
\end{proof}

We now start the proof of the {\rm a priori} estimates in Theorem \ref{estimates}. First, we prove \eqref{est1} for $k=0$.

Here and in the following, we denote by $C$ a generic positive constant which is allowed to depend only on the initial metric $\omega_0$, and may change from line to line. All such constants $C$ can in principle be made completely explicit.

\begin{lemma}\label{c01}
There is a constant $C>0$, which depends only on $\omega_0$, such that
\begin{equation}\label{c00}
\sup_X |\vp(t)|\leq C,
\end{equation}
for all $t\in [0,T_{{\rm max}}).$
\end{lemma}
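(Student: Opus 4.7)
The plan is to apply the parabolic maximum principle, but with a key preparatory trick: since the flow equation only controls $\dot\varphi$ at a space-time extremum of $\varphi$ (not $\varphi$ itself), one must first subtract off a linear-in-$t$ drift. We work on the fixed interval $[0,T_{{\rm max}})\subset [0,T']$, where $T'$ and the associated data $\eta,\chi,\hat\omega_t,\Omega$ were fixed above, so all constants below depend only on $\omega_0$ (through this fixed auxiliary data).

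First, since $\hat\omega_t$ is a smooth family of K\"ahler metrics on the compact set $X\times[0,T']$ and $\Omega$ is a smooth positive volume form, the quantity $\log(\hat\omega_t^n/\Omega)$ is uniformly bounded. Set
\[
A=\sup_{X\times[0,T']}\log\frac{\hat\omega_t^n}{\Omega},\qquad B=\inf_{X\times[0,T']}\log\frac{\hat\omega_t^n}{\Omega}.
\]

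For the upper bound, consider $\psi(x,t)=\varphi(x,t)-At$ on $X\times[0,T_{{\rm max}})$. Fix an arbitrary $0<T''<T_{{\rm max}}$ and let $(x_0,t_0)$ achieve the maximum of $\psi$ on $X\times[0,T'']$. If $t_0>0$ then at this point $\ddbar\varphi\leq 0$ (so $(\hat\omega_{t_0}+\ddbar\varphi)^n\leq\hat\omega_{t_0}^n$) and $\partial_t\psi\geq 0$, so by \eqref{ma}
\[
0\leq \dot\varphi(x_0,t_0)-A=\log\frac{(\hat\omega_{t_0}+\ddbar\varphi)^n}{\Omega}-A\leq\log\frac{\hat\omega_{t_0}^n}{\Omega}-A\leq 0,
\]
which forces equality throughout; in any case the maximum must be attained at $t_0=0$, where $\psi\equiv 0$. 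Hence $\varphi(t)\leq At\leq AT'$.

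For the lower bound, the symmetric argument applied to $\psi(x,t)=\varphi(x,t)-Bt$ at its minimum (where $\ddbar\varphi\geq 0$ forces $(\hat\omega_t+\ddbar\varphi)^n\geq\hat\omega_t^n$ and $\partial_t\psi\leq 0$) shows that the minimum is attained at $t=0$, giving $\varphi(t)\geq Bt\geq BT'$. Combining the two yields $\sup_X|\varphi(t)|\leq \max(|A|,|B|)T'=:C$ for all $t\in[0,T_{{\rm max}})$, as desired. There is no serious obstacle here; the only point worth emphasizing is that one should not try to run the maximum principle directly on $\varphi$, since the flow equation would then only produce an inequality involving $\log(\hat\omega_t^n/\Omega)$ rather than $\varphi$ itself.
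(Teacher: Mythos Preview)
Your approach is essentially identical to the paper's: subtract a linear-in-$t$ term and apply the maximum principle. There is one small technical slip. With your choice $A=\sup_{X\times[0,T']}\log(\hat\omega_t^n/\Omega)$, at an interior maximum you obtain the non-strict chain $0\le\cdots\le 0$, which is not a contradiction, so the assertion ``in any case the maximum must be attained at $t_0=0$'' is not justified as written. The paper fixes this by taking $A=1+\sup_{X\times[0,T']}\log(\hat\omega_t^n/\Omega)$, which yields $0\le -1$ at any interior maximum and hence a genuine contradiction; equivalently you could use $A+\varepsilon$ and let $\varepsilon\to 0$. Also, the final constant $\max(|A|,|B|)T'$ tacitly assumes $A\ge 0$ and $B\le 0$; in general you get $\varphi(t)\le \max(0,AT')$ and $\varphi(t)\ge \min(0,BT')$. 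These are cosmetic points; the argument is otherwise correct.
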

\begin{proof}
Let $\ti{\vp}(t)=\vp(t)-At$, for some constant $A>0$ to be determined. We have
$$\frac{\de}{\de t}\ti{\vp}(t)=\log\frac{(\hat{\omega}_t+\ddbar\ti{\vp}(t))^n}{\Omega}-A,$$
for $t\in[0,T_{{\rm max}})$. Fix any $0<\tau<T_{{\rm max}}$ and let the maximum of $\ti{\vp}(t)$ on $X\times[0,\tau]$ be achieved at $(x,t)$. If $t>0$ then at $(x,t)$ we have
$$0\leq \frac{\de}{\de t}\ti{\vp}(t)=\log\frac{(\hat{\omega}_t+\ddbar\ti{\vp}(t))^n}{\Omega}-A\leq \log\frac{\hat{\omega}_t^n}{\Omega}-A,$$
using that
$$0<\hat{\omega}_t+\ddbar\ti{\vp}(t)\leq\hat{\omega}_t,$$
at $(x,t)$. But recall that $\hat{\omega}_t$ are K\"ahler metrics for all $t\in[0,T_{{\rm max}}],$ which vary smoothly in $t$, and so
$$A=1+\sup_{X\times[0,T_{{\rm max}}]}\log\frac{\hat{\omega}_t^n}{\Omega},$$
is a finite, uniform constant, and with this choice of $A$ we obtain a contradiction. Therefore we must have that the maximum of $\ti{\vp}(t)$ is achieved at $t=0$, where this function is zero. This shows that
$$\sup_X \vp(t)\leq At\leq AT_{{\rm max}},$$
for all $t\in [0,T_{{\rm max}}),$ which gives half of the estimate \eqref{c00}.

For the other half, one looks at the function $\vp(t)+Bt$,
where
$$B=1-\inf_{X\times[0,T_{{\rm max}}]}\log\frac{\hat{\omega}_t^n}{\Omega},$$
and argues similarly.
\end{proof}

Having given all the details on how to apply the maximum principle in this case, from now on we will be more brief on this point (in particular, when applying the maximum principle
we will always restrict to a compact time subinterval without mention).

\begin{lemma}\label{c002}
There is a constant $C>0$, which depends only on $\omega_0$, such that
\begin{equation}\label{c000}
\sup_X |\dot{\vp}(t)|\leq C,
\end{equation}
for all $t\in [0,T_{{\rm max}}).$
\end{lemma}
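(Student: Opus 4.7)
The plan is to derive a linear parabolic equation for $\dot\vp$ and apply the maximum/minimum principle to carefully modified test quantities. First I would differentiate the Monge-Amp\`ere equation in \eqref{ma} in time. Since $\dot\vp = \log(\omega(t)^n/\Omega)$, using $\partial_t \omega(t) = \chi + \ddbar \dot\vp$ gives
\[
(\partial_t - \Delta_{\omega(t)}) \dot\vp = \tr{\omega(t)}{\chi}.
\]
The basic auxiliary identity I will use repeatedly is $(\partial_t-\Delta_{\omega(t)})\vp = \dot\vp - n + \tr{\omega(t)}{\hat\omega_t}$, which follows from $\Delta_{\omega(t)}\vp = n - \tr{\omega(t)}{\hat\omega_t}$. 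The $C^0$ bound on $\vp$ from Lemma \ref{c01} is available, and the key will be to combine it with a well-chosen linear combination of $\dot\vp$ and $\vp$.

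For the lower bound, I would introduce $Q_- = (T'-t)\dot\vp + \vp + nt$. A direct calculation yields
\[
(\partial_t - \Delta_{\omega(t)}) Q_- = (T'-t)\tr{\omega(t)}{\chi} + \tr{\omega(t)}{\hat\omega_t} = \tr{\omega(t)}{\bigl((T'-t)\chi + \hat\omega_t\bigr)} = \tr{\omega(t)}{\eta},
\]
where the last step uses $(T'-t)\chi + \hat\omega_t = T'\chi + \omega_0 = \eta$. Since $\tr{\omega(t)}{\eta} > 0$, no interior minimum of $Q_-$ on $X \times [0,\tau]$ (for $\tau < T_{\max}$) can occur at $t>0$, so the minimum is attained at $t=0$, where $Q_- = T' \log(\omega_0^n/\Omega)$ is bounded below. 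Using $T' - t \geq T' - T_{\max} > 0$ (as we are working under the hypothesis $T_{\max} < T'$) together with Lemma \ref{c01} then gives $\dot\vp \geq -C$.

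For the upper bound, fix $A > 0$ large enough that $A\hat\omega_t - \chi > 0$ uniformly for $t \in [0,T']$; this is possible because $\hat\omega_t$ is a smooth family of K\"ahler metrics on the compact set $X \times [0,T']$. Set $Q_+ = \dot\vp - A\vp$. Then
\[
(\partial_t - \Delta_{\omega(t)})Q_+ = \tr{\omega(t)}{(\chi - A\hat\omega_t)} - A\dot\vp + An,
\]
in which the trace term is strictly negative. At an interior maximum of $Q_+$ the left side is nonnegative, so $-A\dot\vp + An \geq -\tr{\omega(t)}{(\chi-A\hat\omega_t)} > 0$, forcing $\dot\vp < n$ at that point. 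Combined with Lemma \ref{c01} this bounds $Q_+$ above at any interior maximum, and the value at $t=0$ is $\log(\omega_0^n/\Omega)$, also bounded above. Hence $Q_+ \leq C$ everywhere, which together with the $C^0$ bound on $\vp$ gives $\dot\vp \leq C$.

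The only real obstacle is guessing the correct test quantities $Q_\pm$; once they are in hand, all the ingredients are the identity $(T'-t)\chi + \hat\omega_t = \eta$, the smoothness/positivity of $\hat\omega_t$ on $[0,T']$, and the maximum principle, each of which is routine. No new estimates beyond the $C^0$ control of $\vp$ are required.
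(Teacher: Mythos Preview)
Your proof is correct. The lower bound argument via $Q_-=(T'-t)\dot\vp+\vp+nt$ is exactly the paper's computation \eqref{get2}, noting $\hat\omega_{T'}=\eta$. For the upper bound you take a genuinely different route: the paper instead differentiates once more to get $\partial_t\dot\vp=-R(t)$, proves the scalar curvature lower bound $R(t)\ge -C$ via the parabolic inequality $(\partial_t-\Delta)R\ge 0$, and then integrates in time. Your argument with $Q_+=\dot\vp-A\vp$ is more direct and avoids introducing the curvature evolution, but the paper's approach yields the bound \eqref{scal} as a byproduct, which is reused later (e.g.\ in \eqref{upd} and in the proof of Proposition~\ref{equiv}). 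Either way the estimate goes through.
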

\begin{proof}
We compute
$$\left(\frac{\de}{\de t}-\Delta\right)\vp(t)=\dot{\vp}(t)-\tr{\omega(t)}{(\omega(t)-\hat{\omega}_t)}=\dot{\vp}(t)-n+\tr{\omega(t)}{\hat{\omega}_t},$$
$$\left(\frac{\de}{\de t}-\Delta\right)\dot{\vp}(t)=\frac{n\omega(t)^{n-1}\wedge(\chi+\ddbar\dot{\vp}(t))}{\omega(t)^n}-\Delta\dot{\vp}(t)=\tr{\omega(t)}{\chi},$$
where here and from now on we will always write $\Delta=\Delta_{\omega(t)}$.
Combining these, we obtain the useful equations
\begin{equation}\label{get}
\left(\frac{\de}{\de t}-\Delta\right)(t\dot{\vp}(t)-\vp(t)-nt)=\tr{\omega(t)}{(t\chi-\hat{\omega}_t)}=-\tr{\omega(t)}{\omega_0}<0,
\end{equation}
\begin{equation}\label{get2}
\left(\frac{\de}{\de t}-\Delta\right)((T'-t)\dot{\vp}(t)+\vp(t)+nt)=\tr{\omega(t)}{((T'-t)\chi+\hat{\omega}_t)}=\tr{\omega(t)}{\hat{\omega}_{T'}}>0.
\end{equation}
We won't need \eqref{get} right now, but we record it here for later use.
The maximum principle applied to \eqref{get2} gives that the minimum of $(T'-t)\dot{\vp}(t)+\vp(t)+nt$ is achieved at $t=0$, and so
$$(T'-t)\dot{\vp}(t)+\vp(t)+nt\geq T'\dot{\vp}(0)\geq T'\inf_X \log\frac{\omega_0^n}{\Omega}\geq -C,$$
and since $T'-t\geq T'-T_{{\rm max}}>0$, this implies that
$$\inf_X \dot{\vp}(t)\geq -C,$$
for all $t\in[0,T_{{\rm max}})$, using Lemma \ref{c01}. For the upper bound on $\dot{\vp}(t)$, we observe that
$$\frac{\de}{\de t}\dot{\vp}(t)=\frac{n\omega(t)^{n-1}\wedge(\frac{\de}{\de t}\omega(t))}{\omega(t)^n}=\tr{\omega(t)}{(-\Ric(\omega(t)))}=-R(t),$$
and since locally
$$R(t)=g^{i\ov{j}}R_{i\ov{j}}=-g^{i\ov{j}}\de_i\de_{\ov{j}}\log\det (g_{k\ov{\ell}}),$$
we obtain
$$\frac{\de}{\de t}R(t)=g^{i\ov{q}}g^{p\ov{j}}R_{p\ov{q}}R_{i\ov{j}}-g^{i\ov{j}}\de_i\de_{\ov{j}}\left(g^{p\ov{q}}\frac{\de}{\de t}g_{p\ov{q}}\right)=
|\Ric(\omega(t))|^2_{\omega(t)}+\Delta R(t),$$
and so $\left(\frac{\de}{\de t}-\Delta\right)R(t)\geq 0$, and the minimum principle implies that
\begin{equation}\label{scal}
\inf_X R(t)\geq \inf_XR(0)\geq -C,
\end{equation}
for all $t\in[0,T_{{\rm max}})$. Since $T_{{\rm max}}<\infty$, we can integrate this bound in $t$ and obtain $\sup_X \dot{\vp}(t)\leq C$ for all $t\in[0,T_{{\rm max}})$.
\end{proof}

\begin{theorem}\label{c22}
There is a constant $C>0$, which depends only on $\omega_0$, such that
\begin{equation}\label{c202}
\sup_X \tr{\omega_0}{\omega(t)}\leq C,
\end{equation}
for all $t\in [0,T_{{\rm max}}).$
\end{theorem}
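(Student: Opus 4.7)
The plan is to establish a parabolic Schwarz-type (or Aubin-Yau type) inequality for $\tr{\omega_0}{\omega(t)}$ and then use an auxiliary function to apply the maximum principle. First I would compute the evolution of $\log \tr{\omega_0}{\omega(t)}$ under the flow. A direct calculation in normal coordinates for $\omega_0$ (and diagonalizing $\omega(t)$ with respect to $\omega_0$) gives a bound of the form
\begin{equation*}
\left(\frac{\partial}{\partial t}-\Delta\right)\log\tr{\omega_0}{\omega(t)} \le C_0\,\tr{\omega(t)}{\omega_0},
\end{equation*}
where $C_0$ depends only on an upper bound for the bisectional curvature of $\omega_0$. This is the standard Chern-Lu-type inequality: the term $-\tr{\omega_0}{\Ric(\omega(t))}$ from $\partial_t$ is cancelled against the ``good'' positive terms coming from $-\Delta\log\tr{\omega_0}{\omega(t)}$, leaving only curvature terms controlled by $C_0\,\tr{\omega(t)}{\omega_0}$.

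Next, I would form the quantity $Q = \log\tr{\omega_0}{\omega(t)} - A\vp(t)$ for a large constant $A$ to be chosen. Because $T_{{\rm max}}<T'$, we have the uniform bound $\hat{\omega}_t \ge c\,\omega_0$ on $X\times[0,T_{{\rm max}}]$ for some $c>0$ (indeed $\hat{\omega}_t = \frac{1}{T'}((T'-t)\omega_0+t\eta)$, so one may take $c=(T'-T_{{\rm max}})/T'$). Using the formula $(\partial_t-\Delta)\vp(t)=\dot{\vp}(t)-n+\tr{\omega(t)}{\hat{\omega}_t}$ from the proof of Lemma \ref{c002}, together with the $C^0$ bounds on $\vp$ and $\dot{\vp}$ from Lemmas \ref{c01} and \ref{c002}, I obtain
\begin{equation*}
\left(\frac{\partial}{\partial t}-\Delta\right)Q \le (C_0 - Ac)\,\tr{\omega(t)}{\omega_0} + C_1,
\end{equation*}
for a uniform $C_1$. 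Choosing $A$ so large that $Ac \ge C_0+1$, the right-hand side is at most $-\tr{\omega(t)}{\omega_0}+C_1$.

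Now apply the maximum principle to $Q$ on $X\times[0,\tau]$ for any $\tau<T_{{\rm max}}$. If the max is attained at $t=0$, then $Q \le \log\tr{\omega_0}{\omega_0} - 0 \le \log n$. Otherwise, at the max point $(x,t)$ one has $(\partial_t-\Delta)Q \ge 0$, which forces $\tr{\omega(t)}{\omega_0}(x,t) \le C_1$. Combined with the bound $\omega(t)^n \le C_2\,\omega_0^n$ coming from $\dot{\vp} = \log(\omega(t)^n/\Omega)$ and Lemma \ref{c002}, the elementary pointwise inequality
\begin{equation*}
\tr{\omega_0}{\omega(t)} \le \frac{1}{(n-1)!}\bigl(\tr{\omega(t)}{\omega_0}\bigr)^{n-1}\,\frac{\omega(t)^n}{\omega_0^n}
\end{equation*}
gives $\tr{\omega_0}{\omega(t)}(x,t)\le C_3$, hence $Q(x,t)\le C_4$. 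Either way $Q\le C_4$ on $X\times[0,T_{{\rm max}})$, and using $|\vp(t)|\le C$ from Lemma \ref{c01} yields the desired bound \eqref{c202}.

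The main obstacle is the first step: the Chern-Lu inequality for $\log\tr{\omega_0}{\omega(t)}$ requires a careful tensorial calculation involving the Chern connections of both $\omega(t)$ and $\omega_0$, and one must be precise about how the bisectional curvature of the fixed metric $\omega_0$ enters, so that the Ricci term $-g_0^{i\bar j}R_{i\bar j}(t)$ arising from $\partial_t\tr{\omega_0}{\omega(t)}$ cancels with a corresponding term in $-\Delta_{\omega(t)}\tr{\omega_0}{\omega(t)}$. Everything after this is a fairly mechanical maximum principle argument exploiting the bounds already established in Lemmas \ref{c01} and \ref{c002}.
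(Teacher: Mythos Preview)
Your proof is correct and follows a well-known variant of the argument. The key parabolic inequality
\[
\left(\frac{\partial}{\partial t}-\Delta\right)\log\tr{\omega_0}{\omega(t)} \le C_0\,\tr{\omega(t)}{\omega_0}
\]
is exactly what the paper establishes (equation \eqref{gettt2}), and the paper does carry out the tensorial calculation in detail, including the verification that the ``extra'' gradient term is nonnegative via the tensor $B_{ki\ov{q}}$.

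The difference lies in the choice of auxiliary function. You subtract $A\vp(t)$ and use the lower bound $\hat{\omega}_t\ge c\,\omega_0$ (valid since $T_{\rm max}<T'$) to produce a term $-Ac\,\tr{\omega(t)}{\omega_0}$, which leaves a bounded remainder $C_1$; at the interior maximum you then need the extra step of bounding $\tr{\omega(t)}{\omega_0}$ and feeding this, together with the volume bound from Lemma~\ref{c002}, through the elementary inequality $\tr{\omega_0}{\omega(t)}\le\frac{1}{(n-1)!}(\tr{\omega(t)}{\omega_0})^{n-1}\,\omega(t)^n/\omega_0^n$ to close the argument. The paper instead adds $C_0(t\dot{\vp}(t)-\vp(t)-nt)$, whose heat operator equals \emph{exactly} $-\tr{\omega(t)}{\omega_0}$ by \eqref{get}, so the combined quantity has nonpositive heat operator and the maximum is forced to $t=0$ with no residual constant and no need for the volume/eigenvalue inequality at the maximum point. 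Your route is slightly longer but entirely standard; the paper's choice of barrier is a bit slicker because the cancellation is exact.
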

\begin{proof}
Calculate
$$\frac{\de}{\de t}\tr{\omega_0}{\omega(t)}=-\tr{\omega_0}{\Ric(\omega(t))},$$
and at a point with local holomorphic normal coordinates for $\omega_0$ where $\omega(t)$ is diagonal, we have
\[\begin{split}
\Delta \tr{\omega_0}{\omega(t)}&=g^{k\ov{\ell}}\de_k\de_{\ov{\ell}}(g_0^{i\ov{j}}g_{i\ov{j}})=g_0^{i\ov{q}}g_0^{p\ov{j}}R^0_{k\ov{\ell}p\ov{q}} g^{k\ov{\ell}}g_{i\ov{j}}+
g_0^{i\ov{j}}g^{k\ov{\ell}}\de_k\de_{\ov{\ell}}g_{i\ov{j}}\\
&=\sum_{i,k=1}^n R^0_{k\ov{k}i\ov{i}}g^{k\ov{k}}g_{i\ov{i}}-g_0^{i\ov{j}}R_{i\ov{j}}+g_0^{i\ov{j}}g^{k\ov{\ell}}g^{p\ov{q}}\de_kg_{i\ov{q}}\de_{\ov{\ell}}g_{p\ov{j}}\\
&=\sum_{i,k=1}^n R^0_{k\ov{k}i\ov{i}}g^{k\ov{k}}g_{i\ov{i}}-\tr{\omega_0}{\Ric(\omega(t))}+g_0^{i\ov{j}}g^{k\ov{\ell}}g^{p\ov{q}}\overset{0}{\nabla}_kg_{i\ov{q}}\overset{0}{\nabla}_{\ov{\ell}}g_{p\ov{j}},
\end{split}\]
where $\overset{0}{\nabla}$ is the covariant derivative of $\omega_0$. Note that at our point we have that
$$R^0_{k\ov{k}i\ov{i}}={\rm Rm}^0(\de_k,\ov{\de_k},\de_i,\ov{\de_i})\geq -C_0,$$
where $-C_0$ is a lower bound for the bisectional curvature of $\omega_0$ among all $\omega_0$-unit vectors (note the vectors $\de_i,\de_k$ are $\omega_0$-orthonormal at our point).
Therefore
\[\begin{split}
\sum_{i,k=1}^n R^0_{k\ov{k}i\ov{i}}g^{k\ov{k}}g_{i\ov{i}}&\geq-C_0\sum_{i,k=1}^n g^{k\ov{k}}g_{i\ov{i}}=-C_0\left(\sum_{k=1}^n  g^{k\ov{k}}\right)\left(\sum_{i=1}^n  g_{i\ov{i}}\right)\\
&=-C_0(\tr{\omega_0}{\omega(t)}) (\tr{\omega(t)}{\omega_0}),
\end{split}\]
and so
\begin{equation}\label{gettt}
\left(\frac{\de}{\de t}-\Delta\right)\tr{\omega_0}{\omega(t)}\leq C_0(\tr{\omega_0}{\omega(t)}) (\tr{\omega(t)}{\omega_0})-g_0^{i\ov{j}}g^{k\ov{\ell}}g^{p\ov{q}}\overset{0}{\nabla}_kg_{i\ov{q}}\overset{0}{\nabla}_{\ov{\ell}}g_{p\ov{j}}.
\end{equation}
It follows that
\[\begin{split}
\left(\frac{\de}{\de t}-\Delta\right)\log\tr{\omega_0}{\omega(t)}&\leq C_0\tr{\omega(t)}{\omega_0}\\
&-\frac{1}{\tr{\omega_0}{\omega(t)}}\left(g_0^{i\ov{j}}g^{k\ov{\ell}}g^{p\ov{q}}\overset{0}{\nabla}_kg_{i\ov{q}}\overset{0}{\nabla}_{\ov{\ell}}g_{p\ov{j}}
-\frac{|\de \tr{\omega_0}{\omega(t)}|^2_{\omega(t)}}{\tr{\omega_0}{\omega(t)}}\right).
\end{split}\]
Surprisingly, the term inside the big bracket is nonnegative,
$$\left(g_0^{i\ov{j}}g^{k\ov{\ell}}g^{p\ov{q}}\overset{0}{\nabla}_kg_{i\ov{q}}\overset{0}{\nabla}_{\ov{\ell}}g_{p\ov{j}}
-\frac{|\de \tr{\omega_0}{\omega(t)}|^2_{\omega(t)}}{\tr{\omega_0}{\omega(t)}}\right)\geq 0,$$
because it is readily verified that it equals the norm squared
$$g_0^{i\ov{j}}g^{k\ov{\ell}}g^{p\ov{q}}B_{ki\ov{q}}\ov{B_{\ell j\ov{p}}}\geq 0,$$
of the tensor $B$ with components
$$B_{ki\ov{q}}=\overset{0}{\nabla}_kg_{i\ov{q}}-\frac{\de_{k}\tr{\omega_0}{\omega(t)}}{\tr{\omega_0}{\omega(t)}}g_{i\ov{q}}.$$
Indeed,
\[\begin{split}
g_0^{i\ov{j}}g^{k\ov{\ell}}g^{p\ov{q}}B_{ki\ov{q}}\ov{B_{\ell j\ov{p}}}&=g_0^{i\ov{j}}g^{k\ov{\ell}}g^{p\ov{q}}\overset{0}{\nabla}_kg_{i\ov{q}}\overset{0}{\nabla}_{\ov{\ell}}g_{p\ov{j}}
+g_0^{i\ov{j}}g^{p\ov{q}}\frac{|\de\tr{\omega_0}{\omega(t)}|^2_{\omega(t)}}{(\tr{\omega_0}{\omega(t)})^2}g_{i\ov{q}}g_{p\ov{j}}\\
&-2\mathrm{Re}\left(g_0^{i\ov{j}}g^{k\ov{\ell}}g^{p\ov{q}}\frac{\de_{k}\tr{\omega_0}{\omega(t)}}{\tr{\omega_0}{\omega(t)}}g_{i\ov{q}}\overset{0}{\nabla}_{\ov{\ell}}g_{p\ov{j}}\right)\\
&=g_0^{i\ov{j}}g^{k\ov{\ell}}g^{p\ov{q}}\overset{0}{\nabla}_kg_{i\ov{q}}\overset{0}{\nabla}_{\ov{\ell}}g_{p\ov{j}}
+\frac{|\de\tr{\omega_0}{\omega(t)}|^2_{\omega(t)}}{\tr{\omega_0}{\omega(t)}}\\
&-2\mathrm{Re}\left(g_0^{i\ov{j}}g^{k\ov{\ell}}\frac{\de_{k}\tr{\omega_0}{\omega(t)}}{\tr{\omega_0}{\omega(t)}}\overset{0}{\nabla}_{\ov{\ell}}g_{i\ov{j}}\right)\\
&=g_0^{i\ov{j}}g^{k\ov{\ell}}g^{p\ov{q}}\overset{0}{\nabla}_kg_{i\ov{q}}\overset{0}{\nabla}_{\ov{\ell}}g_{p\ov{j}}
-\frac{|\de\tr{\omega_0}{\omega(t)}|^2_{\omega(t)}}{\tr{\omega_0}{\omega(t)}},
\end{split}\]
as claimed, using that $g_0^{i\ov{j}}\overset{0}{\nabla}_{\ov{\ell}}g_{i\ov{j}}=\de_{\ov{\ell}}\tr{\omega_0}{\omega(t)}.$
This gives
\begin{equation}\label{gettt2}
\left(\frac{\de}{\de t}-\Delta\right)\log\tr{\omega_0}{\omega(t)}\leq C_0\tr{\omega(t)}{\omega_0},
\end{equation}
and combining this with \eqref{get} we obtain
$$\left(\frac{\de}{\de t}-\Delta\right)\big(\log\tr{\omega_0}{\omega(t)}+C_0(t\dot{\vp}(t)-\vp(t)-nt)\big)\leq 0,$$
and so the maximum principle implies that this quantity achieves its maximum at $t=0$, and so
$$\log\tr{\omega_0}{\omega(t)}\leq C-C_0(t\dot{\vp}(t)-\vp(t)-nt)\leq C,$$
on $X\times[0,T_{{\rm max}}),$ using Lemmas \ref{c01}, \ref{c002} and the fact that $t\leq T_{{\rm max}}<\infty$.
Exponentiating we obtain \eqref{c202}.
\end{proof}
\begin{corollary}
There is a constant $C>0$, which depends only on $\omega_0$, such that
\begin{equation}\label{c0equiv}
C^{-1}\omega_0\leq \omega(t)\leq C\omega_0,
\end{equation}
for all $t\in [0,T_{{\rm max}}).$
\end{corollary}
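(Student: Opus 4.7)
The plan is to deduce both inequalities by combining Theorem \ref{c22} with the Monge-Amp\`ere equation \eqref{ma} and Lemma \ref{c002}, via an elementary eigenvalue argument at each point.

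First, the upper bound $\omega(t) \leq C\omega_0$ is immediate from Theorem \ref{c22}. At any point $x \in X$, choose local holomorphic coordinates that are normal for $\omega_0$ and simultaneously diagonalize $\omega(t)$, so that $g_{0,i\bar j}(x) = \delta_{ij}$ and $g_{i\bar j}(x) = \lambda_i \delta_{ij}$ with $\lambda_i > 0$. Then $\tr{\omega_0}{\omega(t)} = \sum_i \lambda_i$, so the bound \eqref{c202} gives $\lambda_i \leq C$ for every $i$, which means $\omega(t) \leq C\omega_0$ at $x$. Since $x$ was arbitrary, this yields the right-hand inequality in \eqref{c0equiv}.

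Second, for the lower bound, I would use the Monge-Amp\`ere equation \eqref{ma}, which gives $\omega(t)^n = e^{\dot\vp(t)}\Omega$. By Lemma \ref{c002}, $|\dot\vp(t)|\leq C$, so $C^{-1}\Omega \leq \omega(t)^n \leq C\Omega$. Since $\Omega$ and $\omega_0^n$ are both smooth positive volume forms on the compact manifold $X$, they are uniformly comparable, hence $C^{-1}\omega_0^n \leq \omega(t)^n \leq C\omega_0^n$ on $X \times [0, T_{\max})$, after enlarging $C$.

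Finally, combining the two: at the point $x$ with the same diagonalizing coordinates as above, $\omega(t)^n/\omega_0^n = \prod_i \lambda_i \geq C^{-1}$, while $\lambda_i \leq C$ for each $i$. Therefore for each index $j$,
\[
\lambda_j = \frac{\prod_i \lambda_i}{\prod_{i\neq j}\lambda_i} \geq \frac{C^{-1}}{C^{n-1}} = C^{-n},
\]
so $\omega(t) \geq C^{-n}\omega_0$ at $x$, and since $x$ was arbitrary, the left-hand inequality in \eqref{c0equiv} follows (after replacing $C$ by $C^n$). There is no genuine obstacle here; the only subtlety worth noting is that one must invoke \emph{both} the trace upper bound and the determinant lower bound to control the smallest eigenvalue, since an upper bound on the trace by itself says nothing about how small individual eigenvalues can be.
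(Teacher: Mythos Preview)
Your proof is correct and follows essentially the same approach as the paper's own proof: the upper bound comes directly from the trace estimate \eqref{c202}, and the lower bound is obtained by combining the volume bound $\omega(t)^n \geq C^{-1}\omega_0^n$ (from \eqref{ma} and Lemma \ref{c002}) with the upper bound on eigenvalues via the identity $\lambda_j = \prod_i \lambda_i / \prod_{i\neq j}\lambda_i$.
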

\begin{proof}
The bound $\omega(t)\leq C\omega_0$ follows immediately from \eqref{c202}. For the lower bound, note that the flow equation \eqref{ma} together with Lemma \ref{c002} give
\begin{equation}\label{vol}
C^{-1}\omega_0^n\leq \omega(t)^n\leq C\omega_0^n,
\end{equation}
and if at a point we choose coordinates where $\omega_0$ is the identity and $\omega(t)$ is diagonal with eigenvalues $\lambda_j>0, 1\leq j\leq n$, then \eqref{c202} shows that
$$\lambda_j\leq C,$$ for all $j$, while \eqref{vol} implies
$$\prod_{j=1}^n \lambda_j\geq C^{-1},$$
and so for any $j$ we have
$$\lambda_j=\frac{\prod_{i=1}^n \lambda_i}{\prod_{k\neq j} \lambda_k}\geq C^{-1},$$
which exactly says that $\omega(t)\geq C^{-1}\omega_0$.
\end{proof}
Of course \eqref{c0equiv} implies \eqref{est2}.

While all the arguments so far used the maximum principle, the higher order estimates are in fact purely local. For a proof we refer to \cite{ShW}.
\begin{theorem}\label{higher}
Let $U\subset X$ be a nonempty open set, and $\omega(t)$ solve the K\"ahler-Ricci flow \eqref{krf} on $U\times[0,T)$, for $0<T\leq\infty$, with initial K\"ahler metric $\omega_0$. Assume that there exists a constant $C_0>0$ such that
\begin{equation}\label{estt1}
C_0^{-1}\omega\leq \omega(t)\leq C_0\omega,
\end{equation}
on $U\times [0,T)$, for some K\"ahler metric $\omega$ on $X$. Then given any $K\subset U$ compact, and any $k\geq 1$ there is a constant $C$ which depends only on $K,U,k,\omega_0,\omega$ and $C_0$ such that
\begin{equation}\label{higer}
\|\omega(t)\|_{C^k(K,\omega)}\leq C,
\end{equation}
for all $t\in[0,T)$.
Furthermore, for any given $0<\ve<T$, the estimates \eqref{higer} hold for $t\in [\ve,T)$ with a constant $C$ that depends also on $\ve$ but does not depend on $\omega_0$.
\end{theorem}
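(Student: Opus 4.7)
The plan is to reduce the flow to a local parabolic complex Monge--Amp\`ere equation inside $U$, to prove a local $C^3$-estimate for $g(t)$ by a Calabi--Yau type maximum-principle argument with a spatial cut-off, and then to bootstrap to all orders by parabolic Schauder theory.

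First, cover $K$ by finitely many small coordinate balls $B\Subset U$ on each of which $\omega$ admits a K\"ahler potential. Writing $\omega(t)=\omega+\ddbar\vp(t)$ for a local smooth potential $\vp(t)$, \eqref{krf} reduces on $B$, after integration in $t$, to
\[
\ddt\vp(t)=\log\frac{(\omega+\ddbar\vp(t))^n}{\omega^n}+h,
\qquad \omega+\ddbar\vp(t)>0,
\]
for some fixed smooth function $h$ on $B$. The hypothesis \eqref{estt1} says precisely that this is a uniformly parabolic fully nonlinear equation on $B\times[0,T)$, and integrating $\dot\vp$ in time also gives a local $C^0$-bound on $\vp$ depending only on the allowed data.

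Next I would prove a local $C^3$-bound on $g(t)$ in the Calabi--Yau style. Set
\[
S(t)=g^{i\bar j}(t)g^{k\bar\ell}(t)g_{p\bar q}(t)\,\overset{0}{\nabla}_i g_{k\bar q}(t)\,\overline{\overset{0}{\nabla}_j g_{\ell\bar p}(t)},
\]
where $\overset{0}{\nabla}$ is the Chern connection of the reference metric $\omega$. A standard but intricate computation (in which the potentially dangerous fourth-order contributions cancel) yields
\[
\left(\ddt-\Delta\right)S\le -|\overset{0}{\nabla}\,\overset{0}{\nabla} g(t)|^2_{g(t)}-|\db\,\overset{0}{\nabla}g(t)|^2_{g(t)}+C_1 S+C_2,
\]
with $C_1,C_2$ depending only on $\omega$ and $C_0$. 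Fix $\rho\in C^\infty_c(U)$ with $0\le\rho\le1$ and $\rho\equiv1$ in a neighbourhood of $K$, and apply the maximum principle to $Q=\rho^2 S+A\,\tr{\omega}{\omega(t)}$ on $U\times[0,T)$ for $A$ sufficiently large. The negative third-order term in the evolution of $\tr{\omega}{\omega(t)}$ already produced in \eqref{gettt}, together with the two good negative quartic terms above, absorb via Cauchy--Schwarz the extra cross terms coming from $-\Delta(\rho^2)S$ and from the first-order gradient interaction between $\rho^2$ and $S$, yielding a uniform bound $S\le C$ on $K\times[0,T)$.

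Once a local $C^3$-bound on $g(t)$ is known, the coefficients $g^{i\bar j}(t)$ of the linearization of the Monge-Amp\`ere equation are locally $C^{1,\alpha}$, and standard interior Schauder estimates for linear uniformly parabolic equations (e.g.\ \cite[Ch.~8]{Kr}, \cite{Li}) applied to successive holomorphic and antiholomorphic derivatives of the equation give local $C^{k,\alpha}$ bounds for $\vp(t)$ on $K\times[0,T)$ by induction on $k$, which proves \eqref{higer}. For the last assertion, since \eqref{estt1} is assumed on the full interval $[0,T)$, the same argument applied with an additional cut-off in the time variable produces estimates on $[\ve,T)$ whose constants depend on $\ve^{-1}$ but lose all dependence on higher derivatives of $\omega_0$. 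The main technical obstacle is the delicate bookkeeping of the cut-off error terms in the Calabi--Yau third-order inequality, which is exactly the content of \cite{ShW}.
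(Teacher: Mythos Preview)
The paper does not give its own proof of this theorem; it simply states the result and refers to \cite{ShW}. Your outline is essentially a sketch of the Sherman--Weinkove argument referenced there, and you explicitly acknowledge this at the end of your proposal.

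One small caveat: your claim that ``integrating $\dot\vp$ in time also gives a local $C^0$-bound on $\vp$ depending only on the allowed data'' is only clean when $T<\infty$; for $T=\infty$ the potential can grow linearly. This is harmless in practice, since the $C^3$-step works directly with $S$ and $\tr{\omega}{\omega(t)}$ and never uses $\vp$, and the Schauder bootstrap can be run on unit time windows $[t_0,t_0+1]$ (or applied to derivatives of the metric rather than to $\vp$) to obtain constants independent of $t_0$. Apart from this detail, your sketch matches the approach the paper cites.
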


We can now complete the proof of Theorem \ref{estimates}.

\begin{proof}[Proof of Theorem \ref{estimates}]
We have already established \eqref{est2} and \eqref{est1} for $k=0$, so it remains to show \eqref{est1} for $k\geq 1.$ First note that by a simple covering argument, \eqref{c0equiv} together with Theorem \ref{higher} implies that
\begin{equation}\label{higer2}
\|\omega(t)\|_{C^k(X,\omega_0)}\leq C_k,
\end{equation}
for all $t\in[0,T_{{\rm max}})$, and all $k\geq 1$, where $C_k$ is a uniform constant. But we have
$$\ddbar\vp(t)=\omega(t)-\hat{\omega}_t,$$
and $\hat{\omega}_t$ is a smoothly varying family of K\"ahler metrics for all $t\in [0,T_{{\rm max}}]$,
and so
$$\Delta_{\omega_0} \vp(t)=\tr{\omega_0}{\omega(t)}-\tr{\omega_0}{\hat{\omega}_t},$$
where the function on the right-hand-side is uniformly bounded in $C^k(X,\omega_0)$ for all $k\geq 0$ thanks to \eqref{c0equiv} and \eqref{higer2}.
But for any fixed $0<\alpha<1$ we have the elliptic estimates (see e.g. \cite{Kr})
\[\begin{split}
\|\vp(t)\|_{C^k(X,g_0)}&\leq \|\vp(t)\|_{C^{k,\alpha}(X,g_0)}\leq C_k(\|\Delta_{\omega_0} \vp(t)\|_{C^{k-2,\alpha}(X,g_0)}+\|\vp(t)\|_{C^0(X)})\\
&\leq C_k(\|\Delta_{\omega_0} \vp(t)\|_{C^{k-1}(X,g_0)}+\|\vp(t)\|_{C^0(X)}),
\end{split}\]
for all $k\geq 2$, and so (using Lemma \ref{c01}) we obtain \eqref{est1}.
\end{proof}

\subsection{Examples of calculations of $T$}
First, we look at the case when $n=1$, so $X$ is a compact Riemann surface. It is well-known that $X$ is diffeomorphic to a surface $\Sigma_g$ of genus $g$, for some $g\geq 0$.
Since $H^2(X,\mathbb{R})=\mathbb{R}$, it follows that $H^{1,1}(X,\mathbb{R})=\mathbb{R}$ as well.

\begin{example}
If $g=0$, so $X$ is diffeomorphic to $S^2$, then the uniformization theorem implies that $X$ is in fact biholomorphic to $\mathbb{CP}^1$, so $\mathcal{C}_X$ is generated by $[\omega_{\rm FS}]$ where $\omega_{\rm FS}$ is the Fubini-Study metric, which in the standard coordinate system (writing $\mathbb{CP}^1=\mathbb{C}\cup\{\infty\}$) is locally given by $\omega_{\rm FS}=\ddbar\log (1+|z|^2)$.
Recall that $\omega_{\rm FS}$ satisfies
$$\int_{X}\omega_{\rm FS}=2\pi,$$
and
$$\Ric(\omega_{\rm FS})=2\omega_{\rm FS}.$$
Therefore $2\pi c_1(X)=2[\omega_{\rm FS}]\in\mathcal{C}_X.$ If $\omega_0$ is any K\"ahler metric on $X$, then $[\omega_0]=\lambda[\omega_{\rm FS}]$ for some $\lambda>0$, and the evolved class is
$$[\omega(t)]=[\omega_0]-2\pi tc_1(X)=(\lambda -2t)[\omega_{\rm FS}],$$
which is K\"ahler if and only if $\lambda-2t>0$. Therefore by Theorem \ref{tz} the maximal existence time of the K\"ahler-Ricci flow \eqref{krf} is
$T=\frac{\lambda}{2}$. The limiting class is
$$[\alpha]=[\omega_0]-2\pi Tc_1(X)=0,$$
so in particular $\vol(X,\omega(t))\to 0$ as $t\to T$.
\end{example}
\begin{example}
If $g=1$, so $X$ is diffeomorphic to the torus $T^2$, then the uniformization theorem implies that $X$ is biholomorphic to $\mathbb{C}/\Lambda$ for some lattice $\Lambda\subset\mathbb{C}$. In general different lattices give rise to non-biholomorphic complex tori. In any case, any given Euclidean metric $\omega_{\rm flat}$ on $\mathbb{C}$ is invariant under translations by $\Lambda$ and so it descends to a K\"ahler metric $\omega_{\rm flat}$ on $X$ with
$$\Ric(\omega_{\rm flat})=0.$$
Therefore $c_1(X)=0$, and the flow starting at any initial metric $\omega_0$ does not change the K\"ahler class $[\omega(t)]=[\omega_0]$, and so by Theorem \ref{tz} we get that $T=\infty$. Clearly, the volume of $(X,\omega(t))$ is constant.
\end{example}
\begin{example}
If $g\geq 2$, then the uniformization theorem implies that $X$ is biholomorphic to $B/\Gamma$ were $B=\{z\in \mathbb{C}\ |\ |z|<1\}$ is the unit disc and $\Gamma$ is some discrete group which acts on $B$ by isometries of the Poincar\'e metric
$$\omega_{\rm hyp}=-\ddbar\log(1-|z|^2),$$
on $B.$ Therefore $\omega_{\rm hyp}$ descends to a K\"ahler metric on $X$, which satisfies
$$\Ric(\omega_{\rm hyp})=-2\omega_{\rm hyp},$$
by direct calculation. Therefore, if $\omega_0$ is any K\"ahler metric on $X$, then
$[\omega_0]=\lambda[\omega_{\rm hyp}]$ for some $\lambda>0$, and the evolved class is
$$[\omega(t)]=[\omega_0]-2\pi tc_1(X)=(\lambda +2t)[\omega_{\rm hyp}],$$
which is K\"ahler for all $t\geq 0$. Therefore by Theorem \ref{tz} the maximal existence time of the K\"ahler-Ricci flow \eqref{krf} is
$T=\infty$. The volume of $\vol(X,\omega(t))$ grows like $t$ as $t\to \infty$, and the cohomology class of the rescaled metrics $\frac{\omega(t)}{t}$ converges to $-2\pi c_1(X)$.
\end{example}
\begin{example}
Let $X=\mathbb{CP}^1\times\mathbb{CP}^1$, with projections $\pi_1,\pi_2$ to the two factors. Then $H^{1,1}(X,\mathbb{R})=\mathbb{R}^2$, generated by $a=\pi_1^*[\omega_{\rm FS}]$ and
$b=\pi_2^*[\omega_{\rm FS}]$, and it is easy to see that a class $[\alpha]=\lambda_1 a+\lambda_2b$ is K\"ahler if and only if $\lambda_1>0$ and $\lambda_2>0$.
Also, the product metric $\omega_{\rm prod}=\pi_1^*\omega_{\rm FS}+\pi_2^*\omega_{\rm FS}$ satisfies
$$\Ric(\omega_{\rm prod})=2\omega_{\rm prod},$$
and so $2\pi c_1(X)=2(a+b)$. Therefore the evolved class is
$$[\omega(t)]=[\omega_0]-2\pi tc_1(X)=(\lambda_1 -2t)a+(\lambda_2 -2t)b,$$
and so by Theorem \ref{tz} the maximal existence time is
$$T=\min\left(\frac{\lambda_1}{2},\frac{\lambda_2}{2}\right).$$
The limiting class as $t\to T$ is either zero, or a multiple of $a$ or $b$, and so we always have that $\vol(X,\omega(t))\to 0$ as $t\to T$.
\end{example}
\begin{example}\label{blow}
Let $\pi:X\to\mathbb{CP}^2$ be the blowup of $\mathbb{CP}^2$ at a point $p$, with exceptional divisor $E=\pi^{-1}(p)\cong\mathbb{CP}^1$. Then we have that
$H^{1,1}(X,\mathbb{R})=\mathbb{R}^2$, generated by $a=\frac{\pi^*[\omega_{\rm FS}]}{2\pi}$ and $b$, the Poincar\'e dual of $E$, and also
$$c_1(X)=3a-b.$$
Consider a $(1,1)$ class $[\alpha]=\lambda_1a+\lambda_2b$. The Nakai-Moishezon criterion of Bunchdahl \cite[Corollary 15]{Bu} and Lamari \cite{La} (which was extended to all dimensions by Demailly-P\u{a}un \cite{DP2}) in this case says that $[\alpha]\in \mathcal{C}_X$ if and only if
\begin{equation}\label{nm}
\int_X\alpha^2>0,\quad \int_E\alpha>0,\quad \int_H\alpha>0,
\end{equation}
where $H=\pi^{-1}(L)$ and $L\cong\mathbb{CP}^1$ is a projective line in $\mathbb{CP}^2$ which does not pass through $p$. The Poincar\'e dual of $L$ inside $\mathbb{CP}^2$ is $\frac{[\omega_{\rm FS}]}{2\pi}$, and so the Poincar\'e dual of $H$ inside $X$ is $a$, and so \eqref{nm} is equivalent to
\begin{equation}\label{nm2}
\int_X\alpha^2>0,\quad \int_X\alpha\wedge a>0,\quad \int_X\alpha\wedge b>0.
\end{equation}
We also have that
\begin{equation}\label{fa1}
\int_X a^2=\frac{1}{4\pi^2}\int_{\mathbb{CP}^2}\omega_{\rm FS}^2=1,
\end{equation}
\begin{equation}\label{fa2}
\int_X b^2=\int_E b=-1,
\end{equation}
\begin{equation}\label{fa3}
\int_X a\wedge b=\int_Ea=0,
\end{equation}
where \eqref{fa2} is well-known and \eqref{fa3} holds because we can represent $a$ by a smooth form supported in an arbitrarily small neighborhood of $H$, and since $H$ is disjoint from $E$ we may choose a representative of $a$ which vanishes everywhere on $E$. Using these, we immediately see that \eqref{nm2} is equivalent to
\begin{equation}\label{nm3}
\lambda_1^2-\lambda_2^2>0,\quad \lambda_1>0,\quad -\lambda_2>0,
\end{equation}
or equivalently
\begin{equation}\label{nm4}
0<-\lambda_2<\lambda_1.
\end{equation}
So if $[\omega_0]=\lambda_1a+\lambda_2b$ is any K\"ahler class on $X$ (so \eqref{nm4} holds), then the evolved class is given by
$$[\omega(t)]=[\omega_0]-2\pi tc_1(X)=(\lambda_1 -6\pi t)a-(-\lambda_2-2\pi t)b.$$
This class remains K\"ahler as long as $-\lambda_2-2\pi t>0$ and $\lambda_1 -6\pi t>-\lambda_2-2\pi t$,
and so by Theorem \ref{tz} the maximal existence time is
$$T=\min\left(\frac{\lambda_1+\lambda_2}{4\pi},\frac{-\lambda_2}{2\pi}\right).$$
We have that
$$\vol(X,\omega(t))=(\lambda_1-6\pi t)^2-(-\lambda_2-2\pi t)^2.$$
If $\lambda_1\leq -3\lambda_2$, then $T=\frac{\lambda_1+\lambda_2}{4\pi}$ and so $\vol(X,\omega(t))\to 0$ as $t\to T$. If instead
$\lambda_1> -3\lambda_2$, then $T=\frac{-\lambda_2}{2\pi}$ and so
$$\vol(X,\omega(t))\to (\lambda_1+3\lambda_2)^2>0$$
as $t\to T$. This is the first example that we encounter of a finite time noncollapsed singularity. We will study these in more detail in the next section.
\end{example}

\section{Finite time singularities}\label{sectfin}

\subsection{Finite time singularities of the K\"ahler-Ricci flow}
In this section we assume that the K\"ahler-Ricci flow \eqref{krf} has a finite time singularity at time $T<\infty$. The limiting class of the flow is
$$[\alpha]=\lim_{t\to T}[\omega(t)]=[\omega_0]-2\pi Tc_1(X),$$
and it is a nef class, since it is a limit of K\"ahler classes. Not all nef classes arise in this way, and we have the following elementary observation:

\begin{proposition}\label{nef}
Let $X$ be a compact K\"ahler manifold and $[\alpha]\in\de\mathcal{C}_X$ a nef $(1,1)$ class, which is not K\"ahler. Then there exists a K\"ahler metric $\omega_0$ such that the K\"ahler-Ricci flow \eqref{krf} has a finite time singularity with limiting class $[\alpha]$ if and only if $[\alpha]+\lambda c_1(X)\in \mathcal{C}_X$ for some $\lambda>0$.
In this case the maximal existence time is $T=\frac{\lambda}{2\pi}.$
\end{proposition}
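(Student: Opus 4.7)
The plan is to reduce everything to Theorem \ref{tz} (which identifies the maximal existence time with the cohomological sup) together with Corollary \ref{sum} (which says K\"ahler plus nef is K\"ahler).

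For the ``only if'' direction, suppose there is a K\"ahler metric $\omega_0$ whose K\"ahler-Ricci flow has a finite time singularity at $T<\infty$ with limiting class $[\alpha]$. By Theorem \ref{tz} the class along the flow is $[\omega(t)]=[\omega_0]-2\pi tc_1(X)$, so passing to the limit gives $[\alpha]=[\omega_0]-2\pi Tc_1(X)$. Rearranging, $[\alpha]+2\pi Tc_1(X)=[\omega_0]\in\mathcal{C}_X$, and since $T>0$ we can take $\lambda=2\pi T$.

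For the ``if'' direction, pick $\lambda>0$ with $[\alpha]+\lambda c_1(X)\in\mathcal{C}_X$, and choose any K\"ahler metric $\omega_0$ representing this class. Then for any $t\in\mathbb{R}$,
$$[\omega_0]-2\pi tc_1(X)=[\alpha]+(\lambda-2\pi t)c_1(X)=\Bigl(1-\tfrac{2\pi t}{\lambda}\Bigr)[\omega_0]+\tfrac{2\pi t}{\lambda}[\alpha].$$
For $0\le t<\lambda/(2\pi)$ the first coefficient is positive, so the right-hand side is the sum of a K\"ahler class and a nonnegative multiple of the nef class $[\alpha]$, hence K\"ahler by Corollary \ref{sum}. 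Therefore the sup in \eqref{coho} satisfies $T\geq \lambda/(2\pi)$. Conversely, if $[\omega_0]-2\pi t_0c_1(X)\in\mathcal{C}_X$ for some $t_0>\lambda/(2\pi)$, then for $s=1-\frac{\lambda}{2\pi t_0}\in(0,1)$, the convex combination $s[\omega_0]+(1-s)([\omega_0]-2\pi t_0c_1(X))=[\omega_0]-\lambda c_1(X)=[\alpha]$ would be K\"ahler, contradicting the hypothesis that $[\alpha]\in\de\mathcal{C}_X$ is not K\"ahler. Hence $T=\lambda/(2\pi)$, and Theorem \ref{tz} produces a smooth K\"ahler-Ricci flow starting at $\omega_0$ with maximal existence time exactly $\lambda/(2\pi)$ and limiting class $[\alpha]$.

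There is no real obstacle here: the statement is essentially a formal consequence of Theorem \ref{tz}, with the only minor subtlety being the convexity argument used to rule out $T>\lambda/(2\pi)$, which exploits that K\"ahlerness is preserved under convex combinations while $[\alpha]$ itself is not K\"ahler.
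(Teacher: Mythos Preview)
Your proof is correct and follows essentially the same route as the paper: both directions reduce to Theorem \ref{tz} together with Corollary \ref{sum}, and the convex-combination rewriting $[\omega_0]-2\pi tc_1(X)=(1-\tfrac{2\pi t}{\lambda})[\omega_0]+\tfrac{2\pi t}{\lambda}[\alpha]$ is identical. The only cosmetic difference is that the paper dispatches the upper bound $T\le\lambda/(2\pi)$ by simply noting that the class at $t=\lambda/(2\pi)$ equals $[\alpha]\notin\mathcal{C}_X$ (implicitly using that the set of K\"ahler times along the line is an interval), whereas you spell out the convexity argument explicitly.
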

\begin{proof}
If there exists a metric $\omega_0$ such that the K\"ahler-Ricci flow \eqref{krf} has a finite time singularity at time $T$ with limiting class $[\alpha]$, then we know that
$$[\alpha]=[\omega_0]-2\pi Tc_1(X),$$
and so $[\alpha]+2\pi Tc_1(X)\in\mathcal{C}_X$.

Conversely, if $[\alpha]+\lambda c_1(X)\in \mathcal{C}_X$ for some $\lambda>0$, we choose a K\"ahler metric $\omega_0$ in this class, and evolve it by the K\"ahler-Ricci flow \eqref{krf}.
The class of the evolved metric is
$$[\omega(t)]=[\omega_0]-2\pi tc_1(X)=[\alpha]+(\lambda-2\pi t)c_1(X)=\left(1-\frac{2\pi t}{\lambda}\right)[\omega_0]+\frac{2\pi t}{\lambda}[\alpha].$$
For $0\leq t<\frac{\lambda}{2\pi}$ this is a sum of a K\"ahler class and a nef class, and so it is K\"ahler, while for $t=\frac{\lambda}{2\pi}$ this equals $[\alpha]$ which is nef but not K\"ahler. It follow from Theorem \ref{tz} that the maximal existence time is $T=\frac{\lambda}{2\pi}<\infty$ and the limiting class is $[\alpha]$.
\end{proof}

\subsection{Noncollapsed finite time singularities}
We will say that a finite time singularity at time $T<\infty$ is noncollapsed if $\vol(X,\omega(t))\geq C^{-1}$ for all $t\in [0,T)$. As we saw, this is equivalent to the cohomological property
$$\int_X (\omega_0-2\pi T\Ric(\omega_0))^n=\int_X\alpha^n>0.$$
In other words, it is equivalent to requiring that the limiting class $[\alpha]$ be nef and big. Recall that in this case the null locus $\Null(\alpha)$, defined in \eqref{null}, is a proper analytic subvariety of $X$.

\begin{example}
Going back to Example \ref{blow}, if we choose the initial class to be $[\omega_0]=4a-b$, then we have $T=\frac{1}{2\pi}$ and the limiting class is
$$[\alpha]=a=\frac{\pi^*[\omega_{\rm FS}]}{2\pi}.$$
As shown in \eqref{fa3}, we have that
$$\int_E a=0,$$
so certainly $E\subset\Null(a)$. Since $\int_X a^2>0$ (see \eqref{fa1}), we have that $\Null(a)$ is not equal to $X$.
If $C\subset X$ is an irreducible curve which is not equal to $E$, then $C$ cannot be contained in $E$ and so its image $\pi(C)$ is an irreducible curve in $\mathbb{CP}^2$. We then have
$$\int_C a=\frac{1}{2\pi}\int_{\pi(C)}\omega_{\rm FS}>0,$$
since $\int_{\pi(C)}\omega_{\rm FS}$ equals the volume of $\pi(C)$ with respect to the Fubini-Study metric. Therefore we have shown that $\Null(a)=E$.
\end{example}

The following is the main result of this section:

\begin{theorem}[Collins-T. \cite{CT}]\label{fint}
Let $(X,\omega_0)$ be a compact K\"ahler manifold such that the K\"ahler-Ricci flow \eqref{krf} starting at $\omega_0$ has a noncollapsed finite time singularity at $T<\infty$.
Let $\alpha=\omega_0-2\pi T\Ric(\omega_0)$. Then there is a K\"ahler metric $\omega_T$ on $X\backslash \Null(\alpha)$ such that
$$\omega(t)\to\omega_T,$$
in $C^\infty_{\mathrm{loc}}(X\backslash \Null(\alpha))$ as $t\to T$.
\end{theorem}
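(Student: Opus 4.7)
The plan is to reduce the flow to a parabolic complex Monge-Amp\`ere equation via Lemma \ref{krflow}, establish global $L^\infty$ bounds on $\vp$ and $\dot\vp$, prove a localized second-order estimate on compact subsets of $X\setminus\Null(\alpha)$ using a K\"ahler current barrier coming from \cite{CT}, and finally extract smooth convergence via Theorem \ref{higher} together with a monotonicity argument.

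Fix a smooth representative $\alpha_0\in[\alpha]$ (which need not be positive) and put $\hat\omega_t=\frac{T-t}{T}\omega_0+\frac{t}{T}\alpha_0$, which is K\"ahler for $t\in[0,T)$ by Corollary \ref{sum}. Choose a smooth volume form $\Omega>0$ with $\Ric(\Omega)=\frac{1}{T}(\omega_0-\alpha_0)$ and write $\omega(t)=\hat\omega_t+\ddbar\vp(t)$ as in Lemma \ref{krflow}. The noncollapsing hypothesis $\int_X\alpha_0^n>0$ together with Ko\l{}odziej's $L^\infty$ estimate for the Monge-Amp\`ere equation and a parabolic comparison argument following \cite{TiZ} yields $\|\vp\|_{C^0(X\times[0,T))}\leq C$. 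The minimum principle applied to $R(t)$ (cf.\ \eqref{scal}) combined with $\partial_t\dot\vp=-R$ gives $\dot\vp\leq C$, while the lower bound $\dot\vp\geq -C$ follows from the maximum principle applied to $(T'-t)\dot\vp+\vp+nt$ as in \eqref{get2}, taking $T'$ arbitrarily close to $T$ and using that $\alpha_0+\ve\omega_0$ is K\"ahler for every $\ve>0$.

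The crucial local $C^2$ estimate relies on the main theorem of \cite{CT}: since $[\alpha]$ is nef and big, the non-K\"ahler locus of $[\alpha]$ equals $\Null(\alpha)$, hence there exist $\delta>0$ and a quasi-plurisubharmonic function $\psi$ on $X$ with $\sup_X\psi=0$, smooth on $X\setminus\Null(\alpha)$ with $\psi\to-\infty$ along $\Null(\alpha)$, satisfying $\alpha_0+\ddbar\psi\geq\delta\omega_0$ (smoothly on $X\setminus\Null(\alpha)$). For a constant $A>0$ to be chosen, consider the test quantity
\[ Q=\log\tr{\omega_0}{\omega(t)}-A\vp+\frac{At}{T}\psi, \]
where the time-dependent coefficient $\frac{At}{T}$ is engineered so that the $\tr{\omega(t)}{\alpha_0}$ contributions from $-A\vp$ and $\frac{At}{T}\psi$ cancel exactly in the evolution equation. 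Since $Q\to-\infty$ along $\Null(\alpha)$, for every $\tau<T$ the supremum of $Q$ on $X\times[0,\tau]$ is attained at some $(x_0,t_0)$ in the smooth locus. Combining the parabolic Schwarz inequality \eqref{gettt2}, the identity $\Delta\vp=n-\tr{\omega(t)}{\hat\omega_t}$, and $\Delta\psi\geq\delta\tr{\omega(t)}{\omega_0}-\tr{\omega(t)}{\alpha_0}$, the cross terms in $\tr{\omega(t)}{\alpha_0}$ cancel and one obtains
\[ \left(\frac{\de}{\de t}-\Delta\right)Q\leq\left(C_0-\frac{A((T-t)+t\delta)}{T}\right)\tr{\omega(t)}{\omega_0}-A\dot\vp+An+\frac{A}{T}\psi. \]
Choosing $A$ so that $A\min(1,\delta)\geq T(C_0+1)$ makes the coefficient of $\tr{\omega(t)}{\omega_0}$ at most $-1$, and the maximum principle, together with $\psi\leq 0$ and the bounds on $\vp,\dot\vp$, forces $\tr{\omega(t)}{\omega_0}(x_0,t_0)\leq C'$, hence $Q\leq C''$ on all of $X\times[0,T)$. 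On any compact $K\subset X\setminus\Null(\alpha)$ we have $\psi|_K\geq -M_K$, giving $\log\tr{\omega_0}{\omega(t)}\leq C''+A\|\vp\|_{C^0}+AM_K$ on $K$, i.e.\ $\omega(t)\leq C_K\omega_0$; the matching lower bound follows by combining this trace bound with the two-sided volume bound $C^{-1}\omega_0^n\leq\omega(t)^n\leq C\omega_0^n$ coming from $\|\dot\vp\|_{C^0}\leq C$, exactly as in the proof of \eqref{c0equiv}.

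Theorem \ref{higher} then yields uniform $C^k$ bounds on $\omega(t)$ on compact subsets of $X\setminus\Null(\alpha)$ for every $k\geq 1$. Since $\dot\vp\leq C$ the function $\vp(t)-Ct$ is nonincreasing in $t$ and uniformly bounded, so has a unique pointwise limit $\vp_T$ as $t\to T$; by the local smooth estimates and an Ascoli-Arzel\`a/diagonal argument mimicking the proof of Theorem \ref{tz}, this convergence is in $C^\infty_{\rm loc}(X\setminus\Null(\alpha))$, and $\omega_T:=\hat\omega_T+\ddbar\vp_T$ is the desired smooth K\"ahler metric on $X\setminus\Null(\alpha)$ with $\omega(t)\to\omega_T$ in $C^\infty_{\rm loc}$. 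The main obstacle is the localized second-order estimate: it relies crucially on the deep identification $\Null(\alpha)=E_{nK}([\alpha])$ from \cite{CT} to produce a barrier $\psi$ whose singular set is exactly $\Null(\alpha)$, and on the time-dependent choice of $\psi$-coefficient in $Q$ that exactly neutralizes the loss of positivity of $\hat\omega_t$ as $t\to T$.
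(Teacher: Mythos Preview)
Your argument contains a genuine gap: the claimed uniform bound $\dot\vp\geq -C$ on $X\times[0,T)$ is \emph{false}, and your justification via \eqref{get2} does not apply here. That computation gives
\[
\Bigl(\tfrac{\partial}{\partial t}-\Delta\Bigr)\bigl((T'-t)\dot\vp+\vp+nt\bigr)=\tr{\omega(t)}{\hat\omega_{T'}},
\]
and in Section~\ref{sectmax} the minimum principle applies because $\hat\omega_{T'}$ is a genuine K\"ahler \emph{form}. In the present setup the reference forms $\hat\omega_t=\frac{T-t}{T}\omega_0+\frac{t}{T}\alpha_0$ have no positivity whatsoever (only their cohomology classes lie in $\mathcal{C}_X$ for $t<T$; your appeal to Corollary~\ref{sum} only gives the latter), so the right-hand side need not be nonnegative. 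Worse, a uniform bound $\dot\vp\geq -C$ is impossible: combined with the global Schwarz inequality $\log\tr{\omega_0}{\omega}\leq C-Ct\dot\vp+C\vp$ (from \eqref{get} and \eqref{gettt2}) and the easy bound $\vp\leq C$, it would force $\tr{\omega_0}{\omega}\leq C$ and $\omega^n=e^{\dot\vp}\Omega\geq C^{-1}\omega_0^n$ on all of $X$, hence $C^{-1}\omega_0\leq\omega(t)\leq C\omega_0$ globally --- contradicting singularity formation on $\Null(\alpha)\neq\emptyset$. This kills your two-sided volume bound and with it the metric lower bound on compact sets.

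The paper replaces the global $\dot\vp$ bound by the localized estimate $\dot\vp\geq C\psi-C$ (Lemma~\ref{uno}), obtained by applying the minimum principle to $(T+\delta-t)\dot\vp+\vp-\psi+nt$ and using that $\hat\omega_{T+\delta}+\ddbar\psi\geq\frac{\ve}{2}\omega_0$ off $\Null(\alpha)$; this gives $\omega(t)^n\geq C^{-1}e^{C\psi}\omega_0^n$, which is exactly the volume lower bound you need on compact $K\subset X\setminus\Null(\alpha)$. Your second-order quantity $Q=\log\tr{\omega_0}{\omega}-A\vp+\frac{At}{T}\psi$, with its cancellation of the $\tr{\omega(t)}{\alpha_0}$ terms, is a legitimate alternative to the paper's use of $t\dot\vp-\vp-nt$ in Lemma~\ref{tre}; but to bound $Q$ at its maximum you still invoke the (false) bound on $\dot\vp$, and you also rely on a global $\|\vp\|_{C^0}$ bound via Ko\l odziej and \cite{TiZ}, which is not justified in this transcendental setting --- indeed the paper records this as the open Conjecture~\ref{lowerbd} and only proves the weaker $\vp\geq\nu\psi-C_\nu$ in Proposition~\ref{due}.
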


When $X$ is projective and $[\omega_0]\in H^2(X,\mathbb{Q})$ this was known earlier: indeed in this case the limiting class $[\alpha]$ is the first Chern class of a $\mathbb{Q}$-divisor $D$, and it follows from a trick of Tsuji \cite{Ts} (cf. \cite{TiZ}) that we have uniform $C^\infty_{\mathrm{loc}}$ estimates on compact sets away from the intersection of the supports of all effective $\mathbb{Q}$-divisors $E$ such that $D-E$ is ample (such divisors exist thanks to ``Kodaira's Lemma'' \cite[Proposition 2.2.6]{Laz}). But this intersection equals the ``augmented base locus'' of $D$, as shown in \cite[Remark 1.3]{ELMNP}, and this in turn equals $\Null(c_1(D))$ thanks to Nakamaye's Theorem \cite{Na}. Our work in \cite{CT} extends Nakamaye's Theorem to real $(1,1)$ classes on K\"ahler manifolds, and this is the key new ingredient.

Following \cite{EMT} we define the singularity formation set of the flow $\Sigma$ (which depends on the initial metric $\omega_0$) by
$$\Sigma=X\backslash \{x\in X\ |\ \exists U\ni x \textrm{ open, }\exists C>0, \textrm{ s.t. } |\textrm{Rm}(t)|_{\omega(t)}\leq C \textrm{ on } U\times [0,T)\},$$
where $\mathrm{Rm}(t)$ denotes the curvature tensor of $\omega(t)$.

We have the following conjecture:

\begin{conjecture}[Feldman-Ilmanen-Knopf \cite{FIK}, Campana (see \cite{Zh2})]\label{fik}
For every finite time singularity of the K\"ahler-Ricci flow the singularity formation set $\Sigma$ is an analytic subvariety.
\end{conjecture}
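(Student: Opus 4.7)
The plan is to combine the convergence-away-from-an-analytic-subvariety from Theorem \ref{fint} with a complementary curvature blow-up statement on that subvariety. Writing $[\alpha]=[\omega_0]-2\pi Tc_1(X)$ for the limiting nef class, I would first split into the noncollapsed case $\int_X\alpha^n>0$ and the collapsed case $\int_X\alpha^n=0$. In the noncollapsed case, Theorem \ref{fint} gives $C^\infty_{\mathrm{loc}}$ convergence of $\omega(t)$ on $X\setminus\Null(\alpha)$, whence uniform curvature bounds there and the inclusion $\Sigma\subseteq\Null(\alpha)$. Since $\Null(\alpha)$ is already known to be an analytic subvariety, matters reduce to proving the reverse inclusion $\Null(\alpha)\subseteq\Sigma$. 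In the collapsed case, where $\vol(X,\omega(t))\to 0$, one has to show $\Sigma=X$, which is trivially analytic.

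For the reverse inclusion $\Null(\alpha)\subseteq\Sigma$, I would fix an irreducible positive-dimensional $V\subset X$ with $\int_V\alpha^{\dim V}=0$, a regular point $x\in V_{reg}$, an open set $U\ni x$ disjoint from $V_{sing}$, and a smaller neighborhood $U'\Subset U$, and argue by contradiction. The starting observation is the cohomological degeneration
$$\int_{V_{reg}\cap U'}\omega(t)^{\dim V}\leq\int_V\omega(t)^{\dim V}=\int_V\bigl([\omega_0]-2\pi tc_1(X)\bigr)^{\dim V}\longrightarrow\int_V\alpha^{\dim V}=0.$$
If $|\mathrm{Rm}(t)|_{\omega(t)}$ were uniformly bounded on $U\times[0,T)$, then Shi-type local estimates together with a noncollapsing (volume-ratio) input would upgrade this to uniform $C^\infty_{\mathrm{loc}}$ bounds on $U'$, giving a smooth K\"ahler limit $\omega_T$ on $U'$; but then $\int_{V_{reg}\cap U'}\omega(t)^{\dim V}\to\int_{V_{reg}\cap U'}\omega_T^{\dim V}>0$, a contradiction, so $|\mathrm{Rm}(t)|_{\omega(t)}$ must blow up somewhere in $U$ as $t\to T$, showing $x\in\Sigma$. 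The collapsed case is handled by the same blow-up argument applied at any point of $X$, using the global $\vol(X,\omega(t))\to 0$ in place of the vanishing on $V$.

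The hard part is that this strategy silently uses two ingredients not known in this generality. First, promoting a pointwise curvature bound to $C^\infty_{\mathrm{loc}}$ convergence requires a uniform lower volume-ratio bound on $U$, and non-collapsing transverse to $V$ is precisely the regularity fact one does not yet have unconditionally for the K\"ahler--Ricci flow without algebro-geometric assumptions on $X$ or $[\omega_0]$. Second, even a merely local curvature bound on an open set $U$ meeting $V$ up to time $T$ is a much stronger hypothesis than what one can hope to contradict directly from the cohomological vanishing, which is intrinsically global on $V$. In the algebraic and noncollapsed setting, where Nakamaye's theorem and its extension in \cite{CT} identify $\Null(\alpha)$ with the augmented base locus of a rational nef class, one can often bypass both difficulties; the transcendental noncollapsed case, and especially the collapsed case, appear to need genuinely new techniques, of depth comparable to the Hamilton--Tian program.
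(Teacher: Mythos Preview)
Your overall architecture---split into collapsed/noncollapsed, use Theorem~\ref{fint} for $\Sigma\subseteq\Null(\alpha)$, and use a volume contradiction on subvarieties $V$ with $\int_V\alpha^{\dim V}=0$ for the reverse inclusion---is exactly the paper's strategy. But you misidentify the hard step and therefore overestimate the difficulty.

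The gap you flag, namely promoting a local curvature bound to local $C^\infty$ convergence without an a priori noncollapsing input, is \emph{not} an obstacle here. The paper closes it via Proposition~\ref{equiv} (due to Z.~Zhang), which exploits the Monge--Amp\`ere structure of the K\"ahler--Ricci flow rather than general Riemannian Shi estimates. The point is that along \eqref{ma3} one has $\ddot\vp=-R$, so a bound $|R|\leq C$ on $U\times[0,T)$ (the lower bound being automatic from \eqref{scal}) integrates to $|\vp|+|\dot\vp|\leq C$ on $U$. Now the quantity $\log\tr{\omega_0}{\omega}+C(t\dot\vp-\vp-nt)$ satisfies a \emph{global} parabolic maximum principle (via \eqref{gettt2} and \eqref{get}), yielding $\tr{\omega_0}{\omega}\leq Ce^{-C\dot\vp}$ on all of $X\times[0,T)$; on $U$ this becomes $\tr{\omega_0}{\omega}\leq C$. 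Combined with $\omega(t)^n=e^{\dot\vp}\Omega\geq C^{-1}\omega_0^n$ on $U$, one obtains $C^{-1}\omega_0\leq\omega(t)\leq C\omega_0$ on $U$, and then the purely local higher-order estimates of Theorem~\ref{higher} give smooth convergence on any $U'\Subset U$. No separate noncollapsing input is needed: the two-sided metric equivalence \emph{is} the noncollapsing, and it comes from the potential-theoretic formulation.

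With this in hand, Proposition~\ref{equiv} gives the alternate characterization of $\Sigma$ as the complement of the smooth-convergence locus, and your volume contradiction goes through verbatim: if $x\in\Null(\alpha)$ lay outside $\Sigma$, smooth convergence on some $U\ni x$ would force $\int_{V\cap U}\omega(t)^{\dim V}\to\int_{V\cap U}\omega_T^{\dim V}>0$, contradicting $\int_V\omega(t)^{\dim V}\to 0$. The collapsed case is subsumed automatically since then $\Null(\alpha)=X$ and one may take $V=X$. Your second worry---that a local hypothesis cannot be contradicted by a global vanishing---dissolves once you see that local smooth convergence already gives a strictly positive lower bound on $\int_{V\cap U}\omega(t)^{\dim V}$, which is all the contradiction requires.
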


This conjecture was solved in \cite{CT}:

\begin{theorem}[Collins-T. \cite{CT}]\label{good}
Conjecture \ref{fik} is true, and we have $$\Sigma=\Null(\alpha),$$
where $[\alpha]=[\omega_0]-2\pi T c_1(X)$ is the limiting class. In other words, $\Sigma$ is the union of all irreducible analytic subvarieties whose
volume goes to zero as $t\to T$.
\end{theorem}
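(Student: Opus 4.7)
The plan is to prove the equality $\Sigma=\Null(\alpha)$ by establishing both inclusions; the analyticity of $\Sigma$ then follows immediately from the analyticity of $\Null(\alpha)$ recorded after \eqref{null}. The inclusion $\Sigma\subseteq\Null(\alpha)$ is a direct consequence of Theorem \ref{fint}: if $x\notin\Null(\alpha)$, then a small open neighborhood $U$ of $x$ lies in $X\setminus\Null(\alpha)$, and by Theorem \ref{fint} the metrics $\omega(t)$ converge smoothly on $U$ to a K\"ahler metric $\omega_T$, so $|\mathrm{Rm}(\omega(t))|_{\omega(t)}$ is uniformly bounded on $U\times[0,T)$ and $x\notin\Sigma$.

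The reverse inclusion $\Null(\alpha)\subseteq\Sigma$ is the substantive part, and I would prove it by contradiction using a local volume lower bound. Suppose $x_0\in\Null(\alpha)\setminus\Sigma$. Then there exist an irreducible positive-dimensional analytic subvariety $V\ni x_0$ with $\int_V\alpha^{\dim V}=0$, an open neighborhood $U$ of $x_0$, and a constant $C>0$ with $|\mathrm{Rm}(\omega(t))|_{\omega(t)}\leq C$ on $U\times[0,T)$. Bounded curvature implies bounded Ricci, so from the flow equation $\frac{\de}{\de t}g_{i\ov{j}}=-R_{i\ov{j}}$ one obtains $\bigl|\frac{\de}{\de t}\log|v|^2_{g(t)}\bigr|\leq C$ for any tangent vector $v$ at a point of $U$. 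Integrating over the finite interval $[0,T)$ yields a uniform two-sided bound $c\omega_0\leq\omega(t)\leq c^{-1}\omega_0$ on $U$ for some $c>0$ depending only on $C$ and $T$.

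Now fix a relatively compact neighborhood $U'\Subset U$ of $x_0$ and set $k=\dim V$. Because $V_{\mathrm{reg}}$ is dense in $V$, the intersection $V_{\mathrm{reg}}\cap U'$ is a non-empty open subset of the complex $k$-manifold $V_{\mathrm{reg}}$, so $\delta_0:=\int_{V_{\mathrm{reg}}\cap U'}\omega_0^k>0$ is a fixed positive number. Hence
$$\int_V\omega(t)^k\geq \int_{V_{\mathrm{reg}}\cap U'}\omega(t)^k\geq c^k\delta_0>0$$
uniformly in $t\in[0,T)$. On the other hand, by the cohomological invariance of the Lelong integral $\int_V\cdot^k$ discussed in the preliminaries, we have $\int_V\omega(t)^k=\int_V[\omega(t)]^k$, and since $[\omega(t)]\to[\alpha]$ in $H^{1,1}(X,\mathbb{R})$ as $t\to T$, this tends to $\int_V\alpha^k=0$, contradicting the uniform lower bound.

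The hardest ingredient in this plan is Theorem \ref{fint}, which is already granted; beyond it, the only analytic input is the local metric equivalence from a Ricci bound on a finite time interval, which is a standard Ricci flow computation, together with the elementary convergence of cohomological intersection numbers. A subtle point worth flagging is that even when $x_0\in V_{\mathrm{sing}}$, the density of $V_{\mathrm{reg}}$ in $V$ ensures $V_{\mathrm{reg}}\cap U'$ is non-empty of full dimension $k$, which is what keeps $\delta_0$ strictly positive; nowhere does the argument require the bigness of $\alpha$ beyond what is implicitly used to ensure $\Null(\alpha)$ is a proper subvariety of $X$ and that Theorem \ref{fint} applies on a non-empty open set.
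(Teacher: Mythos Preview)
Your proof is correct and follows the same overall strategy as the paper: use Theorem \ref{fint} for the inclusion $\Sigma\subseteq\Null(\alpha)$, and for $\Null(\alpha)\subseteq\Sigma$ derive a contradiction by bounding $\int_V\omega(t)^k$ from below on a neighborhood where the flow stays controlled, while cohomologically this integral must tend to $\int_V\alpha^k=0$.

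The one genuine difference is in how you obtain local control of the metric. The paper invokes Proposition \ref{equiv}, which upgrades a local curvature (in fact scalar curvature) bound all the way to smooth convergence $\omega(t)\to\omega_T$ on $U$, and then uses $\int_{V\cap U}\omega(t)^k\to\int_{V\cap U}\omega_T^k>0$. You instead work directly from the definition of $\Sigma$: a Riemann curvature bound gives a Ricci bound, and the ODE $\frac{\de}{\de t}\log|v|^2_{g(t)}=-\mathrm{Ric}(v,\ov v)/|v|^2_{g(t)}$ integrated over $[0,T)$ yields $c\,\omega_0\le\omega(t)\le c^{-1}\omega_0$ on $U$, which already suffices for the lower bound $\int_{V_{\rm reg}\cap U'}\omega(t)^k\ge c^k\delta_0$. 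Your route is more elementary and self-contained (it does not need the local higher-order estimates or the maximum principle argument hidden in Proposition \ref{equiv}); the paper's route has the advantage that Proposition \ref{equiv} is of independent interest and is needed elsewhere anyway. Either way the volume-contradiction step is identical, and your remark that the collapsed case ($\Null(\alpha)=X$) makes the first inclusion vacuous while the second still goes through with $V=X$ is exactly right.
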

As we will see, this is a simple application of Theorem \ref{fint}.

First, we rewrite the K\"ahler-Ricci flow as a parabolic complex Monge-Amp\`ere equation. This is similar to the setup we had in section \ref{sectmax}, but there are some key differences. We define $\alpha=\omega_0-2\pi T\Ric(\omega_0)$, which is a closed real $(1,1)$ form with no positivity properties in general, and let
$$\hat{\omega}_t=\frac{1}{T}((T-t)\omega_0+t\alpha), \quad 0\leq t\leq T,$$
which are forms cohomologous to $\omega(t)$, again with no positivity in general. We also let $\chi=\frac{1}{T}(\alpha-\omega_0)$ so that we can write $\hat{\omega}_t=\omega_0+t\chi$, and we choose a smooth positive volume form $\Omega$ with $\Ric(\Omega)=-\chi$. Then, as in section \ref{sectmax}, the K\"ahler-Ricci flow \eqref{krf} is equivalent to
\begin{equation}\label{ma3}
\left\{
                \begin{aligned}
                  &\frac{\de}{\de t}\vp(t)=\log\frac{(\hat{\omega}_t+\ddbar\vp(t))^n}{\Omega}\\
                  &\vp(0)=0\\
                  &\hat{\omega}_t+\ddbar\vp(t)>0.
                \end{aligned}
              \right.
\end{equation}

\begin{lemma}
There is a constant $C>0$ such that
\begin{equation}\label{up}
\vp(t)\leq C,
\end{equation}
\begin{equation}\label{upd}
\dot{\vp}(t)\leq C,
\end{equation}
on $X\times [0,T)$.
\end{lemma}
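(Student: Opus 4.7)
The plan is to prove each inequality by a direct maximum principle argument, adapted to the fact that, unlike in Section \ref{sectmax}, the reference form $\hat\omega_T=\alpha$ is only nef and therefore may fail to be pointwise positive.

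For the upper bound \eqref{up}, I would apply the maximum principle to $\tilde\varphi(t)=\varphi(t)-At$ on $X\times[0,\tau]$ for an arbitrary $\tau\in(0,T)$, choosing the constant $A>0$ at the end. At any space-time maximum $(x_0,t_0)$ with $t_0>0$ one has $\partial_t\tilde\varphi\ge 0$ and $\ddbar\tilde\varphi\le 0$, so that at this point
\[
0<\omega(t_0)=\hat\omega_{t_0}+\ddbar\varphi\le \hat\omega_{t_0}
\]
as Hermitian matrices. In particular $\hat\omega_{t_0}$ is itself positive at $x_0$, and taking determinants gives $0<\omega(t_0)^n\le\hat\omega_{t_0}^n$ there. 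The crucial observation is that, although $\hat\omega_t$ need not be positive everywhere for $t$ close to $T$, the continuous function $\hat\omega_t^n/\Omega$ is bounded in absolute value on the compact product $X\times[0,T]$, so wherever it is positive it is bounded above by some uniform constant $C_0$. The flow equation \eqref{ma3} then gives
\[
0\le\partial_t\tilde\varphi=\log\frac{\omega(t_0)^n}{\Omega}-A\le\log\frac{\hat\omega_{t_0}^n}{\Omega}-A\le\log C_0-A,
\]
and choosing $A=\log C_0+1$ yields a contradiction. Hence the maximum is attained at $t=0$, where $\tilde\varphi=0$, so $\varphi(t)\le At\le AT\le C$ since $T<\infty$.

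For the upper bound \eqref{upd} on $\dot\varphi$, I would reuse the scalar curvature argument from the proof of Lemma \ref{c002}. Differentiating \eqref{ma3} in $t$ gives $\partial_t\dot\varphi=-R(\omega(t))$, and the standard Kähler--Ricci flow computation yields
\[
\Bigl(\frac{\partial}{\partial t}-\Delta\Bigr)R(\omega(t))=|\Ric(\omega(t))|^2_{\omega(t)}\ge 0.
\]
The minimum principle then gives $R(\omega(t))\ge\inf_X R(\omega_0)\ge -C$, so $\partial_t\dot\varphi\le C$, and integrating on $[0,t]$ together with $\dot\varphi(0)=\log(\omega_0^n/\Omega)$ being uniformly bounded and $t<T<\infty$ produces $\dot\varphi(t)\le C$.

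The main point to be flagged is that, compared with Section \ref{sectmax}, the loss of positivity of $\hat\omega_T=\alpha$ disables the companion identity \eqref{get2}, which produced two-sided estimates through $\tr{\omega(t)}{\hat\omega_{T'}}>0$. This is why only one-sided bounds can be expected at this level of generality: the argument above for \eqref{up} requires only an upper bound on $\hat\omega_t^n/\Omega$, not positivity, and the argument for \eqref{upd} depends only on $\omega(t)$, which remains Kähler throughout. The corresponding lower bounds on $\varphi$ and $\dot\varphi$ are genuinely more delicate and will need further input (eventually the nef and big assumption on $\alpha$, via Kodaira-type constructions on $X\setminus\Null(\alpha)$); that is presumably the content of the next steps in the proof of Theorem \ref{fint}.
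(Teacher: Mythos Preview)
Your proof is correct. The argument for \eqref{upd} is exactly the paper's: it uses the global lower bound $R(t)\geq -C$ from \eqref{scal}, the identity $\partial_t\dot\vp=-R$, and integrates once using $T<\infty$.

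For \eqref{up}, however, the paper takes a shorter route: once $\dot\vp\leq C$ is known, one simply integrates a second time to get $\vp(t)\leq \vp(0)+Ct\leq CT$. Your direct maximum-principle argument on $\vp-At$ is also valid --- it is essentially the proof of Lemma~\ref{c01} transplanted to the present setting, with the only new point (which you handle correctly) being that $\hat\omega_t$ need not be positive, so one uses instead that the continuous function $\hat\omega_t^n/\Omega$ is bounded on the compact set $X\times[0,T]$. This buys you an argument for \eqref{up} that is logically independent of the scalar curvature bound, but at the cost of a second maximum-principle computation. The paper's approach is more economical: one estimate (the scalar curvature lower bound) yields both inequalities by successive integration.

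Your closing remarks about why only one-sided bounds are available here, and why the lower bounds require the nef-and-big hypothesis via Theorem~\ref{ct}, are accurate and match the paper's strategy in the subsequent lemmas.
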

\begin{proof}
Recall from \eqref{scal} that we have $R(t)\geq -C$ on $X\times[0,T)$. Since $$\frac{\de}{\de t}\dot{\vp}(t)=-R(t),$$
this gives $\frac{\de}{\de t}\dot{\vp}(t)\leq C$. Integrating in $t$ we obtain \eqref{upd}, and integrating again we get \eqref{up}.
\end{proof}

Next, we give two equivalent definitions of $\Sigma$, following Z. Zhang \cite{Zh}.
\begin{proposition}\label{equiv}
We have that
\[\begin{split}
\Sigma&=X\backslash \{x\in X\ |\ \exists\ U\ni x \textrm{ open, }\exists\ C>0, \textrm{ s.t. } R(t)\leq C \textrm{ on } U\times [0,T)\}\\
&=X\backslash \{x\in X\ |\ \exists\ U\ni x \textrm{ open, }\exists\ \omega_U \textrm{ K\"ahler metric on $U$},\\
&\quad\quad\quad\quad\quad\quad\quad\quad\quad\quad \textrm{s.t. } \omega(t)\to\omega_U \textrm{ in }C^\infty(U)\textrm{ as }t\to T\},
\end{split}\]
where $R(t)$ is the scalar curvature of $\omega(t)$.
\end{proposition}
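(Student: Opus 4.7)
Denote by $\Sigma_1$, $\Sigma_2$, $\Sigma_3$ the three sets appearing in the statement (full curvature bound, scalar curvature bound, smooth convergence, respectively). The inclusions $\Sigma_2\subseteq\Sigma_1\subseteq\Sigma_3$ are automatic: smooth convergence $\omega(t)\to\omega_U$ on an open set $U$ yields uniform $C^k$ bounds on $\omega(t)$, hence uniform curvature bounds on $U$, and pointwise curvature bounds trivially imply scalar curvature bounds. So the content of the proposition is the reverse inclusion $\Sigma_3\subseteq \Sigma_2$, i.e.\ that a one-sided local bound $R(t)\leq C$ on $U\times[0,T)$ forces $\omega(t)$ to converge in $C^\infty_{\mathrm{loc}}(U')$ on some neighborhood $U'\ni x$ to a K\"ahler metric. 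I would prove this in three steps, following Z.\ Zhang \cite{Zh}.

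Step 1 (volume form comparison): From $\frac{\de}{\de t}\dot\vp(t)=-R(t)$, the hypothesis $R(t)\leq C$ on $U\times[0,T)$ gives $\dot\vp(t)\geq \dot\vp(0)-Ct\geq -C'$ pointwise on $U$. Combined with the global upper bound \eqref{upd}, this shows $|\dot\vp|\leq C''$ on $U\times[0,T)$, and hence by the flow equation \eqref{ma3},
$$C^{-1}\Omega\leq \omega(t)^n\leq C\,\Omega\quad \textrm{on }U\times[0,T).$$

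Step 2 (local $C^2$ estimate): Fix $K\Subset U$ and a cutoff $\rho\in C^\infty_c(U)$ with $\rho\equiv 1$ on $K$. The goal is a bound $\tr{\omega_0}{\omega(t)}\leq C_K$ on $K\times[0,T)$. This is a localization of the proof of Theorem \ref{c22}: combining the parabolic Schwarz inequality \eqref{gettt2}
$$\left(\frac{\de}{\de t}-\Delta\right)\log\tr{\omega_0}{\omega(t)}\leq C_0\tr{\omega(t)}{\omega_0}$$
with the identity \eqref{get}
$$\left(\frac{\de}{\de t}-\Delta\right)(t\dot\vp-\vp-nt)=-\tr{\omega(t)}{\omega_0},$$
I would apply the maximum principle to a localized quantity of the shape
$$Q=\rho^2\log\tr{\omega_0}{\omega(t)}+C_0(t\dot\vp-\vp-nt)-A\vp,$$
choosing $A$ large enough to absorb the error terms produced by $\Delta\rho$ and $|\de\rho|_{\omega(t)}^2$ into $-A\Delta\vp=A(n-\tr{\omega(t)}{\hat\omega_t})$; these error terms are dominated by $\tr{\omega(t)}{\omega_0}$ up to multiplicative constants depending on the cutoff, and are controlled using the global bounds \eqref{up} and Step 1. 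Combined with the lower bound on $\omega(t)^n$ from Step 1, the elementary arithmetic--geometric mean argument used in the proof of \eqref{c0equiv} then yields $C_K^{-1}\omega_0\leq\omega(t)\leq C_K\omega_0$ on $K\times[0,T)$.

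Step 3 (higher-order estimates and convergence): The local equivalence $C_K^{-1}\omega_0\leq\omega(t)\leq C_K\omega_0$ on $K$ puts us in the hypotheses of Theorem \ref{higher}, which yields uniform bounds $\|\omega(t)\|_{C^k(K',\omega_0)}\leq C_{k,K'}$ on any $K'\Subset \mathrm{int}(K)$ for all $k\geq 0$. To upgrade precompactness to actual convergence, I would argue as in the proof of Theorem \ref{tz}: since $\dot\vp$ is bounded above globally, the function $\vp(t)-C''t$ is monotone nonincreasing in $t$ and bounded below, so it has a pointwise limit as $t\to T$; combined with the $C^k$ bounds this upgrades to $C^\infty_{\mathrm{loc}}$ convergence of $\vp(t)$ on $\mathrm{int}(K)$ to a limit $\vp_U$, and hence $\omega(t)\to \omega_U:=\hat\omega_T+\ddbar\vp_U$ in $C^\infty_{\mathrm{loc}}$ on a neighborhood of $x$. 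The form $\omega_U$ is K\"ahler there by the uniform positivity lower bound. Thus $x\notin\Sigma_3$, proving $\Sigma_3\subseteq \Sigma_2$.

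The main obstacle is Step 2: the computation leading to Theorem \ref{c22} is clean only globally, and localizing it with a cutoff introduces derivative-of-$\rho$ terms that are \emph{a priori} sign-indefinite. Absorbing them requires exploiting both the global uniform bound on $\vp$ and the local $\omega(t)^n\asymp\Omega$ comparison from Step 1, which is why the one-sided scalar curvature hypothesis is the correct assumption to run the whole argument.
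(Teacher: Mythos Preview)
Your outline is sound in Steps 1 and 3, but in Step 2 you are working harder than necessary, and the localization you propose has a gap as written. The paper avoids cutoffs entirely: applying the \emph{global} maximum principle to $\log\tr{\omega_0}{\omega}+C_0(t\dot\vp-\vp-nt)$ on all of $X\times[0,T)$ yields the pointwise bound
\[
\tr{\omega_0}{\omega(t)}\leq Ce^{-C\dot\vp(t)}
\]
everywhere on $X$ (using only the global upper bounds $\vp\leq C$, $t\leq T$). The local scalar curvature hypothesis enters only at the very end, to bound $\dot\vp$ from below on $U$ and hence turn this into $\tr{\omega_0}{\omega}\leq C$ on $U$. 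No cutoff is needed because the ``good'' term $-\tr{\omega(t)}{\omega_0}$ from \eqref{get} already matches the bad term from \eqref{gettt2} globally.

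Your proposed quantity $Q=\rho^2\log\tr{\omega_0}{\omega}+C_0(t\dot\vp-\vp-nt)-A\vp$ is problematic: outside $\mathrm{supp}\,\rho$ it reduces to $C_0(t\dot\vp-\vp-nt)-A\vp$, and since there is no global \emph{lower} bound on $\vp$ (that is precisely Conjecture \ref{lowerbd}), the term $-(C_0+A)\vp$ is not bounded above on $X$, so you cannot locate the maximum of $Q$. You also invoke $-A\Delta\vp=A(n-\tr{\omega(t)}{\hat\omega_t})$ to absorb cutoff errors, but in this setting $\hat\omega_t$ need not be positive, so $\tr{\omega(t)}{\hat\omega_t}$ does not directly dominate $\tr{\omega(t)}{\omega_0}$. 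A correct localization would need a different barrier (e.g.\ putting all terms under $\rho^2$ and controlling the boundary via Step 1), but the paper's global route sidesteps all of this: what you flagged as ``the main obstacle'' disappears once you observe that the $C^2$ estimate can be obtained globally in terms of $\dot\vp$ and then specialized to $U$.
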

\begin{proof}
It is clear that if the metric $\omega(t)$ converge smoothly to a limit K\"ahler metric on some open set $U$ then we have $|\textrm{Rm}(t)|_{\omega(t)}\leq C$ on $U$. It is also
clear that a uniform bound on the curvature tensor implies an upper bound on the scalar curvature. Therefore we are left to show that if $R(t)\leq C$ on $U\times [0,T)$, where $U$ is an open set which contains a given point $x$, then on a possibly smaller open neighborhood $U'$ of $x$ we have smooth convergence of the metrics to a limit K\"ahler metric on $U'$.

To see this, first recall from \eqref{scal} that the bound $R\geq -C$ always holds on $X\times [0,T)$. Therefore on $U\times [0,T)$ we have $|R|\leq C$, and differentiating \eqref{krf} we have
$$\frac{\de}{\de t}\dot{\vp}=-R.$$
We conclude that on $U\times [0,T)$ we have $|\ddot{\vp}|\leq C$, and integrating in time this gives $|\vp|+|\dot{\vp}|\leq C$ on this set. The quantity $t\dot{\vp}-\vp-nt$ is therefore uniformly bounded on $U\times [0,T)$ and satisfies (thanks to \eqref{get})
$$\left(\frac{\de}{\de t}-\Delta\right)(t\dot{\vp}-\vp-nt)=\tr{\omega}{(t\chi-\hat{\omega}_t)}=-\tr{\omega}{\omega_0}.$$
Recall that from \eqref{gettt2} we also have
$$\left(\frac{\de}{\de t}-\Delta\right)\log\tr{\omega_0}{\omega}\leq C\tr{\omega}{\omega_0},$$
and so
$$\left(\frac{\de}{\de t}-\Delta\right)(\log\tr{\omega_0}{\omega}+C(t\dot{\vp}-\vp-nt))\leq 0,$$
This implies that this quantity achieves its maximum at $t=0$, and so
$$\tr{\omega_0}{\omega}\leq Ce^{-Ct\dot{\vp}+C\vp+Cnt}\leq Ce^{-C\dot{\vp}},$$
holds on $X\times[0,T)$. In particular, on $U\times [0,T)$ we obtain
$$\tr{\omega_0}{\omega}\leq C.$$
From the flow equation $\omega(t)^n=e^{\dot{\vp}}\Omega$ we also have $\omega(t)^n\geq C^{-1}\omega_0^n$ on $U\times [0,T)$, and so we conclude that
$$C^{-1}\omega_0\leq \omega(t)\leq C\omega_0,$$
on $U\times [0,T)$.  The local estimates of \cite{ShW} then give uniform $C^\infty$ bounds for $\omega(t)$ on $U'\times[0,T)$, for a smaller neighborhood $U'$ of $x$, and from these we easily obtain smooth convergence to a limit K\"ahler metric on $U'$.
\end{proof}

As a corollary, we see that the scalar curvature blows up at a finite time singularity \cite{Zh}:
\begin{corollary}
For every finite time singularity of the K\"ahler-Ricci flow the singularity formation set $\Sigma$ is nonempty, and furthermore we have that $\limsup_{t\to T}\sup_X R(t)=+\infty$.
\end{corollary}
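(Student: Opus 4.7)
The plan is to derive both assertions from Proposition \ref{equiv} combined with Theorem \ref{tz}. The underlying mechanism is that an upper bound on the scalar curvature (even on an open set) has already been upgraded to full $C^\infty$ convergence of $\omega(t)$ to a limiting K\"ahler metric inside Proposition \ref{equiv}; a global such convergence would let us extend the flow past $T$, contradicting its maximality.

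First I would show $\Sigma\neq\emptyset$ by contradiction. If $\Sigma=\emptyset$, the first equivalent description in Proposition \ref{equiv} says that every $x\in X$ admits an open neighborhood $U_x$ and a constant $C_x>0$ with $R(t)\leq C_x$ on $U_x\times[0,T)$. Since $X$ is compact, finitely many such $U_x$ cover $X$, yielding a uniform bound $\sup_X R(t)\leq C$ on $[0,T)$. Applying the second equivalent description of Proposition \ref{equiv} with $U=X$ then produces a K\"ahler metric $\omega_T$ on $X$ with $\omega(t)\to\omega_T$ smoothly on $X$ as $t\to T$. Passing to cohomology gives
$$[\omega_T]=\lim_{t\to T}[\omega(t)]=[\omega_0]-2\pi T c_1(X)=[\alpha]\in\mathcal{C}_X,$$
which contradicts Theorem \ref{tz}, since the maximality of $T<\infty$ forces $[\alpha]\in\partial\mathcal{C}_X\setminus\mathcal{C}_X$.

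For the scalar curvature blow-up, suppose instead that $\limsup_{t\to T}\sup_X R(t)<+\infty$. Then there exist $C>0$ and $t_0\in(0,T)$ such that $R(t)\leq C$ on $X\times[t_0,T)$. On the compact slab $X\times[0,t_0]$ the scalar curvature is automatically bounded by smoothness of the flow, so we get a global upper bound $R(t)\leq C'$ on all of $X\times[0,T)$. The same argument as above then produces a K\"ahler limit $\omega_T$ on $X$ with $\omega(t)\to\omega_T$ smoothly, and Theorem \ref{tz} again yields a contradiction with $T$ being the maximal existence time.

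The substantive analytic content is already contained in Proposition \ref{equiv} (turning an upper scalar curvature bound into $C^\infty_{\mathrm{loc}}$ convergence of the metrics), so no genuine obstacle remains; the corollary reduces to a compactness argument followed by a cohomological application of Theorem \ref{tz}.
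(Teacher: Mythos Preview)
Your proof is correct and follows essentially the same approach as the paper: both derive the corollary directly from Proposition~\ref{equiv} together with the fact that $[\alpha]\notin\mathcal{C}_X$. One minor remark: for the first assertion you take a small detour through the scalar-curvature characterization and a compactness argument, whereas the paper (and you could too) uses the second characterization in Proposition~\ref{equiv} directly---if $\Sigma=\emptyset$, every point has a neighborhood on which $\omega(t)$ converges smoothly to a K\"ahler metric, and these local limits patch (by uniqueness of limits) to a global K\"ahler metric in $[\alpha]$, a contradiction. Your phrasing ``applying the second equivalent description with $U=X$'' should really be read as this patching argument, since the proposition only furnishes small neighborhoods; but the content is the same.
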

\begin{proof}
Thanks to Proposition \ref{equiv}, if we had $\Sigma=\emptyset$ then the metrics $\omega(t)$ would converge in $C^\infty(X)$ to a limiting K\"ahler metric in the class $[\alpha]$, contradicting the fact that $[\alpha]$ is not in the K\"ahler cone. The blow up of the supremum of the scalar curvature also follows directly from Proposition \ref{equiv}.
\end{proof}

Assuming Theorem \ref{fint} we can now prove Theorem \ref{good}.
\begin{proof}[Proof of Theorem \ref{good}]
If $x\not\in\Null(\alpha)$, then by Theorem \ref{fint} the metrics $\omega(t)$ converge smoothly in a neighborhood of $x$ to a limiting K\"ahler metric. In particular
the curvature of $\omega(t)$ remains uniformly bounded near $x$, and therefore $x\not\in\Sigma$.

On the other hand, given $x\in \Null(\alpha)$, suppose that there exist an open set $U$ containing $x$, and a K\"ahler metric $\omega_T$ on $U$ such that $\omega(t)$ converges to $\omega_T$ in $C^\infty(U)$ as $t\to T$. Then, by definition of $\Null(\alpha)$, there is a positive-dimensional irreducible analytic subvariety $V\subset X$ which contains $x$ and with
$$\int_V\alpha^k=0,$$
where $k=\dim V$, and as usual $\alpha=\omega_0-2\pi T\Ric(\omega_0)$. Then we have that as $t\to T$ the integral
$$\int_V \omega(t)^k$$
converges to zero, since $[\omega(t)]\to [\alpha]$. But we also have
$$\int_V \omega(t)^k\geq \int_{V\cap U} \omega(t)^k\overset{t\to T}{\xrightarrow{\hspace*{1cm}}}\int_{V\cap U}\omega_T^k>0,$$
which is a contradiction. Therefore, using Proposition \ref{equiv}, we see that $x\in\Sigma$.
\end{proof}

We now turn to the proof of Theorem \ref{fint}. The key ingredient is the following theorem, which provides a suitable barrier function, and which is a general statement independent of the K\"ahler-Ricci flow.

\begin{theorem}[Collins-T. \cite{CT}]\label{ct}
Let $(X,\omega_0)$ be a compact K\"ahler manifold and $\alpha$ a closed real $(1,1)$ form whose class $[\alpha]$ is nef, and with $\int_X\alpha^n>0$. Then there exists an upper semicontinuous $L^1$ function $\psi:X\to\mathbb{R}\cup\{-\infty\},$ which equals $-\infty$ on $\Null(\alpha)$, which is finite and smooth on $X\backslash \Null(\alpha)$, and such that
$$\alpha+\ddbar\psi\geq \ve\omega_0,$$
on $X\backslash \Null(\alpha)$, for some $\ve>0$.
\end{theorem}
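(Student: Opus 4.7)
The plan is to produce $\psi$ as the local potential of a K\"ahler current $T = \alpha + \ddbar\psi$ in $[\alpha]$ whose polar set is exactly $\Null(\alpha)$. The whole strategy is the K\"ahler-analytic transcription of the algebraic case: when $X$ is projective and $\alpha = c_1(L)$ for a big and nef $\mathbb{Q}$-line bundle $L$, Kodaira's lemma writes $mL = A + D$ with $A$ ample and $D$ effective, the function $\log|s_D|^2$ (with $s_D$ the defining section of $D$) gives such a $\psi$, and Nakamaye's theorem identifies the singular locus $\mathrm{supp}(D)$ with $\Null(\alpha)$.

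The first step is to obtain \emph{some} K\"ahler current in $[\alpha]$: a quasi-plurisubharmonic $\psi_0$ and $\ve_0>0$ with $\alpha+\ddbar\psi_0\geq \ve_0\omega_0$. This is exactly the statement that $[\alpha]$ is big, and follows from $\int_X\alpha^n>0$ together with nefness via a mass-concentration argument of Demailly--P\u{a}un: one solves a degenerate complex Monge-Amp\`ere equation with right-hand side concentrated in a small ball, and exploits the positive top self-intersection to force the resulting potential to dominate $\omega_0$. Next, Demailly's regularization theorem (via multiplier ideal sheaves associated to $k\psi_0$) produces quasi-psh functions $\psi_k$ with analytic singularities along analytic subvarieties $V_k\subset X$, still satisfying $\alpha+\ddbar\psi_k\geq \tfrac{\ve_0}{2}\omega_0$ and smooth on $X\setminus V_k$. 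The decreasing intersection $E_{nK}(\alpha):=\bigcap_k V_k$, the non-K\"ahler locus, is itself an analytic subvariety by Noetherianity.

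The key step, which I expect to be the main obstacle, is to prove
\[
E_{nK}(\alpha) = \Null(\alpha).
\]
The inclusion ``$\supseteq$'' is the easier direction: if $W$ is an irreducible positive-dimensional subvariety with $\int_W\alpha^{\dim W}=0$ and $W\not\subset V_k$ for some $k$, then restricting the K\"ahler current $\alpha+\ddbar\psi_k$ to $W$ (using that its singularities miss a Zariski-dense open subset of $W$) contradicts the vanishing of $\int_W\alpha^{\dim W}$ by comparison with $\int_W\omega_0^{\dim W}>0$. The hard direction ``$\subseteq$'' is the Nakamaye-type statement in the K\"ahler setting: given a point $x\notin\Null(\alpha)$, one must construct a K\"ahler current in $[\alpha]$ whose singular locus avoids $x$. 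I would handle this by a delicate mass-concentration argument, running a family of degenerate complex Monge-Amp\`ere equations whose right-hand sides concentrate near $x$, and combining them with uniform $L^\infty$ estimates \`a la Ko\l odziej; the crucial use of hypothesis is that for every irreducible analytic subvariety $V\ni x$ one has $\int_V\alpha^{\dim V}>0$, and this positivity is what propagates the mass bounds to yield the desired current.

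Finally, with $E_{nK}(\alpha)=\Null(\alpha)$ in hand, I would pick finitely many $\psi_{k_1},\ldots,\psi_{k_N}$ whose singular subvarieties intersect exactly in $\Null(\alpha)$ (possible by Noetherianity since $\bigcap_k V_k=\Null(\alpha)$), and take Demailly's regularized maximum of these, obtaining a single quasi-psh function $\psi$ whose singular locus is precisely $\Null(\alpha)$ and which still satisfies $\alpha+\ddbar\psi\geq \ve\omega_0$ with $\ve=\ve_0/2$. Such a $\psi$ is automatically upper semicontinuous and in $L^1(X)$ (being quasi-psh on a compact K\"ahler manifold), finite and smooth on $X\setminus\Null(\alpha)$, and identically $-\infty$ on $\Null(\alpha)$, which is all that the statement requires.
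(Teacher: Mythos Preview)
The paper does not prove Theorem~\ref{ct}: immediately after stating it, the text says ``The proof of Theorem~\ref{ct} is quite technical and involves very different techniques from the ones in these notes. Therefore we will skip its proof, referring the interested reader to the original article \cite{CT} or to the survey \cite{To3}.'' So there is no in-paper proof to compare against.

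Your outline matches the architecture of the Collins--Tosatti argument: produce a K\"ahler current via Demailly--P\u{a}un, regularize to analytic singularities via Demailly, identify the non-K\"ahler locus $E_{nK}(\alpha)$ with $\Null(\alpha)$, then glue finitely many regularized potentials via a regularized maximum. The easy inclusion $\Null(\alpha)\subseteq E_{nK}(\alpha)$ is as you describe.

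There is a genuine gap in your plan for the hard inclusion $E_{nK}(\alpha)\subseteq\Null(\alpha)$. You propose to obtain, for each $x\notin\Null(\alpha)$, a K\"ahler current smooth near $x$ by ``mass concentration'' and degenerate Monge--Amp\`ere equations with sources near $x$, using Ko\l odziej-type $L^\infty$ bounds. This is not how the result is proved, and it is unclear that such an approach can succeed: Ko\l odziej's estimates give boundedness, not smoothness, and mass concentration \`a la Demailly--P\u{a}un is designed to produce \emph{some} K\"ahler current, not one with prescribed smooth locus. The actual mechanism in \cite{CT} is an \emph{extension theorem for K\"ahler currents}: given $x\notin\Null(\alpha)$ and a K\"ahler current $T$ in $[\alpha]$ with analytic singularities along some $V$, one works by induction on $\dim V$, restricts $[\alpha]$ to the components of $V$ through $x$ (where the restriction is still big by the $\Null$ hypothesis), resolves singularities, and extends a K\"ahler current on $V$ back to $X$ via a gluing argument with local potentials on a Stein cover. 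The positivity $\int_W\alpha^{\dim W}>0$ for all $W\ni x$ enters by guaranteeing bigness of the restricted class at each inductive step, not via mass bounds in a Monge--Amp\`ere equation. Your sketch misidentifies the analytic engine of the hard step.
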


Note that we have that $\psi$ is globally bounded above on $X$, and so up to subtracting a constant from it we may assume that $\psi\leq 0$ on $X$. The proof of Theorem \ref{ct} is quite technical and involves very different techniques from the ones in these notes. Therefore we will skip its proof, referring the interested reader to the original article \cite{CT} or to the survey \cite{To3}. For the reader who is familiar with these concepts (see e.g. \cite{CT}), Theorem \ref{ct} easily implies that the null locus of a nef and big $(1,1)$ class on a compact K\"ahler manifold equals its non-K\"ahler locus, which is also the complement of its ample locus.

On $X\backslash \Null(\alpha)$ we have
\begin{equation}\label{positive}
\begin{split}
\hat{\omega}_t+\ddbar\psi&=\frac{1}{T}((T-t)(\omega_0+\ddbar\psi)+t(\alpha+\ddbar\psi))\\
&\geq \frac{T-t}{T}(\omega_0-\alpha)+\frac{t}{T}\ve\omega_0\\
&\geq \frac{\ve}{2}\omega_0,
\end{split}
\end{equation}
if $t\in [T-\delta,T+\delta],$ for some $\delta>0$.

\begin{lemma}\label{uno}
There is a constant $C>0$ such that
$$\dot{\vp}\geq C\psi-C,$$
on $X\times [0,T)$. Equivalently, we have
$$\omega^n\geq C^{-1}e^{C\psi}\omega_0^n.$$
\end{lemma}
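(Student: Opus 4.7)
My plan is to apply the parabolic minimum principle to the quantity
$$Q := (T-t)\dot{\vp} + \vp + nt - \psi,$$
viewed as a function on $X \times [0,T)$ taking the value $+\infty$ on $\Null(\alpha)\times[0,T)$, where $\psi$ is the barrier of Theorem \ref{ct}, normalized (by subtracting a constant) so that $\psi \leq 0$ on $X$.

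The key computation is the evolution of $Q$. Using $(\de_t - \Delta)\dot{\vp} = \tr{\omega}{\chi}$, $(\de_t - \Delta)\vp = \dot{\vp} - n + \tr{\omega}{\hat{\omega}_t}$, and the algebraic identity $(T-t)\chi + \hat{\omega}_t = \alpha$ (which follows at once from $\hat{\omega}_t = \omega_0 + t\chi$ and $T\chi = \alpha - \omega_0$), we obtain
$$(\de_t - \Delta)\bigl((T-t)\dot{\vp} + \vp + nt\bigr) = \tr{\omega}{\alpha}.$$
Combined with $(\de_t - \Delta)(-\psi) = \Delta\psi \geq \tr{\omega}{(\ve\omega_0 - \alpha)}$ on $X\setminus\Null(\alpha)$ (Theorem \ref{ct}), the $\tr{\omega}{\alpha}$ terms cancel exactly and we obtain
$$(\de_t - \Delta)Q \geq \ve\,\tr{\omega}{\omega_0} > 0 \quad \text{on }(X\setminus\Null(\alpha))\times[0,T).$$

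Next I apply the minimum principle on $X \times [0, T-\eta]$ for small $\eta > 0$. Because $Q \equiv +\infty$ on $\Null(\alpha)$, any minimum of $Q$ is attained at some point in $(X\setminus\Null(\alpha)) \times [0, T-\eta]$ where $Q$ is smooth. The strict inequality $(\de_t - \Delta)Q > 0$ rules out a minimum at $t_0 > 0$ (which would force $(\de_t - \Delta)Q \leq 0$), so the minimum is attained at $t = 0$, where $Q(\cdot, 0) = T\dot{\vp}(0) - \psi \geq T\inf_X\log(\omega_0^n/\Omega) \geq -C$, using $\psi \leq 0$. Letting $\eta \to 0$ yields $Q \geq -C$ on $X \times [0,T)$.

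The final step converts this into the stated bound. From $Q \geq -C$ we rearrange $(T-t)\dot{\vp} \geq \psi - \vp - nt - C$, i.e.\ $T\dot{\vp} \geq t\dot{\vp} + \psi - \vp - nt - C$. Applying $\dot{\vp} \leq C$ (from \eqref{upd}) to bound $t\dot{\vp} \leq TC$, and $\vp \leq C$ (from \eqref{up}) together with $0 \leq t \leq T$, the right-hand side is at least $\psi - C'$, giving $\dot{\vp} \geq \psi/T - C'/T$, which can be written as $\dot{\vp} \geq C\psi - C$ after enlarging the constant (using $\psi \leq 0$). The equivalent form $\omega^n \geq C^{-1}e^{C\psi}\omega_0^n$ then follows from $\omega^n = e^{\dot{\vp}}\Omega$ and the boundedness of $\Omega/\omega_0^n$. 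The main subtlety in the argument is the $(T-t)$ prefactor on $\dot{\vp}$ in $Q$: one might fear this causes the resulting lower bound to degenerate as $t \to T$, but in fact that prefactor is precisely what enables the cancellation of the $\tr{\omega}{\alpha}$ terms, and trading the degeneration against the already-established upper bound on $\dot{\vp}$ converts it into a uniform bound on $X \times [0,T)$.
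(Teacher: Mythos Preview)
Your evolution computation is correct and the minimum principle does give $Q \geq -C$ on $X\times[0,T)$. The error is in the final conversion step. From $(T-t)\dot{\vp} \geq \psi - C'$ you rearrange to $T\dot{\vp} \geq t\dot{\vp} + \psi - C'$ and then invoke the upper bound $\dot{\vp}\leq C$ to ``bound $t\dot{\vp}\leq TC$.'' But to lower-bound the right-hand side you need a \emph{lower} bound on $t\dot{\vp}$, not an upper bound; the inequality goes the wrong way. What you would need is $t\dot{\vp}\geq -C$, which is precisely (up to the factor $t\leq T$) the statement you are trying to prove, so the argument is circular. Concretely, the bound $(T-t)\dot{\vp}\geq\psi-C'$ degenerates as $t\to T$ because the coefficient $T-t$ vanishes, and nothing you have established so far prevents $\dot{\vp}$ from blowing down to $-\infty$ near $t=T$.

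The paper's proof uses essentially the same barrier quantity but with coefficient $(T+\delta-t)$ in place of $(T-t)$, for a small fixed $\delta>0$. The algebra then gives
\[
\left(\frac{\de}{\de t}-\Delta\right)Q=\tr{\omega}{(\hat{\omega}_{T+\delta}+\ddbar\psi)},
\]
and since $\hat{\omega}_{T+\delta}=\alpha+\delta\chi$, one has $\hat{\omega}_{T+\delta}+\ddbar\psi\geq\ve\omega_0+\delta\chi\geq\frac{\ve}{2}\omega_0$ for $\delta$ small (this is \eqref{positive}). The minimum principle (applied only on $[T-\delta,T')$, since on $[0,T-\delta]$ everything is trivially bounded) again yields $Q\geq -C$, but now $(T+\delta-t)\geq\delta>0$ uniformly on $[0,T)$, so dividing through gives $\dot{\vp}\geq(\psi-C)/\delta\geq C\psi-C$ directly, with no circularity. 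Your approach is salvaged by this single modification.
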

\begin{proof}
Let
$$Q=(T+\delta-t)\dot{\vp}+\vp-\psi+nt,$$
which is smooth on $(X\backslash \Null(\alpha))\times[0,T),$ equal to $+\infty$ on $\Null(\alpha)$, and is bounded below on $X$ for each fixed $t\in [0,T)$.
Therefore $Q\geq -C$ holds on $X\times [0,T-\delta]$, for some uniform constant $C$.

Our goal is to show that in fact $Q\geq -C$ on $X\times [0,T)$. Given $T'\in (T-\delta,T)$ suppose that the minimum of $Q$ on $X\times [T-\delta,T']$ is achieved
at a point $(x,t)$, with $t\in (T-\delta,T']$. We must have $x\not\in\Null(\alpha)$, and so at $(x,t)$ we have
\[\begin{split}
0\geq \left(\frac{\de}{\de t}-\Delta\right)Q&=\tr{\omega}{((T+\delta-t)\chi+\hat{\omega}_t+\ddbar\psi)}\\
&=\tr{\omega}{(\hat{\omega}_{T+\delta}+\ddbar\psi)}\\
&\geq \frac{\ve}{2}\tr{\omega}{\omega_0}>0,
\end{split}\]
using \eqref{positive}. This contradiction shows that the minimum of $Q$ on $X\times [T-\delta,T']$ is achieved at time $T-\delta$, where we have $Q\geq -C$. Since $T'<T$ was arbitrary, we conclude that $Q\geq -C$ on $X\times [0,T)$. This gives
$$(T+\delta-t)\dot{\vp}\geq -\vp+\psi-nt-C\geq \psi-C,$$
$$\dot{\vp}\geq \frac{\psi-C}{T+\delta-t}\geq C\psi-C,$$
since $T+\delta-t\geq \delta$ and $\psi\leq 0$.

The equivalent estimate for the volume form follows from the flow equation.
\end{proof}

\begin{lemma}\label{tre}
There is a constant $C>0$ such that
$$\tr{\omega_0}{\omega}\leq Ce^{-C\psi},$$
on $X\times [0,T)$.
\end{lemma}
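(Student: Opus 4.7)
The strategy is to redo the parabolic Schwarz-lemma computation of Theorem \ref{c22} verbatim, but replace the lost uniform lower bound on $\dot{\vp}$ (which was available in Section \ref{sectmax} because $\hat\omega_t$ was K\"ahler on $[0,T']$) by the weaker bound $\dot\vp \geq C\psi - C$ coming from Lemma \ref{uno}. The function $\psi$ from Theorem \ref{ct} then plays the role of the auxiliary lower barrier.

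First, recall from \eqref{gettt2} that
$$\left(\frac{\de}{\de t}-\Delta\right)\log\tr{\omega_0}{\omega}\leq C_0\tr{\omega}{\omega_0},$$
and from \eqref{get} that
$$\left(\frac{\de}{\de t}-\Delta\right)(t\dot{\vp}-\vp-nt)=-\tr{\omega}{\omega_0}.$$
Adding $C_0$ times the second identity to the first yields
$$\left(\frac{\de}{\de t}-\Delta\right)\bigl(\log\tr{\omega_0}{\omega}+C_0(t\dot{\vp}-\vp-nt)\bigr)\leq 0,$$
on $X\times[0,T)$. For each $T''<T$ the quantity in brackets is smooth on $X\times[0,T'']$, so a standard parabolic maximum principle (subtracting $\ve t$ and letting $\ve\to 0$) shows that its supremum is attained at $t=0$, where it equals $\log n$. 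Letting $T''\to T$, we obtain
$$\log\tr{\omega_0}{\omega}\leq \log n -C_0(t\dot\vp-\vp-nt) \quad\text{on }X\times[0,T).$$

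Second, I would estimate the right-hand side using the inputs we already have: the upper bound $\vp\leq C$ from \eqref{up}, the trivial bound $t\leq T$, and crucially the lower bound $\dot\vp\geq C\psi-C$ from Lemma \ref{uno}. Since $\psi\leq 0$ (after normalizing) and $t\in[0,T]$, this gives
$$-C_0 t\dot\vp \leq -C_0 C t \psi + C_0 C t \leq -C'\psi+C',$$
and combining with $C_0\vp+C_0 nt\leq C$ we conclude
$$\log\tr{\omega_0}{\omega}\leq C-C\psi \quad\text{on }X\times[0,T),$$
for a uniform constant $C$. Exponentiating yields $\tr{\omega_0}{\omega}\leq Ce^{-C\psi}$, which is the claim.

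The only conceptual obstacle is the unavailability of a two-sided bound on $\dot\vp$: in the maximal-existence-time setting of Section \ref{sectmax}, $\hat\omega_t$ was a K\"ahler metric up to and including $T'$, which forced $\dot\vp$ to be uniformly bounded; here $\hat\omega_T=\alpha$ is only nef and big and degenerates along $\Null(\alpha)$, so $\dot\vp$ is genuinely unbounded below near the null locus. The role of Theorem \ref{ct} together with Lemma \ref{uno} is precisely to quantify this degeneration by the barrier $\psi$, after which the rest of the argument is a direct parabolic maximum-principle computation parallel to Theorem \ref{c22}.
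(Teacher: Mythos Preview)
Your proof is correct and follows essentially the same route as the paper's: combine the Schwarz-type inequality \eqref{gettt2} with the identity \eqref{get} to get a subsolution, apply the maximum principle to push the supremum to $t=0$, and then feed in the lower bound $\dot\vp\geq C\psi-C$ from Lemma \ref{uno} (together with $\vp\leq C$ and $t\leq T$) to convert the resulting inequality into $\log\tr{\omega_0}{\omega}\leq C-C\psi$. The only differences are cosmetic: you spell out the $\ve t$ trick and the case-splitting on the sign of $\dot\vp$ more explicitly than the paper does.
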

\begin{proof}
From \eqref{gettt2} we have
$$\left(\frac{\de}{\de t}-\Delta\right)\log\tr{\omega_0}{\omega}\leq C\tr{\omega}{\omega_0},$$
and from \eqref{get}
$$\left(\frac{\de}{\de t}-\Delta\right)(t\dot{\vp}-\vp-nt)=\tr{\omega}{(t\chi-\hat{\omega}_t)}=-\tr{\omega}{\omega_0},$$
and so
$$\left(\frac{\de}{\de t}-\Delta\right)(\log\tr{\omega_0}{\omega}+C(t\dot{\vp}-\vp-nt))\leq 0,$$
and by the maximum principle, the maximum of this quantity on $X\times[0,T)$ is achieved at $t=0$. This gives
$$\log\tr{\omega_0}{\omega}\leq C(-t\dot{\vp}+\vp+nt)+C\leq C-C\dot{\vp}\leq C-C\psi,$$
on $X\times[0,T)$, where we used Lemma \ref{uno}. Exponentiating gives what we want.
\end{proof}

\begin{proof}[Proof of Theorem \ref{fint}]
Given a compact set $K\subset X\backslash \Null(\alpha)$ with nonempty interior, we have $\inf_K\psi\geq -C_K$ (here and in the following we denote by $C_K$ a constant which depends on the compact set), and so thanks to Lemmas \ref{uno} and \ref{tre} we see that
$$C_K^{-1}\omega_0\leq \omega(t)\leq C_K \omega_0,$$
on $K\times[0,T)$. The local estimates of \cite{ShW} then give uniform $C^k$ bounds for $\omega(t)$ on compact subsets of $X\backslash \Null(\alpha)$, and arguing as in the proof of Theorem \ref{tz}, we easily obtain a K\"ahler metric
$\omega_T$ on $X\backslash\Null(\alpha)$ such that $\omega(t)$ converge to $\omega_T$ in $C^\infty_{\mathrm{loc}}(X\backslash\Null(\alpha))$ as $t\to T$.
\end{proof}

\subsection{A conjectural uniform bound for the potential}
We now mention a conjecture raised explicitly by Zhang \cite[Conjecture 5.1]{Zh2}:

\begin{conjecture}\label{lowerbd}
For every finite time solution of \eqref{ma3}, there is a constant $C>0$ such that
$$\vp(t)\geq -C,$$
on $X\times[0,T)$.
\end{conjecture}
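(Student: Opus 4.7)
The plan is to convert the conjecture into a pluripotential-theoretic $L^\infty$ estimate. By \eqref{upd} we have $\dot{\vp}(t)\leq C$, so the flow equation \eqref{ma3} reads, at each fixed $t\in[0,T)$, as the elliptic complex Monge-Amp\`ere equation
$$(\hat{\omega}_t+\ddbar\vp(t))^n=e^{\dot{\vp}(t)}\Omega\leq C\,\Omega,$$
with uniformly bounded right-hand side, for $\vp(t)$ viewed as a $\hat{\omega}_t$-plurisubharmonic function. I would split $\vp(t)=c(t)+\ti{\vp}(t)$ with $c(t):=\sup_X\vp(t)$ (so $c(t)\leq C$ by \eqref{up}) and $\ti{\vp}(t)\leq 0$ a normalized $\hat{\omega}_t$-psh function. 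The conjecture then reduces to two uniform estimates: $\|\ti{\vp}(t)\|_{L^\infty}\leq C$ and $c(t)\geq -C$.

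For the first estimate I would apply Ko\l odziej's $L^\infty$ estimate, in the form extended to degenerate cohomology classes by Eyssidieux-Guedj-Zeriahi. When $[\alpha]$ is nef and big, the limit class is controlled via the barrier function $\psi$ of Theorem \ref{ct}, and Lemma \ref{uno} already provides the two-sided mass bound $C^{-1}e^{C\psi}\omega_0^n\leq\omega(t)^n\leq C\omega_0^n$, which is exactly the input one needs for a uniform $L^\infty$ bound on $\ti{\vp}(t)$. For the second estimate I would differentiate the integral average $I(t)=V^{-1}\int_X\vp(t)\,\omega_0^n$, so that $c(t)-I(t)$ is bounded once $\|\ti{\vp}(t)\|_{L^\infty}$ is, and bound
$$I'(t)=V^{-1}\int_X\log\frac{\omega(t)^n}{\Omega}\,\omega_0^n$$
from below using a Jensen-type inequality together with the cohomological identity $\int_X\omega(t)^n=\int_X\hat{\omega}_t^n$, which stays bounded away from zero in the noncollapsed case.

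The main obstacle is the collapsed case $\int_X\alpha^n=0$. There the reference forms $\hat{\omega}_t$ themselves degenerate as $t\to T$, with $\int_X\hat{\omega}_t^n\to 0$, and no analogue of the barrier function of Theorem \ref{ct} is available; the capacity-based pluripotential arguments underlying Ko\l odziej-EGZ lose their uniformity in $t$, so the first step of the plan breaks down. A proof in this regime would seem to require either uniform pluripotential estimates along a degenerating family of non-big nef classes (a pluripotential analogue of the collapsing results of Section \ref{sectinf}), or a refined maximum-principle argument employing a carefully chosen auxiliary function of $\vp$, $\dot{\vp}$, and a geometric potential built from the conjectural limiting fibration. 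Both routes appear out of reach with currently available techniques, which is presumably why the conjecture remains open.
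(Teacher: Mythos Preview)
This statement is a conjecture, and the paper does not prove it. What the paper does is (i) show that Conjecture~\ref{lowerbd} is equivalent to Conjecture~\ref{bpf} (that the limiting class $[\alpha]$ admits a closed positive current with bounded potential), and (ii) prove the partial bound $\vp(t)\geq\nu\psi-C_\nu$ for every $\nu>0$ (Proposition~\ref{due}), where $\psi$ is the barrier of Theorem~\ref{ct}, equal to $-\infty$ on $\Null(\alpha)$.

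Your plan for the noncollapsed case has a real gap at the first estimate. The uniform Ko\l odziej/EGZ $L^\infty$ bound along the degenerating family $\hat{\omega}_t\to\alpha$ does \emph{not} follow from the data you list (volume bounded below, density in $L^\infty$, existence of the singular barrier $\psi$). The known uniform versions of these estimates for families of big classes require, as input, a \emph{bounded} $\alpha$-psh function---which is exactly Conjecture~\ref{bpf}. With only the singular $\psi$ available, pluripotential theory (or the direct maximum-principle argument of Proposition~\ref{due}) yields at best $\ti{\vp}(t)\geq C\psi-C$, which still goes to $-\infty$ on $\Null(\alpha)$. So your step is circular via the paper's equivalence, and the noncollapsed case is open as well (for irrational nef and big classes on general K\"ahler manifolds), not only the collapsed one. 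A minor secondary point: in your second estimate, Jensen's inequality for $\log$ goes the wrong way; the workable lower bound on $I'(t)$ comes instead from integrating the pointwise inequality $\dot{\vp}\geq C\psi-C$ of Lemma~\ref{uno} against $\omega_0^n$ and using that $\psi\in L^1$.
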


Note that we do not necessarily assume that the singularity is non-collapsed.
Consider now the following conjecture, which is not about the K\"ahler-Ricci flow.

\begin{conjecture}\label{bpf}
Let $X$ be a compact K\"ahler manifold and $[\alpha]$ a nef $(1,1)$ class such that $[\alpha]+\lambda c_1(X)$ is a K\"ahler class for some $\lambda>0$.
Then a closed positive current with minimal singularities in the class $[\alpha]$ has bounded potential.
\end{conjecture}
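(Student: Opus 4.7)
The plan is to reduce Conjecture \ref{bpf} to Conjecture \ref{lowerbd} via the K\"ahler-Ricci flow, and then to attempt a direct analytic attack on one of the two equivalent statements.

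First I would observe that the conjecture is equivalent to the existence of a bounded $\alpha$-psh function on $X$. Indeed, if $u \in L^\infty(X)$ is $\alpha$-psh, then for any other $\alpha$-psh function $v$ the trivial bound $v \leq \sup_X v \leq u + (\sup_X v - \inf_X u)$ shows that $\alpha + \ddbar u$ is a closed positive current with minimal singularities (in the sense of Demailly, where the comparison constant may depend on $v$) and bounded potential; conversely, any bounded potential for a current with minimal singularities is by definition a bounded $\alpha$-psh function. We may assume $[\alpha]$ is not K\"ahler, as otherwise the assertion is trivial.

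To produce such a bounded function I would use the flow. By Proposition \ref{nef}, starting \eqref{krf} from a K\"ahler metric $\omega_0 \in [\alpha]+\lambda c_1(X)$ yields a finite-time singularity at $T = \lambda/(2\pi)$ with limiting class $[\alpha]$, and the potentials $\vp(t)$ solving \eqref{ma3} satisfy $\vp(t) \leq C$ by \eqref{up}. If the conjectural lower bound $\vp(t) \geq -C$ of Conjecture \ref{lowerbd} were known, then standard $L^1$-compactness of uniformly bounded families of quasi-psh functions, combined with the smooth convergence $\hat{\omega}_t \to \alpha$, would produce a limit $\vp_T := \bigl(\limsup_{t \to T} \vp(t)\bigr)^*$ which is a bounded $\alpha$-psh function, yielding the desired current. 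Conversely, a bounded $\alpha$-psh function $\psi$ should yield Conjecture \ref{lowerbd} by a maximum principle argument against the modified reference form $\omega'_t := \hat{\omega}_t + (t/T)\ddbar\psi = \tfrac{T-t}{T}\omega_0 + \tfrac{t}{T}(\alpha + \ddbar\psi) \geq \tfrac{T-t}{T}\omega_0$, in the spirit of Lemma \ref{uno} (though some care is required since $\omega'_t$ need not be smooth). Thus the two conjectures are equivalent.

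The principal obstacle is to actually establish either statement. The most natural direct attack is a Kolodziej-type $L^\infty$ estimate on $\vp(t)$: since $\dot{\vp}(t) \leq C$ by \eqref{upd}, the flow equation \eqref{ma3} gives that the Monge-Amp\`ere density $\omega(t)^n/\Omega = e^{\dot\vp(t)}$ is uniformly bounded from above. To deduce $\vp(t) \in L^\infty$ one would apply Kolodziej's theorem, but the difficulty is that $\vp(t)$ is $\hat{\omega}_t$-psh rather than psh against a fixed reference; converting to a fixed reference $\omega_0$ introduces cohomological error terms which become singular as the class $[\hat{\omega}_t]$ degenerates to the nef class $[\alpha]$. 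When $[\alpha]$ is nef and big these singularities can plausibly be absorbed using the barrier $\psi$ of Theorem \ref{ct}, but the conjecture is stated without any bigness hypothesis, and in the genuinely degenerate case $\int_X \alpha^n = 0$ the Monge-Amp\`ere measure $(\alpha + \ddbar u)^n$ vanishes identically for every bounded $\alpha$-psh $u$ (since its total mass is $[\alpha]^n = 0$), so Kolodziej's approach breaks down and fundamentally new pluripotential techniques tailored to nef non-big classes --- perhaps exploiting the Ricci curvature hypothesis $[\alpha] + \lambda c_1(X) \in \mathcal{C}_X$ more directly, e.g.~through an Aubin-Yau type equation $(\alpha + \ddbar\vp)^n = e^{\vp/\lambda}\Omega$ with $\Ric(\Omega) = (\omega_0 - \alpha)/\lambda$ --- will be needed.
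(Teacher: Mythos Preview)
The statement you are addressing is labelled a \emph{conjecture} in the paper, and the paper does not prove it. What the paper does prove is the Proposition immediately following it: Conjectures \ref{lowerbd} and \ref{bpf} are equivalent. Your proposal correctly identifies this equivalence as the only available content, and your final paragraph is an honest acknowledgment that neither conjecture can currently be established; the discussion of why Ko\l{}odziej-type estimates fail in the non-big case is accurate and to the point. So there is no ``gap'' in the sense of a wrong claim --- you simply have not proved the conjecture, and neither has the paper.

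On the equivalence itself, your outline is close to the paper's but differs in two technical respects worth noting. For the implication \ref{lowerbd} $\Rightarrow$ \ref{bpf}, you pass to a limit $\vp_T=(\limsup_{t\to T}\vp(t))^*$ via $L^1$-compactness; the paper is more direct and avoids any limit: since $\alpha+\ddbar\vp(t)=\omega(t)+(T-t)\chi>(T-t)\chi$, the smooth functions $\vp(t)$ themselves, for $t$ close to $T$, already witness the $\eta_\ve$-formulation of bounded minimal singularities. For the converse \ref{bpf} $\Rightarrow$ \ref{lowerbd}, you work with a single bounded $\alpha$-psh barrier $\psi$ and flag the regularity issue; the paper sidesteps this entirely by invoking the equivalent \emph{smooth} reformulation (via Demailly regularization) --- a family $\eta_\ve\in C^\infty$ with $\alpha+\ddbar\eta_\ve\ge -\ve\omega_0$ and $|\eta_\ve|\le C_0$ --- and applies the maximum principle to the smooth quantity $(\vp+(T-t)\dot\vp+nt)+\ve(\vp-t\dot\vp+nt)-\eta_\ve$, then lets $\ve\to 0$. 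This is cleaner than your $\omega'_t$ approach and makes the ``some care is required'' caveat unnecessary.
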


The condition that a closed positive current with minimal singularities in the class $[\alpha]$ has bounded potential, is equivalent to the following statement (which does not involve currents, and can be taken as the definition in these notes): there is a constant $C_0>0$ such that for every $\ve>0$ there exists $\eta_\ve\in C^\infty(X,\mathbb{R})$ such that $\alpha+\ddbar\eta_\ve\geq -\ve\omega_0$ and
$\sup_X |\eta_\ve|\leq C_0$. The equivalence follows immediately from Demailly's regularization theorem for closed positive $(1,1)$ currents \cite{De}. In particular this condition holds if the class $[\alpha]$ has a smooth semipositive representative.

Conjecture \ref{bpf} is a transcendental (weak) version of the base-point-free theorem \cite{KMM}, which implies that Conjecture \ref{bpf} is true when $X$ is projective and
$[\alpha]\in (H^{1,1}(X,\mathbb{R})\cap H^2(X,\mathbb{Q}))\otimes\mathbb{R}=:NS^1(X,\mathbb{R})$. In fact, in this case the class $[\alpha]$ even has a smooth semipositive representative, and Tian conjectures in \cite{Ti3} that this is the case also in the setting of Conjecture \ref{bpf}.

Interestingly, these two conjectures are equivalent:
\begin{proposition}
Conjectures \ref{lowerbd} and \ref{bpf} are equivalent.
\end{proposition}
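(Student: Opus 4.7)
The plan is to prove each implication separately. For the direction Conjecture \ref{lowerbd} $\Rightarrow$ Conjecture \ref{bpf}, assume $[\alpha]$ is nef with $[\alpha] + \lambda c_1(X)$ K\"ahler; if $[\alpha]$ is itself K\"ahler the conclusion is trivial, so we may suppose $[\alpha] \in \de\mathcal{C}_X$. Pick $\omega_0$ K\"ahler in $[\alpha] + \lambda c_1(X)$; by Proposition \ref{nef}, the K\"ahler-Ricci flow starting at $\omega_0$ has a finite-time singularity at $T = \lambda/(2\pi)$ with limiting class $[\alpha]$. Conjecture \ref{lowerbd} combined with the upper bound \eqref{up} yields $|\vp(t)| \leq C$ uniformly on $X \times [0,T)$. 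The positivity $\omega(t) = \hat{\omega}_t + \ddbar\vp(t) > 0$ rewrites as $\alpha + \ddbar\vp(t) \geq \frac{T-t}{T}(\alpha - \omega_0)$; since $\alpha - \omega_0 = -2\pi T\,\Ric(\omega_0)$ is a fixed smooth form bounded by $M\omega_0$ for some $M > 0$, we obtain $\alpha + \ddbar\vp(t) \geq -\frac{M(T-t)}{T}\omega_0$. For any $\ve > 0$, choosing $t_\ve < T$ with $M(T-t_\ve)/T \leq \ve$ and setting $\eta_\ve := \vp(t_\ve)$ gives a smooth function with $|\eta_\ve| \leq C$ uniform in $\ve$ and $\alpha + \ddbar\eta_\ve \geq -\ve\omega_0$, which is the bounded potential property.

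For the harder direction Conjecture \ref{bpf} $\Rightarrow$ Conjecture \ref{lowerbd}, consider any K\"ahler-Ricci flow with finite-time singularity at $T$; the limiting class $[\alpha] = [\omega_0] - 2\pi T c_1(X)$ is nef and $[\alpha] + 2\pi T c_1(X) = [\omega_0]$ is K\"ahler, so Conjecture \ref{bpf} supplies smooth $\eta_\ve$ satisfying $\alpha + \ddbar\eta_\ve \geq -\ve\omega_0$ and $\sup_X |\eta_\ve| \leq C_0$ uniformly in $\ve$. A computation entirely analogous to \eqref{get2}, now with $T' = T$ and $\hat{\omega}_T = \alpha$, gives
\[\left(\frac{\de}{\de t} - \Delta\right)\bigl((T-t)\dot\vp + \vp + nt\bigr) = \tr{\omega(t)}{\alpha}.\]
Writing $\tr{\omega(t)}{\alpha} = \tr{\omega(t)}{(\alpha + \ddbar\eta_\ve)} - \Delta\eta_\ve \geq -\ve\tr{\omega(t)}{\omega_0} - \Delta\eta_\ve$ and rearranging, one obtains
\[\left(\frac{\de}{\de t} - \Delta\right)\bigl((T-t)\dot\vp + \vp + nt - \eta_\ve\bigr) \geq -\ve\tr{\omega(t)}{\omega_0}.\]
Subtracting $\ve$ times the identity \eqref{get} produces
\[F_\ve := (T - t(1+\ve))\dot\vp + (1+\ve)\vp + (1+\ve)nt - \eta_\ve,\]
which satisfies $\left(\frac{\de}{\de t} - \Delta\right) F_\ve \geq 0$. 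The minimum principle forces $F_\ve \geq F_\ve|_{t=0} = T\log(\omega_0^n/\Omega) - \eta_\ve \geq -C_2$ on $X \times [0,T)$, with $C_2$ independent of $\ve$.

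To extract the lower bound on $\vp$, restrict to $t \leq T/(1+\ve)$: there the coefficient $T - t(1+\ve)$ of $\dot\vp$ in $F_\ve$ is nonnegative, so combining with the a priori upper bound $\dot\vp \leq C_1$ from \eqref{upd} yields $(1+\ve)\vp \geq -C_2 - TC_1 - (1+\ve)nT - C_0 \geq -C_3$, with $C_3$ independent of $\ve$. For any $t_0 \in [0,T)$, picking $\ve > 0$ small enough that $T/(1+\ve) \geq t_0$ gives $\vp(t_0) \geq -C_3$. The main technical subtlety is precisely this interplay: no single value of $\ve$ covers the full interval $[0,T)$, but the $\ve$-uniformity of $C_3$ allows $\ve$ to shrink as $t_0 \to T$, yielding the required global lower bound.
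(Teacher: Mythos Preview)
Your proof is correct and follows essentially the same approach as the paper. Both directions use the same constructions: for Conjecture~\ref{lowerbd} $\Rightarrow$ Conjecture~\ref{bpf} you use the flow potentials $\vp(t)$ themselves as the functions $\eta_\ve$, and for the converse you apply the minimum principle to the combination $(T-t)\dot\vp+\vp+nt+\ve(\vp-t\dot\vp+nt)-\eta_\ve$, exactly as the paper does. The only cosmetic difference is in the final extraction of the bound in the second direction: the paper simply lets $\ve\to 0$ in the inequality $(1+\ve)\vp+(T-t-\ve t)\dot\vp\geq -C$ to obtain $\vp+(T-t)\dot\vp\geq -C$ and then invokes $\dot\vp\leq C$, whereas you restrict to $t\leq T/(1+\ve)$ before letting $\ve\to 0$; both maneuvers are equivalent.
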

\begin{proof}
Assume Conjecture \ref{lowerbd}. Given $[\alpha]$ a nef class such that $[\alpha]+\lambda c_1(X)$ is a K\"ahler class, fix a K\"ahler metric $\omega_0$ in this class.
Since Conjecture \ref{bpf} is trivial if $[\alpha]$ is K\"ahler, we may assume that $[\alpha]$ is on the boundary of the K\"ahler cone. Then the K\"ahler-Ricci flow \eqref{krf} starting
at $\omega_0$ has a solution defined on the maximal time interval $[0,T)$ where $T=\frac{\lambda}{2\pi}$. We choose the representative $\alpha=\omega_0-T\Ric(\omega_0)$ of the class $[\alpha]$, and as before we let $\hat{\omega}_t=\frac{1}{T}((T-t)\omega_0+t\alpha)$ and $\chi=\frac{1}{T}(\alpha-\omega_0)$.
Since we know that $\vp(t)\leq C$ on $X\times[0,T)$, we get a uniform $C^0$ bound for $\vp(t)$, independent of $t$. Then
$$\alpha+\ddbar\vp(t)=\hat{\omega}_t+\ddbar\vp(t)+(T-t)\chi=\omega(t)+(T-t)\chi>(T-t)\chi,$$
and $(T-t)\chi$ goes to zero smoothly as $t\to T$. This proves that a closed positive current with minimal singularities in the class $[\alpha]$ has bounded potential.

Conversely, assume a closed positive current with minimal singularities in the class $[\alpha]$ has bounded potential, and consider a solution of \eqref{krf} with a singularity at time $T<\infty$. After writing the flow as \eqref{ma3} as before, we compute for any $\ve>0$
\[\begin{split}
\left(\frac{\de}{\de t}-\Delta\right)&((\vp+(T-t)\dot{\vp}+nt)+\ve(\vp-t\dot{\vp}+nt)-\eta_\ve)\\
&=\tr{\omega(t)}{(\alpha+\ve\omega_0+\ddbar\eta_\ve)}\geq 0,
\end{split}\]
and so by the minimum principle (together with $\eta_\ve\leq C$, independent of $\ve$) we obtain
$$((\vp+(T-t)\dot{\vp}+nt)+\ve(\vp-t\dot{\vp}+nt)-\eta_\ve)\geq -C,$$
or in other words
$$(1+\ve)\vp+(T-t-\ve t)\dot{\vp}\geq\eta_\ve -C\geq-C,$$
using that $\eta_\ve\geq -C,$ independent of $\ve$. We can then let $\ve\to 0$, and recalling that $\dot{\vp}\leq C$, we finally obtain
$\vp\geq-C$ on $X\times [0,T)$.
\end{proof}

The following can be viewed as partial progress towards Conjecture \ref{lowerbd} (which would be the same statement with $\nu=0$, and without the noncollapsed hypothesis).
\begin{proposition}\label{due}
Suppose that the limiting class $[\alpha]$ satisfies the noncollapsed condition $\int_X\alpha^n>0$, and let $\psi$ be as in Theorem \ref{ct}. Then for every $\nu>0$ there is a constant $C_\nu>0$ such that
$$\vp\geq \nu\psi-C_\nu,$$
on $X\times [0,T)$.
\end{proposition}
\begin{proof}
Since the class $[\alpha]$ is nef, for every $\nu>0$ there is a smooth function $\rho_\nu$ such that $\alpha+\ddbar\rho_\nu\geq -\nu\ve \omega_0$, where $\ve$ is as in Theorem \ref{ct}. Then away from $\Null(\alpha)$ we have
$$\alpha+\ddbar(\nu\psi+(1-\nu)\rho_\nu)\ge \nu^2\ve\omega_0.$$
As in \eqref{positive} it follows that
\begin{equation}\label{positive2}
\hat{\omega}_t+\ddbar(\nu\psi+(1-\nu)\rho_\nu)\geq \frac{\nu^2\ve}{2}\omega_0,
\end{equation}
on $X\backslash\Null(\alpha)$, provided $t\in [T-\delta,T+\delta],$ for some $\delta>0$. For simplicity write $\psi_\nu=\nu\psi+(1-\nu)\rho_\nu$, and
let
$$Q=\vp-\psi_\nu+At,$$
where $A>0$ is a constant to be determined. The function $Q$
is smooth on $(X\backslash \Null(\alpha))\times[0,T),$ equal to $+\infty$ on $\Null(\alpha)$, and is bounded below on $X$ for each fixed $t\in [0,T)$.
Therefore $Q\geq -C$ holds on $X\times [0,T-\delta]$, for some uniform constant $C$.

Our goal is to show that in fact $Q\geq -C$ on $X\times [0,T)$. Given $T'\in (T-\delta,T)$ suppose that the minimum of $Q$ on $X\times [T-\delta,T']$ is achieved
at a point $(x,t)$, with $t\in (T-\delta,T']$. We must have $x\not\in\Null(\alpha)$, and so at $(x,t)$ we have, using \eqref{positive2},
\[\begin{split}
0\geq \frac{\de Q}{\de t}&=\log\frac{(\hat{\omega}_t+\ddbar\psi_\nu+\ddbar Q)^n}{\Omega}+A\\
&\geq \log\frac{(\hat{\omega}_t+\ddbar\psi_\nu)^n}{\Omega}+A\\
&\geq \log\frac{\left(\frac{\nu^2\ve}{2}\omega_0\right)^n}{\Omega}+A\geq -C+A>0,
\end{split}\]
provided we choose $A>C.$
This contradiction shows that the minimum of $Q$ on $X\times [T-\delta,T']$ is achieved at time $T-\delta$, where we have $Q\geq -C$. Since $T'<T$ was arbitrary, we conclude that $Q\geq -C$ on $X\times [0,T)$, which is what we wanted to prove.
\end{proof}

\subsection{Expected behavior at noncollapsed finite time singularities}
Next, we discuss what is expected to hold in the case of finite time non-collapsed singularities. Recall that in this case the limiting class
$[\alpha]=[\omega_0]-2\pi Tc_1(X)$ is nef and big (i.e. $\int_X\alpha^n>0$), and that singularities form precisely along the proper analytic subvariety $\Null(\alpha)\subset X$, by Theorem \ref{good}.

\begin{conjecture}
Let $X$ be a compact K\"ahler manifold and $[\alpha]$ a nef and big $(1,1)$ class which is not K\"ahler and such that $[\alpha]+\lambda c_1(X)$ is a K\"ahler class for some $\lambda>0$.
Then every irreducible component of $\Null(\alpha)$ is uniruled.
\end{conjecture}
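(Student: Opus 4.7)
The plan is to reduce the statement to a uniruledness criterion expressed in terms of (non-)pseudoeffectivity of the canonical bundle of a desingularization. Let $V$ be an irreducible component of $\Null(\alpha)$, of dimension $k>0$. By maximality of $V$ among positive-dimensional irreducible analytic subvarieties contained in $\Null(\alpha)$, the definition \eqref{null} forces $\int_V\alpha^k=0$, so $[\alpha]|_V$ is nef but not big. Pick a resolution $\mu:\tilde V\to V$ and set $\tilde\alpha=\mu^*(\alpha|_V)$, a nef class on $\tilde V$ with $\int_{\tilde V}\tilde\alpha^k=0$.

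In the projective algebraic case, i.e.\ when $[\alpha]\in NS^1(X,\mathbb{R})$ is rational, the hypothesis $[\alpha]+\lambda c_1(X)\in\mathcal{C}_X$ translates into $D-\lambda K_X$ being ample for some $\mathbb{Q}$-divisor $D$ with $c_1(D)=[\alpha]$. Kawamata's base-point-free theorem then produces a birational morphism $\pi:X\to Y$ with $mD=\pi^*H$ for some ample $H$ on $Y$ and some $m>0$, and by Nakamaye's theorem the exceptional locus of $\pi$ is the augmented base locus $\mathbb{B}_+(D)$, which coincides with $\Null(\alpha)$. Any curve $C$ contracted by $\pi$ satisfies $D\cdot C=0$, hence
\[
 -K_X\cdot C=\lambda^{-1}(D-\lambda K_X)\cdot C>0,
\]
so the contraction is $K_X$-negative. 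Bend-and-break on the general fiber of $\pi$, together with the standard description of the exceptional locus of a $K_X$-negative Mori contraction, shows every positive-dimensional fiber of $\pi$ is covered by rational curves, and hence so is every irreducible component of $\Null(\alpha)$.

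In the general transcendental K\"ahler setting one lacks an analogue of the base-point-free theorem, so $V$ cannot be contracted; this is essentially Tian's semi-ampleness conjecture alluded to earlier. The plan is instead to invoke the theorem of Cao--H\"oring (extending the projective result of Boucksom--Demailly--P\u{a}un--Peternell) that a compact K\"ahler manifold with non-pseudoeffective canonical bundle is uniruled, applied to $\tilde V$. By the duality between pseudoeffective and movable classes (BDPP in the projective case, Witt Nystr\"om in the K\"ahler case), it suffices to exhibit a strongly movable $(k-1,k-1)$-class $\gamma$ on $\tilde V$ with $K_{\tilde V}\cdot\gamma<0$. The natural candidate is a K\"ahler perturbation of $\tilde\alpha^{k-1}$. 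Adjunction on the smooth part of $V$ gives $c_1(K_V)=c_1(K_X)|_V+c_1(N_{V/X})$, while the hypothesis gives $c_1(K_X)|_V=\lambda^{-1}[\alpha]|_V-\lambda^{-1}\kappa$ with $\kappa$ K\"ahler on $V_{\mathrm{reg}}$, so, using $\int_V\alpha^k=0$, the $K_X|_V$-contribution to $K_V\cdot\alpha^{k-1}$ is $-\lambda^{-1}\kappa\cdot\alpha^{k-1}\leq 0$, and strictly negative after a small K\"ahler perturbation of $\alpha$ provided $\alpha|_V^{k-1}\neq 0$.

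The hard part will be to control the normal-bundle contribution $c_1(N_{V/X})\cdot\alpha^{k-1}$ (which has no a priori sign), to guarantee $\alpha|_V^{k-1}\neq 0$, and to transfer the whole argument through the resolution $\mu$ when $V$ is very singular. One expects the negativity coming from $\kappa$ to dominate, and a plausible route is an induction on $\dim V$ using that lower-dimensional irreducible components of $\Null(\mu^*\alpha)$ on $\tilde V$ are themselves uniruled, combined with a resolution $\mu$ adapted to the positivity of $\alpha$ in the spirit of the Demailly regularization underlying Theorem \ref{ct}. Overcoming this technical obstacle is the main point and appears to require genuinely new ideas beyond the maximum-principle methods in these notes, closely intertwined with Tian's conjecture.
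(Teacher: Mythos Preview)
This statement is recorded in the paper as an open \emph{conjecture}, not a theorem; the paper provides no proof. It only remarks that the result is known when $X$ is projective and $[\alpha]\in NS^1(X,\mathbb{R})$, by combining the base-point-free theorem with Kawamata's bound on the length of extremal rational curves, and that the surface case $n=2$ is easy. Your sketch of the projective/rational case is essentially this route: base-point-freeness gives a $K_X$-negative contraction whose exceptional locus is $\Null(\alpha)$, and bend-and-break then populates every positive-dimensional fiber with rational curves. That part is fine and matches what the paper cites.

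For the general K\"ahler case you propose a genuinely different strategy---bypass the missing base-point-free theorem by applying the Cao--H\"oring/BDPP-type uniruledness criterion to a resolution $\tilde V$ of an irreducible component $V$, testing $K_{\tilde V}$ against a movable class built from $\tilde\alpha^{k-1}$. This is a reasonable line of attack, but as you yourself acknowledge, the argument has real gaps: you have no control over the normal-bundle term $c_1(N_{V/X})\cdot\alpha^{k-1}$ (which can have either sign), you do not know that $[\alpha]|_V^{k-1}\neq 0$ (indeed the numerical dimension of $[\alpha]|_V$ could well be strictly less than $k-1$), and the passage through a resolution introduces an exceptional contribution to $K_{\tilde V}$ that again has no a priori sign. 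These are not technicalities---each one can genuinely kill the desired inequality, and the inductive scheme you gesture at does not obviously close up. So what you have written is an honest outline of why the conjecture is plausible and where the difficulties lie, but it is not a proof, and the paper does not claim one either.
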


If $X$ is projective and $[\alpha]\in NS^1(X,\mathbb{R})$ this follows from the base-point-free theorem \cite{KMM} together with \cite[Theorem 2]{Ka}. This conjecture is not hard to prove when $n=2$, see \cite[3.8.3]{SWL}.

An even stronger statement, which is true in the projective case, is this:
\begin{conjecture}
Let $X$ be a compact K\"ahler manifold and $[\alpha]$ a nef and big $(1,1)$ class which is not K\"ahler and such that $[\alpha]+\lambda c_1(X)$ is a K\"ahler class for some $\lambda>0$.
Then there is a bimeromorphic morphism $\pi:X\to Y$ onto a normal K\"ahler space $Y$ such that $\mathrm{Exc}(\pi)=\Null(\alpha)$ and $[\alpha]=\pi^*[\omega_Y]$ for some K\"ahler class $[\omega_Y]$ on $Y$.
\end{conjecture}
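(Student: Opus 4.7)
The plan is to construct $Y$ as the metric completion of $U := X\setminus\Null(\alpha)$ equipped with the limiting K\"ahler metric $\omega_T$ produced by Theorem \ref{fint}, and then equip this completion with a normal analytic structure turning the canonical surjection into the desired contraction. Fix $\omega_0$ in the K\"ahler class $[\alpha]+\lambda c_1(X)$ and run the flow \eqref{krf}; by Theorem \ref{fint} we have smooth convergence $\omega(t)\to\omega_T$ on $U$ with $[\omega_T]=[\alpha]|_U$.

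The first step is to establish that $(U,\omega_T)$ has finite volume and finite diameter. Finite volume is immediate from $\int_U \omega_T^n=\int_X\alpha^n>0$. For the diameter bound one would combine the smooth convergence on $U$ with a uniform upper bound on $\diam(X,\omega(t))$ as $t\to T$, derived via the parabolic Monge-Amp\`ere equation \eqref{ma3}, the upper potential bound \eqref{up}, and the partial lower bound of Proposition \ref{due} (an effective diameter estimate would use Conjecture \ref{lowerbd}). Granting this, the metric completion $Y$ of $(U,\omega_T)$ is compact, and one further shows that $(X,\omega(t))$ Gromov-Hausdorff converges to $(Y,d_{\omega_T})$ as $t\to T$, with the natural continuous surjection $\pi:X\to Y$ collapsing each connected component of $\Null(\alpha)$ to a point.

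The central step is to endow $Y$ with a normal analytic structure so that $\pi$ becomes a holomorphic bimeromorphic morphism. The strategy I would try is to exploit the quasi-plurisubharmonic function $\psi$ from Theorem \ref{ct}, which satisfies $\alpha+\ddbar\psi\geq\ve\omega_0$ on $U$ and equals $-\infty$ on $\Null(\alpha)$, to generate enough local holomorphic data near the collapsed fibers. Concretely, for $y\in Y\setminus U$ one would take as a neighborhood basis the connected components of $\{\psi<c\}$ (as $c\to -\infty$) in $X$ and apply a Grauert-type contractibility criterion; alternatively, one applies an $L^2$ H\"ormander-Demailly argument with auxiliary weight $\alpha+\ddbar\psi$ (a K\"ahler current on $U$) to produce holomorphic sections of suitable twisted line bundles that separate points and define local analytic charts on $Y$. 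Once $Y$ is a normal analytic space, $[\omega_T]$ descends to a class $[\omega_Y]$ on $Y$ with $\pi^*[\omega_Y]=[\alpha]$, and the K\"ahler property of $[\omega_Y]$ follows from the uniform smooth bounds on $\omega_T$ on compact subsets of $U$ together with the collapsing behavior of $\omega(t)$ along $\Null(\alpha)$, with $\mathrm{Exc}(\pi)=\Null(\alpha)$ coming for free from the construction.

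The principal obstacle is this analytic contraction step. In the projective case with $[\alpha]\in NS^1(X,\mathbb{R})$ it is precisely supplied by the base-point-free theorem of \cite{KMM} combined with Nakamaye's theorem identifying the augmented base locus with $\Null(\alpha)$; but a K\"ahler/transcendental analogue --- a base-point-free theorem for nef and big real $(1,1)$-classes on compact K\"ahler manifolds --- is a major open problem, essentially equivalent to Tian's conjecture that such an $[\alpha]$ admits a smooth semipositive representative, and closely tied to Conjecture \ref{bpf} above. Consequently, while the metric-analytic skeleton above is a reasonable blueprint, any complete proof in the K\"ahler case must overcome this transcendental contraction problem, which at present remains inaccessible by the tools of these notes.
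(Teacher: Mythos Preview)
The statement you are attempting to prove is labeled a \emph{Conjecture} in the paper, and the paper gives no proof of it. The paper remarks only that it is easy when $n=2$, that partial results exist for $n=3$ via \cite{HP}, and that in the projective case with $[\alpha]\in NS^1(X,\mathbb{R})$ it follows from the base-point-free theorem. So there is no ``paper's own proof'' to compare against; your proposal should be read as an outline of a possible attack, and you yourself concede in the final paragraph that the crucial step is open.

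That said, a few of the intermediate steps you pass over are already genuine gaps, independent of the final contraction problem. First, the uniform diameter bound for $(X,\omega(t))$ as $t\to T$ is precisely Conjecture~\ref{diam} of the paper and is open in general; you cannot derive it from \eqref{up} and Proposition~\ref{due}, and you parenthetically admit you would need Conjecture~\ref{lowerbd}. Second, and more seriously, your assertion that the surjection $\pi:X\to Y$ ``collapses each connected component of $\Null(\alpha)$ to a point'' is not what the conjecture predicts and is false in general: a bimeromorphic morphism may contract its exceptional locus onto a positive-dimensional subvariety (think of blowing up a smooth curve in a threefold). The correct picture is that $\Null(\alpha)$ maps onto some analytic subvariety of $Y$, not a finite set, so the metric-completion ansatz as stated already mis-identifies the target. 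Finally, even granting a compact metric limit, promoting it to a \emph{normal K\"ahler space} with $\pi$ holomorphic is, as you say, exactly the transcendental base-point-free problem; your two suggested mechanisms (sublevel sets of $\psi$, or H\"ormander $L^2$ sections) do not, as far as is known, produce the required analytic structure without algebraic input. Your closing paragraph is an accurate assessment: this remains a conjecture.
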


If this is the case, then $\pi^*\omega_Y$ is a smooth nonnegative representative of $[\alpha]$. This conjecture is easy to prove when $n=2$ (see again \cite[3.8.3]{SWL}), and when $n=3$ the recent results in \cite{HP} show that this holds in many cases, but it seems that more work is needed to prove this in general when $n=3$.

In general the singularities of $Y$ may be very bad, and it may not be possible to define a solution of the K\"ahler-Ricci flow on $Y$, even in a weak sense. In this case it is expected (see \cite{SoT3, So2, LT}) that there is another normal K\"ahler space $Y'$ bimeromorphic to $X$, with K\"ahler metric $\omega_{Y'}$ and with reasonable singularities, such that the metric completion of $(X\backslash\Null(\alpha),\omega_T)$ (the smooth limit provided by Theorem \ref{fint}) is isometric to the metric completion of $(Y'_{reg}, \omega_{Y'})$, and so that the K\"ahler-Ricci flow can be defined starting at $\omega_{Y'}$ (in a weak sense, cf. \cite{EGZ3, SoT3}), and that the whole process is continuous in the Gromov-Hausdorff sense. The only case when this has been established is when $n=2$, by Song-Weinkove \cite{SW, SW2}.

\subsection{Expected behavior at collapsed finite time singularities}
Lastly, we discuss what is expected to hold in the case of finite time collapsed singularities. In this case the limiting class
$[\alpha]=[\omega_0]-2\pi Tc_1(X)$ is nef but not big, i.e. $\int_X\alpha^n=0$, and we know that singularities form everywhere on $X$, by Theorem \ref{good}.

We will say that the manifold $X$ admits a Fano fibration if there is a surjective holomorphic map $f:X\to Y$ with connected fibers, where $Y$ is a compact normal K\"ahler space (the reader may wish to assume that $Y$ is a compact K\"ahler manifold) with $0\leq \dim Y<\dim X$ and such that for every fiber $F$ of $f$ we have that $-K_X|_F$ is ample. In this case the generic fiber of $f$ is a Fano manifold of dimension $\dim X-\dim Y$, but there may be some singular fibers. The simplest example of a Fano fibration is when $Y$ is a point, and $X$ is a Fano manifold. Other simple examples are obtained by taking $X=F\times Y$ where $F$ is a Fano manifold and $Y$ is any compact K\"ahler manifold.

\begin{conjecture}[\cite{TZ2}]\label{fincol}
Let $X^n$  be a compact K\"ahler manifold. Then there exists a K\"ahler metric $\omega_0$ such that the K\"ahler-Ricci flow \eqref{krf} develops a finite time collapsed singularity if and only if $X$ admits a Fano fibration $f:X\to Y$. In this case, we can write
\begin{equation}\label{fact}
[\omega_0]=\lambda c_1(X)+f^*[\omega_Y],
\end{equation}
for some K\"ahler metric $\omega_Y$ on $Y$ and some $\lambda>0$.
\end{conjecture}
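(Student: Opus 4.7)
The plan is to attack the two implications separately, with the ``if'' direction being essentially a cohomological calculation combined with Theorem \ref{tz}, while the ``only if'' direction reduces to a transcendental base-point-free statement that is the main open difficulty.

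For the ``if'' direction, assume a Fano fibration $f: X \to Y$ is given and fix a Kähler metric $\omega_Y$ on $Y$. The relative ampleness of $-K_X$ along $f$, combined with $\omega_Y$ being Kähler on $Y$, should yield that for every sufficiently large $m$ the class $c_1(X) + m f^*[\omega_Y]$ is Kähler on $X$; this can be checked by a partition-of-unity argument over trivializing neighborhoods on $Y$, gluing fiberwise Kähler forms coming from $-K_X|_F$ to a large multiple of $f^*\omega_Y$ in the horizontal directions. Setting $\lambda = 1/m$, the class $[\omega_0] := \lambda c_1(X) + f^*[\omega_Y] = (1/m)(c_1(X) + m f^*[\omega_Y])$ is Kähler, and Corollary \ref{sum} shows that $s c_1(X) + f^*[\omega_Y]$ remains Kähler for every $s \in (0, \lambda]$. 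Hence by Theorem \ref{tz} the maximal existence time of the flow starting at $\omega_0$ is $T = \lambda/(2\pi) < \infty$, with limiting class $[\alpha] = f^*[\omega_Y]$; since $\dim Y < n$, we have $\omega_Y^n \equiv 0$ for dimension reasons, so $\int_X \alpha^n = 0$ and the singularity is collapsed.

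For the converse direction, suppose $\omega_0$ is given producing a collapsed finite time singularity at $T < \infty$, and set $[\alpha] = [\omega_0] - 2\pi T c_1(X)$. Then $[\alpha]$ is nef, non-Kähler, satisfies $\int_X \alpha^n = 0$, and obeys the constraint $[\alpha] + 2\pi T c_1(X) = [\omega_0] \in \mathcal{C}_X$. The crux would then be to prove a transcendental base-point-free theorem producing a surjective holomorphic map with connected fibers $f: X \to Y$ onto a compact normal Kähler space with $0 \le \dim Y < n$, and a Kähler class $[\omega_Y]$ on $Y$, such that $[\alpha] = f^*[\omega_Y]$. Granted this, the decomposition \eqref{fact} with $\lambda = 2\pi T$ is immediate. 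To verify that $f$ is a Fano fibration, one restricts to any fiber $F$: since $f$ contracts $F$ to a point, $f^*[\omega_Y]|_F = 0$, so $[\omega_0]|_F = 2\pi T c_1(X)|_F$; but the left-hand side is Kähler on $F$, so $c_1(X)|_F$ is Kähler, that is, $-K_X|_F$ is ample. The fact that $\int_X \alpha^n = 0$ and $[\alpha]$ is not Kähler forces $\dim Y < n$ and $[\alpha]$ to be nontrivially pulled back.

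The main obstacle is the transcendental base-point-free theorem invoked in the converse direction. When $X$ is projective and $[\alpha] \in \mathrm{NS}^1(X,\mathbb{R})$ this follows from the Kawamata--Shokurov base-point-free theorem \cite{KMM} (combined with the rationality theorem to handle real classes), but in the general Kähler setting it is a major open problem, and only partial cases are known, largely via direct analysis of the flow's collapsing behavior as in \cite{SoT, TWY, TZ}. A direct attack through the flow would try to construct the map $f$ as a limit of some Gromov--Hausdorff or Kodaira-map-like procedure associated to $[\alpha]$, perhaps exploiting the refinement of Theorem \ref{fint} that one expects in the collapsed case (where $[\alpha]$ is no longer big, so $\mathrm{Null}(\alpha) = X$ and a new local smooth convergence result on a dense open set would be needed). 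This is precisely where all the analytic difficulties of collapsing singularities are concentrated, and going beyond the special cases known to date would require a genuine new idea.
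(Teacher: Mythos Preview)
Your proposal is correct and essentially matches the paper's treatment. This statement is a conjecture, and the paper only establishes the ``if'' direction, via the same convex-combination trick you use: writing $[\omega(t)]=(1-2\pi t/\lambda)[\omega_0]+(2\pi t/\lambda)f^*[\omega_Y]$ as a positive multiple of a K\"ahler class plus a nef class (the paper leaves implicit the construction of a K\"ahler $[\omega_0]$ of the form \eqref{fact}, which you correctly supply via relative ampleness).

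Two small corrections. First, your direct invocation of Corollary~\ref{sum} on the decomposition $s\,c_1(X)+f^*[\omega_Y]$ is not literally applicable, since neither summand is K\"ahler; you must first rewrite it as $(s/\lambda)[\omega_0]+(1-s/\lambda)f^*[\omega_Y]$, which is exactly what the paper does. Second, the known partial cases of the ``only if'' direction are the projective case with $[\omega_0]\in NS^1(X,\mathbb{R})$ (via the base-point-free and rationality theorems \cite{KMM}) and the case $n\le 3$ (via \cite{TZ2}, using \cite{HP2}); the references \cite{SoT,TWY,TZ} you cite concern infinite-time collapsing and are not relevant here.
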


The ``if'' direction is elementary, thanks to \eqref{fact}. Indeed, the evolving class along the flow is
\[\begin{split}
[\omega(t)]&=[\omega_0]-2\pi tc_1(X)=f^*[\omega_Y]+(\lambda-2\pi t)c_1(X)\\
&=\left(1-\frac{2\pi t}{\lambda}\right)[\omega_0]+\frac{2\pi t}{\lambda}f^*[\omega_Y].
\end{split}\]
For $0\leq t<\frac{\lambda}{2\pi}$ this is a sum of a K\"ahler class and a nef class, and so it is K\"ahler, while for $t=\frac{\lambda}{2\pi}$ this equals $f^*[\omega_Y]$ which is nef but not K\"ahler. It follow from Theorem \ref{tz} that the maximal existence time is $T=\frac{\lambda}{2\pi}<\infty$ and the limiting class is $f^*[\omega_Y]$. Since we have  $\int_X(f^*\omega_Y)^n=0$, it follows that the flow is collapsed at time $T$.

The ``only if'' direction is known if $X$ is projective and $[\omega_0]\in NS^1(X,\mathbb{R})$, thanks to the base-point-free theorem and the rationality theorem \cite{KMM}. It is also known when $n\leq 3$ thanks to \cite{TZ2} (which uses as a key ingredient \cite{HP2}).

Assuming Conjecture \ref{fincol}, it is then expected that the solution $\omega(t)$ of the K\"ahler-Ricci flow \eqref{krf} will converge to $f^*\omega_Y$ as $t\to T$, in a suitable sense, for some K\"ahler metric $\omega_Y$ on $Y$. This is proved in \cite{SSW} when $f:X\to Y$ is a submersion, with fibers projective spaces, but the convergence is rather weak. The difficulty in attacking this problem is that in general $\omega_Y$ will not be a ``canonical'' metric on $Y$ (e.g. K\"ahler-Einstein).

Lastly we mention a related conjecture, raised by Tian \cite[Conjecture 4.4]{Ti} (see also \cite{So}).

\begin{conjecture}\label{finext}
Let $(X^n,\omega_0)$ be a compact K\"ahler manifold, let $\omega(t)$ be the solution of the K\"ahler-Ricci \eqref{krf}, defined
on the maximal time interval $[0,T)$ with $T<\infty$. Then as $t\to 0$ we have
\begin{equation}\label{point}
\mathrm{diam}(X,\omega(t))\to 0,
\end{equation}
if and only if
\begin{equation}\label{fano}
[\omega_0]=\lambda c_1(X),
\end{equation}
for some $\lambda>0$.
\end{conjecture}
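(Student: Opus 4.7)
For the ``if'' direction, assume $[\omega_0]=\lambda c_1(X)$ for some $\lambda>0$. Since $[\omega_0]$ is Kähler this forces $c_1(X)\in\mathcal{C}_X$, so $X$ is Fano, and Theorem~\ref{tz} gives $T=\lambda/(2\pi)$ with $[\omega(t)]=(\lambda-2\pi t)c_1(X)$. I would set $\tilde\omega(s):=\omega(t)/(T-t)$ with $s=-\log(1-t/T)$; a direct computation (using that the Ricci form is scale-invariant) shows that $\tilde\omega$ satisfies the normalized Fano Kähler--Ricci flow $\partial_s\tilde\omega=-\Ric(\tilde\omega)+\tilde\omega$ on $X$ for all $s\in[0,\infty)$. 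Perelman's celebrated diameter bound then gives $\diam(X,\tilde g(s))\leq C$ uniformly, and unwinding yields $\diam(X,g(t))=\sqrt{T-t}\,\diam(X,\tilde g(s))\leq C\sqrt{T-t}\to 0$ as $t\to T$ (the conjecture's ``$t\to 0$'' is evidently a typo for ``$t\to T$'').

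For the ``only if'' direction, assume $\diam(X,\omega(t))\to 0$ and let $[\alpha]=[\omega_0]-2\pi Tc_1(X)$ be the nef limit class. My first step is to show the singularity is collapsed, i.e.\ $\int_X\alpha^n=0$. If instead $\int_X\alpha^n>0$, then $[\alpha]$ is nef and big but not Kähler, so $\Null(\alpha)$ is a proper, nonempty analytic subvariety, and Theorem~\ref{fint} supplies a Kähler metric $\omega_T$ on $X\setminus\Null(\alpha)$ with $\omega(t)\to\omega_T$ in $C^\infty_{\mathrm{loc}}$. Picking two distinct points $p,q$ in a compact subset of $X\setminus\Null(\alpha)$ gives $d_{\omega(t)}(p,q)\to d_{\omega_T}(p,q)>0$, contradicting $\diam\to 0$.

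The heart of the matter is to upgrade $\int_X\alpha^n=0$ to $[\alpha]=0$. Granting Conjecture~\ref{fincol}, one obtains a Fano fibration $f:X\to Y$ with $[\omega_0]=\lambda c_1(X)+f^*[\omega_Y]$ and $[\alpha]=f^*[\omega_Y]$. If $\dim Y\geq 1$, my plan is to prove a uniform Schwarz-lemma lower bound $\omega(t)\geq c\cdot f^*\omega_Y$ on $X\times[0,T)$, in the spirit of estimates from the Song--Tian program on long-time collapsing flows; this would make $f$ a $c^{-1/2}$-Lipschitz map from $(X,\omega(t))$ to $(Y,\omega_Y)$, yielding $\diam(X,\omega(t))\geq c^{1/2}\diam(Y,\omega_Y)>0$, a contradiction. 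Hence $\dim Y=0$ and $[\omega_0]=\lambda c_1(X)$ with $\lambda=2\pi T>0$.

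The main obstacle is this last step, which depends on two currently open inputs: (a) Conjecture~\ref{fincol} itself, presently known only for $n\leq 3$ \cite{TZ2} or when $X$ is projective and $[\omega_0]$ rational (via the base-point-free theorem); and (b) the uniform Schwarz bound $\omega(t)\geq c\, f^*\omega_Y$ in the finite-time collapsing regime, which would require careful parabolic $C^0$ estimates on the Monge--Ampère potential $\vp(t)$ of \eqref{ma3} relative to a pullback potential from the base, along the lines of Lemma~\ref{uno} but with a considerably more singular barrier adapted to the collapsing directions. A purely cohomological route that bypasses (a) would try, for each nef $[\alpha]\neq 0$ with $[\alpha]^n=0$, to exhibit an analytic subvariety $V\subset X$ of dimension $k=\nu([\alpha])$ with $\int_V\alpha^k>0$ whose $\omega(t)$-volume stays bounded below while its $\omega(t)$-diameter collapses, but converting this into a contradiction seems to require a volume-vs-diameter comparison under only the scalar curvature lower bound \eqref{scal}, which is not available in general. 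Consequently the proof outlined here is unconditional only in dimensions $n\leq 3$.
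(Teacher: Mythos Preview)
This statement is labeled a \emph{Conjecture} in the paper, and indeed the paper does not supply a proof; it only records what is known. Your outline matches the paper's discussion almost exactly: the ``if'' direction is attributed to Perelman's diameter estimate (yielding $\diam(X,\omega(t))\leq C(T-t)^{1/2}$), and for the ``only if'' direction the paper first shows the singularity must be collapsed via Theorem~\ref{fint}, then notes that the full statement follows from Conjecture~\ref{fincol} (this implication being proved in \cite{TZ2}), and is therefore unconditional only when $[\omega_0]$ is rational \cite{So} or $n\leq 3$ \cite{TZ2}. So you have correctly identified both the argument and its conditional status.

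Two remarks. First, your item (b) --- the uniform Schwarz bound $\omega(t)\geq c\,f^*\omega_Y$ in the finite-time collapsing setting --- is \emph{not} an additional open input beyond Conjecture~\ref{fincol}: once the Fano fibration $f$ is in hand, this parabolic Schwarz estimate goes through by the same mechanism as Lemma~\ref{schwarz} (the key being that $[\alpha]=f^*[\omega_Y]$ gives a smooth semipositive representative, so the $C^0$ bound on $\vp$ is available via the maximum principle as in the proof that Conjecture~\ref{bpf} implies Conjecture~\ref{lowerbd}). The paper explicitly records that \cite{TZ2} establishes the implication Conjecture~\ref{fincol} $\Rightarrow$ Conjecture~\ref{finext}, so only (a) is genuinely open.

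Second, a small gap in your noncollapsed step: you write $d_{\omega(t)}(p,q)\to d_{\omega_T}(p,q)$, but paths realizing $d_{\omega(t)}(p,q)$ may pass through $\Null(\alpha)$ where you have no control. The correct argument is local: fix a small $\omega_0$-ball $B$ around $p$ with $\overline{B}\subset X\setminus\Null(\alpha)$ and $q\notin\overline{B}$; smooth convergence on $\overline{B}$ gives $\omega(t)\geq c\,\omega_0$ there, so any path from $p$ to $q$ has $\omega(t)$-length at least $\sqrt{c}$ times the $\omega_0$-radius of $B$, bounding $\diam(X,\omega(t))$ away from zero. The paper's one-line justification (``the diameter cannot go to zero'') implicitly uses this.
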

Condition \eqref{point} is equivalent to assuming that $(X,\omega(t))$ converges to a point in the Gromov-Hausdorff topology, and is called ``finite time extinction''.
Conjecture \ref{finext} predicts that finite time extinction happens if and only if the manifold is Fano and the initial class is a positive multiple of the first Chern class. The ``if'' direction follows from work of Perelman (see \cite{ST}), who proved the stronger result that $\diam(X,\omega(t))\leq C(T-t)^{\frac{1}{2}}$, assuming \eqref{fano}.
If $[\omega_0]\in H^2(X,\mathbb{Q})$ (so $X$ is projective), then Conjecture \ref{finext} was proved by Song \cite{So}, and when $n\leq 3$ it was proved in \cite{TZ2}.

Note that if \eqref{point} holds then the flow exhibits finite time collapsing at time $T$. Indeed, if this was not the case then the limiting class $[\alpha]$ would be nef with $\int_X\alpha^n>0$, and so Theorem \ref{fint} shows that on the open set $X\backslash \mathrm{Null}(\alpha)$ we have smooth convergence of $\omega(t)$ to a
limiting K\"ahler metric $\omega_T$, and so the diameter of $(X,\omega(t))$ cannot go to zero. In fact, it is proved in \cite{TZ2} that in general Conjecture \ref{fincol} implies Conjecture \ref{finext}.

\section{Long time behavior}\label{sectinf}
\subsection{K\"ahler-Ricci flows with long time existence}
Let $(X,\omega_0)$ be a compact K\"ahler manifold and let $\omega(t)$ be the solution of the K\"ahler-Ricci flow \eqref{krf} starting at $\omega_0$, defined on the maximal time interval $[0,T)$. As we saw in Corollary \ref{nefff}, we have $T=\infty$ if and only if $-c_1(X)$ is a nef class (i.e. $-c_1(X)\in\ov{\mathcal{C}_X}$). Since $c_1(K_X)=-c_1(X)$, in this case we also say that the canonical bundle $K_X$ is nef, or that $X$ is a (smooth) minimal model. In this section we will always assume that this is the case.

The goal of this section is to analyze the behavior of the flow as $t\to\infty$, and more specifically to investigate the convergence properties of the metrics $\omega(t)$, or of the rescaled metrics $\frac{\omega(t)}{t}$, as $t\to\infty$.

Chronologically, the first result along these lines is the following.

\begin{theorem}[Cao \cite{Ca}]\label{ena}
If $c_1(X)=0$ in $H^2(X,\mathbb{R})$ then as $t\to\infty$ the metrics $\omega(t)$ converge smoothly to the unique Ricci-flat K\"ahler metric $\omega_\infty$ in the class $[\omega_0]$.
\end{theorem}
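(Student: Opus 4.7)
Since $c_1(X) = 0$, Yau's solution of the Calabi conjecture provides a smooth positive volume form $\Omega$ with $\Ric(\Omega) = 0$ and $\int_X \Omega = \int_X \omega_0^n$. Because the K\"ahler class $[\omega(t)] = [\omega_0]$ is preserved along the flow, I would specialize the setup of Lemma~\ref{krflow} to $\hat{\omega}_t \equiv \omega_0$ and reduce \eqref{krf} to the parabolic Monge-Amp\`ere equation
\begin{equation*}
\frac{\de\vp}{\de t} = \log\frac{(\omega_0+\ddbar\vp)^n}{\Omega}, \qquad \vp(0)=0, \qquad \omega(t)=\omega_0+\ddbar\vp(t).
\end{equation*}
Differentiating in $t$ yields $(\de_t - \Delta_{\omega(t)})\dot\vp = 0$, so the parabolic maximum principle gives the time-uniform bound $\|\dot\vp(t)\|_{C^0(X)} \leq \|\dot\vp(0)\|_{C^0(X)}$.

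Naively integrating $\dot\vp$ in time only yields linear growth of $\|\vp(t)\|_{C^0}$, so to bound $\vp$ uniformly I would freeze $t$ and view $(\omega_0+\ddbar\vp(t))^n = e^{\dot\vp(t)}\Omega$ as an elliptic complex Monge-Amp\`ere equation with uniformly bounded right-hand side; Yau's $C^0$ estimate then yields $\|\vp(t) - c(t)\|_{C^0(X)} \leq C$ where $c(t) = (\int_X \Omega)^{-1}\int_X \vp(t)\,\Omega$. The second-order estimate $\tr{\omega_0}{\omega(t)} \leq C$ follows from a parabolic Laplacian computation modeled on Theorem~\ref{c22} (with $\chi = 0$ and $\hat\omega_t = \omega_0$, which actually simplifies the argument); combined with $C^{-1}\omega_0^n \leq \omega(t)^n \leq C\omega_0^n$, this gives $C^{-1}\omega_0 \leq \omega(t) \leq C\omega_0$, and Theorem~\ref{higher} then delivers uniform $C^k$ bounds on $\omega(t)$ for every $k$.

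The crucial remaining step, and in my view the main obstacle, is to prove convergence. Using $\ddot\vp = \Delta\dot\vp$ together with $\de_t \omega^n = (\Delta\dot\vp)\omega^n$, an integration by parts yields
\begin{equation*}
\frac{d}{dt}\int_X \dot\vp\,\omega(t)^n = -\int_X |\de\dot\vp|^2_{\omega(t)}\,\omega(t)^n.
\end{equation*}
Since $|\dot\vp|$ is uniformly bounded, the left-hand side is a bounded quantity, hence $\int_0^\infty\!\int_X |\de\dot\vp|^2_{\omega(t)}\,\omega(t)^n\,dt < \infty$, and we may choose $t_i \to \infty$ with $\int_X |\de\dot\vp(t_i)|^2_{\omega(t_i)}\,\omega(t_i)^n \to 0$. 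Using the uniform $C^k$ bounds on $\omega(t)$ and $\dot\vp(t)$ together with Arzel\`a-Ascoli, a subsequence of $(\omega(t_i), \dot\vp(t_i))$ converges smoothly to a pair $(\omega_\infty, \psi_\infty)$ with $\omega_\infty \in [\omega_0]$ and $|\de\psi_\infty|_{\omega_\infty} \equiv 0$, so $\psi_\infty$ is a constant; the volume constraint $\int_X e^{\dot\vp(t_i)}\Omega = \int_X \Omega$ then forces $\psi_\infty = 0$. Consequently $\omega_\infty^n = \Omega$, so $\Ric(\omega_\infty) = 0$, and by Calabi's uniqueness theorem for Ricci-flat K\"ahler metrics in a fixed class, $\omega_\infty$ does not depend on the chosen subsequence. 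A standard subsequence argument (any subsequence of $\omega(t)$ admits a further subsequence converging smoothly to the same $\omega_\infty$) then upgrades this to the full statement $\omega(t) \to \omega_\infty$ in $C^\infty(X)$ as $t\to\infty$.
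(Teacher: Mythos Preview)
The paper does not give its own proof of this theorem --- it simply refers the reader to \cite[Theorem 3.4.4]{SWL} --- so there is no direct comparison to make. Your outline follows the standard approach (essentially Cao's): reduce to a parabolic Monge--Amp\`ere equation with fixed reference form $\omega_0$, use the heat equation $(\de_t-\Delta)\dot\vp=0$ to bound $\dot\vp$, invoke Yau's $C^0$ estimate for the oscillation of $\vp$, then obtain the $C^2$ and higher-order estimates as in Theorems~\ref{c22} and~\ref{higher}. All of this is correct.

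There is, however, a genuine gap in your convergence step. Your energy identity gives $\int_0^\infty E(t)\,dt<\infty$ where $E(t)=\int_X|\de\dot\vp|^2_{\omega(t)}\omega(t)^n$, and you then pick a sequence $t_i\to\infty$ with $E(t_i)\to 0$; along this sequence you correctly identify the Ricci-flat limit $\omega_\infty$. But the final ``standard subsequence argument'' requires that \emph{every} subsequential limit of $\omega(t)$ equals $\omega_\infty$, and for this you need every subsequential limit of $\dot\vp$ to be constant. Your argument, as written, only delivers this along the special sequence $t_i$; for an arbitrary sequence $s_j\to\infty$ you have not shown that $E(s_j)\to 0$, so you cannot conclude that $\lim\dot\vp(s_{j_k})$ is constant, hence cannot invoke Calabi's uniqueness.

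The fix is short: your uniform $C^k$ bounds on $\omega(t)$ and $\dot\vp(t)$ imply a uniform bound on $|E'(t)|$; combined with $E\ge 0$ and $\int_0^\infty E(t)\,dt<\infty$, an elementary real-analysis lemma gives $E(t)\to 0$ as $t\to\infty$ (not merely along a subsequence). Once you have this, every subsequential limit $\psi'$ of $\dot\vp$ satisfies $\de\psi'\equiv 0$, hence $\psi'\equiv 0$ by the volume constraint, and the subsequence argument goes through. Alternatively, one can bypass the energy argument entirely and use the maximum principle more directly: since $(\de_t-\Delta)\dot\vp=0$, both $\sup_X\dot\vp(t)$ and $-\inf_X\dot\vp(t)$ are nonincreasing, and a Harnack-type argument (using the uniform equivalence $C^{-1}\omega_0\le\omega(t)\le C\omega_0$) shows the oscillation of $\dot\vp$ decays to zero.
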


For a detailed exposition of the proof of this result, see for example \cite[Theorem 3.4.4]{SWL}. In fact the convergence is exponentially fast in all $C^k$ norms, see e.g. \cite[Proof of Theorem 1.5]{TZ} and \cite{PS}.
Next, we have:
\begin{theorem}[Cao \cite{Ca}, Tsuji \cite{Ts}]\label{dva}
If $-c_1(X)\in\mathcal{C}_X$ then as $t\to\infty$ the rescaled metrics $\frac{\omega(t)}{t}$ converge smoothly to the unique K\"ahler-Einstein metric $\omega_\infty$ on $X$ which satisfies $\Ric(\omega_\infty)=-\omega_\infty$.
\end{theorem}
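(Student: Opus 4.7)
The plan is to pass to the normalized K\"ahler-Ricci flow and reduce Theorem \ref{dva} to a study of a normalized parabolic complex Monge-Amp\`ere equation whose solution decays exponentially. Since $-c_1(X)\in\mathcal{C}_X$, fix a K\"ahler metric $\omega_\infty\in -2\pi c_1(X)$ and a smooth positive volume form $\Omega$ with $\Ric(\Omega)=-\omega_\infty$. Introduce the time change $s=\log(1+t)$ and the rescaled metric $\tilde{\omega}(s)=(1+t)^{-1}\omega(t)$; a direct calculation shows that $\tilde{\omega}$ satisfies the normalized flow
\[
\frac{\de}{\de s}\tilde{\omega}=-\Ric(\tilde{\omega})-\tilde{\omega},\quad \tilde{\omega}(0)=\omega_0.
\]
The reference forms $\hat{\omega}_s=\omega_\infty+e^{-s}(\omega_0-\omega_\infty)$ are K\"ahler for all $s\geq 0$ and cohomologous to $\tilde{\omega}(s)$, so writing $\tilde{\omega}(s)=\hat{\omega}_s+\ddbar\vp(s)$ the flow becomes
\[
\frac{\de}{\de s}\vp=\log\frac{(\hat{\omega}_s+\ddbar\vp)^n}{\Omega}-\vp,\quad \vp(0)=0.
\]
Proving smooth convergence of $\tilde{\omega}(s)$ to a K\"ahler-Einstein metric as $s\to\infty$ yields the statement for $\omega(t)/t=\frac{1+t}{t}\tilde{\omega}(s)$.

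The next step is to prove $s$-uniform a priori estimates. Since $\hat{\omega}_s\to\omega_\infty$ smoothly, the term $\log(\hat{\omega}_s^n/\Omega)$ is bounded; applying the maximum principle to $\vp$ (using the damping term $-\vp$ in the equation) gives $\|\vp\|_{C^0}\leq C$ uniformly in $s$. Differentiating the equation in $s$ produces
\[
\left(\frac{\de}{\de s}-\Delta\right)\dot{\vp}=-\dot{\vp}-e^{-s}\tr{\tilde{\omega}}{(\omega_0-\omega_\infty)},
\]
whose right-hand side contains the crucial damping $-\dot{\vp}$; the maximum principle then yields $\|\dot{\vp}\|_{C^0}\leq C$. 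For the second order estimate I would apply the Aubin-Yau type computation already used in \eqref{gettt2} to $\log\tr{\omega_\infty}{\tilde{\omega}}$, combined with a suitable auxiliary function (such as $A\vp-\dot{\vp}$) to absorb the bad curvature term $C_0\tr{\tilde{\omega}}{\omega_\infty}$; this gives $\tr{\omega_\infty}{\tilde{\omega}}\leq C$, and combined with the volume bound $\tilde{\omega}^n\asymp \Omega$ from the flow equation produces the two-sided bound $C^{-1}\omega_\infty\leq\tilde{\omega}\leq C\omega_\infty$. The higher order estimates $\|\tilde{\omega}\|_{C^k(X,\omega_\infty)}\leq C_k$ then follow from the local estimates of Theorem \ref{higher}.

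To upgrade uniform bounds to smooth convergence, the heart of the argument is to prove exponential decay $\|\dot{\vp}\|_{C^0}\leq Ce^{-\delta s}$ for some $\delta>0$. The evolution equation for $\dot{\vp}$ derived above, together with the uniform metric equivalence $\tilde\omega\asymp\omega_\infty$, makes this a direct application of the maximum principle to $e^{\delta s}\dot{\vp}$ (with a small enough $\delta<1$ to dominate the $-\dot\vp$ damping against the $O(e^{-s})$ inhomogeneous term). Integrating the decay in $s$ shows that $\vp(s)$ converges uniformly to some limit function $\vp_\infty$, and the uniform higher-order bounds combined with Ascoli-Arzel\`a and a diagonal/uniqueness-of-limit argument (as in the proof of Theorem \ref{tz}) promote this to $C^\infty$ convergence $\vp(s)\to\vp_\infty$. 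Passing to the limit in the PDE, the fact that $\dot{\vp}\to 0$ gives the elliptic equation $(\omega_\infty+\ddbar\vp_\infty)^n=e^{\vp_\infty}\Omega$; taking $-\Ric$ and using $\Ric(\Omega)=-\omega_\infty$ yields $\Ric(\tilde\omega_\infty)=-\tilde\omega_\infty$, so $\tilde\omega_\infty:=\omega_\infty+\ddbar\vp_\infty$ is a K\"ahler-Einstein metric with Einstein constant $-1$; uniqueness follows from the standard maximum principle applied to the difference of two solutions of the elliptic Monge-Amp\`ere equation.

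I expect the main obstacle to be the exponential decay step: the $C^0$ and $C^2$ estimates are conceptually routine adaptations of what is already in Section \ref{sectmax}, but the delicate choice of auxiliary function/exponential weight needed to extract a genuine decay rate (rather than just boundedness) for $\dot{\vp}$, and hence to avoid having to pass to subsequences in the final convergence, is where one has to work carefully with the interplay between the damping term $-\vp$ and the decaying inhomogeneity $e^{-s}(\omega_0-\omega_\infty)$ in the reference family $\hat{\omega}_s$.
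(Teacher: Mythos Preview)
Your proposal is correct and follows essentially the same route as the paper: pass to the normalized flow, reduce to the parabolic Monge--Amp\`ere equation with reference forms $\hat{\omega}_s=e^{-s}\omega_0+(1-e^{-s})\omega_\infty$, obtain $C^0$, $C^2$ and higher-order estimates by the maximum principle and Theorem~\ref{higher}, and then extract a K\"ahler--Einstein limit by showing $\dot\vp\to 0$. The paper in fact proves Theorem~\ref{dva} as the special case $\psi\equiv 0$ of the more general Theorem~\ref{tri}.

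There is one genuine tactical difference in the convergence step. The paper first proves the \emph{one-sided} decay $\dot\vp(t)\leq C(1+t)e^{-t}$ via the test function $(e^t-1)\dot\vp-\vp-nt$ (this needs only the $C^0$ bound on $\vp$, not the $C^2$ estimate), and then uses monotonicity of $\vp(t)+Ce^{-t}(1+t)$ together with the uniform lower bound $\vp+\dot\vp\geq -C$ to obtain pointwise convergence of $\vp(t)$; exponential decay of $\dot\vp$ from below is never used. Your approach instead first establishes the metric equivalence $C^{-1}\omega_\infty\leq\tilde\omega\leq C\omega_\infty$ and then applies the maximum principle to $e^{\delta s}\dot\vp$ to obtain \emph{two-sided} decay $|\dot\vp|\leq Ce^{-\delta s}$ directly. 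Both arguments are valid; yours is a bit more streamlined for the purpose of Theorem~\ref{dva} alone, while the paper's ordering (decay before $C^2$) is what allows the same proof to go through verbatim in the nef-and-big setting of Theorem~\ref{tri}, where the barrier $\psi$ is singular and the uniform metric equivalence only holds away from $\Null(-c_1(X))$.
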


More generally, we have:

\begin{theorem}[Tsuji \cite{Ts}, Tian-Zhang \cite{TiZ}]\label{tri}
If $-c_1(X)\in\ov{\mathcal{C}_X}$ and $\int_X(-c_1(X))^n>0$, then there exists a K\"ahler-Einstein metric $\omega_\infty$ on $X\backslash\Null(-c_1(X))$ which satisfies $\Ric(\omega_\infty)=-\omega_\infty$, such that for any initial K\"ahler metric $\omega_0$, the rescaled metrics $\frac{\omega(t)}{t}$ converge smoothly on compact subsets of $X\backslash\Null(-c_1(X))$ to $\omega_\infty$ as $t\to\infty$.
\end{theorem}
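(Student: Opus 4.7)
The plan is to closely follow the strategy of Theorem \ref{fint}, where the role of the finite time $T$ is now played by $s\to\infty$ after passing to a suitable normalization. First I would rescale time by $s=\log(1+t)$ and set $\ti\omega(s)=e^{-s}\omega(e^s-1)$, so that $\ti\omega$ satisfies the normalized K\"ahler-Ricci flow $\frac{\de}{\de s}\ti\omega=-\Ric(\ti\omega)-\ti\omega$, with cohomology class $[\ti\omega(s)]=e^{-s}[\omega_0]+(1-e^{-s})(-2\pi c_1(X))$ converging to $-2\pi c_1(X)$. Pick a smooth closed $(1,1)$ form $\chi$ with $[\chi]=-2\pi c_1(X)$, a smooth volume form $\Omega$ with $\Ric(\Omega)=-\chi$, and put $\hat\omega_s=e^{-s}\omega_0+(1-e^{-s})\chi$. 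Arguing as in Lemma \ref{krflow}, one checks $\ti\omega(s)=\hat\omega_s+\ddbar\vp(s)$, where $\vp$ solves the parabolic complex Monge-Amp\`ere equation
\begin{equation*}
\dot\vp=\log\frac{(\hat\omega_s+\ddbar\vp)^n}{\Omega}-\vp,\qquad \vp(0)=0,
\end{equation*}
whose formal elliptic limit $(\chi+\ddbar\vp_\infty)^n=e^{\vp_\infty}\Omega$ is precisely the K\"ahler-Einstein equation $\Ric(\omega_\infty)=-\omega_\infty$ with $\omega_\infty:=\chi+\ddbar\vp_\infty$.

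Next I would establish uniform upper bounds on $\vp$ and $\dot\vp$. For $\vp$, at an interior maximum $\ddbar\vp\le 0$ forces $\hat\omega_s\ge\ti\omega>0$, hence $(\hat\omega_s+\ddbar\vp)^n\le\hat\omega_s^n$, and $0\le\dot\vp\le\log(\hat\omega_s^n/\Omega)-\vp$ gives $\vp\le C$ uniformly, since $\hat\omega_s$ has uniformly bounded coefficients. For $\dot\vp$, differentiating the equation yields the useful identity $(\de_s+1)\dot\vp=-R-n$; combined with the computation $\de_s R=\Delta R+|\Ric|^2+R\ge\Delta R+R^2/n+R$ under the normalized flow, the maximum principle gives $R\ge -C$, so that $e^s\dot\vp(s)\le\dot\vp(0)+C(e^s-1)$, i.e.\ $\dot\vp\le C$ uniformly on $X\times[0,\infty)$.

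The core of the argument is to exploit the barrier provided by Theorem \ref{ct} applied to the nef and big class $-2\pi c_1(X)$: there exists $\psi\le 0$, smooth on $X\setminus\Null(-c_1(X))$ and equal to $-\infty$ on $\Null(-c_1(X))$, with $\chi+\ddbar\psi\ge\ve\omega_0$ there. For $s$ large, $\hat\omega_s+\ddbar\psi\ge\frac{\ve}{2}\omega_0$ on $X\setminus\Null(-c_1(X))$. Mimicking Lemmas \ref{uno} and \ref{tre}, I would apply the maximum principle to quantities of the form $Q_1=(A+s)\dot\vp+\vp-\psi+nAs$ and $Q_2=\log\tr{\omega_0}{\ti\omega}+C(\vp-\psi)$, with constants tuned to absorb the extra $-\vp$ and $-\ti\omega$ terms introduced by the normalization, to obtain
\begin{equation*}
\dot\vp\ge C\psi-C,\qquad \tr{\omega_0}{\ti\omega}\le Ce^{-C\psi},
\end{equation*}
uniformly on $X\times[0,\infty)$. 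Together with the flow equation these force a two-sided bound $C_K^{-1}\omega_0\le\ti\omega(s)\le C_K\omega_0$ on any compact $K\subset X\setminus\Null(-c_1(X))$, and the local higher-order estimates of Theorem \ref{higher} then produce uniform $C^k_{\mathrm{loc}}$ bounds there.

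Finally, for convergence as $s\to\infty$ I would use the damping built into the normalization: a direct calculation gives
\begin{equation*}
\left(\frac{\de}{\de s}-\Delta\right)\dot\vp+\dot\vp=e^{-s}\tr{\ti\omega}{(\chi-\omega_0)},
\end{equation*}
whose right-hand side is $O(e^{-s})$ locally on $X\setminus\Null(-c_1(X))$ by the previous step. A maximum principle comparison then yields $|\dot\vp|\le C_K e^{-\eta s}$ on compacta for some $\eta>0$, so integrating in $s$ gives smooth convergence $\vp(s)\to\vp_\infty$ on compacta of $X\setminus\Null(-c_1(X))$, with the limit solving the stationary Monge-Amp\`ere equation. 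The hard part will be showing that $\vp_\infty$ is independent of the initial datum $\omega_0$: uniqueness of bounded solutions follows from the maximum principle applied to the difference of two solutions, but doing so requires control at the degenerate locus $\Null(-c_1(X))$, and this is exactly where Theorem \ref{ct} together with the uniform upper bound on $\vp$ are indispensable, replacing in the purely K\"ahler setting the roles played by Kodaira's lemma and Nakamaye's theorem in the original projective arguments of \cite{Ts, TiZ}.
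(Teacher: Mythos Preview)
Your setup and the barrier estimates are essentially the paper's, but there are two genuine gaps.

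\textbf{Convergence.} Your proposed route to $|\dot\vp|\le C_K e^{-\eta s}$ on compacta via the equation
\[
\left(\frac{\de}{\de s}-\Delta+1\right)\dot\vp=e^{-s}\tr{\ti\omega}{(\chi-\omega_0)}
\]
does not work as stated: the right-hand side is $O(e^{-s})$ only on compact subsets of $X\setminus\Null(-c_1(X))$, while near the null locus $\tr{\ti\omega}{\omega_0}$ can be as large as $e^{-C\psi}$, so there is no global control. A maximum principle on a compact set $K$ needs boundary information you do not have, and a cutoff argument would introduce gradient terms of the same order as the quantity you are trying to bound. The paper avoids this entirely by proving a \emph{global} sharp upper bound $\dot\vp(t)\le C(1+t)e^{-t}$, obtained from the maximum principle applied to $(e^t-1)\dot\vp-\vp-nt$, whose heat operator evaluates to $-\tr{\omega(t)}{\omega_0}<0$. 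This makes $\vp(t)+Ce^{-t}(1+t)$ nonincreasing in $t$, forcing pointwise convergence of $\vp(t)$; the $C^k_{\rm loc}$ bounds then upgrade this to smooth convergence, and $\dot\vp\to 0$ follows a posteriori. Your argument needs this global monotonicity step.

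\textbf{Uniqueness.} Your outline (``maximum principle applied to the difference of two solutions'') is too vague: the limits $\vp_\infty$ are not bounded below near $\Null(-c_1(X))$, so a direct comparison of two solutions of the stationary equation is delicate. The paper instead compares two \emph{flows}, one starting at $\omega_0$ and one at $\omega_0+\omega_0'$, and the parabolic maximum principle (valid globally on $X$) gives $\vp(t)\le\vp'(t)$ for all $t$, hence $\vp_\infty\le\vp_\infty'$. Equality then comes from the integral constraint $\int_X e^{\vp_\infty}\Omega=\int_X(-2\pi c_1(X))^n$, which is the same for both limits by dominated convergence. This trick bypasses any boundary analysis at the null locus.

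A minor point: the paper's barrier quantity for the lower bound is $\vp+\dot\vp-\psi$, whose heat operator is exactly $\tr{\omega(t)}{(\eta+\ddbar\psi)}-n$; your $Q_1=(A+s)\dot\vp+\vp-\psi+nAs$ introduces a term $(2-(A+s))\dot\vp$ with unbounded coefficient and no sign control, and $Q_2$ as written has the wrong sign on $\vp-\psi$ (you should subtract $A(\vp+\dot\vp-\psi)$, not add $C(\vp-\psi)$).
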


Further properties, which we will not discuss, were established in \cite{TiZ, Zh3, SW2, GSW, TiZ2}.

We now give the proof of Theorem \ref{tri}, which will also give as a special case Theorem \ref{dva}, where we have that $\Null(-c_1(X))=\emptyset$. The uniqueness statement in Theorem \ref{dva} is stronger than the one in Theorem \ref{tri}, but its proof is much easier, and is left as an exercise.
\begin{proof}
Since the convergence is for the rescaled metrics $\frac{\omega(t)}{t}$, it is convenient to renormalize the flow as follows:
\begin{equation}\label{krf2a}
\left\{
                \begin{aligned}
                  &\frac{\de}{\de t}\omega(t)=-\Ric(\omega(t))-\omega(t)\\
                  &\omega(0)=\omega_0
                \end{aligned}
              \right.
\end{equation}
Note that if $\ti{\omega}(s)$ solves \eqref{krf} then $\omega(t)=\frac{\ti{\omega}(s)}{1+s}$ solves \eqref{krf2a} with the new time parameter $t=\log(1+s)$, and conversely
if $\omega(t)$ solves \eqref{krf2a} then $\ti{\omega}(s)=e^t\omega(t)$ solves \eqref{krf} with the new time parameter $s=e^t-1.$ It follows that \eqref{krf2a} is also solvable on $[0,\infty),$ and that the goal is now to show that the solution $\omega(t)$ of \eqref{krf2a} satisfies
\begin{equation}\label{conv1p}
\omega(t)\to \omega_\infty,
\end{equation}
in $C^\infty_{\mathrm{loc}}(X\backslash \Null(-c_1(X)))$ as $t\to\infty$, and that the limit $\omega_\infty$ is K\"ahler-Einstein and independent of the initial metric $\omega_0$.

The cohomology class of the solution $\omega(t)$ of \eqref{krf2a} is
$$[\omega(t)]=e^{-t}[\omega_0]-(1-e^{-t})2\pi c_1(X).$$
Fix now any closed real $(1,1)$ form $\eta$ cohomologous to $-2\pi c_1(X)$, a smooth positive volume form $\Omega$ with $\Ric(\Omega)=-\eta$, and let
$$\hat{\omega}_t=e^{-t}\omega_0+(1-e^{-t})\eta.$$
These are reference forms (not necessarily positive) cohomologous to $\omega(t)$.
We claim that \eqref{krf2a} is equivalent to
\begin{equation}\label{ma2a}
\left\{
                \begin{aligned}
                  &\frac{\de}{\de t}\vp(t)=\log\frac{(\hat{\omega}_t+\ddbar\vp(t))^n}{\Omega}-\vp(t)\\
                  &\vp(0)=0\\
                  &\hat{\omega}_t+\ddbar\vp(t)>0.
                \end{aligned}
              \right.
\end{equation}
Indeed, if $\vp(t)$ solves \eqref{ma2a} and we define $\omega(t)=\hat{\omega}_t+\ddbar\vp(t)$, then
\[\begin{split}
\frac{\de}{\de t}\omega(t)&=\frac{\de}{\de t}(\hat{\omega}_t+\ddbar\vp(t))\\
&=-\hat{\omega}_t+\eta-\Ric(\omega(t))+\Ric(\Omega)-\ddbar\vp(t)\\
&=-\Ric(\omega(t))-\omega(t),
\end{split}\]
and \eqref{krf2a} holds. Conversely, if $\omega(t)$ solves \eqref{krf2a}, we define $\vp(t)$ by solving the ODE
$$\frac{\de}{\de t}\vp(t)=\log\frac{\omega(t)^n}{\Omega}-\vp(t),\quad \vp(0)=0,$$
and compute
\[\begin{split}
\frac{\de}{\de t}(e^t(\omega(t)-\hat{\omega}_t-\ddbar\vp(t)))=e^t(-\Ric(\omega(t))+\Ric(\omega(t)))=0,
\end{split}\]
and since $(e^t(\omega(t)-\hat{\omega}_t-\ddbar\vp(t)))|_{t=0}=0$, we conclude that $\omega(t)=\hat{\omega}_t+\ddbar\vp(t)$ for all $t$, and \eqref{ma2a} holds.

We now apply Theorem \ref{ct} and obtain an upper semicontinuous $L^1$ function $\psi:X\to\mathbb{R}\cup\{-\infty\},$ with $\sup_X\psi=0$, which equals $-\infty$ on $\Null(-c_1(X))$, which is finite and smooth on $X\backslash \Null(-c_1(X))$, and such that
$$\eta+\ddbar\psi\geq \ve\omega_0,$$
on $X\backslash \Null(-c_1(X))$, for some $\ve>0$.

We remark that in fact in this case (since $[\eta]=2\pi c_1(K_X)$) the result of Theorem \ref{ct} was already known before, thanks to \cite{Na, Ts} (this is known as ``Tsuji's trick''). Also, in the setting of Theorem \ref{dva}, since $-c_1(X)\in\mathcal{C}_X$, we can choose $\eta$ to be a K\"ahler form, and $\psi$ identically equal to $0$, and in this case the forms $\hat{\omega}_t$ are all K\"ahler.

First, we show that
\begin{equation}\label{tt1}
\vp(t)\leq C,
\end{equation}
on $X\times [0,\infty)$. This is a simple consequence of the maximum principle since at any maximum point of $\vp$ (for $t>0$) we have
$$0\leq \frac{\de}{\de t}\vp=\log\frac{(\hat{\omega}_t+\ddbar\vp(t))^n}{\Omega}-\vp(t)\leq
\log\frac{\hat{\omega}_t^n}{\Omega}-\vp(t)\leq C-\vp(t),$$
using that at a maximum point $\hat{\omega}_t\geq \hat{\omega}_t+\ddbar\vp(t)>0$,
and we are done. Next, we show that
\begin{equation}\label{tt}
\dot{\vp}(t)\leq C(1+t)e^{-t},
\end{equation}
on $X\times [0,\infty)$. Indeed we compute
$$\left(\frac{\de}{\de t}-\Delta\right)\vp(t)=\dot{\vp}(t)-n+\tr{\omega(t)}{\hat{\omega}_t},$$
$$\left(\frac{\de}{\de t}-\Delta\right)\dot{\vp}(t)=-\dot{\vp}(t)-e^{-t}\tr{\omega(t)}{\omega_0}+e^{-t}\tr{\omega(t)}{\eta},$$
$$\left(\frac{\de}{\de t}-\Delta\right)((e^t-1)\dot{\vp}(t)-\vp(t)-nt)=-\tr{\omega(t)}{\omega_0}<0,$$
and so the maximum principle gives
$$(e^t-1)\dot{\vp}(t)-\vp(t)-nt\leq 0,$$
which together with \eqref{tt1} gives \eqref{tt} for $t\geq 1$ (and it is clear that \eqref{tt} holds for $0\leq t\leq 1$).

Next, we show that there is a constant $C>0$ such that
\begin{equation}\label{tt2}
\vp(t)+\dot{\vp}(t)\geq \psi-C,
\end{equation}
on $X\times [0,\infty)$.
Consider the quantity
$$Q=\vp(t)+\dot{\vp}(t)-\psi.$$
The function $Q$ is lower semicontinuous (hence bounded below) and it approaches $+\infty$ as we approach $\Null(-c_1(X))$, and so it achieves a minimum at $(x,t)$, for some $t>0$ and $x\not\in\Null(-c_1(X))$, and at this point we have
\[\begin{split}
0&\geq\left(\frac{\de}{\de t}-\Delta\right)Q=\tr{\omega(t)}{(\eta+\ddbar\psi)}-n\geq \ve\tr{\omega(t)}{\omega_0}-n\\
&\geq n\ve\left(\frac{\omega_0^n}{\omega(t)^n}\right)^{\frac{1}{n}}-n\geq C^{-1}e^{-\frac{\vp(t)+\dot{\vp}(t)}{n}}-n,
\end{split}\]
and so $\vp(t)+\dot{\vp}(t)\geq -C$, which implies that $Q\geq -C$ since $\psi\leq 0$, and this shows \eqref{tt2}.

Next we show that
\begin{equation}\label{tt3}
\tr{\omega_0}{\omega(t)}\leq Ce^{-C\psi},
\end{equation}
on $X\times [0,\infty)$. For this, we compute using \eqref{gettt2}
\[\begin{split}
\left(\frac{\de}{\de t}-\Delta\right)&(\log \tr{\omega_0}{\omega(t)}-A(\vp(t)+\dot{\vp}(t)-\psi))\\
&\leq C\tr{\omega(t)}{\omega_0}+An-A\tr{\omega(t)}{(\eta+\ddbar\psi)}\\
&\leq -\tr{\omega(t)}{\omega_0}+C,
\end{split}\]
on $X\backslash \Null(-c_1(X)),$ provided we choose $A>0$ large enough. Therefore at a maximum of this quantity (achieved at $(x,t)$ with $t>0$, and necessarily with $x\not\in\Null(-c_1(X))$), we have
$$\tr{\omega(t)}{\omega_0}\leq C.$$
We now use the elementary inequality
$$\tr{\omega_0}{\omega(t)}\leq \frac{(\tr{\omega(t)}{\omega_0})^{n-1}}{(n-1)!}\cdot \frac{\omega(t)^n}{\omega_0^n},$$
which can be proved by choosing coordinates so that at a point $\omega_0$ is the identity and $\omega(t)$ is diagonal with eigenvalues $\lambda_j>0$, so that it reduces to
$$\sum_j \lambda_j \leq \frac{1}{(n-1)!}\left(\sum_j \frac{1}{\lambda_j}\right)\left(\prod_k\lambda_k\right),$$
which is obvious since each term on the LHS appears in the RHS, and all other terms on the RHS are positive. We conclude that at our point of maximum we have
$$\tr{\omega_0}{\omega(t)}\leq C\frac{\omega(t)^n}{\omega_0^n}=Ce^{\vp(t)+\dot{\vp}(t)}\frac{\Omega}{\omega_0^n}\leq C,$$
using \eqref{tt1} and \eqref{tt}. Combining this with \eqref{tt2} it follows that
$$\log \tr{\omega_0}{\omega(t)}-A(\vp(t)+\dot{\vp}(t)-\psi)\leq C,$$
at the maximum and hence everywhere, and this (using \eqref{tt1}, \eqref{tt} again) implies \eqref{tt3}.
But note that
$$\frac{\omega(t)^n}{\omega_0^n}\geq C^{-1}e^{\vp(t)+\dot{\vp}(t)}\geq C^{-1}e^\psi,$$
using \eqref{tt2}, and so given any compact subset $K\subset X\backslash \Null(-c_1(X))$ there is a constant $C_K$ such that
\begin{equation}\label{tt4}
C_K^{-1}\omega_0\leq \omega(t)\leq C_K\omega_0,
\end{equation}
holds on $K\times[0,\infty)$. The higher order estimates in Theorem \ref{higher} give that
$$\|\omega(t)\|_{C^k(K,g_0)}\leq C_{K,k},$$
for all $t\geq 0, k\geq 0$, up to shrinking $K$ slightly.
These estimates in turn imply that the function
$$\Delta_{\omega_0} \vp(t)=\tr{\omega_0}{\omega(t)}-\tr{\omega_0}{\hat{\omega}_t},$$
is uniformly bounded in $C^k(K,\omega_0)$ for all $k\geq 0$. But \eqref{tt1}, \eqref{tt} and \eqref{tt2} imply that $\vp(t)$ is uniformly bounded on $K$ (by a constant that depends on $K$ but is independent of $t$) and elliptic estimates (as in the proof of Theorem \ref{estimates}) give
\begin{equation}\label{esti1}
\|\vp(t)\|_{C^k(K,\omega_0)}\leq C_{K,k},
\end{equation}
for all $t\geq 0, k\geq 0$, up to shrinking $K$ again.
Now for $t\geq 1$, \eqref{tt} gives
$$\dot{\vp}(t)\leq Cte^{-t},$$
and so
$$\frac{\de}{\de t}(\vp(t)+Ce^{-t}(1+t))=\dot{\vp}(t)-Cte^{-t}\leq 0.$$
The function $\vp(t)+Ce^{-t}(1+t)$ is thus nonincreasing and uniformly bounded from below on compact subsets of $X\backslash \Null(-c_1(X))$, and so as
$t\to\infty$ the functions $\vp(t)$ converge pointwise on $X\backslash \Null(-c_1(X))$ to a function $\vp_\infty$, which thanks to \eqref{esti1} is smooth and in fact
$\vp(t)\to\vp_\infty$ in $C^\infty_{\rm loc}(X\backslash \Null(-c_1(X)))$.
Also \eqref{tt4} shows that $\omega_\infty:=\eta+\ddbar\vp_\infty$ is a smooth K\"ahler metric
on $X\backslash \Null(-c_1(X))$. The flow equation \eqref{ma2a} implies that $\dot{\vp}(t)$ also converges smoothly to some limit function.
Now, since $\vp(t)$ converge smoothly to $\vp_\infty$ on compact subsets of $X\backslash \Null(-c_1(X))$ it follows that given any $x\in X\backslash \Null(-c_1(X))$ there is a sequence $t_i\to\infty$ such that $\dot{\vp}(x,t_i)\to 0$. But since $\dot{\vp}(t)$ converges smoothly on compact sets to some limit function, it follows that
$\dot{\vp}(t)\to 0$ in $C^\infty_{\rm loc}(X\backslash \Null(-c_1(X)))$. Taking then the limit as $t\to\infty$ in \eqref{ma2a} we obtain
$$0=\log\frac{\omega_\infty^n}{\Omega}-\vp_\infty,$$
on $X\backslash \Null(-c_1(X))$. Taking $\ddbar$ of this, we finally obtain
$$\Ric(\omega_\infty)=-\eta-\ddbar\vp_\infty=-\omega_\infty.$$
Lastly we show that the limit $\omega_\infty$ is independent of the initial metric $\omega_\infty$, following \cite{TiZ}. The first observation is that since the functions
$e^{\vp(t)+\dot{\vp}(t)}$ are uniformly bounded (thanks to \eqref{tt1}, \eqref{tt}) and converge to $e^{\vp_\infty}$ pointwise a.e. on $X$, the dominated convergence theorem implies that
$$\lim_{t\to\infty}\int_X e^{\vp(t)+\dot{\vp}(t)}\Omega=\int_Xe^{\vp_\infty}\Omega,$$
where we extend $\vp_\infty$ by zero on $\Null(-c_1(X))$,
but at the same time
$$\lim_{t\to\infty}\int_X e^{\vp(t)+\dot{\vp}(t)}\Omega=\lim_{t\to\infty}\int_X \omega(t)^n=\int_X (-2\pi c_1(X))^n>0,$$
and so $\int_Xe^{\vp_\infty}\Omega$ is independent of the initial metric $\omega_0$.
If $\omega_0'$ is another K\"ahler metric on $X$, consider the flow \eqref{krf2a} starting at $\omega_0+\omega_0'$, which is equivalent to the parabolic complex Monge-Amp\`ere equation
\begin{equation}\label{ma2b}
\left\{
                \begin{aligned}
                  &\frac{\de}{\de t}\vp'(t)=\log\frac{(\hat{\omega}'_t+\ddbar\vp'(t))^n}{\Omega}-\vp'(t)\\
                  &\vp'(0)=0\\
                  &\hat{\omega}'_t+\ddbar\vp'(t)>0,
                \end{aligned}
              \right.
\end{equation}
where the reference forms are now
$$\hat{\omega}'_t=e^{-t}(\omega_0+\omega_0')+(1-e^{-t})\eta=\hat{\omega}_t+e^{-t}\omega_0'.$$
Therefore the difference $\vp(t)-\vp'(t)$ satisfies
\begin{equation*}
\left\{
                \begin{aligned}
                  &\frac{\de}{\de t}(\vp(t)-\vp'(t))=\log\frac{(\hat{\omega}'_t-e^{-t}\omega'_0+\ddbar\vp'(t)+\ddbar(\vp(t)-\vp'(t)))^n}{(\hat{\omega}'_t+\ddbar\vp'(t))^n}\\
                  &\quad\quad\quad\quad\quad\quad\quad\quad-(\vp(t)-\vp'(t))\\
                  &(\vp-\vp')(0)=0\\
                  &\hat{\omega}'_t+\ddbar(\vp(t)-\vp'(t))>0,
                \end{aligned}
              \right.
\end{equation*}
and at a maximum of $\vp(t)-\vp'(t)$, achieved at time $t>0$, we obtain
$$\vp(t)-\vp'(t)\leq 0,$$
and so $\vp(t)\leq \vp'(t)$ holds for all $t\geq 0$. Passing to the limit we obtain
$$\vp_\infty\leq\vp'_\infty,$$
on $X\backslash \Null(-c_1(X))$ and since, as remarked earlier,
$$\int_X e^{\vp_\infty}\Omega=\int_X e^{\vp'_\infty}\Omega,$$
this implies that $\vp_\infty=\vp'_\infty$ a.e. on $X$, and therefore everywhere on $X\backslash \Null(-c_1(X))$ where these functions are smooth.

This shows that the limits of the flow starting at $\omega_0$ and $\omega_0+\omega_0'$ are the same, and by symmetry we obtain the same statement for $\omega_0$ and $\omega_0'$.
\end{proof}

\subsection{Semiample canonical bundle}

Combining Theorems \ref{ena}, \ref{dva} and \ref{tri}, we see that the only case left to study (when $T=\infty$) is when
$-c_1(X)\in\de\mathcal{C}_X, \int_X(-c_1(X))^n=0$, and $-c_1(X)$ is not the zero class. This is the hardest case, and in general not much is known. However, a widely-believed conjecture in algebraic geometry (or rather, its direct generalization to K\"ahler manifolds), called the Abundance Conjecture, predicts that if $X$ is a compact K\"ahler manifold with $K_X$ nef, then $K_X$ is semiample. This means that there exists $\ell\geq 1$ such that for every given point $x\in X$ we can find a section $s\in H^0(X,K_X^{\otimes\ell})$ such that $s(x)\neq 0$ (i.e. $K_X^{\otimes\ell}$ is base-point free).

From now on, inspired by the Abundance Conjecture, we will make the assumption that $K_X$ is semiample (which automatically implies $K_X$ nef, see below). Then it turns out that one can say a lot about the behavior of the flow. The reason is that using sections of $K_X^{\otimes \ell}$ we may define a holomorphic map $f:X\to \mathbb{CP}^N$, where $N=\dim H^0(X,K_X^{\otimes\ell})-1$, by fixing a basis $\{s_0,\dots,s_N\}$ of $H^0(X,K_X^{\otimes\ell})$ and mapping a point $x\in X$ to the point
$[s_0(x):\dots:s_N(x)]$, which is a well-defined point in $\mathbb{CP}^N$ because these sections have empty common zero locus, by assumption. Also by definition of $f$ we have that $f^*\mathcal{O}_{\mathbb{CP}^N}(1)\cong K_X^{\otimes\ell}$.
In particular, if $\omega_{\mathrm{FS}}$ denotes the Fubini-Study metric on $\mathbb{CP}^N$, then $\frac{1}{\ell}f^*\omega_{\mathrm{FS}}$ is a smooth semipositive $(1,1)$ form which represents $-c_1(X)$. We conclude that $-c_1(X)\in\ov{\mathcal{C}_X}$, i.e. that $K_X$ is nef.

By the Proper Mapping Theorem (see \cite[p.34]{GH}), the image $f(X)$ is an irreducible analytic subvariety $Y$ of $\mathbb{CP}^N$, i.e. an irreducible algebraic variety. Provided we replace $\ell$ by a suitably high multiple of it, we have that the map $f:X\to Y$ has connected fibers, $Y$ is normal (see \cite[Theorem 2.1.27, Example 2.1.15]{Laz}) and the dimension of $Y$ equals the Kodaira dimension $\kappa(X)$ of $X$ (see \cite[Theorem 2.1.33]{Laz}).

We now split into cases depending on the Kodaira dimension $\kappa(X)$.
\subsection{The case $\kappa(X)=0$}
The first case is $\kappa(X)=0$, where we need the following well-known lemma.

\begin{lemma}\label{cycy}
Let $X$ be a compact K\"ahler manifold with $K_X$ semiample. Then the following are equivalent:
\begin{itemize}
\item[(a)] $\kappa(X)=0$
\item[(b)] $c_1(X)=0$ in $H^2(X,\mathbb{R})$
\item[(c)] There exists $k\geq 1$ such that $K_X^{\otimes k}\cong\mathcal{O}_X$ is holomorphically trivial.
\end{itemize}
\end{lemma}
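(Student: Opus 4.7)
The plan is to close the cycle $(c)\Rightarrow(b)$, $(c)\Rightarrow(a)$, $(a)\Rightarrow(c)$, and $(b)\Rightarrow(c)$, using throughout the semiample fibration $f\colon X\to Y\subset\mathbb{CP}^N$ with $f^*\mathcal{O}_{\mathbb{CP}^N}(1)\cong K_X^{\otimes \ell}$ and $\dim Y=\kappa(X)$ described just before the statement.

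Both implications out of (c) are immediate. If $K_X^{\otimes k}\cong\mathcal{O}_X$ then $k c_1(K_X)=0$ in $H^2(X,\mathbb{Z})$, hence $c_1(X)=-c_1(K_X)=0$ in $H^2(X,\mathbb{R})$, which is (b). For (a), the assumption forces $K_X^{\otimes \ell}$ to be torsion for every $\ell$, since $(K_X^{\otimes \ell})^{\otimes k}\cong \mathcal{O}_X$; any $s\in H^0(X,K_X^{\otimes \ell})$ then satisfies $s^{\otimes k}\in H^0(X,\mathcal{O}_X)=\mathbb{C}$, a constant which is either zero or nowhere-zero, forcing $s$ itself to be either identically zero or nowhere-vanishing. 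Hence $\dim H^0(X,K_X^{\otimes \ell})\in\{0,1\}$ for all $\ell$, so $\kappa(X)=0$.

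For $(a)\Rightarrow(c)$, pick $\ell$ large enough that $\dim f(X)=\kappa(X)=0$, so $Y$ is a single point. After a linear change of basis we may assume the constant value of $f$ is $[1:0:\cdots:0]$, so each $s_i$ with $i\geq 1$ vanishes wherever $s_0$ does not. Since $K_X^{\otimes \ell}$ is base-point free, $s_0$ must be nowhere-vanishing, and then $s_1=\cdots=s_N=0$ identically, so $\dim H^0(X,K_X^{\otimes \ell})=1$ and $s_0$ trivializes $K_X^{\otimes \ell}$.

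The main obstacle is $(b)\Rightarrow(c)$, since the other implications are essentially formal. Assuming $c_1(X)=0$ in $H^2(X,\mathbb{R})$, the smooth closed semipositive $(1,1)$-form $f^*\omega_{\mathrm{FS}}$ has cohomology class a positive multiple of $\ell c_1(K_X)=-\ell c_1(X)=0$, so it is exact. Lemma \ref{dd} then produces $\vp\in C^\infty(X,\mathbb{R})$ with $\ddbar\vp=f^*\omega_{\mathrm{FS}}\geq 0$. Fixing any background K\"ahler metric $\omega_0$ gives $\Delta_{\omega_0}\vp\geq 0$, and $\int_X \Delta_{\omega_0}\vp\,\omega_0^n = n\int_X \ddbar\vp\wedge \omega_0^{n-1}=0$ by Stokes' theorem, so $\Delta_{\omega_0}\vp\equiv 0$ and $\vp$ is constant. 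Therefore $f^*\omega_{\mathrm{FS}}\equiv 0$; since $\omega_{\mathrm{FS}}$ is positive-definite on $\mathbb{CP}^N$, this forces $df\equiv 0$, hence $f$ is constant, $Y$ is a point, $\kappa(X)=0$, and $(a)\Rightarrow(c)$ finishes the argument. The essential feature is that the semiample hypothesis supplies the smooth semipositive representative of $-c_1(X)$ needed to run the $\partial\overline{\partial}$-Lemma argument; without it, vanishing of the Chern class alone would not yield the rigidity of $f$.
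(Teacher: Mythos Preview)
Your proof is correct, but your route through $(b)\Rightarrow(a)$ is genuinely different from the paper's. The paper argues via a Bochner technique: one chooses a Hermitian metric $\tilde h$ on $K_X$ whose curvature has vanishing trace with respect to a fixed K\"ahler metric $\omega$, and then for any $s\in H^0(X,K_X^{\otimes k})$ computes $\Delta_\omega|s|^2=|\nabla s|^2\ge 0$, forcing $s$ to be parallel; hence $\dim H^0(X,K_X^{\otimes k})\le 1$ for all $k$. This yields $\kappa(X)\le 0$ \emph{without} using semiampleness, and the paper explicitly remarks that $(b)\Leftrightarrow(c)\Rightarrow(a)$ persists in that generality.

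Your argument instead exploits semiampleness from the start: the pullback $f^*\omega_{\mathrm{FS}}$ is a smooth semipositive representative of the zero class, so the $\partial\overline\partial$-Lemma and the vanishing of $\int_X\Delta_{\omega_0}\vp\,\omega_0^n$ force $f^*\omega_{\mathrm{FS}}\equiv 0$ and hence $f$ constant. This is more elementary (no curvature computation, no parallel-section argument) and ties in nicely with the semiample fibration picture developed just before the lemma, but it is intrinsically tied to the hypothesis and does not recover the stronger statement the paper extracts. A minor point: once you know $f$ is constant and $K_X^{\otimes\ell}$ is base-point-free with one-dimensional $H^0$, you have $(c)$ directly for $k=\ell$, so the detour through $\kappa(X)=0$ and $(a)\Rightarrow(c)$ is not strictly necessary.
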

In fact, without the assumption that $K_X$ be semiample, it remains true that $(b)\Leftrightarrow (c)\Rightarrow(a)$, while the implication $(a)\Rightarrow(b)$ is false. In this case, the only hard implication is $(b)\Rightarrow (c)$, and we refer the interested reader to \cite{To2}, for example.

\begin{proof}
The implication $(c)\Rightarrow (b)$ is trivial. First we show that $(a)\Rightarrow (c)$.
The assumption that $\kappa(X)=0$ is equivalent to the fact that $\dim H^0(X,K_X^{\otimes k})\leq 1$ for all $k\geq 1$, and is equal to $1$ for at least one value of $k$. Choose $k$ large enough so that $K_X^{\otimes k}$ is base-point free. Then we must have $\dim H^0(X,K_X^{\otimes k})=1$, and if $s\in H^0(X,K_X^{\otimes k})$ is a nontrivial section then necessarily $s$ is never vanishing. This means that $K_X^{\otimes k}\cong\mathcal{O}_X$ is holomorphically trivial.

Next, we show that $(b)\Rightarrow (a)$.
Fix a smooth metric $h$ on $K_X$ and a K\"ahler metric $\omega$ on $X$. The curvature $R_h$ of $h$ is a closed real $(1,1)$ form cohomologous to $c_1(K_X)=-c_1(X)=0$, so
$$\int_X \tr{\omega}{R_h}\omega^n=n\int_X\omega^{n-1}\wedge R_h=0,$$
and so we can find a smooth function $u$ such that $\Delta_{\omega}u=\tr{\omega}{R_h}$. Therefore the smooth metric $\ti{h}=e^u h$ on $K_X$ has curvature $R_{\ti{h}}=R_h-\ddbar u$ which satisfies $\tr{\omega}{R_{\ti{h}}}=0.$ Given any $k\geq 1$ and any $s\in H^0(X,K_X^{\otimes k})$, let $|s|^2$ be its pointwise length squared with respect to the metric $\ti{h}^{k}$. Then a straightforward calculation gives
$$\Delta_{\omega}|s|^2=|\nabla s|^2-k|s|^2\tr{\omega}{R_{\ti{h}}}=|\nabla s|^2\geq 0,$$
where $\nabla$ is the Chern connection of the metric $\ti{h}^k$ on $K_X^{\otimes k}$. By the strong maximum principle this implies that $|s|^2$ is constant, and so $|\nabla s|^2$ is identically zero, i.e. the section $s$ is parallel. This implies that $\dim H^0(X,K_X^{\otimes k})\leq 1$, because if we have two global sections $s_1,s_2$, given a point $x\in X$ there exists $\lambda\in\mathbb{C}$ such that $s_1(x)=\lambda s_2(x)$ (up to switching $s_1$ and $s_2$), and since they are both globally parallel we must have $s_1=\lambda s_2$ globally.
We have therefore shown that $\kappa(X)\leq 0$ (without using that $K_X$ is semiample). Since $K_X$ is semiample, we have $H^0(X,K_X^{\otimes\ell})\neq 0$ for some $\ell\geq 1$, and so $\kappa(X)=0$.
\end{proof}

So, under our assumption that $K_X$ is semiample, if $\kappa(X)=0$ then Theorem \ref{ena} applies.

\subsection{The case $\kappa(X)=n$}
The second case is when $\kappa(X)=n=\dim X$. Recall that since $K_X$ is semiample, we have a holomorphic map $f:X\to\mathbb{CP}^N$ such that $f^*\mathcal{O}_{\mathbb{CP}^N}(1)\cong K_X^{\otimes\ell}$, for some $\ell\geq 1$.

If, as before, we let $Y=f(X)$, which is an irreducible algebraic variety of dimension $n$, then the map $f:X\to Y$ has connected fibers and the generic fiber has dimension $0$, i.e. it is a bimeromorphic morphism. It follows that we have
\[\begin{split}
\int_X(-c_1(X))^n&=\int_Xc_1(K_X)^n=\ell^{-n}\int_X c_1(f^*\mathcal{O}_{\mathbb{CP}^N}(1))^n\\
&=\ell^{-n}\int_Y c_1(\mathcal{O}_{\mathbb{CP}^N}(1)|_{Y})^n>0,
\end{split}\]
since the last term is (up to a positive constant) equal to the volume of the regular part of $Y$ with respect to the restriction of $\omega_{\mathrm{FS}}$, the Fubini-Study metric on $\mathbb{CP}^N$.

Therefore, either we have $-c_1(X)\in\mathcal{C}_X$, in which case Theorem \ref{dva} applies, or otherwise we have $-c_1(X)\in\de\mathcal{C}_X$ and $\int_X(-c_1(X))^n>0$, and Theorem \ref{tri} applies.

\subsection{The case $0<\kappa(X)<n$}
The third and last case to study is thus $0<\kappa(X)<n$. Let $Y_{sing}$ be the singular locus of $Y$, which is a proper analytic subvariety of $Y$, and $Y_{reg}=Y\backslash Y_{sing}$ its regular locus, so $Y_{reg}$ is a connected complex manifold of dimension $\kappa(X)$. Also, $f^{-1}(Y_{sing})$ is a proper analytic subvariety of $X$, and so $f:X\backslash f^{-1}(Y_{sing})\to Y_{reg}$ is a surjective holomorphic map between complex manifolds, with compact connected fibers. Let $S'\subset Y$ be the union of $Y_{sing}$ together with the critical values of $f:X\backslash f^{-1}(Y_{sing})\to Y_{reg}$ (i.e. the images of all points $x\in X\backslash f^{-1}(Y_{sing})$ such that $df_x$ is not surjective). Then $S'$ is a proper analytic subvariety of $Y$, $S=f^{-1}(S')$ is a proper analytic subvariety of $X$, and $f:X\backslash S\to Y\backslash S'$ is a (surjective) holomorphic submersion between complex manifolds, and all the fibers $X_y=f^{-1}(y), y\in Y\backslash S'$ are connected compact complex manifolds of dimension equal to $n-\kappa(X)$. Informally, we will refer to $S$ as the set of singular fibers of $f$, and to the fibers $X_y=f^{-1}(y), y\in Y\backslash S'$ as the smooth fibers, although this is not strictly speaking correct.

Recall that the map $f$ has the property that $K_X^{\otimes\ell}\cong f^*\mathcal{O}_{\mathbb{CP}^N}(1)$, which implies that for every $y\in Y\backslash S'$ we have $K_X^{\otimes \ell}|_{X_y}\cong \mathcal{O}_{X_y}$. However, since $f$ is a submersion in a neighborhood of $X_y$, we have the adjunction-type relation $K_{X_y}\cong K_X|_{X_y}$ (see Lemma \ref{adj} below), and so it follows that $K_{X_y}^{\otimes \ell}\cong\mathcal{O}_{X_y}$, and so in particular $c_1(X_y)=0$ in $H^2(X_y,\mathbb{R})$. This means that the smooth fibers are Calabi-Yau manifolds, and so $X$ is a fiber space over $Y$ with generic fiber a Calabi-Yau $(n-\kappa(X))$-fold.

So we have seen that assuming that $K_X$ is semiample has provided us with a fibration structure on $X$ (and in fact, one can also view the existence of this fibration as being an equivalent statement to the Abundance Conjecture). This is a major advantage over the ``general'' case when one only assumes that $K_X$ is nef (i.e. $T=\infty$).

Our goal is the following result, which generalizes earlier work of Song-Tian \cite{SoT, SoT2} and Fong-Zhang \cite{FZ} (see also \cite{To}):

\begin{theorem}[T.-Weinkove-Yang \cite{TWY}, T.-Zhang \cite{TZ}]\label{colla}
Let $(X,\omega_0)$ be a compact K\"ahler manifold with $K_X$ semiample and $0<\kappa(X)<n$, and let $f:X\to Y$ be the fibration we just described. Let $\omega(t),t\in[0,\infty)$ be the solution of the K\"ahler-Ricci flow \eqref{krf} starting at $\omega_0$. Then as $t\to\infty$ we have
\begin{equation}\label{conve1}
\frac{\omega(t)}{t}\to f^*\omega_Y,
\end{equation}
in $C^0_{\mathrm{loc}}(X\backslash S),$ where $\omega_Y$ is a K\"ahler metric on $Y\backslash S'$ which satisfies
\begin{equation}\label{wp}
\Ric(\omega_Y)=-\omega_Y+\omega_{\mathrm{WP}},
\end{equation}
and $\omega_{\mathrm{WP}}$ is a smooth semipositive $(1,1)$ form on $Y\backslash S'$. Furthermore, for any given $y\in Y\backslash S'$ we have
\begin{equation}\label{conve2}
\omega(t)|_{X_y}\to \omega_y,
\end{equation}
in $C^\infty(X_y)$, where $\omega_y$ is the unique Ricci-flat K\"ahler metric on $X_y$ in the class $[\omega_0]|_{X_y}$.

Lastly, if $S=\emptyset$ (i.e. $Y$ is smooth and $f$ is a submersion) then $(X,\frac{\omega(t)}{t})$ converge to $(Y,\omega_Y)$ in the Gromov-Hausdorff topology, as $t\to\infty$.
\end{theorem}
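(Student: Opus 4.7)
The plan is to promote the two convergence statements \eqref{conve1} and \eqref{conve2} to the four Gromov--Hausdorff inequalities \eqref{un1}--\eqref{un4}. Since $S=\emptyset$, the map $f:X\to Y$ is a proper submersion of compact complex manifolds, $Y$ is compact, and the $C^0_{\mathrm{loc}}$ convergence in \eqref{conve1} becomes uniform on all of $X$. For the comparison maps I would take $F=f$, and $G=\sigma$ any (not necessarily continuous) section $\sigma:Y\to X$ of $f$; then $F\circ G=\mathrm{id}_Y$ and \eqref{un4} is automatic.

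The first nontrivial step is uniform fiber collapse,
$$\sup_{y\in Y}\diam\bigl(X_y,(\omega(t)/t)|_{X_y}\bigr)=O(t^{-1/2})\to 0,$$
which reduces to a uniform bound $\omega(t)|_{X_y}\leq C\omega_0|_{X_y}$ across all $y\in Y$ and large $t$. Although \eqref{conve2} is stated pointwise in $y$, the PDE argument underlying it can be made uniform by choosing a global reference geometry on the compact base $Y$; this uniformity is the main PDE input needed beyond what has already been proved. Granted uniform fiber collapse, \eqref{un3} is immediate, since $x$ and $G(F(x))=\sigma(f(x))$ lie in the common fiber $X_{f(x)}$.

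Next I would establish the two-sided distance estimate
$$\bigl|d_{\omega(t)/t}(x_1,x_2)-d_{\omega_Y}(f(x_1),f(x_2))\bigr|\to 0,$$
uniformly in $x_1,x_2\in X$, which yields \eqref{un1} with $F=f$ and, combined with $f\circ\sigma=\mathrm{id}_Y$, also \eqref{un2}. For the upper bound I would pick a minimizing $\omega_Y$-geodesic $\eta$ from $f(x_1)$ to $f(x_2)$, lift it horizontally (using the $\omega_0$-orthogonal complement of $\ker df$) to a curve $\tilde\eta$ starting at $x_1$, use the uniform convergence in \eqref{conve1} to show $L_{\omega(t)/t}(\tilde\eta)=L_{\omega_Y}(\eta)+o(1)$, and then close up with a fiber segment of length at most the (vanishing) fiber diameter. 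For the lower bound one applies $f^*\omega_Y\leq \omega(t)/t+\varepsilon_t\omega_0$ to any $\omega(t)/t$-minimizing curve $\gamma$ from $x_1$ to $x_2$, obtaining $L_{\omega_Y}(f\circ\gamma)\leq L_{\omega(t)/t}(\gamma)+\sqrt{\varepsilon_t}\,L_{\omega_0}(\gamma)$.

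The main obstacle is controlling $L_{\omega_0}(\gamma)$ in this lower bound, since an $\omega(t)/t$-minimizer can be long in $\omega_0$ by exploiting the shrinking vertical directions. The standard way to circumvent this (used in the references cited in the theorem) is to bypass direct distance comparison with $d_{\omega(t)/t}$ itself and instead introduce a semi-flat reference metric $\omega_t^\sharp=f^*\omega_Y+t^{-1}\omega_F$, where $\omega_F$ is a fixed smooth form restricting to Ricci-flat representatives on the fibers. One shows, using the uniform parabolic estimates behind \eqref{conve1} and \eqref{conve2}, that $\omega(t)/t$ and $\omega_t^\sharp$ are uniformly equivalent as Riemannian metrics with constant tending to $1$, whence their intrinsic distances are asymptotic. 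Since $\omega_t^\sharp$ has an explicit, approximately warped-product geometry in which the base distances are literally the $\omega_Y$-distances and the fiber distances scale like $t^{-1/2}$, the distance comparison and thereby \eqref{un1}--\eqref{un2} follow. The bulk of the work lies in establishing this uniform equivalence between $\omega(t)/t$ and $\omega_t^\sharp$, which in turn requires promoting the fiberwise estimates behind \eqref{conve2} to genuinely uniform ones over $y\in Y$.
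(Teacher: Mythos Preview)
Your proposal addresses only the final Gromov--Hausdorff statement, taking \eqref{conve1} and \eqref{conve2} as given; that is a reasonable reading of the task. The paper does the same: once \eqref{conve1p} (the $C^0$ convergence $\omega(t)\to f^*\omega_Y$ on all of $X$, since $S=\emptyset$) is in hand, the GH convergence is deduced from a single general lemma about smooth fiber bundles (Theorem~\ref{gh} in the paper), with no further PDE input. Your choice of maps $F=f$, $G=\sigma$, the fiber-collapse verification of \eqref{un3}--\eqref{un4}, and the upper bound in \eqref{un1} via lifting a base geodesic through local trivializations all match the paper's proof of Theorem~\ref{gh} exactly.

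Where you diverge is the lower bound. The paper handles it in one line: for a $g(t)$-minimizing geodesic $\gamma$ one has $d_B(f(x_1),f(x_2))\le L_{\pi^*g_B}(\gamma)\le L_t(\gamma)+\ve$, using $g(t)\to\pi^*g_B$ in $C^0$. Your concern about controlling $L_{\omega_0}(\gamma)$ is a fair one, but the remedy you propose is both heavier than what the paper does and does not actually close the gap you identify. First, the ``bulk of the work'' you describe --- the uniform equivalence of $\omega(t)$ and the reference $\omega_t^\sharp=\hat\omega_t$ with ratio tending to $1$ --- is precisely what the paper has \emph{already} established on the way to \eqref{conve1p}: the bound $\|\hat\omega_t-\omega(t)\|_{C^0(X,\omega(t))}\le Ce^{-\eta t/2}$ comes directly from Theorem~\ref{baseb}, \eqref{baseb3} and Lemma~\ref{matrix}. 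So no additional parabolic estimates are needed. Second, and more seriously, switching from $\omega(t)$ to $\hat\omega_t$ merely relocates the problem: you still have to prove $(X,\hat\omega_t)\to(Y,\omega_Y)$ in GH, and the lower bound $d_{\omega_Y}(f(x_1),f(x_2))\le d_{\hat\omega_t}(x_1,x_2)+\ve$ faces the identical obstacle. Your claim that $\hat\omega_t$ has a warped-product structure ``in which the base distances are literally the $\omega_Y$-distances'' is not justified: $\hat\omega_t=e^{-t}\omega_{\rm SRF}+(1-e^{-t})f^*\omega_Y$, and $\omega_{\rm SRF}$ is \emph{not} known to be semipositive on $X$ (the paper explicitly flags this as open), so you cannot conclude $\hat\omega_t\ge(1-o(1))f^*\omega_Y$ pointwise and thereby get a $(1+o(1))$-Lipschitz bound for $f$. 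The genuinely semipositive semi-flat form $\omega_{\rm SF}$ of Proposition~\ref{semif} exists only when the fibers are tori, which is not assumed here.

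In short: the setup and upper bound match the paper; for the lower bound, the paper argues directly via the general fiber-bundle lemma rather than through a semi-flat comparison metric, and your detour neither avoids the difficulty you raise nor requires any PDE estimates beyond those already proved.
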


The Weil-Petersson form $\omega_{\mathrm{WP}}$ measures the variation of the complex structures of the smooth Calabi-Yau fibers, and it is identically zero whenever all the fibers $X_y$ are biholomorphic to each other (see Proposition \ref{gr}).

In the setting of Theorem \ref{colla}, Song-Tian \cite{SoT, SoT2} had earlier proved that \eqref{conve1} holds in the weak topology of currents, in the $C^0_{\rm loc}$ topology of K\"ahler potentials, and when $n=2$ also in the $C^{1,\alpha}_{\rm loc}$ topology of K\"ahler potentials (for $0<\alpha<1$). This was then extended to all $n$ in \cite{FZ} (cf. \cite{To}), but note that this convergence falls short of the one obtained in Theorem \ref{colla}. As we will see in Theorem \ref{smooth}, if the smooth fibers $X_y$ are tori (or finite quotients of tori) then in fact \eqref{conve1} holds in the $C^\infty_{\mathrm{loc}}(X\backslash S)$ topology thanks to \cite{FZ,Gi,GTZ,HT,TZ}, and this is expected to hold in general.

We also mention that in the setting of Theorem \ref{colla}, it was proved in \cite{SoT4} that the scalar curvature of $\frac{\omega(t)}{t}$ remains uniformly bounded.
It is also conjectured that in this same setting, assuming $S\not =\emptyset,$ then $(X,\frac{\omega(t)}{t})$ converge to the metric completion of $(Y\backslash S',\omega_Y)$ in the Gromov-Hausdorff topology, as $t\to\infty$. This is completely open even in the simplest case when $n=2,\kappa(X)=1$, and in fact we do not even know whether these metrics
have uniformly bounded diameter, see Section \ref{sectopen}.

\subsection{General facts about holomorphic submersions}
Before we begin the proof of Theorem \ref{colla} we need to discuss a few results about holomorphic submersions.  For simplicity of notation we will write $m=\kappa(X)$. To avoid excessive technicalities, we will assume that $S$ is empty, or in other words that $Y$ is a smooth projective manifold and the map $f:X\to Y$ is a submersion. In the general case one argues along the same lines, but with the extra complication of having to introduce a suitably chosen cutoff function in essentially all the estimates (see \cite{TWY} for details). The only estimates which are substantially harder to obtain are the uniform $C^0$ bounds for $\vp$ and $\dot{\vp}$ (which in general are weaker than those obtained in Lemma \ref{c0}). Also, by assuming that $S=\emptyset,$ we will in fact be able to conclude that the convergence in \eqref{conve1} and \eqref{conve2} is exponentially fast.

Note that the fibers $X_y$ (which are now all smooth) are all diffeomorphic to each other (by Ehresmann's Theorem \cite[Theorem 2.4]{Ko}, which implies that $f$ is a smooth fiber bundle), but in general are different as complex manifolds, so the term $\omega_{\mathrm{WP}}$ will not be zero in general. In other words, $f$ is in general not a holomorphic fiber bundle (by the Fischer-Grauert theorem \cite{FG}, $f$ is a holomorphic fiber bundle if and only if all fibers $X_y$ are biholomorphic to each other). However, if $\dim X_y=1$, so that the fibers are elliptic curves, then necessarily $f$ is a holomorphic fiber bundle, since elliptic curves are classified by their $j$-invariant, which in our case defines a holomorphic map $j:Y\to\mathbb{C}$ which must be constant since $Y$ is compact.

A useful fact, which we will use extensively, is that on the total space of a holomorphic submersion we can always find local holomorphic product coordinates.

\begin{lemma}\label{adj}
Let $f:X^n\to Y^m$ be a holomorphic submersion between complex manifolds. Then given any point $x\in X$ we can find an open set $U\ni x$ and local holomorphic coordinates $(z_1,\dots,z_n)$ on $U$ and $(y_1,\dots,y_m)$ on $f(U)$ such that in these coordinates the map $f$ is given by $(z_1,\dots,z_n)\mapsto (z_1,\dots,z_m)$. If $f$ is proper with connected fibers, then the canonical bundle of every fiber $X_y=f^{-1}(y)$ satisfies $K_{X_y}\cong K_X|_{X_y}$.
\end{lemma}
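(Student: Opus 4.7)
The plan has two independent pieces, mirroring the two assertions of the lemma.

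For the first assertion, I would apply the holomorphic version of the submersion theorem (equivalently, the holomorphic inverse function theorem). Fix arbitrary local holomorphic coordinates $(w_1,\ldots,w_n)$ on a neighborhood of $x$ and $(y_1,\ldots,y_m)$ on a neighborhood of $f(x)$, both centered at the respective points. Because $f$ is a submersion, the Jacobian matrix $(\de(y_i\circ f)/\de w_j)$ has rank $m$ at $x$, so after a permutation of the $w_j$'s one may assume the leading $m\times m$ block is invertible. Then the $n$ functions $z_i := y_i\circ f$ for $1\le i\le m$ and $z_i := w_i$ for $m+1\le i\le n$ have invertible Jacobian at $x$, so the holomorphic inverse function theorem provides a (possibly smaller) neighborhood $U\ni x$ on which they serve as holomorphic coordinates. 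By construction, $f$ in these coordinates is the projection $(z_1,\ldots,z_n)\mapsto(z_1,\ldots,z_m)$.

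For the second assertion, I would use the normal bundle short exact sequence on a fiber. Fix $y\in Y$; since $f$ is a proper holomorphic submersion, $X_y=f^{-1}(y)$ is a compact complex submanifold of $X$ of codimension $m$. The differential $df$ restricted to $X_y$ is a surjective bundle map $T^{1,0}X|_{X_y}\to f^*T^{1,0}Y|_{X_y}$ with kernel $T^{1,0}X_y$, producing the short exact sequence
\[
0\to T^{1,0}X_y\to T^{1,0}X|_{X_y}\to f^*T^{1,0}Y|_{X_y}\to 0
\]
of holomorphic vector bundles on $X_y$. Since $f|_{X_y}$ is the constant map with value $y$, the pullback $f^*T^{1,0}Y|_{X_y}$ is the trivial rank-$m$ bundle $T^{1,0}_yY\otimes\mathcal{O}_{X_y}$, hence has trivial determinant. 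Taking top exterior powers of the sequence gives $\det T^{1,0}X|_{X_y}\cong \det T^{1,0}X_y\otimes\mathcal{O}_{X_y}$, and dualizing yields $K_X|_{X_y}\cong K_{X_y}$.

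Neither piece is genuinely difficult, and I do not anticipate a serious obstacle. The mildly nontrivial checkpoint is the identification $N_{X_y/X}\cong f^*T^{1,0}Y|_{X_y}$ as holomorphic (not merely smooth) bundles, but this is cleanest if one first invokes the local product coordinates produced in part one to write $f$ as the projection $(z_1,\ldots,z_n)\mapsto(z_1,\ldots,z_m)$; then on each such chart $T^{1,0}X|_{X_y}$ splits canonically as $T^{1,0}X_y\oplus f^*T^{1,0}Y|_{X_y}$, the short exact sequence holds on the nose, and these local splittings glue to a global holomorphic exact sequence on $X_y$ because the gluing cocycles preserve the subbundle $\ker df$.
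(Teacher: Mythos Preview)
Your proposal is correct and follows essentially the same route as the paper. For the first assertion the paper simply cites the holomorphic implicit function theorem, and your explicit construction via $z_i=y_i\circ f$ for $i\le m$ is the standard unpacking of that citation. For the second assertion the paper invokes the adjunction formula $K_{X_y}\cong K_X|_{X_y}\otimes\det(N_{X_y/X})$ and then observes that the conormal bundle is globally trivialized by $f^*dy_1,\ldots,f^*dy_m$; your exact-sequence argument is the same computation phrased without naming adjunction, since the identification $N_{X_y/X}\cong f^*T^{1,0}Y|_{X_y}$ (trivial because $f|_{X_y}$ is constant) is precisely what both arguments rest on.
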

\begin{proof}
The existence of local holomorphic product coordinates is a simple consequence of the implicit function theorem for holomorphic maps (see e.g. \cite[p.60]{Ko}).

If $f$ is proper with connected fibers $X_y=f^{-1}(y)$, then the adjunction formula (\cite[Proposition 2.2.17]{Hu}) gives
$$K_{X_y}\cong K_X|_{X_y}\otimes \det(N_{X_y/X}),$$
where $N_{X_y/X}$ is the normal bundle of $X_y$ inside $X$. However this normal bundle is trivial, because its dual is globally trivialized  by
$$f^*(dy_1\wedge\dots\wedge dy_m),$$
where $(y_1,\dots,y_m)$ are local holomorphic coordinates on $Y$ near $y$.
\end{proof}

In particular, using these local coordinates, we can view $(z_1,\dots,z_m)$ as ``base directions'' and $(z_{m+1},\dots,z_n)$ as ``fiber directions'', a fact that we will use very often.

First, we define the Weil-Petersson form $\omega_{\mathrm{WP}}$. Recall that by construction of the map $f$ we have $K_{X_y}^{\otimes \ell}\cong\mathcal{O}_{X_y}$. Let $\Psi$ be a local nonvanishing holomorphic section of $f_*(K_{X/Y}^{\otimes\ell})$, i.e. a family $\Psi_y\in H^0(X_y, K_{X_y}^{\otimes \ell}),$ for $y$ in some small open set $U$ in $Y$, such that each $\Psi_y$ is never vanishing on $X_y$, and the forms $\Psi_y$ vary holomorphically in $y$. Here $K_{X/Y}=K_X\otimes(f^*K_Y)^*$ denotes the relative canonical bundle. On $U$ we then define
$$\omega_{\mathrm{WP}}=-\ddbar \log\left( (\sqrt{-1})^{(n-m)^2}\int_{X_y}(\Psi_y\wedge\ov{\Psi_y})^{\frac{1}{\ell}}\right),$$
where $(\sqrt{-1})^{(n-m)^2}(\Psi_y\wedge\ov{\Psi_y})^{\frac{1}{\ell}}$ is a smooth positive volume form on $X_y$, defined as follows:
in local holomorphic coordinates $(z_1,\dots,z_{n-m})$ on $X_y$ we can write
$$\Psi_y=F(y,z) (dz_1\wedge\dots\wedge dz_{n-m})^{\otimes \ell},$$
where $F$ is a nonvanishing holomorphic function, and we have
$$\Psi_y\wedge\ov{\Psi_y}=|F(y,z)|^2 (dz_1\wedge \dots\wedge dz_{n-m}\wedge d\ov{z}_1\wedge\dots\wedge d\ov{z}_{n-m})^{\otimes \ell},$$
$$(\Psi_y\wedge\ov{\Psi_y})^{\frac{1}{\ell}}=|F(y,z)|^{\frac{2}{\ell}} dz_1\wedge \dots\wedge dz_{n-m}\wedge d\ov{z}_1\wedge\dots\wedge d\ov{z}_{n-m},$$
$$(\sqrt{-1})^{(n-m)^2}(\Psi_y\wedge\ov{\Psi_y})^{\frac{1}{\ell}}=|F(y,z)|^\frac{2}{\ell} (\mn)^{n-m}dz_1\wedge d\ov{z}_1\wedge\dots\wedge dz_{n-m}\wedge d\ov{z}_{n-m},$$
and this is well-defined independent of the choice of coordinates.
Note also that the volume form $(\sqrt{-1})^{(n-m)^2}(\Psi_y\wedge\ov{\Psi_y})^{\frac{1}{\ell}}$ on $X_y$ is Ricci-flat, in the sense that
\begin{equation}\label{rff}
\ddbar\log\left((\sqrt{-1})^{(n-m)^2}(\Psi_y\wedge\ov{\Psi_y})^{\frac{1}{\ell}}\right)=\frac{1}{\ell}\ddbar\log|F|^2=0,
\end{equation}
because $F$ is a never-vanishing holomorphic function.
Furthermore, the Weil-Petersson form $\omega_{\mathrm{WP}}$ is well-defined globally on $Y$, because if
$\ti{\Psi}$ is another local nonvanishing holomorphic section of $f_*(K_{X/Y}^{\otimes\ell})$, defined on an open set $\ti{U}\subset Y$, then for all $y\in U\cap \ti{U}$ (assuming this is nonempty) we have that
$\Psi_y$ and $\ti{\Psi}_y$ are both nonvanishing sections of the trivial bundle $K_{X_y}^{\otimes \ell}$, and so there is a nonzero constant $h_y$ such that $\ti{\Psi}_y=h_y \Psi_y$ on $X_y$. Since $\Psi_y$ and $\ti{\Psi}_y$ vary holomorphically in $y$, then so does $h_y$, i.e. it defines a local nonvanishing holomorphic function $h$ on $U\cap\ti{U}$.
But we have $\ddbar\log|h|^2=0,$ and so $\omega_{\mathrm{WP}}$ is well-defined globally on $Y$. Also, we may take $\ell$ to be the smallest positive integer such that $K_{X_y}^{\otimes \ell}\cong\mathcal{O}_{X_y}$ holds (since if we use multiples of this $\ell$, we obtain the same Weil-Petersson form).

Although we will not use the following proposition, it is a useful fact to keep in mind.

\begin{proposition}\label{gr}
Let $f:X\to Y$ be a holomorphic submersion between compact K\"ahler manifolds, with connected fibers, such that $K_X^{\otimes \ell}\cong f^*L$ for some $\ell\geq 1$, where $L\to Y$ is a holomorphic line bundle.
Then the Weil-Petersson form $\omega_{\mathrm{WP}}$ on $Y$ is semipositive definite, and identically equal to zero if and only if $f$ is a holomorphic fiber bundle.
\end{proposition}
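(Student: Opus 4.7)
The plan is to interpret $\omega_{\mathrm{WP}}$ as the Chern curvature of the natural $L^2$-Hermitian metric on the direct image line bundle $f_*(K_{X/Y}^{\otimes\ell})$, and to identify $\omega_{\mathrm{WP}}(v,\bar v)$ with a weighted $L^2$-norm of the Kodaira-Spencer class $\rho(v)$. Both semipositivity and the characterization of its vanishing locus then follow directly.

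First, the assumption $K_X^{\otimes\ell}\cong f^*L$ forces $K_{X_y}^{\otimes\ell}\cong \mathcal{O}_{X_y}$ for every fiber (via the adjunction isomorphism $K_{X_y}\cong K_X|_{X_y}$ from Lemma \ref{adj}), so $\dim H^0(X_y,K_{X_y}^{\otimes\ell})=1$ and $f_*(K_{X/Y}^{\otimes\ell})$ is a holomorphic line bundle on $Y$. Local nonvanishing holomorphic sections $\Psi$ have squared $L^2$-norm $V(y)=(\sqrt{-1})^{(n-m)^2}\int_{X_y}(\Psi_y\wedge\overline{\Psi_y})^{1/\ell}$, and by definition of Chern curvature, $\omega_{\mathrm{WP}}=-\ddbar\log V$ is precisely the curvature form of this $L^2$-metric. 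To compute it, I would fix $y_0\in Y$, $v\in T_{y_0}Y$, and use Ehresmann's theorem to obtain a smooth product trivialization $\Phi:X_{y_0}\times U\to f^{-1}(U)$ near $y_0$; lifting $v$ to a smooth $(1,0)$-vector field $\tilde v$ on $f^{-1}(U)$, the piece $\bar\partial\tilde v$ projected onto the fiber directions via $df$ is a $\bar\partial$-closed $(0,1)$-form with values in $T_{X_{y_0}}$ representing the Kodaira-Spencer class $\rho_{y_0}(v)\in H^{0,1}(X_{y_0},T_{X_{y_0}})$.

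Differentiating $V$ twice in the trivialization $\Phi$, using that $\eta_{y_0}=(\sqrt{-1})^{(n-m)^2}(\Psi_{y_0}\wedge\overline{\Psi_{y_0}})^{1/\ell}$ is a Ricci-flat volume form on $X_{y_0}$ by \eqref{rff}, integrating by parts, and applying the Hodge decomposition with respect to the unique Ricci-flat K\"ahler metric $\omega_{y_0}$ on $X_{y_0}$ (whose volume form is a positive multiple of $\eta_{y_0}$), one should obtain
\begin{equation*}
\omega_{\mathrm{WP}}(v,\bar v)\big|_{y_0}=\frac{1}{V(y_0)}\int_{X_{y_0}}|H(\rho_{y_0}(v))|^2_{\omega_{y_0}}\,\eta_{y_0},
\end{equation*}
where $H(\cdot)$ denotes the $L^2$-orthogonal projection onto harmonic $(0,1)$-forms with values in $T_{X_{y_0}}$. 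The mechanism is that $-\partial_v\partial_{\bar v}\log V=V^{-1}V_{v\bar v}-V^{-2}|V_v|^2$ equals the squared $L^2$-norm of a representative of $\rho_{y_0}(v)$ minus the squared norm of its $\bar\partial$-exact part, leaving exactly the harmonic component. Semipositivity of $\omega_{\mathrm{WP}}$ is immediate from this identity.

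For the characterization of the zero locus, if $\omega_{\mathrm{WP}}\equiv 0$ then $H(\rho_y(v))=0$ for all $y\in Y$ and $v\in T_yY$, and since each Dolbeault class has a unique harmonic representative this forces $\rho_y\equiv 0$ identically. Vanishing of the Kodaira-Spencer map at every point says that the moduli map $Y\to\mathcal{M}$ (where $\mathcal{M}$ is the Kuranishi space of complex structures on the fiber, which is smooth by Tian-Todorov since our fibers are Calabi-Yau) has vanishing differential; as $Y$ is connected, this makes the moduli map constant, so all fibers are biholomorphic, and the Fischer-Grauert theorem implies $f$ is a holomorphic fiber bundle. Conversely, on a holomorphic fiber bundle, $\Psi$ can be chosen locally as the pullback of a fixed section on the typical fiber under a local holomorphic trivialization, so $V$ is locally constant and $\omega_{\mathrm{WP}}=-\ddbar\log V=0$. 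The main obstacle is the explicit computation in Step~2, since the smooth trivialization $\Phi$ is not holomorphic and one must carefully track how this failure, measured precisely by the Kodaira-Spencer class, produces all of the curvature.
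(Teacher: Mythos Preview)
Your approach is correct and follows the same essential strategy as the paper: both identify $\omega_{\mathrm{WP}}$ with the pullback under the Kodaira--Spencer map of the $L^2$ inner product on $H^1(X_y,T^{1,0}X_y)$ defined via harmonic forms for the fiberwise Ricci-flat metric, from which semipositivity is immediate and vanishing is equivalent to vanishing of all Kodaira--Spencer maps.

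There are two genuine differences in execution worth noting. First, the paper does not attempt the curvature computation for general $\ell$; instead it passes to an $\ell$-fold unramified cover $\tilde X\to X$ (using Stein factorization to keep connected fibers) to reduce to the case $K_{X_y}\cong\mathcal O_{X_y}$, and then simply cites Tian and Fujiki--Schumacher for the key identity. Your direct sketch for general $\ell$ is plausible, but the $1/\ell$ powers make the integration-by-parts step more delicate, and you would ultimately be reproving the cited result; the covering reduction is a cleaner way to reach firm ground. Second, for the implication ``Kodaira--Spencer vanishes $\Rightarrow$ holomorphic fiber bundle,'' the paper avoids moduli spaces and Tian--Todorov entirely: it uses Serre duality (with $K_{X_y}\cong\mathcal O_{X_y}$) to show $\dim H^1(X_y,T^{1,0}X_y)$ is independent of $y$, and then applies Kodaira--Spencer's rigidity theorem directly to conclude local holomorphic triviality. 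Your route via a moduli map into the (Tian--Todorov--smooth) Kuranishi space followed by Fischer--Grauert is also valid, but note that the Kuranishi space is only a local germ, so the ``moduli map'' must be understood locally on $Y$; the paper's argument sidesteps this and is more elementary.
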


\begin{proof}
The statements we need to prove are local on $Y$, so we may assume that $Y$ is a ball in $\mathbb{C}^m$, where $L$ is trivial and so $K_X^{\otimes \ell}$ is also trivial. We may also assume that $\ell$ is the smallest positive integer such that this holds. We can then find an $\ell$-fold unramified connected covering $\tau:\ti{X}\to X$ such that $K_{\ti{X}}=\tau^*K_X^{\otimes\ell}$ is trivial, see e.g. \cite[Lemma 4.6]{Fu} (connectedness follows from the fact that we took $\ell$ minimal). Then
composing the map $\tau$ with $f$ we obtain a holomorphic submersion  $\ti{f}:\ti{X}\to Y$. Its Stein factorization is $\ti{X}\overset{p}{\to} \ti{Y}\overset{q}{\to} Y$ where $\ti{Y}$ is a connected complex manifold (since $\ti{X}$ is connected), $p$ is a holomorphic submersion with connected fibers, and $q$ is a finite unramified covering of $Y$ (see e.g. \cite[Lemma 2.4]{FG2}). Since $Y$ is a ball and $\ti{Y}$ is connected, we conclude that $q$ is a biholomorphism, and so we may assume that $\ti{f}$ has connected fibers $\ti{X}_y$ which satisfy
$K_{\ti{X}_y}\cong\mathcal{O}_{\ti{X}_y}$. The maps $\ti{X}_y\to X_y$ are also $\ell$-fold unramified coverings, and the Weil-Petersson form for $\ti{f}$ equals the one for $f$.
Furthermore, $\ti{f}$ is a holomorphic fiber bundle if and only if $f$ is (\cite[Lemma 4.5]{Fu}).

Therefore we may assume that $K_{X_y}\cong\mathcal{O}_{X_y}$.
For every $y\in Y$ there is a Kodaira-Spencer linear map $\rho_y:T_yY\to H^1(X_y,T^{1,0}X_y)$ (see \cite{Ko}), and the Weil-Petersson form at $y$ is equal to the pullback under $\rho_y$ of the $L^2$ inner product on
$H^1(X_y,T^{1,0}X_y)$ defined using harmonic forms with respect to the Ricci-flat metric on $X_y$ in the class $[\omega_0]|_{X_y}$, thanks to \cite[Theorem 2]{Ti2} or \cite{FS}. Therefore $\omega_{\mathrm{WP}}$ is semipositive definite (see also \cite[Lemma 1.8]{Fu} for a direct proof of this semipositivity), and identically equal to zero if and only if all the Kodaira-Spencer maps $\rho_y$ are zero. But Serre duality, together with $K_{X_y}\cong\mathcal{O}_{X_y}$, gives $H^1(X_y,T^{1,0}X_y)\cong H^{n-1}(X_y,\Omega^1_{X_y})\cong H^{1,n-1}(X_y),$ and so this vector space has dimension independent of $y$. A theorem of Kodaira-Spencer \cite[Theorem 4.6]{Ko} then implies that the Kodaira-Spencer maps are all zero if and only if $f$ is a holomorphic fiber bundle.
\end{proof}

It is instructive to see directly that if the map $f$ is a holomorphic fiber bundle then the Weil-Petersson form is identically zero. Indeed, in this case all the fibers are biholomorphic to a fixed Calabi-Yau manifold $F$ and we can find local trivializing biholomorphisms $f^{-1}(U)\to U\times F$, over all sufficiently small open sets $U\subset Y$, and using these we can then choose the forms $\Psi_y$ as above to be independent of $y\in U$, equal to the pullback to $U\times F$ of a fixed never vanishing section of $K_F^{\otimes \ell}$. This way the integrals $\int_{X_y}(\Psi_y\wedge\ov{\Psi_y})^{\frac{1}{\ell}}$ do not depend on $y$, and so $\omega_{\mathrm{WP}}=0$.

We also have the following useful fact.

\begin{proposition}\label{c1neg}
Let $f:X\to Y$ be a holomorphic submersion between compact K\"ahler manifolds, with connected fibers, such that $K_X^{\otimes \ell}\cong f^*L$ for some $\ell\geq 1$, where $L\to Y$ is an ample line bundle. Then the class
$$-2\pi c_1(Y)+[\omega_{\mathrm{WP}}],$$
is a K\"ahler class on $Y$.
\end{proposition}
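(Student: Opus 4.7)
The plan is to express the class $[\omega_{\mathrm{WP}}]$ as the first Chern class (up to a scalar) of an explicit line bundle on $Y$, by interpreting the fiber-integral formula defining $\omega_{\mathrm{WP}}$ as minus the logarithm of a Hermitian metric, and then reading off the class of $-2\pi c_1(Y)+[\omega_{\mathrm{WP}}]$.

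First I would identify the relevant line bundle. The definition $K_{X/Y}=K_X\otimes(f^*K_Y)^*$ together with the hypothesis $K_X^{\otimes\ell}\cong f^*L$ yields $K_{X/Y}^{\otimes\ell}\cong f^*(L\otimes K_Y^{\otimes -\ell})$. Set $\mathcal{L}:=L\otimes K_Y^{\otimes -\ell}$. Since $f$ is a proper submersion with connected fibers, $f_*\mathcal{O}_X=\mathcal{O}_Y$, so the projection formula gives $f_*(K_{X/Y}^{\otimes\ell})\cong\mathcal{L}$. Moreover, because each fiber $X_y$ is compact with $K_{X_y}^{\otimes\ell}$ trivial, $\dim H^0(X_y,K_{X_y}^{\otimes\ell})=1$ for every $y$, so by cohomology and base change the pushforward is genuinely a line bundle on $Y$.

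Next I would convert the fiber integral into a Hermitian metric on $\mathcal{L}$. A local trivializing section $\ti{\Psi}$ of $\mathcal{L}$ over $U\subset Y$ corresponds, via pullback, to a local trivializing section $\Psi$ of $K_{X/Y}^{\otimes\ell}$ over $f^{-1}(U)$ whose fiberwise restrictions $\Psi_y$ generate $H^0(X_y,K_{X_y}^{\otimes\ell})$. Define
\[
|\ti{\Psi}|_h^{2}(y):=\left((\mn)^{(n-m)^2}\int_{X_y}(\Psi_y\wedge\ov{\Psi_y})^{1/\ell}\right)^{\!\ell}.
\]
Under a change of generator $\ti{\Psi}\mapsto u\ti{\Psi}$, with $u$ holomorphic and nonvanishing on $U$, the inner integral rescales by $|u|^{2/\ell}$, so its $\ell$-th power rescales by $|u|^2$: this is precisely the transformation law of a Hermitian metric, so $h$ is a globally well-defined Hermitian metric on $\mathcal{L}$. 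Its Chern curvature is then
\[
R_h=-\ddbar\log|\ti{\Psi}|_h^{2}=-\ell\,\ddbar\log\!\left((\mn)^{(n-m)^2}\int_{X_y}(\Psi_y\wedge\ov{\Psi_y})^{1/\ell}\right)=\ell\,\omega_{\mathrm{WP}}.
\]

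Since $R_h$ represents $2\pi c_1(\mathcal{L})=2\pi c_1(L)+2\pi\ell\,c_1(Y)$, dividing by $\ell$ gives
\[
[\omega_{\mathrm{WP}}]=\tfrac{2\pi}{\ell}c_1(L)+2\pi c_1(Y),
\]
whence $-2\pi c_1(Y)+[\omega_{\mathrm{WP}}]=\tfrac{2\pi}{\ell}c_1(L)$. Because $L$ is ample, $2\pi c_1(L)\in\mathcal{C}_Y$ (represented by a Kähler form via the Kodaira embedding), and $\mathcal{C}_Y$ is a cone, so $\tfrac{2\pi}{\ell}c_1(L)\in\mathcal{C}_Y$, finishing the proof. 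The only real subtlety I anticipate is the bookkeeping of the $\ell$-th power in the definition of $h$: the raw fiber integral $\int_{X_y}(\Psi_y\wedge\ov{\Psi_y})^{1/\ell}$ transforms as $|u|^{2/\ell}$ rather than $|u|^2$, so it is naturally a section of a fractional-power object rather than a Hermitian metric on the honest line bundle $\mathcal{L}$; once this is straightened out, everything else is a direct Chern class identification.
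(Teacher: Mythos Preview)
Your proof is correct and follows essentially the same approach as the paper: both identify $[\omega_{\mathrm{WP}}]=\tfrac{2\pi}{\ell}c_1(f_*(K_{X/Y}^{\otimes\ell}))$ via the curvature of the fiber-integral metric, and then show $f_*(K_{X/Y}^{\otimes\ell})\otimes K_Y^{\otimes\ell}\cong L$. Your derivation of this last isomorphism is slightly more direct than the paper's --- you observe $K_{X/Y}^{\otimes\ell}\cong f^*\mathcal{L}$ and apply the projection formula immediately, whereas the paper routes through a separate argument that $K_X^{\otimes\ell}\cong f^*f_*(K_X^{\otimes\ell})$ before reaching the same conclusion.
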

\begin{proof}
The assumption that $f$ has connected fibers is equivalent to $f_*\mathcal{O}_X\cong\mathcal{O}_Y,$ and so the projection formula gives
\begin{equation}\label{proj}
f_* (K_X^{\otimes\ell})\cong(f_* (K_{X/Y}^{\otimes\ell}))\otimes K_Y^{\otimes\ell}.
\end{equation}
But the assumption $K_X^{\otimes \ell}\cong f^*L$ implies
$$\mathcal{O}_{X_y}\cong K_{X}^{\otimes\ell}|_{X_y}\cong K_{X/Y}^{\otimes\ell}|_{X_y},$$
and together with Lemma \ref{adj} we obtain $K_{X_y}^{\otimes\ell}\cong \mathcal{O}_{X_y}$.

Therefore $\dim H^0(X_y, K_{X/Y}^{\otimes\ell}|_{X_y})=1$ is independent of $y\in Y$,
and Grauert's theorem on direct images \cite[Theorem I.8.5]{bhpv} shows that
\begin{equation}\label{gra}
f_* (K_{X/Y}^{\otimes\ell})=:L',
\end{equation}
is a line bundle on $Y$ . Since all the fibers of $f$ have trivial $K_{X_y}^{\otimes\ell}$, it follows
that
\begin{equation}\label{push}
K_X^{\otimes \ell}\cong f^*f_*(K_X^{\otimes\ell})
\end{equation}
(see \cite[Theorem V.12.1]{bhpv}). Indeed, note that
$$f_*f^*f_*(K_X^{\otimes\ell})\cong f_*K_X^{\otimes \ell},$$
thanks to the projection formula. If we denote by $E=K_X^{\otimes \ell}\otimes ( f^*f_*(K_X^{\otimes\ell}))^*$ the ``error term'', then we have that
$f_*E\cong\mathcal{O}_Y$, and so $H^0(X,E)\cong H^0(Y,f_*E)=\mathbb{C}$. Let $e$ be a global trivializing section of $f_*E$, and let $s\in H^0(X,E)$ be the section
which corresponds to $e$ under this isomorphism. If $s$ vanishes at a point $x\in X$, then the restriction of $s$ to the fiber $X_{f(x)}$ is a holomorphic section
of $E|_{X_y}\cong K_{X_y}^{\otimes\ell}\cong\mathcal{O}_{X_y}$, the trivial bundle, so $s|_{X_y}$ is a holomorphic function which vanishes somewhere, and hence it is zero since the fiber $X_{f(x)}$ is compact. Therefore $e$ vanishes at the
point $f(x)$, which is absurd. This shows that $s$ is never vanishing, and so $E$ is the trivial bundle, and this proves \eqref{push}.

From \eqref{proj}, \eqref{gra} and \eqref{push} we conclude that
\begin{equation}\label{voglio}
K_X^{\otimes\ell}\cong f^*((f_* (K_{X/Y}^{\otimes\ell}))\otimes K_Y^{\otimes\ell})\cong f^*(L'\otimes K_Y^{\otimes\ell}).
\end{equation}
But we also have by assumption that $K_X^{\otimes \ell}\cong f^*L$, and so
$$f^*(L'\otimes K_Y^{\otimes\ell}\otimes L^*)\cong\mathcal{O}_X,$$
and pushing forward and using the projection formula we see that
$$L'\otimes K_Y^{\otimes\ell}\cong L,$$
which is ample, and so $c_1(L)\in\mathcal{C}_Y$.
By definition, the smooth form $\ell\omega_{\mathrm{WP}}$ is the curvature of a singular metric on $L'$, and so $[\omega_{\mathrm{WP}}]=\frac{2\pi}{\ell}c_1(L')$.
We obtain that
$$-2\pi c_1(Y)+[\omega_{\mathrm{WP}}]=2\pi c_1(K_Y)+[\omega_{\mathrm{WP}}]=\frac{2\pi}{\ell}c_1(L)\in\mathcal{C}_Y,$$
as claimed.
\end{proof}

\subsection{Reduction to a parabolic complex Monge-Amp\`ere equation}

Since in Theorem \ref{colla} the collapsing is for the rescaled metric $\frac{\omega(t)}{t}$, we again consider the normalized flow
\begin{equation}\label{krf2}
\left\{
                \begin{aligned}
                  &\frac{\de}{\de t}\omega(t)=-\Ric(\omega(t))-\omega(t)\\
                  &\omega(0)=\omega_0
                \end{aligned}
              \right.
\end{equation}
The flow \eqref{krf2} is also solvable on $[0,\infty),$ and \eqref{conve1} is equivalent to showing that the solution $\omega(t)$ of \eqref{krf2} satisfies
\begin{equation}\label{conve1p}
\omega(t)\to f^*\omega_Y,
\end{equation}
in $C^0(X)$ as $t\to\infty$, and \eqref{conve2} is equivalent to the statement that
\begin{equation}\label{conve2p}
e^t\omega(t)|_{X_y}\to \omega_y,
\end{equation}
in $C^\infty(X_y)$, where $\omega_y$ is the unique Ricci-flat K\"ahler metric on $X_y$ in the class $[\omega_0]|_{X_y}$. In fact, since we assume that $S=\emptyset$, we will be able to show that convergence in \eqref{conve1p} and \eqref{conve2p} is exponentially fast.

As usual, we would like to rewrite \eqref{krf2} as an equivalent parabolic complex Monge-Amp\`ere equation, but in order to obtain the convergence results in Theorem \ref{colla}, we have to make a very careful choice of reference metrics, and we have to first derive several preliminary results. The K\"ahler class of the evolving metric $\omega(t)$ is now
$$[\omega(t)]=e^{-t}[\omega_0]-(1-e^{-t})2\pi c_1(X).$$
Since the fibers $X_y$ are Calabi-Yau, thanks to Yau's Theorem \cite{Ya} for every $y\in Y$ there exists a unique smooth function $\rho_y$ on $X_y$ with $\int_{X_y}\rho_y \omega_0^{n-m}=0$, and such that $\omega_0|_{X_y}+\ddbar\rho_y=\omega_y$ is the unique Ricci-flat K\"ahler metric on $X_y$ in the class $[\omega_0|_{X_y}]$. Thanks to Yau's a priori estimates for $\rho_y$ in \cite{Ya}, we see that the functions $\rho_y$ depend smoothly on $y$, and so they define a global smooth function $\rho$ on $X$ (see also \cite[Lemma 2.1]{Fi}). We define
$$\omega_{\mathrm{SRF}}=\omega_0+\ddbar\rho.$$
This is a closed real $(1,1)$ form on $X$, which restricts to a Ricci-flat K\"ahler metric on all fibers $X_y$ of $f$.
It was first introduced by Greene-Shapere-Vafa-Yau \cite{GSVY} in the context of elliptically fibered $K3$ surfaces and ``stringy cosmic strings''.
For every $\eta$ K\"ahler form on $Y$, we clearly have that $f^*\eta^m\wedge \omega_{\mathrm{SRF}}^{n-m}$ is a smooth positive volume form on $X$. As a side remark, it would be interesting to know whether
$\omega_{\rm SRF}$ is semipositive definite everywhere on $X$.

The following two propositions are due to Song-Tian \cite{SoT, SoT2} (see also \cite{To}).
\begin{proposition}\label{wp1}
Given a K\"ahler form $\eta$ on $Y$, then on $X$ we have
$$\ddbar\log (f^*\eta^m\wedge \omega_{\mathrm{SRF}}^{n-m})=-f^*\Ric(\eta)+f^*\omega_{\mathrm{WP}}.$$
\end{proposition}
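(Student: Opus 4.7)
The plan is to work entirely in local holomorphic product coordinates on $X$ provided by Lemma \ref{adj}, and to compare the volume form $f^*\eta^m\wedge\omega_{\mathrm{SRF}}^{n-m}$ with a second volume form built out of the local nonvanishing section $\Psi$ of $f_*(K_{X/Y}^{\otimes\ell})$ that defines $\omega_{\mathrm{WP}}$. This is a purely local identity, so I would fix a point $x\in X$, choose coordinates $(z_1,\dots,z_n)$ on a neighborhood $U$ such that $f$ is $(z_1,\dots,z_n)\mapsto(z_1,\dots,z_m)$, and write $\Psi=F\,(dz_{m+1}\wedge\cdots\wedge dz_n)^{\otimes\ell}$ with $F$ a nonvanishing holomorphic function.

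Next I would carry out two parallel wedge-product calculations. Writing $\omega_{\mathrm{SRF}}=\mn\sum g_{i\bar j}dz_i\wedge d\bar z_j$, only the purely fiber components $g_{a\bar b}$ ($m+1\le a,b\le n$) contribute when $\omega_{\mathrm{SRF}}^{n-m}$ is wedged with $f^*\eta^m$, because $f^*\eta^m$ already occupies all base directions. This gives
$$f^*\eta^m\wedge\omega_{\mathrm{SRF}}^{n-m}=m!(n-m)!\,\det(\eta_{i\bar j})_{\text{base}}\,\det(g_{a\bar b})_{\text{fiber}}\,(\mn)^n\textstyle\prod_k dz_k\wedge d\bar z_k,$$
while the analogous computation with $\Phi=(\sqrt{-1})^{(n-m)^2}(\Psi\wedge\ov{\Psi})^{1/\ell}$ in place of $\omega_{\mathrm{SRF}}^{n-m}$ produces the same expression but with $|F|^{2/\ell}$ in place of $(n-m)!\det(g_{a\bar b})_{\text{fiber}}$.

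The heart of the argument is to show that the ratio $W:=\bigl((n-m)!\det(g_{a\bar b})_{\text{fiber}}\bigr)/|F|^{2/\ell}$ depends only on the base variables $y=(z_1,\dots,z_m)$. Indeed, by \eqref{rff} the volume form $\Phi$ has vanishing fiberwise Ricci form, and by construction so does $(\omega_{\mathrm{SRF}}|_{X_y})^{n-m}$; hence $\log W$ restricted to any fiber $X_y$ is fiber-pluriharmonic, and since $X_y$ is compact it must be constant on $X_y$. To identify this constant of proportionality, I would integrate both sides over $X_y$: the integral of the numerator is $\int_{X_y}(\omega_{\mathrm{SRF}}|_{X_y})^{n-m}=\int_{X_y}\omega_0^{n-m}$, which is a topological constant independent of $y$, while the integral of the denominator is precisely the quantity $\int_{X_y}(\sqrt{-1})^{(n-m)^2}(\Psi_y\wedge\ov{\Psi_y})^{1/\ell}$ appearing in the definition of $\omega_{\mathrm{WP}}$.

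Finally I would take $\ddbar\log$ of the identity $f^*\eta^m\wedge\omega_{\mathrm{SRF}}^{n-m}=W(y)\cdot m!\det(\eta_{i\bar j})\,|F|^{2/\ell}\,(\mn)^n\prod dz_k\wedge d\bar z_k$. The three terms give exactly: $\ddbar\log W(y)=f^*\omega_{\mathrm{WP}}$ by the definition of the Weil--Petersson form; $\ddbar\log\det(\eta_{i\bar j})=-f^*\Ric(\eta)$ by the formula \eqref{ric} applied on $Y$; and $\ddbar\log|F|^{2/\ell}=0$ since $F$ is nonvanishing and holomorphic. Summing yields the desired formula. The main subtlety to keep straight is the bookkeeping in the first step -- making sure that off-diagonal and base-direction components of $\omega_{\mathrm{SRF}}$ genuinely do not contribute after wedging with $f^*\eta^m$ -- and then the fiberwise-pluriharmonic-plus-compact-implies-constant argument that pins down the form of $W$.
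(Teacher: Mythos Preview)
Your proof is correct and follows essentially the same route as the paper: both arguments identify the ratio of the fiberwise volume form $(\omega_{\mathrm{SRF}}|_{X_y})^{n-m}$ to $(\sqrt{-1})^{(n-m)^2}(\Psi_y\wedge\ov{\Psi_y})^{1/\ell}$ as a function of $y$ alone (using that both are Ricci-flat volume forms on the compact fiber), integrate over the fiber to recognize this ratio as the reciprocal of the quantity defining $\omega_{\mathrm{WP}}$, and then take $\ddbar\log$. The paper packages the ratio as a function $u(y)$ and writes the key identity a bit more intrinsically as $f^*\eta^m\wedge\omega_{\mathrm{SRF}}^{n-m}=\frac{1}{f^*u}\,(\sqrt{-1})^{(n-m)^2}f^*\eta^m\wedge(\Psi_y\wedge\ov{\Psi_y})^{1/\ell}$, whereas you write out the determinants explicitly, but this is only a cosmetic difference.
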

\begin{proof}
We choose local product coordinates as in Lemma \ref{adj}, which we call $(z_1,\dots, z_n)$ on $U\subset X$ and $(z_1,\dots,z_m)$ on $f(U)\subset Y$.
In these coordinates we write
$$\eta=\mn\sum_{i,j=1}^m\eta_{i\ov{j}}dz_i\wedge d\ov{z}_j.$$
We choose a local nonvanishing holomorphic section $\Psi_y$ of $f_*(K_{X/Y}^{\otimes\ell})$ as before, with $y\in f(U)$, and
define a smooth positive function on $f(U)$ by
$$u(y)=\frac{(\sqrt{-1})^{(n-m)^2}(\Psi_y\wedge\ov{\Psi}_y)^{\frac{1}{\ell}}}{ \omega_{\mathrm{SRF}}^{n-m}|_{X_y}}.$$
This is well-defined because both $(\sqrt{-1})^{(n-m)^2}(\Psi_y\wedge\ov{\Psi}_y)^{\frac{1}{\ell}}$ and $\omega_{\mathrm{SRF}}^{n-m}|_{X_y}$
are Ricci-flat volume forms on $X_y$ (recalling \eqref{rff}) and so their ratio is a constant on $X_y$.
Then integrating $u(y)\omega_{\mathrm{SRF}}^{n-m}|_{X_y}$ over $X_y$ we see that
$$u(y)=\frac{(\sqrt{-1})^{(n-m)^2}\int_{X_y}(\Psi_y\wedge\ov{\Psi}_y)^{\frac{1}{\ell}}}{\int_{X_y}\omega_{\mathrm{SRF}}^{n-m}},$$
and so
$$-\ddbar \log u = \omega_{\mathrm{WP}}+\ddbar \log \int_{X_y}\omega_{\mathrm{SRF}}^{n-m}.$$
But the function $y\mapsto \int_{X_y}\omega_{\mathrm{SRF}}^{n-m}$ is constant on $Y$, because it equals the pushforward $f_*\omega_{\mathrm{SRF}}^{n-m}$ and we have
\begin{equation}\label{const}
df_*\omega_{\mathrm{SRF}}^{n-m} = f_*d\omega_{\mathrm{SRF}}^{n-m}=0.
\end{equation}
Therefore
\begin{equation}\label{crux}
-\ddbar \log u = \omega_{\mathrm{WP}}.
\end{equation}
Writing as before
$$\Psi_y=F(y,z) (dz_1\wedge\dots\wedge dz_{n-m})^{\otimes \ell},$$
with $F$ holomorphic and nonzero, then we have
\begin{equation}\label{calc}
f^*\eta^m\wedge \omega_{\mathrm{SRF}}^{n-m}=f^*\eta^m\wedge(\omega_{\mathrm{SRF}}^{n-m}|_{X_y})=\frac{1}{f^*u}
(\sqrt{-1})^{(n-m)^2}f^*\eta^m\wedge(\Psi_y\wedge\ov{\Psi}_y)^{\frac{1}{\ell}},
\end{equation}
and so
\[\begin{split}
\ddbar\log(f^*\eta^m\wedge \omega_{\mathrm{SRF}}^{n-m})&=\ddbar\log\left(|F|^{\frac{2}{\ell}}\det(\eta_{i\ov{j}})\right)-f^*\ddbar\log u\\
&=-f^*\Ric(\eta)+f^*\omega_{\mathrm{WP}},
\end{split}\]
thanks to \eqref{crux}.
\end{proof}

\begin{proposition}\label{wp2}
There is a unique K\"ahler metric $\omega_Y$ on $Y$ which satisfies
\begin{equation}\label{twke}
\Ric(\omega_Y)=-\omega_Y+\omega_{\mathrm{WP}}.
\end{equation}
\end{proposition}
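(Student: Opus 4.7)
The plan is to reduce \eqref{twke} to a standard complex Monge-Ampère equation of negative type on $Y$ and invoke the Aubin--Yau theorem.

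First I would take cohomology classes in \eqref{twke} to see that, if a solution $\omega_Y$ exists, its class must be $[\omega_Y] = -2\pi c_1(Y) + [\omega_{\mathrm{WP}}]$. Proposition \ref{c1neg} applies with $L = \mathcal{O}_{\mathbb{CP}^N}(1)|_Y$, which is ample because $Y\subset\mathbb{CP}^N$ is projective, and so this class lies in $\mathcal{C}_Y$. I would then fix any K\"ahler metric $\omega^*$ in this class.

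Next, using the $\ddbar$-lemma on $Y$, I would choose a smooth positive volume form $\Omega_Y$ on $Y$ with
$$\Ric(\Omega_Y) = -\omega^* + \omega_{\mathrm{WP}},$$
which is possible because the right-hand side is a closed real $(1,1)$ form in the class $2\pi c_1(Y)$. Writing a candidate $\omega_Y = \omega^* + \ddbar\vp$ and using $\Ric(\omega_Y) - \Ric(\Omega_Y) = -\ddbar\log(\omega_Y^m/\Omega_Y)$, equation \eqref{twke} becomes
$$\ddbar\log\frac{(\omega^* + \ddbar\vp)^m}{\Omega_Y} = \ddbar\vp,$$
so that $\log((\omega^* + \ddbar\vp)^m/\Omega_Y) = \vp + c$ for some constant $c$; absorbing $c$ into $\vp$ yields the equivalent Monge-Amp\`ere equation
\begin{equation}\label{mawp}
(\omega^* + \ddbar\vp)^m = e^\vp\,\Omega_Y,\qquad \omega^* + \ddbar\vp>0.
\end{equation}

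Equation \eqref{mawp} is the classical equation for K\"ahler-Einstein metrics of negative Ricci curvature, and the theorem of Aubin and Yau provides a unique smooth solution $\vp$: the continuity method works easily because the exponential on the right has the correct sign, giving a one-line $C^0$ estimate from the maximum principle (at a max of $\vp$ one gets $e^{\vp}\Omega_Y \le (\omega^*)^m$, and similarly at a min), after which the Yau $C^2$ and higher order estimates close the argument. This yields the desired K\"ahler metric $\omega_Y = \omega^* + \ddbar\vp$ satisfying \eqref{twke}. For uniqueness, if $\omega_Y^{(1)}, \omega_Y^{(2)}$ are two solutions with potentials $\vp_1,\vp_2$ relative to $\omega^*$, the same maximum principle argument applied to $\vp_1-\vp_2$ at its extrema (using the concavity of $A\mapsto\log\det A$) forces $\vp_1=\vp_2$, hence $\omega_Y^{(1)}=\omega_Y^{(2)}$.

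The only nontrivial input is Proposition \ref{c1neg} to ensure the target class is K\"ahler; once that is in hand, there is no real obstacle, since the negative-curvature Monge-Amp\`ere equation is the easiest case of the Calabi conjecture.
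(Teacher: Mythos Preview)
Your proof is correct and follows essentially the same strategy as the paper: use Proposition \ref{c1neg} to see that $-2\pi c_1(Y)+[\omega_{\mathrm{WP}}]$ is K\"ahler, then reduce \eqref{twke} to an Aubin--Yau Monge--Amp\`ere equation of negative type on $Y$. The only difference is cosmetic: you produce the reference volume form $\Omega_Y$ abstractly via the $\ddbar$-lemma on $Y$, whereas the paper constructs it explicitly as $G\eta^m$ with $G$ obtained from a ratio of volume forms on $X$ involving $\omega_{\mathrm{SRF}}$ and Proposition~\ref{wp1}; this more concrete construction pays off immediately afterward in the paper when verifying $\Ric(\Omega)=-f^*\omega_Y$ for $\Omega=\binom{n}{m}f^*\omega_Y^m\wedge\omega_{\mathrm{SRF}}^{n-m}$.
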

\begin{proof}
Thanks to Proposition \ref{c1neg} we know that
$-2\pi c_1(Y)+[\omega_{\mathrm{WP}}]\in\mathcal{C}_Y$, and so we can choose a K\"ahler metric $\eta$ in this class.
Thanks to \eqref{voglio}, we have that
$$2\pi c_1(X)=f^*(2\pi c_1(Y)-[\omega_{\mathrm{WP}}]),$$
and so we can find a smooth positive volume form $\Omega'$ on $X$ with $\Ric(\Omega')=-f^*\eta$. Consider then the smooth positive function on $X$ given by
$$G=\frac{\Omega'}{f^*\eta^m\wedge \omega_{\mathrm{SRF}}^{n-m}}.$$
We claim that $G$ is constant when restricted to every fiber $X_y$ of $f$. Indeed we can choose local product coordinates as in Lemma \ref{adj}, and write
$$\eta=\mn\sum_{i,j=1}^m\eta_{i\ov{j}}dz_i\wedge d\ov{z}_j,$$
$$\omega_{\mathrm{SRF}}|_{X_y}=\mn\sum_{i,j=m+1}^n g_{i\ov{j}}dz_i\wedge d\ov{z}_j,$$
$$\Omega'=H (\mn)^ndz_1\wedge d\ov{z}_1\wedge\dots\wedge dz_n\wedge d\ov{z}_n,$$
so that in these coordinates we have
$$G=\frac{H}{\det(\eta_{i\ov{j}}) \det(g_{i\ov{j}})},$$
and so if we differentiate only along $X_y$ we have
$$\ddbar\log G=\Ric(\omega_{\mathrm{SRF}}|_{X_y})=0,$$
because $f^*\eta$ and $\Ric(\Omega')$ are pulled back from $Y$, and $\omega_{\mathrm{SRF}}|_{X_y}$ is Ricci-flat. Therefore $G$ is the pullback of a smooth positive function on $Y$,
still denoted by $G$.

Thanks to Aubin \cite{Au} and Yau \cite{Ya} there is a unique smooth function $\psi$ on $Y$ such that $\eta+\ddbar\psi>0$ and
\begin{equation}\label{maa}
(\eta+\ddbar\psi)^m=G e^\psi \eta^m.
\end{equation}
If we let $\omega_Y=\eta+\ddbar\psi$, then we can use Proposition \ref{wp1} to compute
\begin{equation}\label{long}\begin{split}
\Ric(\omega_Y)&=-\ddbar\log G -\ddbar\psi+\Ric(\eta)\\
&=\Ric(\Omega')+\ddbar\log(f^*\eta^m\wedge \omega_{\mathrm{SRF}}^{n-m})-\ddbar\psi+\Ric(\eta)\\
&=-\eta-\Ric(\eta)+\omega_{{\rm WP}}-\ddbar\psi+\Ric(\eta)\\
&=-\omega_Y+\omega_{{\rm WP}},
\end{split}\end{equation}
which is \eqref{twke}. Note here that the $(1,1)$ forms $\Ric(\Omega')$ and $\ddbar\log(f^*\eta^m\wedge \omega_{\mathrm{SRF}}^{n-m})$ which as written are defined on $X$, are in fact pullbacks of forms on $Y$ (the latter thanks to Proposition \ref{wp1}).

Conversely if $\omega_Y$ solves \eqref{twke}, then we obtain
$$[\omega_Y]=-2\pi c_1(Y)+[\omega_{{\rm WP}}]=[\eta],$$
and so $\omega_Y=\eta+\ddbar\psi$ for some smooth function $\psi$. We have
$$\ddbar\log\frac{\omega_Y^m}{Ge^\psi\eta^m}=-\Ric(\omega_Y)-\omega_Y+\omega_{{\rm WP}}=0,$$
using the same argument as in \eqref{long}, and so
$$\frac{\omega_Y^m}{Ge^\psi\eta^m}=c,$$
a positive constant on $Y$. Replacing $\psi$ with $\psi+\log c$ we may assume that $c=1$, and so $\psi$ solves \eqref{maa}.
But \eqref{maa} has a unique solution, as follows easily from the maximum principle, and so $\omega_Y$ is also unique.
\end{proof}

Let now
$$\Omega=\binom{n}{m} f^*\omega_Y^m\wedge \omega_{\mathrm{SRF}}^{n-m},$$ which is a smooth positive volume form on $X$.
Combining Propositions \ref{wp1} and \ref{wp2} we obtain that
$$\Ric(\Omega)=-f^*\omega_Y.$$
We define now reference forms on $X$
$$\hat{\omega}_t=e^{-t}\omega_{\mathrm{SRF}}+(1-e^{-t})f^*\omega_Y,$$
which are cohomologous to $\omega(t)$, and are positive definite for all $t\geq T_0$ (because $f^*\omega_Y$ is positive in the base directions and zero in the others, and $\omega_{\mathrm{SRF}}$ is positive in the fiber directions).
In fact, there is a uniform constant $C>0$ such that
\begin{equation}\label{use}
\hat{\omega}_t\geq C^{-1}e^{-t}\omega_0,
\end{equation}
for all $t\geq T_0$. Note also that
\begin{equation}\label{use2}
\hat{\omega}_t\geq \frac{1}{2}f^*\omega_Y,
\end{equation}
for all $t\geq T_0$.
Then \eqref{krf2} is equivalent to
\begin{equation}\label{ma2}
\left\{
                \begin{aligned}
                  &\frac{\de}{\de t}\vp(t)=\log\frac{e^{(n-m)t}(\hat{\omega}_t+\ddbar\vp(t))^n}{\Omega}-\vp(t)\\
                  &\vp(0)=-\rho\\
                  &\hat{\omega}_t+\ddbar\vp(t)>0
                \end{aligned}
              \right.
\end{equation}
Indeed, if $\vp(t)$ solves \eqref{ma2} and we define $\omega(t)=\hat{\omega}_t+\ddbar\vp(t)$, then
\[\begin{split}
\frac{\de}{\de t}\omega(t)&=\frac{\de}{\de t}(\hat{\omega}_t+\ddbar\vp(t))\\
&=-\hat{\omega}_t+f^*\omega_Y-\Ric(\omega(t))+\Ric(\Omega)-\ddbar\vp(t)\\
&=-\Ric(\omega(t))-\omega(t),
\end{split}\]
and \eqref{krf2} holds. Conversely, if $\omega(t)$ solves \eqref{krf2}, we define $\vp(t)$ by solving the ODE
$$\frac{\de}{\de t}\vp(t)=\log\frac{e^{(n-m)t}\omega(t)^n}{\Omega}-\vp(t),\quad \vp(0)=-\rho,$$
and compute
\[\begin{split}
\frac{\de}{\de t}(e^t(\omega(t)-\hat{\omega}_t-\ddbar\vp(t)))=e^t(-\Ric(\omega(t))+\Ric(\omega(t)))=0,
\end{split}\]
and since $(e^t(\omega(t)-\hat{\omega}_t-\ddbar\vp(t)))|_{t=0}=0$, we conclude that $\omega(t)=\hat{\omega}_t+\ddbar\vp(t)$ for all $t$ (such that the solution exists), and \eqref{ma2} holds.

Note that the factor of $e^{(n-m)t}$ in \eqref{ma2} did not play any role in this derivation, and indeed it could be omitted at this moment, but it becomes crucial when discussing the long time convergence properties of the flow.

As we mentioned earlier, the flow \eqref{ma2} has a solution defined on $[0,+\infty)$.

\subsection{$C^0$ estimates for the potential and its time derivative}

\begin{lemma}\label{c0}
There is a uniform constant $C>0$ such that for all $t\geq 0$ we have
\begin{equation}\label{phi}
|\vp(t)|\leq C(1+t)e^{-t},
\end{equation}
\begin{equation}\label{dphi}
|\dot{\vp}(t)|\leq Ce^{-\frac{t}{4}}.
\end{equation}
\end{lemma}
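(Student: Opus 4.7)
I would prove the two bounds separately, since \eqref{phi} admits a clean Hamilton-trick argument while \eqref{dphi} requires a more delicate analysis. Before starting I would record the key preliminary observation that
\begin{equation*}
h(x,t):=\log\tfrac{e^{(n-m)t}\hat\omega_t^n}{\Omega}
\end{equation*}
satisfies $\sup_X|h(\cdot,t)|\leq Ce^{-t}$ for $t\geq T_0$. To see this, expand $\hat\omega_t^n=\sum_{k=0}^n\binom{n}{k}(1-e^{-t})^ke^{-(n-k)t}\omega_{\rm SRF}^{n-k}\wedge(f^*\omega_Y)^k$; since $\dim Y=m$ the terms with $k>m$ vanish, the leading term $k=m$ equals $(1-e^{-t})^me^{-(n-m)t}\Omega$ (by the definition of $\Omega$), and each subleading term $k<m$ carries an extra factor $e^{-(m-k)t}\leq e^{-t}$. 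The flow \eqref{ma2} then reads $\dot\vp+\vp=\log(\omega^n/\hat\omega_t^n)+h(t)$.

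For \eqref{phi}, I would apply Hamilton's trick to $u(t):=\max_X\vp(\cdot,t)$. At a spatial maximizer $x_t^\ast$ one has $\ddbar\vp\leq 0$, hence $\omega\leq\hat\omega_t$ and $\log(\omega^n/\hat\omega_t^n)\leq 0$ at $x_t^\ast$, so $u'(t)=\dot\vp(x_t^\ast,t)\leq h(x_t^\ast,t)-u(t)\leq Ce^{-t}-u(t)$, i.e.\ $(e^tu)'\leq C$. Integrating from $T_0$, where $u(T_0)$ is bounded by the short-time theory of Theorem \ref{short}, gives $u(t)\leq C(1+t)e^{-t}$. The symmetric argument applied to $v(t):=\min_X\vp(\cdot,t)$, at whose spatial minima $\omega\geq\hat\omega_t$ and hence $\log(\omega^n/\hat\omega_t^n)\geq 0$, produces the matching lower bound and completes \eqref{phi}.

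For \eqref{dphi}, I would aim to establish an intermediate estimate of the form $\|\dot\vp+\vp\|_{C^0(X)}\leq Ce^{-t/4}$ and then conclude by the triangle inequality $|\dot\vp|\leq|\dot\vp+\vp|+|\vp|$, invoking \eqref{phi} to absorb the $\vp$ contribution (since $(1+t)e^{-t}\leq Ce^{-t/4}$). The natural starting point is the identity obtained by adding the evolution equations $(\partial_t-\Delta)\vp=\dot\vp-n+\tr_\omega\hat\omega_t$ and $(\partial_t-\Delta)\dot\vp=(n-m)-\dot\vp+e^{-t}\tr_\omega(f^*\omega_Y-\omega_{\rm SRF})$, namely
\begin{equation*}
\left(\tfrac{\partial}{\partial t}-\Delta\right)(\dot\vp+\vp)=\tr_\omega f^*\omega_Y-m.
\end{equation*}
I would then test the maximum principle on a barrier $\dot\vp+\vp\mp Ae^{-t/4}$, using at spatial extrema the Monge--Amp\`ere identity $\omega^n/\hat\omega_t^n=e^{\dot\vp+\vp-h}$ and an AM--GM inequality on the $m$ nonzero eigenvalues of $\omega^{-1}f^*\omega_Y$ to close the differential inequality in $\dot\vp+\vp$.

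\textbf{Main obstacle.} The hard part is definitely \eqref{dphi}. The right-hand side $\tr_\omega f^*\omega_Y-m$ has no sign, and because $f^*\omega_Y$ is degenerate in the $n-m$ fiber directions, controlling $\tr_\omega f^*\omega_Y$ requires exploiting precisely the collapsing geometry $\omega(t)\approx f^*\omega_Y+e^{-t}\omega_{\rm SRF}|_{\rm fibers}$ that Theorem \ref{colla} ultimately asserts. The appearance of the mild rate $e^{-t/4}$ (rather than $e^{-t/2}$ or $e^{-t}$) strongly suggests that the proof cannot proceed by a direct maximum principle alone but rather passes through an $L^2$-to-$L^\infty$ parabolic iteration, seeded by the integral identity $\int_Xe^{\dot\vp+\vp}\Omega=\int_Xe^h\Omega=\int_X\Omega+O(e^{-t})$ that follows from the cohomological constancy of $\int_X\omega(t)^n$, where the square root in a Moser step is responsible for halving the decay rate.
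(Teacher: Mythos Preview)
Your argument for \eqref{phi} is correct and essentially identical to the paper's: the paper applies the maximum principle to $e^t\vp(t)-At$, which is just a repackaging of the Hamilton trick you describe, and uses exactly your preliminary observation (recorded there as \eqref{sharp}).

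For \eqref{dphi}, however, your proposal misses the actual mechanism entirely. Neither the barrier $\dot\vp+\vp\mp Ae^{-t/4}$ nor a Moser iteration is used. The paper instead proceeds in two steps. First it obtains the \emph{bounded} estimate $|\dot\vp|\leq C$ by maximum principle on $\dot\vp-A\vp$ and $\dot\vp+2\vp$ (the latter using the AM--GM inequality $\tr{\omega}{\hat\omega_t}\geq n(\hat\omega_t^n/\omega^n)^{1/n}\geq C^{-1}e^{-\dot\vp/n}$ on the full reference form $\hat\omega_t$, not on $f^*\omega_Y$). Second, and this is the step you did not anticipate, it differentiates the flow to get $\partial_t\dot\vp=-R-m-\dot\vp$ and invokes the scalar curvature lower bound $R\geq -C$ from \eqref{scal} to obtain the one-sided time-Lipschitz bound $\partial_t\dot\vp\leq C_0$. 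The decay then follows from a contradiction argument: if $\dot\vp(x_k,t_k)\geq ke^{-t_k/4}$, then $\dot\vp(x_k,\cdot)\geq\tfrac{k}{2}e^{-t_k/4}$ on a backward time interval of length $\gamma_k\sim ke^{-t_k/4}$ (determined by $\partial_t\dot\vp\leq C_0$), and integrating in $t$ produces a variation of $\vp$ of size $\sim k^2e^{-t_k/2}$, which for large $k$ violates the already-established bound \eqref{phi}. The exponent $\tfrac14$ thus arises from balancing $(\text{rate})^2$ against $e^{-t}$ in this integration, not from a parabolic iteration.

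The concrete gap in your plan is that the evolution $(\partial_t-\Delta)(\dot\vp+\vp)=\tr{\omega}{f^*\omega_Y}-m$ gives no useful information at this stage: the right-hand side has no a priori decay, and the AM--GM inequality on the $m$ base eigenvalues relates $\tr{\omega}{f^*\omega_Y}$ only to $\omega^m\wedge f^*\omega_Y^{\,n-m}/\omega^n$, which you cannot yet control. (This evolution identity \emph{is} used later in the paper, in Lemma~\ref{base}, but only after Lemma~\ref{c0} and the Schwarz Lemma \ref{schwarz} are available.) The essential input you omitted is the scalar curvature lower bound, which converts the problem into a purely one-dimensional ODE argument in $t$.
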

\begin{proof}
First, we observe that for $t\geq T_0$ we have
\begin{equation}\label{sharp}
\left|e^t\log\frac{e^{(n-m)t}\hat{\omega}_t^n}{\Omega}\right|\leq C.
\end{equation}
Indeed, we have
\[\begin{split}
e^t&\log\frac{e^{(n-m)t}\hat{\omega}_t^n}{\Omega}=e^t\log\frac{e^{(n-m)t}(e^{-t}\omega_{\mathrm{SRF}}+(1-e^{-t})f^*\omega_Y)^n}{\Omega}\\
&=e^t\log\frac{e^{(n-m)t}(\binom{n}{m}e^{-(n-m)t}(1-e^{-t})^mf^*\omega_Y^m\wedge \omega_{\mathrm{SRF}}^{n-m}+\cdots+e^{-nt}\omega_{\mathrm{SRF}}^{n})}{\binom{n}{m} f^*\omega_Y^m\wedge \omega_{\mathrm{SRF}}^{n-m}}\\
&=e^t\log(1+O(e^{-t})),
\end{split}
\]
which is bounded. We can then apply the maximum principle to $e^t\vp(t)-At$, for some constant $A>0$ to be determined. At a maximum point, assuming it is achieved at $t\geq T_0$, we have
\[\begin{split}
0&\leq \frac{\de}{\de t}(e^t\vp(t)-At)=e^t \log\frac{e^{(n-m)t}(\hat{\omega}_t+\ddbar\vp(t))^n}{\Omega}-A\\
&\leq e^t \log\frac{e^{(n-m)t}\hat{\omega}_t^n}{\Omega}-A\leq C-A<0,
\end{split}
\]
as long as we choose $A>C$, where we used \eqref{sharp}. Therefore we obtain a uniform upper bound for $e^t\vp(t)-At$, which proves that $\vp(t)\leq C(1+t)e^{-t}$, the upper bound in \eqref{phi}. The lower bound is similar.

In order to establish \eqref{dphi} we first show that
\begin{equation}\label{dphi2}
|\dot{\vp}(t)|\leq C.
\end{equation}
We apply the maximum principle to $\dot{\vp}(t)-A\vp(t)$, for some constant $A>0$ to be determined. At a maximum point, assuming it is achieved at $t\geq T_0$, we have
\[\begin{split}
0&\leq \left(\frac{\de}{\de t}-\Delta\right)(\dot{\vp}(t)-A\vp(t))\\
&=\tr{\omega(t)}{(f^*\omega_Y-\hat{\omega}_t)}+n-m-\dot{\vp}(t)-A\dot{\vp}(t)+An-A\tr{\omega(t)}{\hat{\omega}_t}\\
&\leq -(A+1)\dot{\vp}(t)+C,
\end{split}
\]
as long as we choose $A$ large enough so that $A\hat{\omega}_t\geq f^*\omega_Y$ for all $t\geq T_0$, using \eqref{use2}. Since $\vp(t)$ is bounded by \eqref{phi}, we conclude from this that $\dot{\vp}(t)\leq C$.

For the lower bound on $\dot{\vp}$, observe that
$$\tr{\omega(t)}{\hat{\omega}_t}\geq n\left(\frac{\hat{\omega}_t^n}{\omega(t)^n}\right)^{\frac{1}{n}}=n\left(e^{-\vp(t)-\dot{\vp}(t)}\frac{e^{(n-m)t}\hat{\omega}_t^n}{\Omega}\right)^{\frac{1}{n}}
\geq C^{-1}e^{-\frac{\dot{\vp}(t)}{n}},$$
using the arithmetic-geometric mean inequality, and the estimates \eqref{phi} and \eqref{sharp}. We can now apply the minimum principle to $\dot{\vp}(t)+2\vp(t)$.
At a minimum point, assuming it is achieved at $t\geq T_0$, we have
\[\begin{split}
0&\geq \left(\frac{\de}{\de t}-\Delta\right)(\dot{\vp}(t)+2\vp(t))\\
&=\tr{\omega(t)}{(f^*\omega_Y-\hat{\omega}_t)}+n-m-\dot{\vp}(t)+2\dot{\vp}(t)-2n+2\tr{\omega(t)}{\hat{\omega}_t}\\
&\geq \tr{\omega(t)}{\hat{\omega}_t}+\dot{\vp}(t)-C\\
&\geq C^{-1}e^{-\frac{\dot{\vp}(t)}{n}}+\dot{\vp}(t)-C,
\end{split}
\]
and so at this point we obtain
$$e^{-\frac{\dot{\vp}(t)}{n}}\leq C(1-\dot{\vp}(t)),$$
which gives a uniform lower bound for $\dot{\vp}(t)$ at this point, and hence everywhere (remembering \eqref{phi}). This proves \eqref{dphi2}.

We now prove \eqref{dphi}. Differentiating \eqref{ma2} we obtain
$$\frac{\de}{\de t}\dot{\vp}(t)=-R(t)-m-\dot{\vp}(t),$$
and using $R(t)\geq -C$ and \eqref{dphi2} we obtain
\begin{equation}\label{ddphi}
\frac{\de}{\de t}\dot{\vp}(t)\leq C_0.
\end{equation}
First we show the bound
$$\dot{\vp}(t)\leq Ce^{-\frac{t}{4}}.$$
If this fails, then we can find a sequence $(x_k,t_k)\in X\times[0,+\infty)$ such that $t_k\to\infty$ and $\dot{\vp}(x_k,t_k)\geq ke^{-\frac{t_k}{4}}$. If we let $\gamma_k=\frac{k}{2C_0}e^{-\frac{t_k}{4}}$ then it follows from \eqref{ddphi} that
$$\dot{\vp}(x_k,t)\geq \frac{k}{2}e^{-\frac{t_k}{4}},$$
for $t\in [t_k-\gamma_k,t_k]$. Integrating in $t$ we get
$$\vp(x_k,t_k)-\vp(x_k,t_k-\gamma_k)=\int_{t_k-\gamma_k}^{t_k}\dot{\vp}(x_k,t)dt\geq \gamma_k \frac{k}{2}e^{-\frac{t_k}{4}}=\frac{k^2}{4C_0}e^{-\frac{t_k}{2}}.$$
If for some value of $k$ we have $\gamma_k\leq 1$, then we can use \eqref{phi} to bound
$$\vp(x_k,t_k)-\vp(x_k,t_k-\gamma_k)\leq C(1+t_k)e^{-t_k}+C(1+t_k-\gamma_k)e^{-t_k+\gamma_k}\leq C(1+t_k)e^{-t_k},$$
and so we obtain
\begin{equation}\label{ein}
\frac{k^2}{4C_0}e^{-\frac{t_k}{2}}\leq C(1+t_k)e^{-t_k}.
\end{equation}
If on the other hand for some $k$ we have $\gamma_k\geq 1$ then we integrate in $t$ again
$$\vp(x_k,t_k)-\vp(x_k,t_k-1)=\int_{t_k-1}^{t_k}\dot{\vp}(x_k,t)dt\geq  \frac{k}{2}e^{-\frac{t_k}{4}},$$
and using \eqref{phi} again we obtain
\begin{equation}\label{zwei}
\frac{k}{2}e^{-\frac{t_k}{4}}\leq C(1+t_k)e^{-t_k}.
\end{equation}
One of the two cases must occur for infinitely many values of $k$, and so letting $k\to\infty$ in \eqref{ein} or \eqref{zwei} we obtain contradiction.

Finally, to prove the lower bound
$$\dot{\vp}(t)\geq -Ce^{-\frac{t}{4}},$$
we use the same argument with the interval $[t_k-\gamma_k,t_k]$ replaced by $[t_k,t_k+\gamma_k]$.
\end{proof}

\subsection{The parabolic Schwarz Lemma}

We have the following parabolic Schwarz Lemma, as in \cite{SoT} (see \cite{Ya} for the original Yau-Schwarz Lemma).
\begin{lemma}\label{schwarz}
There is a uniform constant $C>0$ such that for all $t\geq 0$ we have
\begin{equation}\label{schw}
\omega(t)\geq C^{-1}f^*\omega_Y.
\end{equation}
\end{lemma}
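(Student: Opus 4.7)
The natural approach is the parabolic version of the Yau--Schwarz Lemma applied to the holomorphic map $f:X\to Y$, so I will consider the smooth positive function $u=\tr{\omega(t)}{f^*\omega_Y}$ on $X\times[0,\infty)$ (positivity comes from the facts that $\omega(t)$ is a K\"ahler metric and that $f^*\omega_Y$ has rank $m$ at every point since we are assuming $f$ is a submersion). My goal is to prove the pointwise bound $u\leq C$, which is equivalent to $f^*\omega_Y\leq C\omega(t)$, i.e. to \eqref{schw}.

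The first step is to derive the evolution inequality. Since $\frac{\de}{\de t}g^{i\ov{j}}=g^{i\ov{\ell}}g^{k\ov{j}}(R_{k\ov{\ell}}+g_{k\ov{\ell}})$, the Ricci contribution in $\frac{\de}{\de t}u$ cancels exactly with the Ricci contribution coming from commuting derivatives in $\Delta u$, leaving
\[
\left(\frac{\de}{\de t}-\Delta\right)u = u - g^{i\ov{j}}g^{p\ov{q}}\nabla_p\nabla_{\ov{q}}(f^*\omega_Y)_{i\ov{j}}.
\]
The second term on the right is the standard Schwarz term, and a direct computation shows that in an $\omega$-normal frame it equals $|\nabla df|^2_\omega$ minus a contraction of the bisectional curvature of $\omega_Y$ against $df\otimes\ov{df}\otimes df\otimes\ov{df}$. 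Using the upper bound $C_0$ for the bisectional curvature of $\omega_Y$ (which exists since $Y$ is compact) and the Cauchy--Schwarz trick that absorbs $|\partial u|^2_\omega/u^2$ into $|\nabla df|^2/u$, one arrives at the parabolic Schwarz inequality
\[
\left(\frac{\de}{\de t}-\Delta\right)\log u \leq C_0 u + 1.
\]

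The second step is to close the argument by the maximum principle. Set $Q=\log u - A\vp$ with $A>2C_0$ to be chosen, and recall that $(\frac{\de}{\de t}-\Delta)\vp = \dot{\vp}-n+\tr{\omega(t)}{\hat{\omega}_t}$. By \eqref{use2} we have $\hat{\omega}_t\geq\tfrac{1}{2}f^*\omega_Y$ for $t\geq T_0$, hence $\tr{\omega(t)}{\hat{\omega}_t}\geq\tfrac{1}{2}u$, and therefore
\[
\left(\frac{\de}{\de t}-\Delta\right)Q \leq C_0 u + 1 - A\dot{\vp}+An-\tfrac{A}{2}u = -(A/2-C_0)u + (An+1-A\dot{\vp}).
\]
At an interior maximum of $Q$ on $X\times[T_0,T']$, the left-hand side is nonnegative, and since $|\dot{\vp}|\leq C$ by Lemma \ref{c0}, this forces $u\leq C$ at that maximum. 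Combined with $|\vp|\leq C$, this gives $Q\leq C$ everywhere on $X\times[T_0,T']$, hence $u\leq Ce^{A\vp}\leq C'$; since $T'$ was arbitrary, and $u$ is bounded on the compact set $X\times[0,T_0]$ by continuity, \eqref{schw} follows.

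The main obstacle is really just the careful bookkeeping in the Schwarz computation and the Cauchy--Schwarz cancellation of the gradient terms; the analytic input is that a uniform upper bound on the bisectional curvature of $\omega_Y$ produces a term of the form $C_0 u$, which is exactly the kind of term that can be swallowed by applying the maximum principle to the auxiliary quantity $\log u - A\vp$ once we know that $\hat{\omega}_t$ dominates a definite multiple of $f^*\omega_Y$ in the base directions.
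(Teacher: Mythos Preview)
Your proposal is correct and follows essentially the same route as the paper: derive the parabolic Schwarz inequality $\left(\frac{\de}{\de t}-\Delta\right)\log u\leq C_0 u+1$ for $u=\tr{\omega(t)}{(f^*\omega_Y)}$, then apply the maximum principle to $\log u-A\vp$ using \eqref{use2} to bound $\tr{\omega(t)}{\hat{\omega}_t}\geq\tfrac{1}{2}u$ and Lemma~\ref{c0} to control $\vp,\dot\vp$. The only difference is that the paper writes out the Schwarz computation \eqref{scc}--\eqref{cs0} in full detail at a point in normal coordinates, whereas you summarize it; otherwise the argument is identical.
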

\begin{proof}
Given any point $x\in X$ we choose local coordinates $\{z_i\}$ on $X$ centered at $x$ which are normal for $\omega(t)$, and coordinates $\{y_\alpha\}$ on $Y$ near $f(x)$,
which are normal for $\omega_Y$.
In these coordinates we can represent
the map $f$ as an $m$-tuple of local holomorphic functions $\{f^\alpha\}$. We will use subscripts like $f^\alpha_i, f^\alpha_{ij},...$
to indicate partial derivatives. We will also write $g_{i\ov{j}}$ for the entries of $\omega(t)$ in these coordinates, and $h_{\alpha\ov{\beta}}$ for those of $\omega_Y$.
In these coordinates we have $\tr{\omega(t)}{(f^*\omega_Y)}=g^{k\ov{\ell}}f^\alpha_k \ov{f^\beta_\ell}h_{\alpha\ov{\beta}}$.

Then we have
\begin{equation}\label{scc}\begin{split}
 \left(\frac{\de}{\de t}-\Delta\right)\tr{\omega(t)}{(f^*\omega_Y)}&=g^{k\ov{j}} g^{i\ov{\ell}}R_{i\ov{j}}f^\alpha_k \ov{f^\beta_\ell}h_{\alpha\ov{\beta}}+\tr{\omega(t)}{(f^*\omega_Y)} -g^{i\ov{j}}\de_i \de_{\ov{j}}\left(g^{k\ov{\ell}}f^\alpha_k \ov{f^\beta_\ell}h_{\alpha\ov{\beta}}\right)\\
 &=g^{k\ov{j}} g^{i\ov{\ell}}R_{i\ov{j}}f^\alpha_k \ov{f^\beta_\ell}h_{\alpha\ov{\beta}}+\tr{\omega(t)}{(f^*\omega_Y)}-g^{i\ov{j}}g^{k\ov{\ell}} f^\alpha_{ki} \ov{f^\beta_{\ell j} } h_{\alpha\ov{\beta}}\\
& -g^{i\ov{j}} g^{p\ov{\ell}}g^{k\ov{q}} f^\alpha_k \ov{f^\beta_\ell} h_{\alpha\ov{\beta}}   R_{i\ov{j}p\ov{q}}
+g^{i\ov{j}} g^{k\ov{\ell}} f^\alpha_k \ov{f^\beta_\ell} f^\gamma_i \ov{f^\delta_j} (R_Y)_{\alpha\ov{\beta}\gamma\ov{\delta}}\\
&\leq \tr{\omega(t)}{(f^*\omega_Y)}-g^{i\ov{j}}g^{k\ov{\ell}} f^\alpha_{ki} \ov{f^\beta_{\ell j} } h_{\alpha\ov{\beta}}+C(\tr{\omega(t)}{(f^*\omega_Y)})^2,
\end{split}
\end{equation}
where in the last line we used the following argument: if we set $\xi_i=df(\frac{\de}{\de z_i})=\sum_\alpha f^\alpha_i \frac{\de}{\de y_\alpha}$ then at our point $x$ we have
\[\begin{split}
g^{i\ov{j}} g^{k\ov{\ell}} f^\alpha_k \ov{f^\beta_\ell} f^\gamma_i \ov{f^\delta_j} (R_Y)_{\alpha\ov{\beta}\gamma\ov{\delta}}&=\sum_{i,k}
f^\alpha_k \ov{f^\beta_k} f^\gamma_i \ov{f^\delta_i} (R_Y)_{\alpha\ov{\beta}\gamma\ov{\delta}}=\sum_{i,k}\mathrm{Rm}_Y(\xi_i,\ov{\xi_i},\xi_k,\ov{\xi_k})\\
&\leq C\sum_{i,k} |\xi_i|^2_{\omega_Y} |\xi_k|^2_{\omega_Y}=C(\tr{\omega(t)}{(f^*\omega_Y)})^2,
\end{split}\]
where the constant $C$ is an upper bound for the bisectional curvature of $\omega_Y$ among all $\omega_Y$-unit vectors.
Now at $x$ we have $\de_i (\tr{\omega(t)}{(f^*\omega_Y)})=\sum_{k,\alpha} f^\alpha_{ki}\ov{f^\alpha_k},$ and using the Cauchy-Schwarz inequality we have
\begin{equation}\label{cs0} \begin{split}
|\de \tr{\omega(t)}{(f^*\omega_Y)}|^2_{\omega(t)} = & \sum_{i,k,p,\alpha,\beta} f^\alpha_{ki}\ov{f^\beta_{pi}}f^\beta_{p}\ov{f^\alpha_k}\\
\leq& \sum_{k,p,\alpha,\beta}|f^\alpha_k||f^\beta_p| \left( \sum_i |f^\alpha_{ki}|^2\right)^{1/2} \left( \sum_j |f^\beta_{pj}|^2\right)^{1/2}\\
= &\left(\sum_{k,\alpha} |f^\alpha_k| \left(\sum_i |f^\alpha_{ki}|^2 \right)^{1/2}\right)^2\leq \left(\sum_{\ell,\beta} |f^\beta_\ell|^2\right)
\left(\sum_{i, k,\alpha}|f^\alpha_{ki}|^2\right)\\
=  & (\tr{\omega(t)}{(f^*\omega_Y)})g^{i\ov{j}}g^{k\ov{\ell}} f^\alpha_{ki} \ov{f^\beta_{\ell j} } h_{\alpha\ov{\beta}},
\end{split}\end{equation}
and combining \eqref{scc} and \eqref{cs0} we obtain
\begin{equation}\label{sccc}
\left(\frac{\de}{\de t}-\Delta\right)\log\tr{\omega(t)}{(f^*\omega_Y)}\leq C\tr{\omega(t)}{(f^*\omega_Y)}+1,
\end{equation}
at every point where $\tr{\omega(t)}{(f^*\omega_Y)}>0$.
On the other hand we also have
$$\left(\frac{\de}{\de t}-\Delta\right)\vp(t)=\dot{\vp}(t)-n+\tr{\omega(t)}{\hat{\omega}_t}\geq\frac{1}{2}\tr{\omega(t)}{(f^*\omega_Y)}-C,$$
for $t\geq T_0$, thanks to \eqref{use2} and Lemma \ref{c0}.
Therefore, if we choose $A$ large enough, we have that
\[\begin{split} \left(\frac{\de}{\de t}-\Delta\right)(\log\tr{\omega(t)}{(f^*\omega_Y)}-A\vp(t))&\leq -\tr{\omega(t)}{(f^*\omega_Y)}+C,
\end{split}\]
and from this we conclude easily that $\tr{\omega(t)}{(f^*\omega_Y)}\leq C$ on $X\times[0,\infty)$ (note that at a maximum point of $\log\tr{\omega(t)}{(f^*\omega_Y)}-A\vp(t)$ we must have $\tr{\omega(t)}{(f^*\omega_Y)}>0$).
\end{proof}

\subsection{An optimal $C^0$ estimate for the evolving metric}
Define a smooth function $\underline{\vp}(t)$ on $Y$ by
$$\underline{\vp}(t)(y)=\frac{\int_{X_y}\vp(t)\omega_0^{n-m}}{\int_{X_y}\omega_0^{n-m}},$$
which is just the fiberwise average of $\vp(t)$. We will also denote its pullback $f^*\underline{\vp}(t)$ to $X$ by $\underline{\vp}(t)$.
\begin{lemma}\label{improve}
There is a uniform constant $C>0$ such that for all $t\geq 0$ we have
\begin{equation}\label{c02}
\sup_X|\vp(t)-\underline{\vp}(t)|\leq Ce^{-t}.
\end{equation}
\end{lemma}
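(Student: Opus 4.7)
The plan is to work fiberwise and reduce the estimate to an $L^\infty$ bound for a complex Monge-Amp\`ere equation on the Calabi--Yau fibers. Fix $y \in Y$ and write $\psi := \vp - \underline{\vp}$; since $\underline{\vp}(t)$ is pulled back from $Y$ it is constant on $X_y$, so $\ddbar\psi|_{X_y} = \ddbar\vp|_{X_y}$ and $\int_{X_y}\psi\,\omega_0^{n-m} = 0$ by definition. Using $(f^*\omega_Y)|_{X_y} = 0$ and $\omega_{\mathrm{SRF}}|_{X_y} = \omega_y$, the reference form satisfies $\hat{\omega}_t|_{X_y} = e^{-t}\omega_y$, and therefore, setting $u_y := e^t\psi|_{X_y}$, one has
$$\omega_y + \ddbar u_y = e^t\omega(t)|_{X_y} > 0.$$
Thus $u_y$ is $\omega_y$-plurisubharmonic on $X_y$, and \eqref{c02} is equivalent to showing $\sup_{X_y}|u_y|\leq C$ uniformly in $y$ and $t$.

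Next, I write down the Monge-Amp\`ere equation satisfied by $u_y$ and bound its density uniformly from above. Raising the identity above to the $(n-m)$-th power gives $(\omega_y + \ddbar u_y)^{n-m} = F\,\omega_y^{n-m}$ for a positive function $F$ on $X_y$. In the local holomorphic product coordinates from Lemma \ref{adj}, write the matrix of $\omega(t)$ in block form
$\left(\begin{smallmatrix}A & B \\ B^* & C\end{smallmatrix}\right)$,
with $C$ the fiber-fiber block; then $\omega(t)|_{X_y}$ has matrix $C$, and the Schur complement identity gives $\det\omega(t) = \det(A - BC^{-1}B^*)\det C$. Combining with the parabolic Monge-Amp\`ere equation $\omega(t)^n = e^{\vp + \dot{\vp} - (n-m)t}\Omega$ and the formula $\Omega = \binom{n}{m}f^*\omega_Y^m \wedge \omega_{\mathrm{SRF}}^{n-m}$, a direct computation yields
$$F = e^{\vp+\dot{\vp}}\,\frac{\det h}{\det(A - BC^{-1}B^*)},$$
where $h$ denotes the matrix of $\omega_Y$ in the base coordinates. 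By Lemma \ref{c0}, $e^{\vp+\dot\vp}$ is uniformly bounded. The parabolic Schwarz lemma \ref{schwarz}, i.e.\ $\omega(t)\geq \kappa\,f^*\omega_Y$, reads in block form $g - \kappa\,\mathrm{diag}(h,0) \geq 0$; taking the Schur complement with respect to the positive definite block $C$ then yields $A - BC^{-1}B^* \geq \kappa h$, and hence $\det(A - BC^{-1}B^*) \geq \kappa^m \det h$. Therefore $F\leq C$ uniformly on $X$.

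The last step is a fiberwise $L^\infty$ estimate. Let $c_y := \sup_{X_y} u_y$ and $v_y := u_y - c_y \le 0$: the function $v_y$ is $\omega_y$-plurisubharmonic with $\sup v_y = 0$ and solves $(\omega_y + \ddbar v_y)^{n-m} = F\omega_y^{n-m}$ with $F$ uniformly bounded. Yau's $L^\infty$ estimate \cite{Ya} for the complex Monge-Amp\`ere equation then gives $\|v_y\|_{L^\infty(X_y)} \leq C$, with a constant that can be chosen independent of $y$ because the Ricci-flat metrics $\omega_y$ vary smoothly over the compact base $Y$. The normalization $\int_{X_y} u_y\,\omega_0^{n-m} = 0$ gives $c_y = -\int_{X_y}v_y\,\omega_0^{n-m}\big/\int_{X_y}\omega_0^{n-m}$, which is uniformly bounded, so $\|u_y\|_{L^\infty(X_y)} \leq 2C$ uniformly, and \eqref{c02} follows.

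The main technical point is the lower bound on the Schur complement: the parabolic Schwarz lemma by itself only records positivity in the base directions, and one must observe that the inequality $\omega(t) \geq \kappa f^*\omega_Y$, viewed as an inequality of Hermitian block matrices, forces $A - BC^{-1}B^* \geq \kappa h$ (not merely $A\geq \kappa h$) via a Schur-complement manoeuvre on both sides. Once this algebraic lower bound is in hand, the Monge-Amp\`ere density $F$ is uniformly controlled, and Yau's $L^\infty$ estimate applied fiberwise concludes the argument.
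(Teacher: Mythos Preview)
Your proof is correct and follows the same overall strategy as the paper: rescale by $e^t$, restrict to the fiber, and apply Yau's $C^0$ estimate to the resulting Monge--Amp\`ere equation on $X_y$, with uniformity in $y$ coming from the smooth variation of the fiber geometry. The only difference is in how the density $F$ is bounded from above. The paper rewrites
\[
F=\frac{(e^t\omega(t)|_{X_y})^{n-m}}{(\omega_0|_{X_y})^{n-m}}
=e^{\vp+\dot\vp}\,\frac{\Omega}{\omega_0^{n-m}\wedge f^*\omega_Y^m}\cdot\frac{\omega(t)^{n-m}\wedge f^*\omega_Y^m}{\omega(t)^n},
\]
and controls the last factor by the Maclaurin inequality for elementary symmetric functions of the eigenvalues of $f^*\omega_Y$ with respect to $\omega(t)$, reducing to the trace bound $\tr{\omega(t)}{(f^*\omega_Y)}\le C$ from the Schwarz lemma. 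Your route instead extracts from the pointwise inequality $\omega(t)\ge \kappa f^*\omega_Y$ the Schur-complement bound $A-BC^{-1}B^*\ge \kappa h$, and then combines this with the block-determinant identity. Both arguments use exactly the same inputs (Lemma~\ref{c0} and Lemma~\ref{schwarz}); yours is a slightly more linear-algebraic repackaging of the same estimate, while the paper's is form-theoretic. Neither buys anything the other does not.
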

\begin{proof}
Let $\psi(t)=e^t(\vp(t)-\underline{\vp}(t))$. When we restrict to a fiber $X_y$ we have
$$e^t\omega(t)|_{X_y}=\omega_{\mathrm{SRF}}|_{X_y}+\ddbar (\psi(t)|_{X_y}),$$
and
\[\begin{split}
\frac{(\omega_{\mathrm{SRF}}|_{X_y}+\ddbar (\psi(t)|_{X_y}))^{n-m}}{(\omega_{0}|_{X_y})^{n-n}}&=\frac{e^{(n-m)t}\omega(t)^{n-m}\wedge f^*\omega_Y^m}{\omega_0^{n-m}\wedge f^*\omega_Y^m}\\
&=\frac{\omega(t)^{n-m}\wedge f^*\omega_Y^m}{\omega(t)^n} \frac{e^{\vp(t)+\dot{\vp}(t)}\Omega}{\omega_0^{n-m}\wedge f^*\omega_Y^m}\\
&\leq C(\tr{\omega(t)}{(f^*\omega_Y)})^{n-m}\\
&\leq C,
\end{split}\]
using Lemmas \ref{c0} and \ref{schwarz}, and the elementary inequality
$$\frac{\omega(t)^{n-m}\wedge f^*\omega_Y^m}{\omega(t)^n}\leq \left(\frac{\omega(t)^{n-1}\wedge f^*\omega_Y}{\omega(t)^n}\right)^{n-m},$$
which follows for example from the Maclaurin inequality
between elementary symmetric functions.
Therefore Yau's $C^0$ estimate \cite{Ya} applies, and using also that $\int_{X_y}\psi(t)\omega_0^{n-m}=0$, we obtain
$$\sup_{X_y} \left|\psi(t)|_{X_y}\right|\leq C,$$
independent of $t$. Furthermore, this constant is uniform in $y\in Y$, since it depends only on geometric quantities on the manifold $(X_y,\omega_{\rm SRF}|_{X_y})$ (specifically its Sobolev and Poincar\'e constants) and these are uniformly bounded in $y$. This proves \eqref{c02}.
\end{proof}

\begin{lemma}\label{ddt}
There is a uniform constant $C>0$ such that for all $t\geq 0$ we have
\begin{equation}\label{dddt}
\left(\frac{\de}{\de t}-\Delta\right)\underline{\vp}(t)\leq C.
\end{equation}
\end{lemma}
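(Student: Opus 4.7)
The plan is to estimate the two terms $\dot{\underline{\vp}}(t)$ and $-\Delta\underline{\vp}(t)$ separately. The first is immediate: since Lemma \ref{c0} gives $|\dot\vp(t)|\leq C$, its fiberwise average $\dot{\underline\vp}(t)$ is also bounded by $C$. So the real content is a uniform lower bound $\Delta\underline\vp(t)\geq -C$.

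Since $\underline\vp(t)$ (viewed on $X$) is the pullback via $f$ of a function on $Y$, we have $\ddbar\underline\vp(t) = f^*(\ddbar_Y\underline\vp(t))$, hence
$$\Delta\underline\vp(t) = \tr{\omega(t)}{f^*(\ddbar_Y\underline\vp(t))}.$$
The strategy is therefore twofold: first establish $\ddbar_Y\underline\vp(t) \geq -C\omega_Y$ on $Y$ (uniformly in $t$), then invoke the parabolic Schwarz Lemma \ref{schwarz}, which gives $\omega(t)\geq C^{-1}f^*\omega_Y$ and hence $\tr{\omega(t)}{(f^*\omega_Y)}\leq nC$, to conclude $\Delta\underline\vp(t)\geq -C'$.

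The main step, and the main obstacle, is the lower bound on $\ddbar_Y\underline\vp(t)$, which I would establish via fiber integration. The volume $V := \int_{X_y}\omega_0^{n-m}$ is constant in $y$ because $\omega_0^{n-m}$ is closed, so $V\underline\vp(t) = f_*(\vp(t)\omega_0^{n-m})$ as a function on $Y$. Commuting $\ddbar$ with $f_*$ (again using $d\omega_0^{n-m}=0$) and writing $\ddbar\vp(t) = \omega(t)-\hat\omega_t$ gives the key identity
$$V\ddbar_Y\underline\vp(t) = f_*\bigl(\omega(t)\wedge\omega_0^{n-m}\bigr) - f_*\bigl(\hat\omega_t\wedge\omega_0^{n-m}\bigr).$$
The first term is a nonnegative $(1,1)$-form on $Y$: $\omega(t)\wedge\omega_0^{n-m}$ is strongly positive on $X$, and the fiber integral of a strongly positive form is a positive $(1,1)$-form, as one sees by pairing against test forms of complementary degree such as $\omega_Y^{m-1}$. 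The second term is computed explicitly: expanding $\hat\omega_t = e^{-t}\omega_{\mathrm{SRF}} + (1-e^{-t})f^*\omega_Y$, using $\omega_{\mathrm{SRF}} = \omega_0 + \ddbar\rho$ together with the vanishing of $\int_{X_y}\rho\,\omega_0^{n-m}$ (by construction of $\rho$), and applying the projection formula, yields
$$f_*(\hat\omega_t\wedge\omega_0^{n-m}) = (1-e^{-t})V\omega_Y + e^{-t}f_*(\omega_0^{n-m+1}),$$
a smooth $(1,1)$-form on $Y$ bounded above by $C\omega_Y$. Combining these gives $\ddbar_Y\underline\vp(t)\geq -C\omega_Y$, and the lemma follows from the two-step plan above.
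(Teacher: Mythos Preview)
Your proof is correct and follows essentially the same route as the paper: both split into the time-derivative piece (handled by Lemma \ref{c0}) and the Laplacian piece, then use fiber integration to write $\ddbar_Y\underline\vp = V^{-1}f_*((\omega(t)-\hat\omega_t)\wedge\omega_0^{n-m})$, drop the positive $\omega(t)$-term, bound $f_*(\hat\omega_t\wedge\omega_0^{n-m})\leq C\omega_Y$, and finish with the Schwarz Lemma. Your explicit computation of $f_*(\hat\omega_t\wedge\omega_0^{n-m})$ via $\omega_{\mathrm{SRF}}=\omega_0+\ddbar\rho$ and $\int_{X_y}\rho\,\omega_0^{n-m}=0$ is slightly more detailed than the paper's, which simply asserts the bound.
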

\begin{proof}
We have
$$\frac{\de}{\de t}\underline{\vp}(t)=\frac{\int_{X_y}\dot{\vp}(t)\omega_0^{n-m}}{\int_{X_y}\omega_0^{n-m}}\leq C,$$
by Lemma \ref{c0}. Next, recall from \eqref{const} that $\int_{X_y}\omega_0^{n-m}$ does not depend on $y$, so it is enough to estimate
$\Delta\left(\int_{X_y}\vp(t)\omega_0^{n-m}\right)$. To compute this, it is convenient to write the integral $\int_{X_y}\vp(t)\omega_0^{n-m}$ using fiber integration as
$$\int_{X_y}\vp(t)\omega_0^{n-m}=f_*(\vp(t)\omega_0^{n-m})(y),$$
where the fiber integration map $f_*$ is defined for every proper submersion, it commutes with $d$, and since $f$ is holomorphic it preserves the $(p,q)$ types of forms, and therefore it also commutes with $\de$ and $\db$.
Then we have
$$\ddbar \left(\int_{X_y}\vp(t)\omega_0^{n-m}\right)=\ddbar f_*(\vp(t)\omega_0^{n-m})=f_*(\ddbar\vp(t)\wedge\omega_0^{n-m}),$$
and so
\[\begin{split}
\Delta\left(\int_{X_y}\vp(t)\omega_0^{n-m}\right)&=\tr{\omega(t)}{f^*(f_*(\ddbar\vp(t)\wedge\omega_0^{n-m}))}\\
&=\tr{\omega(t)}{f^*(f_*((\omega(t)-\hat{\omega}_t)\wedge\omega_0^{n-m}))}\\
&\geq -\tr{\omega(t)}{f^*(f_*(\hat{\omega}_t\wedge\omega_0^{n-m}))},
\end{split}\]
but $f_*(\hat{\omega}_t\wedge\omega_0^{n-m}))$ is a smooth $(1,1)$ form on $Y$ which satisfies
$$f_*(\hat{\omega}_t\wedge\omega_0^{n-m}))\leq C\omega_Y,$$
for all $t\geq 0$. The Schwarz Lemma estimate \eqref{schw} then implies that
$$\Delta\left(\int_{X_y}\vp(t)\omega_0^{n-m}\right)\geq -C,$$
and \eqref{dddt} follows.
\end{proof}

\begin{proposition}\label{equiv2}
There is a uniform constant $C>0$ such that for all $t\geq T_0$ we have
\begin{equation}\label{equiv3}
C^{-1}\hat{\omega}_t\leq \omega(t)\leq C\hat{\omega}_t.
\end{equation}
\end{proposition}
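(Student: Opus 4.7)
My plan is to reduce the two‑sided bound \eqref{equiv3} to a single one‑sided trace bound and then obtain that trace bound by a maximum principle argument with a suitable barrier, in the spirit of Theorem \ref{c22}.

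\textbf{Step 1 (volume comparison and reduction).} The flow equation \eqref{ma2} gives $\omega(t)^n = e^{-(n-m)t}e^{\vp(t)+\dot\vp(t)}\Omega$. Expanding $\hat\omega_t^n = (e^{-t}\omega_{\mathrm{SRF}}+(1-e^{-t})f^*\omega_Y)^n$ by the binomial formula, the dominant term is $\binom{n}{m}(1-e^{-t})^m e^{-(n-m)t} f^*\omega_Y^m\wedge\omega_{\mathrm{SRF}}^{n-m} = (1-e^{-t})^m e^{-(n-m)t}\Omega$, with the remaining terms lower order in $e^{-t}$. Combined with the uniform $C^0$ bounds on $\vp$ and $\dot\vp$ from Lemma \ref{c0}, this gives $C^{-1}\hat\omega_t^n\le \omega(t)^n\le C\hat\omega_t^n$ for $t\ge T_0$. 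Diagonalizing $\omega(t)$ with respect to $\hat\omega_t$ at each point with eigenvalues $\mu_1,\dots,\mu_n>0$, the equivalence \eqref{equiv3} amounts to $C^{-1}\le\mu_j\le C$; since $C^{-1}\le\prod_i\mu_i\le C$, it suffices to bound $\sum_i\mu_i = \tr{\hat\omega_t}{\omega(t)}\le C$, because then $\mu_j\le C$ directly and $\mu_j = \prod_i\mu_i/\prod_{i\ne j}\mu_i\ge C^{-1}/C^{n-1}$ (using that each $\mu_i$ is also bounded above).

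\textbf{Step 2 (differential inequality and maximum principle).} Adapting the Aubin--Yau trace computation \eqref{gettt2} to the time-dependent reference $\hat\omega_t$ — absorbing the first-derivative terms of $\tr{\hat\omega_t}{\omega(t)}$ via the Cauchy--Schwarz trick used in the proof of Theorem \ref{c22}, and handling the extra contribution $\partial_t\hat\omega_t = e^{-t}(f^*\omega_Y-\omega_{\mathrm{SRF}})$ which is uniformly bounded — one obtains an estimate of the form
\[
\left(\frac{\de}{\de t}-\Delta\right)\log\tr{\hat\omega_t}{\omega(t)}\le C_0\,\tr{\omega(t)}{\hat\omega_t}+C_1
\]
for $t\ge T_0$. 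On the other hand, from \eqref{ma2} one has $(\frac{\de}{\de t}-\Delta)\vp(t) = \dot\vp(t)-n+\tr{\omega(t)}{\hat\omega_t}$, and Lemma \ref{c0} bounds $|\vp|+|\dot\vp|\le C$. Forming the barrier $Q=\log\tr{\hat\omega_t}{\omega(t)}-A\vp(t)$ with $A>C_0+1$, we obtain $(\frac{\de}{\de t}-\Delta)Q\le -\tr{\omega(t)}{\hat\omega_t}+C_2$. The maximum of $Q$ on $X\times[T_0,T]$ is therefore either attained at $t=T_0$ (where $Q$ is bounded by short-time estimates) or else at an interior point, where the maximum principle forces $\tr{\omega(t)}{\hat\omega_t}\le C$; combined with Step 1 this yields $\tr{\hat\omega_t}{\omega(t)}\le C$.

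\textbf{Main obstacle.} The delicate point is Step 2: the intrinsic holomorphic bisectional curvature of $\hat g_t$ is \emph{not} uniformly bounded in $t$, because along the fibers $\hat\omega_t\sim e^{-t}\omega_{\mathrm{SRF}}$ shrinks, inflating the fiber-direction bisectional curvatures by a factor of order $e^t$. A naive adaptation of \eqref{gettt2} using a lower bound on the bisectional curvature of $\hat g_t$ therefore fails. However, the precise term appearing in the calculation is not such an intrinsic curvature bound but rather the contraction $\hat R^t_{i\ov j p\ov q}g^{k\ov\ell}(\ldots)$ of the curvature of $\hat g_t$ with $\omega(t)$. Exploiting local product coordinates from Lemma \ref{adj}, one decomposes this contraction into horizontal-horizontal terms (governed by the curvature of $(1-e^{-t})f^*\omega_Y$, uniformly bounded for $t\ge T_0$) and terms involving at least one fiber index; the latter carry factors of $e^{-t}$ from the curvature of $e^{-t}\omega_{\mathrm{SRF}}$, which are compensated by the $e^t$-sized components of $\hat g_t^{-1}$ in the fiber directions, but further weighted by the flow metric $\omega(t)$ which itself degenerates at rate $e^{-t}$ in those directions. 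The net effect — once Lemma \ref{schwarz} and the fiberwise Ricci-flat structure of $\omega_{\mathrm{SRF}}$ are taken into account — is that $C_0$ can indeed be chosen uniformly, which is precisely what makes the particular choice of reference metric $\hat\omega_t$ adapted to the fibration work.
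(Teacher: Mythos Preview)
Your Step 1 (volume comparison and reduction to a one-sided trace bound) is fine and is indeed also used in the paper. The difficulty is entirely in Step 2, and here there is a genuine gap.

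You correctly identify the obstruction: the bisectional curvature of the time-dependent reference $\hat\omega_t$ is \emph{not} uniformly bounded below (it blows up like $e^t$ in fiber directions). Your proposed fix --- a block decomposition in product coordinates together with cancellation from the $e^{-t}$ factors and the fiberwise Ricci-flatness of $\omega_{\mathrm{SRF}}$ --- is not a proof as written. The form $\omega_{\mathrm{SRF}}=\omega_0+\ddbar\rho$ is not a product metric and has nontrivial mixed components, so the curvature of $\hat\omega_t$ does not split cleanly; the ``net effect'' sentence hides exactly the estimate that needs to be proved. Without a rigorous argument that $C_0$ can be chosen uniform in $t$, the inequality $\left(\frac{\de}{\de t}-\Delta\right)\log\tr{\hat\omega_t}{\omega(t)}\le C_0\,\tr{\omega(t)}{\hat\omega_t}+C_1$ is simply not established, and the maximum principle step collapses.

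The paper avoids this issue entirely by working with a \emph{fixed} reference metric. It applies the Schwarz Lemma calculation to the identity map $(X,\omega(t))\to(X,\omega_0)$, obtaining $\left(\frac{\de}{\de t}-\Delta\right)\log(e^{-t}\tr{\omega(t)}{\omega_0})\le C\,\tr{\omega(t)}{\omega_0}$ with $C$ depending only on the (bounded) curvature of $\omega_0$. The price is that the naive barrier $-A\vp$ is no longer strong enough: one needs a term that produces $-Ae^{t}\tr{\omega(t)}{\hat\omega_t}$, which via $\hat\omega_t\ge C^{-1}e^{-t}\omega_0$ controls $\tr{\omega(t)}{\omega_0}$. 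The paper achieves this with the barrier $-Ae^{t}(\vp-\underline\vp)$, where $\underline\vp$ is the fiberwise average of $\vp$; this relies on two preparatory lemmas you did not invoke, namely the bound $|\vp-\underline\vp|\le Ce^{-t}$ (Lemma \ref{improve}) and the one-sided heat bound $\left(\frac{\de}{\de t}-\Delta\right)\underline\vp\le C$ (Lemma \ref{ddt}). The conclusion is $e^{-t}\tr{\omega(t)}{\omega_0}\le C$, giving $\omega(t)\ge C^{-1}e^{-t}\omega_0$; together with the parabolic Schwarz Lemma $\omega(t)\ge C^{-1}f^*\omega_Y$ this yields $\omega(t)\ge C^{-1}\hat\omega_t$, and the upper bound then follows from your Step 1. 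So the missing idea in your sketch is precisely the use of the fiber-averaged potential to build a barrier that compensates for the degeneration, rather than attempting to control the curvature of the degenerating reference directly.
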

\begin{proof}
We apply the maximum principle to $$\log(e^{-t}\tr{\omega(t)}{\omega_0})-Ae^t(\vp(t)-\underline{\vp}(t)),$$
for a constant $A$ to be determined. To compute the evolution of
$\log(e^{-t}\tr{\omega(t)}{\omega_0})$ we just use the Schwarz Lemma calculation in \eqref{sccc} to the identity map from $(X,\omega(t))$ to $(X,\omega_0)$, which gives
$$\left(\frac{\de}{\de t}-\Delta\right)\log(e^{-t}\tr{\omega(t)}{\omega_0})\leq C \tr{\omega(t)}{\omega_0}.$$
At a maximum point of our quantity, assuming it is achieved at $t>0$, we have
\[\begin{split}
0&\leq \left(\frac{\de}{\de t}-\Delta\right)(\log(e^{-t}\tr{\omega(t)}{\omega_0})-Ae^t(\vp(t)-\underline{\vp}(t)))\\
&\leq C\tr{\omega(t)}{\omega_0}-A e^t(\vp(t)-\underline{\vp}(t))-Ae^t\dot{\vp}(t)+Ane^t-Ae^t\tr{\omega(t)}{\hat{\omega}_t}+CAe^t\\
&\leq CAe^t-\tr{\omega(t)}{\omega_0},
\end{split}
\]
as long as we choose $A$ sufficiently large, using \eqref{use}, \eqref{dphi2}, \eqref{c02} and \eqref{dddt}. Therefore we conclude that
$$e^{-t}\tr{\omega(t)}{\omega_0}\leq C,$$
on $X\times [0,\infty)$, which implies
$$\omega(t)\geq C^{-1}e^{-t}\omega_0\geq C^{-1}e^{-t}\omega_{\mathrm{SRF}},$$
and adding this to \eqref{schw} we obtain
$$\omega(t)\geq C^{-1}\hat{\omega}_t,$$
which is half of \eqref{equiv3}. For the other half, it is enough to observe that
$$\frac{\omega(t)^n}{\hat{\omega}_t^n}=e^{\vp(t)+\dot{\vp}(t)}\frac{\Omega}{e^{(n-m)t}\hat{\omega}_t^n}\leq C,$$
thanks to Lemma \ref{c0} and \eqref{sharp}, and so the upper bound
$$\omega(t)\leq C\hat{\omega}_t,$$
follows.
\end{proof}

\subsection{$C^0$ convergence of the evolving metric}
\begin{lemma}\label{base}
There is a uniform constant $C>0$ such that for all $t\geq 0$ we have
\begin{equation}\label{base2}
\tr{\omega(t)}{(f^*\omega_Y)}\leq m+Ce^{-\frac{t}{8}}.
\end{equation}
\end{lemma}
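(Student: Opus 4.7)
The plan is to prove Lemma \ref{base} by a maximum principle argument applied to a carefully chosen test function, combining a new identity for the potential with the parabolic Schwarz lemma from Lemma \ref{schwarz}.

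\textbf{Step 1: A key evolution identity.} First I would derive the identity
\begin{equation*}
\left(\frac{\partial}{\partial t}-\Delta\right)(\dot{\vp}+\vp) = \tr{\omega(t)}{(f^*\omega_Y)} - m.
\end{equation*}
This follows from a direct computation: since $\dot{\vp}+\vp=\log\omega(t)^n-\log\Omega+(n-m)t$ by \eqref{ma2}, and $\Ric(\Omega)=-f^*\omega_Y$, we get $\ddbar(\dot{\vp}+\vp)=-\Ric(\omega(t))-f^*\omega_Y$, whose trace against $\omega(t)$ is $-R-H$, while $\partial_t(\dot{\vp}+\vp)=(-R-n)+(n-m)=-R-m$ using the normalized flow \eqref{krf2}. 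Setting $u=\dot{\vp}+\vp$ and recalling $|u|\leq Ce^{-t/4}$ from Lemma \ref{c0}, the desired estimate $H\leq m+Ce^{-t/8}$ is equivalent to an upper bound on $(\partial_t-\Delta)u$.

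\textbf{Step 2: Parabolic Schwarz lemma.} The calculation in the proof of Lemma \ref{schwarz} applied to the normalized flow, together with the a priori bound $H\leq H_0$ from Lemma \ref{schwarz}, gives
\begin{equation*}
\left(\frac{\partial}{\partial t}-\Delta\right) H \leq H(1+C_0 H)\leq C_1 H
\end{equation*}
for a uniform constant $C_1$.

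\textbf{Step 3: Maximum principle on sliding windows.} I would prove the estimate by induction on $T$: assume $H(\cdot,T-1)\leq m+Ce^{-(T-1)/8}$ (base case provided by Lemma \ref{schwarz} and taking $C$ sufficiently large so that the bound is automatic for $T\leq T_0$) and show the same at time $T$. On $X\times[T-1,T]$ apply the maximum principle to
$$Q=H-A\,(\dot{\vp}+\vp),\qquad A=e^{T/8}.$$
Combining Steps 1 and 2 yields $(\partial_t-\Delta)Q\leq(C_1-A)H+Am$, so at an interior maximum $H\leq \tfrac{Am}{A-C_1}=m+O(e^{-T/8})$. At such a point $Q\leq m+O(e^{-T/8})+A\cdot Ce^{-(T-1)/4}=m+O(e^{-T/8})$, since $Ae^{-T/4}=e^{-T/8}$. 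If instead the maximum is achieved on the bottom of the window $t=T-1$, the inductive hypothesis plus the bound $A|u|\leq ACe^{-(T-1)/4}=O(e^{-T/8})$ gives the same bound. In either case $\sup_{X\times[T-1,T]}Q\leq m+Ce^{-T/8}$, and then for any $x\in X$,
$$H(x,T)\leq Q(x,T)+A|u(x,T)|\leq m+Ce^{-T/8}+e^{T/8}\cdot Ce^{-T/4}=m+Ce^{-T/8},$$
closing the induction.

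\textbf{Main subtlety.} The exponent $\tfrac{1}{8}$ arises precisely as the balance between the $O(A^{-1})$ deviation from $m$ at the interior maximum and the propagation term $A|u|=O(Ae^{-T/4})$ inherited from Lemma \ref{c0}; the minimax is attained at $A=e^{T/8}$. The inductive bootstrap is needed because a single application of the maximum principle over $[0,T]$ would pick up the (Schwarz-sized) boundary value at $t=0$ and produce only the trivial bound $H\leq H_0$. The sliding window trick replaces this boundary value, at each step, by the (successively sharper) previously-proven bound at the earlier time.
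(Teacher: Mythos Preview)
Your Steps 1 and 2 are correct and match the paper. The gap is in Step 3: the sliding-window induction does not close. In Case 2 (maximum at the bottom $t=T-1$), the inductive hypothesis only gives $H(\cdot,T-1)\leq m+C e^{-(T-1)/8}=m+Ce^{1/8}e^{-T/8}$, so
\[
Q\big|_{t=T-1}\leq m+Ce^{1/8}e^{-T/8}+A|u|\leq m+(Ce^{1/8}+C')e^{-T/8},
\]
and after adding back $A|u(\cdot,T)|\leq C'e^{-T/8}$ you obtain $H(\cdot,T)\leq m+(Ce^{1/8}+C'')e^{-T/8}$. To close the induction you would need $Ce^{1/8}+C''\leq C$, i.e.\ $C(e^{1/8}-1)\leq -C''$, which is impossible. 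The constant therefore grows by the factor $e^{1/8}>1$ at every step, and after $k$ steps it is of order $e^{k/8}$; the bound degenerates back to the crude Schwarz estimate $H\leq H_0$.

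The paper avoids this by building the time dependence directly into the test function: it applies the maximum principle \emph{once} on all of $X\times[0,\infty)$ to
\[
e^{t/8}\bigl(\tr{\omega(t)}{(f^*\omega_Y)}-m\bigr)-e^{t/4}\bigl(\vp(t)+\dot\vp(t)\bigr).
\]
The two weights are tuned so that in the evolution inequality the term $-e^{t/4}(H-m)$ dominates $+\tfrac{1}{8}e^{t/8}(H-m)$, while $e^{t/4}|\vp+\dot\vp|\leq C$ by Lemma~\ref{c0}. This yields $e^{t/8}(H-m)\leq C$ globally with no iteration needed. Your ingredients are exactly the right ones; what is missing is replacing the fixed coefficient $A=e^{T/8}$ by the time-varying pair $(e^{t/8},e^{t/4})$, which turns the sliding-window scheme into a single global barrier.
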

\begin{proof}
We apply the maximum principle to $$e^{\frac{t}{8}}(\tr{\omega(t)}{(f^*\omega_Y)}-m)-e^{\frac{t}{4}}(\vp(t)+\dot{\vp}(t)).$$
To compute the evolution of this quantity, we first calculate
$$\left(\frac{\de}{\de t}-\Delta\right)\vp(t)=\dot{\vp}(t)-n+\tr{\omega(t)}{\hat{\omega}_t},$$
$$\left(\frac{\de}{\de t}-\Delta\right)\dot{\vp}(t)=\tr{\omega(t)}{(f^*\omega_Y-\hat{\omega}_t)}+n-m-\dot{\vp}(t),$$
$$\left(\frac{\de}{\de t}-\Delta\right)(\vp(t)+\dot{\vp}(t))=\tr{\omega(t)}{(f^*\omega_Y)}-m,$$
and from the Schwarz Lemma calculation \eqref{scc}, together with \eqref{equiv3},
$$\left(\frac{\de}{\de t}-\Delta\right)\tr{\omega(t)}{(f^*\omega_Y)}\leq \tr{\omega(t)}{(f^*\omega_Y)}+C(\tr{\omega(t)}{(f^*\omega_Y)})^2\leq C.$$
At a maximum point of our quantity, assuming it is achieved at $t>0$, we have
\[\begin{split}
0&\leq \left(\frac{\de}{\de t}-\Delta\right)(e^{\frac{t}{8}}(\tr{\omega(t)}{(f^*\omega_Y)}-m)-e^{\frac{t}{4}}(\vp(t)+\dot{\vp}(t)))\\
&\leq \frac{e^{\frac{t}{8}}}{8}(\tr{\omega(t)}{(f^*\omega_Y)}-m)+Ce^{\frac{t}{8}}-\frac{e^{\frac{t}{4}}}{4}(\vp(t)+\dot{\vp}(t))-e^{\frac{t}{4}}(\tr{\omega(t)}{(f^*\omega_Y)}-m)\\
&\le Ce^{\frac{t}{8}}-\frac{e^{\frac{t}{4}}}{2}(\tr{\omega(t)}{(f^*\omega_Y)}-m),
\end{split}
\]
using Lemma \ref{c0}. Therefore we get a uniform upper bound for this quantity, and hence for $e^{\frac{t}{8}}(\tr{\omega(t)}{(f^*\omega_Y)}-m)$.
\end{proof}

\begin{theorem}\label{baseb}
There are uniform constants $C,\eta>0$ such that for all $t\geq T_0$ we have
\begin{equation}\label{baseb2}
\tr{\omega(t)}{\hat{\omega}_t}\leq n+Ce^{-\eta t}.
\end{equation}
\end{theorem}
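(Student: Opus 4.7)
Plan: The lower bound is essentially free. By Proposition \ref{equiv2}, the eigenvalues $\mu_1,\dots,\mu_n$ of $\hat\omega_t$ with respect to $\omega(t)$ lie in a fixed interval $[C^{-1},C]$, while Lemma \ref{c0} combined with \eqref{sharp} gives
$$\frac{\omega(t)^n}{\hat\omega_t^n}=e^{\vp+\dot\vp}\cdot\frac{\Omega}{e^{(n-m)t}\hat\omega_t^n}=1+O(e^{-t/4}).$$
Applying AM--GM to the $\mu_i$ yields
$$\tr_\omega\hat\omega_t=\sum_i\mu_i\ \geq\ n\Bigl(\prod_i\mu_i\Bigr)^{1/n}=n\Bigl(\tfrac{\hat\omega_t^n}{\omega(t)^n}\Bigr)^{1/n}\geq n-Ce^{-t/4},$$
which already proves the lower direction of the theorem, so only the upper bound $\tr_\omega\hat\omega_t\leq n+Ce^{-\eta t}$ requires effort.

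For the upper bound I plan to mimic the maximum-principle trick of Lemma \ref{base}. Set $u:=\tr_\omega\hat\omega_t-n$ and fix a small $\mu\in(0,1/8)$. The required evolution identities are
$$\Bigl(\tfrac{\partial}{\partial t}-\Delta\Bigr)u=\tr_\omega f^*\omega_Y-F,\qquad \Bigl(\tfrac{\partial}{\partial t}-\Delta\Bigr)\dot\vp=\tr_\omega f^*\omega_Y-m-u-\dot\vp,$$
where $F:=g^{k\bar\ell}g^{i\bar j}\nabla_k\nabla_{\bar\ell}\hat\omega_{t;i\bar j}$; the first identity follows by combining the standard normal-coordinate expression of $\Delta\tr_\omega h$ with $\partial_t g^{i\bar j}=R^{i\bar j}+g^{i\bar j}$ (from \eqref{krf2}) and $\partial_t\hat\omega_t=f^*\omega_Y-\hat\omega_t$, the Ricci contributions $R^{i\bar j}\hat\omega_{t;i\bar j}$ on the two sides cancelling exactly. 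With $w:=u+e^{\mu t}\dot\vp$ one then computes
$$\Bigl(\tfrac{\partial}{\partial t}-\Delta\Bigr)w=-e^{\mu t}w+G(t),$$
where
$$G(t)=(1+e^{\mu t})(\tr_\omega f^*\omega_Y-m)+(m-F)+e^{2\mu t}\dot\vp+(\mu-1)e^{\mu t}\dot\vp.$$
Granting a uniform bound $|F|\leq C$, Lemma \ref{base} ($\tr_\omega f^*\omega_Y-m\leq Ce^{-t/8}$) and Lemma \ref{c0} ($|\dot\vp|\leq Ce^{-t/4}$) together bound $G(t)$ from above by a uniform constant for every $\mu\leq 1/8$. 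Applying the maximum principle to $e^{\mu t}w$ on $X\times[T_0,\infty)$ then forces $e^{\mu t}w\leq C$ (at an interior maximum with $t_0$ so large that $e^{\mu t_0}>2\mu$, the coefficient of $e^{\mu t_0}w$ is comparable to $-e^{\mu t_0}$ and dominates $e^{\mu t_0}G(t_0)$), so that $w\leq Ce^{-\mu t}$ and hence
$$u=w-e^{\mu t}\dot\vp\leq Ce^{-\mu t}+Ce^{(\mu-1/4)t}\leq Ce^{-\eta t}$$
for any $\eta<1/8$, which is the desired upper bound.

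The main obstacle is establishing the uniform bound $|F|\leq C$ on the Hessian-type remainder. A naive estimate using Proposition \ref{equiv2} gives only $\omega(t)\geq C^{-1}e^{-t}\omega_0$ in fiber directions, hence $g^{-1}\otimes g^{-1}$ could be as large as $Ce^{2t}$ there, which would make $F$ grow like $Ce^{2t}$. The resolution uses the local product-coordinate description of $f$ from Lemma \ref{adj} together with the decomposition $\hat\omega_t=(1-e^{-t})f^*\omega_Y+e^{-t}\omega_{\rm SRF}$: in such coordinates $f^*\omega_Y$ has no components or derivatives in fiber directions, so its contribution to $F$ pairs only with the base-block of $g^{-1}$, which is uniformly bounded by a Schur-complement argument; the apparent $Ce^t$ growth of the $e^{-t}\omega_{\rm SRF}$ contribution is tamed by using that $\omega_{\rm SRF}$ restricts to a Ricci-flat K\"ahler metric on each fiber $X_y$ together with Lemma \ref{improve}, which makes $e^t\omega(t)|_{X_y}$ and $\omega_{\rm SRF}|_{X_y}$ exponentially close in potentials; the resulting cancellation between the $e^{-t}$ prefactor and the fiber-direction blow-up of $g^{-1}\otimes g^{-1}$ reduces to a uniform bound. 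Carrying out this cancellation argument rigorously is the heart of the proof.
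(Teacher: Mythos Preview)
Your evolution identity for $u=\tr{\omega}{\hat\omega_t}-n$ is correct, and you are right that the Ricci contributions cancel.  However, the whole argument hinges on the uniform bound $|F|\leq C$, and this bound is false in general.  Writing $F$ at the centre of normal coordinates for $\omega(t)$ one has
\[
F=g^{k\bar\ell}g^{i\bar j}\partial_k\partial_{\bar\ell}(\hat\omega_t)_{i\bar j}
=-g^{k\bar\ell}g^{i\bar j}R^{\hat\omega_t}_{i\bar jk\bar\ell}
+g^{k\bar\ell}g^{i\bar j}\hat g_t^{p\bar q}\partial_k(\hat\omega_t)_{i\bar q}\partial_{\bar\ell}(\hat\omega_t)_{p\bar j},
\]
so $F$ encodes the bisectional curvature of the reference metric $\hat\omega_t$ contracted against $g^{-1}\otimes g^{-1}$.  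In fibre--fibre--fibre--fibre indices, $R^{\hat\omega_t}_{i\bar jk\bar\ell}\sim e^{-t}R^{\omega_{\rm SRF}}_{i\bar jk\bar\ell}$ while $g^{i\bar j}g^{k\bar\ell}\sim e^{2t}$, so this block of $F$ is of order $e^{t}$ unless $\omega_{\rm SRF}|_{X_y}$ is \emph{flat}, which happens only when the fibres are finite quotients of tori.  (The paper makes exactly this remark: in general dimensions $|\mathrm{Rm}(\hat\omega_t)|_{\hat\omega_t}=O(e^{t})$.)  Neither of the tools you invoke closes this gap: Ricci-flatness of $\omega_{\rm SRF}|_{X_y}$ only kills the \emph{trace} of the curvature, not the individual bisectional curvatures entering $F$; and Lemma~\ref{improve} is a $C^0$ bound on potentials, giving no control on second derivatives of the metric.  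Your ``cancellation'' sentence is therefore wishful: the $e^{-t}$ prefactor kills only one of the two factors of $e^{t}$ coming from $g^{-1}\otimes g^{-1}$.

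The paper takes a completely different route, and this is the point.  It does \emph{not} run a maximum principle on $\tr{\omega}{\hat\omega_t}$.  Instead it first proves, via a parabolic rescaling argument and the local higher-order estimates (Theorems~\ref{fiberc3} and~\ref{fibercinf}), that the restricted metrics $e^{t}\omega(t)|_{X_y}$ converge in $C^\infty$ to $\omega_{\rm SRF}|_{X_y}$ exponentially fast (Theorem~\ref{trk}).  Only then does it split
\[
\tr{\omega}{\hat\omega_t}=(1-e^{-t})\,\tr{\omega}{f^*\omega_Y}+e^{-t}\,\tr{\omega}{\omega_{\rm SRF}},
\]
bounding the first piece by $m+Ce^{-t/8}$ via Lemma~\ref{base} and the second by $n-m+Ce^{-\eta t}$ using the fibrewise convergence just established (together with a direct estimate of the base--fibre cross terms of $\omega_{\rm SRF}$).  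In short, the hard analytic input is the fibrewise $C^\infty$ convergence, not a clever barrier for $\tr{\omega}{\hat\omega_t}$; your proposal tries to bypass this and runs into exactly the curvature blow-up that forces the paper's detour.
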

This result may seem similar to the one obtained in Lemma \ref{base}, but it is much more powerful and its proof is considerably harder.
This was originally proved when $n=2$ in \cite{TWY0} (for a more general flow of Hermitian metrics, which specializes to the K\"ahler-Ricci flow when the initial metric is K\"ahler). The method of proof used there is special to this dimension, because in this case the reference metrics $\hat{\omega}_t$ satisfy $|\widehat{\mathrm{Rm}}(t)|_{\hat{\omega}_t}\leq Ce^{\frac{t}{2}},$ while in general dimensions this is $O(e^t)$. In these notes we present the proof obtained in \cite{TWY}, which works in all dimensions. This will require us to first
prove strong estimates for the metric along the fibers, including proving \eqref{conve2p}, and then we will be able to prove \eqref{baseb2}.

Before proving Theorem \ref{baseb}, we use it to complete the proof of \eqref{conve1p}.
\begin{proof}[Proof of \eqref{conve1p}]
We observe that for $t\geq T_0$ we have,
\begin{equation}\label{baseb3}
\frac{\hat{\omega}_t^n}{\omega(t)^n}=e^{-\vp(t)-\dot{\vp}(t)}\frac{e^{(n-m)t}\hat{\omega}_t^n}{\binom{n}{m}f^*\omega_Y^m\wedge \omega_{\mathrm{SRF}}^{n-m}}\geq
e^{(-C(1+t)e^{-t}-Ce^{-\frac{t}{4}}-Ce^{-t})}\geq 1-Ce^{-\frac{t}{4}},
\end{equation}
using Lemma \ref{c0}.
If now at any given point we choose local holomorphic coordinates so that $\omega(t)$ is the identity and $\hat{\omega}_t$ is given by a positive definite $n\times n$ Hermitian matrix $A$, then \eqref{baseb2} and \eqref{baseb3} give
$$\mathrm{tr} A\leq n + Ce^{-\eta t}, \quad \det A\geq 1-Ce^{-\frac{t}{4}},$$
and so Lemma \ref{matrix} below gives
$$\|A-\mathrm{Id}\|\leq Ce^{-\frac{\eta}{2} t},$$
which means
$$\|\hat{\omega}_t-\omega(t)\|_{C^0(X,\omega(t))}\leq Ce^{-\frac{\eta}{2} t},$$
and since $\omega(t)\leq C\omega_0$ (by Proposition \ref{equiv2}), this gives
$$\|\hat{\omega}_t-\omega(t)\|_{C^0(X,\omega_0)}\leq Ce^{-\frac{\eta}{2} t},$$
and remembering that $\hat{\omega}_t=f^*\omega_Y+e^{-t}(\omega_{\mathrm{SRF}}-f^*\omega_Y)$, this gives \eqref{conve1p}.
\end{proof}

In the proof we have used the following elementary result:
\begin{lemma} \label{matrix}
Let $A$ be an $n\times n$ positive definite Hermitian matrix such that
$$  \mathrm{tr} A \leq n + \ve, \quad   \det A   \geq 1-\ve,$$
for some $0<\ve<1$.
Then there is a constant $C$ which depends only on $n$ such that
$$\| A- \mathrm{Id} \|^2 \le C \ve,$$
where $\|\cdot\|$ is the Hilbert-Schmidt norm, and $\mathrm{Id}$ is the $n\times n$ identity matrix.
\end{lemma}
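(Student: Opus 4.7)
The plan is to pass to eigenvalues, obtain two-sided bounds on them, and then exploit the convexity of $x - 1 - \log x$ at $x = 1$. Since the Hilbert--Schmidt norm is unitarily invariant, I may diagonalize $A$ and write its (real, positive) eigenvalues as $\lambda_1,\dots,\lambda_n$. The two hypotheses then read
\[
\sum_{i=1}^n \lambda_i \le n + \ve, \qquad \prod_{i=1}^n \lambda_i \ge 1-\ve,
\]
and the conclusion becomes $\sum_i (\lambda_i - 1)^2 \le C\ve$.

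First I would establish that each $\lambda_i$ lies in a fixed compact subinterval of $(0,\infty)$ depending only on $n$. The upper bound $\lambda_i \le n+\ve \le n+1$ is immediate from positivity and the trace bound. For the lower bound, exactly as in the derivation of \eqref{c0equiv}, write
\[
\lambda_i = \frac{\prod_{j=1}^n \lambda_j}{\prod_{k \ne i} \lambda_k} \ge \frac{1-\ve}{(n+1)^{n-1}} \ge c_n > 0,
\]
using the upper bound on the other eigenvalues. So all $\lambda_i$ lie in $[c_n, n+1]$.

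Next I would use the elementary inequality $x - 1 - \log x \ge 0$ for $x > 0$, which vanishes to second order at $x=1$. On the compact interval $[c_n, n+1]$ a standard Taylor argument gives a constant $c_n' > 0$ such that
\[
x - 1 - \log x \ge c_n'\,(x-1)^2 \quad \text{for all } x \in [c_n, n+1].
\]
Summing over $i$ yields
\[
c_n' \sum_{i=1}^n (\lambda_i - 1)^2 \le \sum_{i=1}^n (\lambda_i - 1) - \log \prod_{i=1}^n \lambda_i \le \ve - \log(1-\ve).
\]
For $0 < \ve < 1/2$, the right-hand side is bounded by $3\ve$, and for $\ve \ge 1/2$ the conclusion is trivial since $\|A - \mathrm{Id}\|^2$ is bounded by a constant depending only on $n$ (as the $\lambda_i$ lie in $[c_n, n+1]$). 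This gives $\|A - \mathrm{Id}\|^2 = \sum_i (\lambda_i-1)^2 \le C\ve$, as desired.

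There is no real obstacle here; the only mild subtlety is the need to first secure a lower bound on the eigenvalues (so that $\log \lambda_i$ is controlled and the quadratic lower bound for $x-1-\log x$ applies with a uniform constant), and this follows immediately from combining the trace and determinant hypotheses as above.
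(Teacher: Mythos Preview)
Your argument is correct, though the presentation has a small ordering issue: the claimed uniform lower bound $\lambda_i \ge (1-\ve)/(n+1)^{n-1} \ge c_n > 0$ is not uniform over all $\ve \in (0,1)$, since $1-\ve$ can be arbitrarily small. The fix is already implicit in your case split: handle $\ve \ge 1/2$ first (where only the upper bound $\lambda_i \le n+1$ is needed to get $\|A-\mathrm{Id}\|^2 \le n^3 \le 2n^3\,\ve$), and then for $\ve < 1/2$ take $c_n = 1/(2(n+1)^{n-1})$ and run the convexity argument. With that reordering the proof is complete.

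Your route is genuinely different from the paper's. The paper works entirely with the normalized elementary symmetric polynomials $S_1,\dots,S_n$: the hypotheses give $S_1 \le 1+\ve/n$ and $S_n \ge 1-\ve$, the Maclaurin inequalities $S_1 \ge S_2^{1/2} \ge \cdots \ge S_n^{1/n}$ then force $|S_1-1|+|S_2-1|\le C\ve$, and the identity $\sum_j(\lambda_j-1)^2 = n^2 S_1^2 - 2nS_1 - n(n-1)S_2 + n$ finishes. Your approach instead exploits the strict convexity of $x\mapsto x-1-\log x$ at $x=1$: bounding the eigenvalues into a compact interval gives a uniform quadratic lower bound $x-1-\log x \ge c_n'(x-1)^2$, and summing yields $c_n'\sum(\lambda_i-1)^2 \le (\mathrm{tr}A - n) - \log\det A \le \ve - \log(1-\ve)$. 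The paper's method is more algebraic and avoids the case split, while yours is more analytic and avoids Maclaurin's inequality; both are short and the dimensional constants come out comparable.
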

\begin{proof}  The lemma is trivial for $n=1$ so we may assume that $n\geq 2$.
Let $\lambda_1, \ldots, \lambda_n>0$ be the eigenvalues of $A$.  Define the normalized elementary symmetric polynomials $S_k$ by
$$S_k = {\binom{n}{k}}^{-1} \sum_{1 \le i_1 < \cdots < i_k \le n} \lambda_{i_1} \cdots \lambda_{i_k}, \quad \textrm{for } k=1, \dots, n.$$
By assumption we have that $S_1 \le 1+ \frac{\ve}{n}$ and $S_n \geq 1-\ve$.
Together with the Maclaurin inequalities we obtain
$$1+\frac{\ve}{n}\geq S_1 \geq \sqrt{S_2} \geq S_3^{\frac{1}{3}}\geq\dots\geq S_n^{\frac{1}{n}}\geq 1-\ve,$$
which implies that $|S_1-1|+|S_2-1| \le C \ve$ for $C$ depending only on $n$.
A direct calculation gives
$$\| A-\mathrm{Id} \|^2 = \sum_{j=1}^n (\lambda_j-1)^2 = n^2 S_1^2 -2nS_1- n(n-1) S_2 +n \le C \ve,$$
for $C$ depending only on $n$.
\end{proof}

\subsection{Estimates for the metric along the fibers}
Our goal now is to prove \eqref{conve2p}, which we will then use to prove Theorem \ref{baseb}. The first step is the following:
\begin{theorem}\label{fiberc3}
There is a constant $C>0$ such that for every $y\in Y$ and all $t\geq 0$ we have
\begin{equation}\label{fibc3}
\|e^t\omega(t)|_{X_y}\|_{C^1(X_y,\omega_0|_{X_y})}\leq C.
\end{equation}
\end{theorem}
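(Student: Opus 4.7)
The plan is to adapt the Calabi third-order estimate to the fiber directions. Recall from Lemma \ref{improve} that $\psi(t):=e^t(\vp(t)-\underline{\vp}(t))$ is uniformly bounded and satisfies $e^t\omega(t)|_{X_y}=\omega_{\mathrm{SRF}}|_{X_y}+\ddbar(\psi(t)|_{X_y})$. Furthermore, restricting Proposition \ref{equiv2} to a fiber gives $\hat{\omega}_t|_{X_y}=e^{-t}\omega_{\mathrm{SRF}}|_{X_y}$ and hence the fiberwise $C^0$ equivalence
$$C^{-1}\omega_{\mathrm{SRF}}|_{X_y}\leq e^t\omega(t)|_{X_y}\leq C\omega_{\mathrm{SRF}}|_{X_y},$$
so the task reduces to bounding one derivative of $e^t\omega(t)|_{X_y}$, equivalently one derivative of the tensor $\tilde g_y:=e^t g(t)|_{X_y}$ with respect to $\omega_{\mathrm{SRF}}|_{X_y}$.

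I would work in local holomorphic product coordinates supplied by Lemma \ref{adj}, with fiber indices ranging over $\{m+1,\dots,n\}$. Writing $\overset{\mathrm{SRF}}{\nabla}$ for the Chern connection of $\omega_{\mathrm{SRF}}$, define the fiberwise difference tensor
$$\Psi^{k}_{ij}=g^{k\bar q}\overset{\mathrm{SRF}}{\nabla}_i g_{j\bar q},\qquad i,j,k\in\{m+1,\dots,n\},$$
where $g=g(t)$, and set
$$S=|\Psi|^2_{g}=g^{i\bar j}g^{k\bar\ell}g^{p\bar q}\,\overset{\mathrm{SRF}}{\nabla}_i g_{k\bar q}\,\overset{\mathrm{SRF}}{\nabla}_{\bar j}g_{p\bar\ell}.$$
Then $S$ is globally well-defined on $X$ (fiber indices transform among themselves under product coordinate changes) and, thanks to the fiberwise $C^0$ equivalence above, a uniform bound on $S$ is equivalent to the desired estimate \eqref{fibc3} after rescaling by $e^t$.

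The heart of the argument is a parabolic Calabi computation for $(\frac{\partial}{\partial t}-\Delta)S$. The fiberwise Ricci-flatness $\Ric(\omega_{\mathrm{SRF}}|_{X_y})=0$ ensures that the contributions from the reference curvature tensor along pure fiber directions vanish, leaving only terms involving mixed base-fiber components of $\mathrm{Rm}(\omega_{\mathrm{SRF}})$, which are smooth and uniformly bounded on $X$. The standard Calabi cancellation then produces a good negative term $-|\overset{\mathrm{SRF}}{\nabla}\Psi|^2_g-|\overline{\overset{\mathrm{SRF}}{\nabla}}\Psi|^2_g$ together with error terms controlled by $C(S+1)\cdot\tr{\omega(t)}{\omega_{\mathrm{SRF}}}$ on fiber components and by $C(S+1)$ on mixed components (using that the difference between $\Delta_{g}$ and $\Delta_{g|_{X_y}}$ acting on fiber-only quantities is bounded by Proposition \ref{equiv2}). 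Using that $\tr{\omega(t)}{\hat{\omega}_t}\leq C$ (Proposition \ref{equiv2}), the fiber trace satisfies $\tr{\omega(t)}{\omega_{\mathrm{SRF}}|_{X_y}}\leq C$ when $S$ is measured with respect to $\tilde g_y$, so one ends up with
$$\left(\frac{\partial}{\partial t}-\Delta\right)S\leq -\varepsilon|\overset{\mathrm{SRF}}{\nabla}\Psi|^2_g+C(S+1).$$
This is then coupled with the standard auxiliary quantity $A\tr{\omega(t)}{\hat{\omega}_t}$, whose heat operator produces a negative multiple of $|\overset{\mathrm{SRF}}{\nabla}\Psi|^2_g$ via the Aubin--Yau trick (as in the proof of Theorem \ref{c22}), so that the combination $S+A\tr{\omega(t)}{\hat{\omega}_t}$ satisfies a maximum-principle-amenable inequality. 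Applying the maximum principle globally on $X\times[0,\infty)$, using the $C^0$ bounds on $\vp$, $\dot{\vp}$, and $\tr{\omega(t)}{\hat{\omega}_t}$ already in hand, yields $S\leq C$ uniformly, completing the proof.

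The main obstacle will be controlling the error terms arising from the base-fiber mixed components of both the reference curvature $\mathrm{Rm}(\omega_{\mathrm{SRF}})$ and the evolving Ricci curvature $\Ric(\omega(t))$: these couple fiber derivatives to base derivatives and a priori are only bounded after multiplication by an unfavorable power of $e^t$. The resolution is to measure everything with respect to $\tilde g_y=e^t g(t)|_{X_y}$ rather than $g(t)$, so that along fibers one gets uniform (not blowing-up) trace bounds, while the base-fiber couplings carry a favorable $e^{-t}$ factor (they arise from $\hat{\omega}_t-e^{-t}\omega_{\mathrm{SRF}}=(1-e^{-t})f^*\omega_Y$ in the reference, combined with $\omega(t)=\hat{\omega}_t+\ddbar\vp(t)$) which absorbs any loss.
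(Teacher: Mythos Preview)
Your approach differs substantially from the paper's, and as written it contains a real gap.

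The paper does \emph{not} define a fiber-only Calabi quantity. Instead it works locally in product coordinates on a unit ball $U$, takes the \emph{full} quantity $S=|\nabla^E\omega(t)|^2_{\omega(t)}=|\Gamma|^2_{\omega(t)}$ (all indices, with $\nabla^E$ the Euclidean connection), and proves only the weaker bound $S\le Ce^t$ on a half-ball, which is then seen to be exactly what is needed after restricting to the fiber and rescaling. The key device is the flat local model metric $\omega_t=\omega_E^{(m)}+e^{-t}\omega_E^{(n-m)}$, which is uniformly equivalent to $\omega(t)$ by Proposition~\ref{equiv2}; because $\omega_t$ is flat, the computation of $\left(\frac{\partial}{\partial t}-\Delta\right)\tr{\omega_t}{\omega(t)}$ produces cleanly a term $\le -C^{-1}S$, and one applies the maximum principle to $e^{-t}\rho^2 S + C_0\tr{\omega_t}{\omega(t)}$ with a cutoff $\rho$.

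Your proposal has two concrete problems. First, you couple with $A\,\tr{\omega(t)}{\hat\omega_t}$ and claim its heat operator yields a negative multiple of $|\overset{\mathrm{SRF}}{\nabla}\Psi|^2_g$ ``as in Theorem~\ref{c22}''. But Theorem~\ref{c22} uses $\tr{\omega_0}{\omega(t)}$, the opposite direction; the Schwarz-lemma direction $\tr{\omega(t)}{\hat\omega_t}$ produces no gradient-squared term at all. Even corrected to $\tr{\hat\omega_t}{\omega(t)}$, you would need control on the curvature of $\hat\omega_t$ (or of $\omega_{\mathrm{SRF}}$, which is not even known to be positive definite on $X$), whereas the paper sidesteps this entirely by using a \emph{flat} local reference.

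Second, and more seriously, your assertion that ``the difference between $\Delta_g$ and $\Delta_{g|_{X_y}}$ acting on fiber-only quantities is bounded by Proposition~\ref{equiv2}'' is unjustified. Proposition~\ref{equiv2} is a $C^0$ equivalence of metrics; it says nothing about derivatives. When the full Laplacian $\Delta_{\omega(t)}$ acts on your fiber-only $S$, the base-base and base-fiber components $g^{\alpha\bar\beta}\partial_\alpha\partial_{\bar\beta}S$ and $g^{\alpha\bar j}\partial_\alpha\partial_{\bar j}S$ involve \emph{base derivatives of the evolving metric}, which are precisely the quantities you have no control over at this stage. There is no ``favorable $e^{-t}$ factor'' here: $g^{\alpha\bar\beta}$ is $O(1)$ and $g^{\alpha\bar j}$ is $O(e^{t/2})$, while the base derivatives of $g$ are uncontrolled. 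The paper's use of the full $S$ avoids this issue because the full computation closes up exactly (yielding $\left(\frac{\partial}{\partial t}-\Delta\right)S=S-|\nabla\Gamma|^2_{\omega(t)}-|\bar\nabla\Gamma|^2_{\omega(t)}$), with the cutoff absorbing the boundary and the flat $\omega_t$ absorbing the bad $+CS$.
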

In fact, we will reprove this result in Theorem \ref{fibercinf} below, but we decided to still present this proof in detail since it is self-contained.
\begin{proof}
Given a point $x\in X$, let $y=f(x)$ and choose local product coordinates on an open set $U\ni x$ and on $f(U)\ni y$ as in Lemma \ref{adj}, and let $\omega_E$ be the Euclidean metric on $U$
in these coordinates. We may also assume that in these coordinates $U$ and $f(U)$ are identified with unit balls in $\mathbb{C}^n$ and $\mathbb{C}^m$ respectively, with $x$ and $y$ being the origin. We claim that on the half-ball $B_{\frac{1}{2}}(0)\subset U$ we have
\begin{equation}\label{fibc31}
|\nabla^E\omega(t)|^2_{\omega(t)}\leq Ce^t,
\end{equation}
for all $t\geq 0$. Assuming this holds, then restricting \eqref{equiv3} to $X_y$ we obtain
\begin{equation}\label{c30}
C^{-1}e^{-t}\omega_E|_{X_y}\leq \omega(t)|_{X_y}\leq Ce^{-t}\omega_E|_{X_y},
\end{equation}
and so on $B_{\frac{1}{2}}(0)$ we obtain
\begin{equation}\label{c31}\begin{split}
|\nabla^E (e^t\omega(t)|_{X_y})|^2_{\omega_E}&=e^{-t}|\nabla^E (\omega(t)|_{X_y})|^2_{e^{-t}\omega_E}\leq Ce^{-t}|\nabla^E (\omega(t)|_{X_y})|^2_{\omega(t)}\\
&\leq Ce^{-t}|\nabla^E \omega(t)|^2_{\omega(t)}\leq C,
\end{split}\end{equation}
using \eqref{fibc31}. Then \eqref{c30} and \eqref{c31} together prove \eqref{fibc3} on $B_{\frac{1}{2}}(0)$, and a simple covering argument gives \eqref{fibc3} everywhere.

We are left with proving \eqref{fibc31}. Following Yau \cite{Ya} we define a smooth nonnegative function on $U$ by
$$S=|\nabla^E\omega(t)|^2_{\omega(t)},$$
which in fact equals $|\Gamma|^2_{\omega(t)}$ where $\Gamma_{ij}^k$ are the Christoffel symbols of $\omega(t)$.
We calculate
$$\frac{\de}{\de t}\Gamma_{ij}^k=\frac{\de}{\de t}\left(g^{k\ov{\ell}}\de_i g_{j\ov{\ell}}\right)=-g^{k\ov{\ell}}\de_i R_{j\ov{\ell}}+g^{k\ov{q}}g^{p\ov{\ell}}R_{p\ov{q}}\de_i g_{j\ov{\ell}}=-g^{k\ov{\ell}}\nabla_i R_{j\ov{\ell}},$$
where $\nabla$ is the covariant derivative of $\omega(t)$.
We also have
$$g^{p\ov{q}}\nabla_p\nabla_{\ov{q}}\Gamma_{ij}^k=g^{p\ov{q}}\nabla_p(\de_{\ov{q}}\Gamma_{ij}^k)=-g^{p\ov{q}}\nabla_p R_{ji\ov{q}}^k=-g^{p\ov{q}}\nabla_i R_{jp\ov{q}}^k
=-g^{k\ov{\ell}}\nabla_i R_{j\ov{\ell}},$$
using the second Bianchi identity, and
$$g^{p\ov{q}}\nabla_{\ov{q}}\nabla_p\Gamma_{ij}^k-g^{p\ov{q}}\nabla_p\nabla_{\ov{q}}\Gamma_{ij}^k=g^{p\ov{q}}R_{i\ov{q}}\Gamma_{pj}^k+g^{p\ov{q}}R_{j\ov{q}}\Gamma_{ip}^k
-g^{k\ov{q}}R_{p\ov{q}}\Gamma_{ij}^p,$$
\[\begin{split}
\Delta S&=g^{p\ov{q}}\nabla_p\nabla_{\ov{q}}\left(g^{i\ov{a}}g^{j\ov{b}}g_{k\ov{c}}\Gamma_{ij}^k \ov{\Gamma_{ab}^c}\right)\\
&=
|\nabla \Gamma|^2_{\omega(t)}+|\ov{\nabla} \Gamma|^2_{\omega(t)}
-2\mathrm{Re}\left(g^{i\ov{a}}g^{j\ov{b}}g_{k\ov{c}}g^{k\ov{\ell}}\nabla_i R_{j\ov{\ell}} \ov{\Gamma_{ab}^c}\right)\\
&+g^{i\ov{a}}g^{j\ov{b}}g_{k\ov{c}}g^{p\ov{q}}\Gamma_{ij}^k R_{p\ov{a}}\ov{\Gamma_{qb}^c}
+g^{i\ov{a}}g^{j\ov{b}}g_{k\ov{c}}g^{p\ov{q}}\Gamma_{ij}^kR_{p\ov{b}}\ov{\Gamma_{aq}^c}
-g^{i\ov{a}}g^{j\ov{b}}\Gamma_{ij}^kR_{k\ov{q}}\ov{\Gamma_{ab}^q},
\end{split}\]
\[\begin{split}
\frac{\de}{\de t}S&=S-2\mathrm{Re}\left(g^{i\ov{a}}g^{j\ov{b}}g_{k\ov{c}}g^{k\ov{\ell}}\nabla_i R_{j\ov{\ell}} \ov{\Gamma_{ab}^c}\right)\\
&+g^{i\ov{a}}g^{j\ov{b}}g_{k\ov{c}}g^{p\ov{q}}\Gamma_{ij}^k R_{p\ov{a}}\ov{\Gamma_{qb}^c}
+g^{i\ov{a}}g^{j\ov{b}}g_{k\ov{c}}g^{p\ov{q}}\Gamma_{ij}^kR_{p\ov{b}}\ov{\Gamma_{aq}^c}
-g^{i\ov{a}}g^{j\ov{b}}\Gamma_{ij}^kR_{k\ov{q}}\ov{\Gamma_{ab}^q},
\end{split}\]
and so
\[\begin{split}
\left(\frac{\de}{\de t}-\Delta\right)S=S-|\nabla \Gamma|^2_{\omega(t)}-|\ov{\nabla} \Gamma|^2_{\omega(t)}.
\end{split}\]
Let now $\rho$ be a smooth nonnegative cutoff function, which is supported in $B_1(0)$ and is identically $1$ on $B_{\frac{1}{2}}(0)$, and with
$$\mn\de\rho\wedge\db\rho\leq C\omega_E,\quad -C\omega_E\leq \ddbar(\rho^2)\leq C\omega_E,$$
where $C$ is a dimensional constant. Recalling \eqref{use} and \eqref{equiv3}, we obtain that
$\omega(t)\geq C^{-1}e^{-t}\omega_E$, and so
$$|\nabla\rho|^2_{\omega(t)}\leq Ce^t\quad \Delta(\rho^2)\geq -Ce^t,$$
on $U$.

We can then compute
\[\begin{split}
\left(\frac{\de}{\de t}-\Delta \right)(\rho^2 S)&\leq \rho^2\left(\frac{\de}{\de t}-\Delta \right)S+C Se^t+2|\langle \nabla \rho^2,\nabla S\rangle_{\omega(t)}|\\
&\leq \rho^2S-\rho^2\left(|\nabla \Gamma|^2_{\omega(t)}+|\ov{\nabla} \Gamma|^2_{\omega(t)}\right)+C Se^t+2|\langle \nabla \rho^2,\nabla S\rangle_{\omega(t)}|.
\end{split}\]
On the other hand, using the Young inequality
\[\begin{split}
2|\langle \nabla \rho^2,\nabla S\rangle_{\omega(t)}|
&=4\rho|\langle \nabla\rho, \nabla |\Gamma|^2_{\omega(t)}\rangle_{\omega(t)}|\leq 4\rho |\nabla \rho|_{\omega(t)}\cdot | \nabla |\Gamma|^2_{\omega(t)}|_{\omega(t)}\\
&\leq 4\rho |\nabla \rho|_{\omega(t)}|\Gamma|_{\omega(t)}\left(|\nabla \Gamma|_{\omega(t)}+|\ov{\nabla} \Gamma|_{\omega(t)}\right)\\
&\leq \rho^2\left(|\nabla \Gamma|^2_{\omega(t)}+|\ov{\nabla} \Gamma|^2_{\omega(t)}\right)+CS|\nabla \rho|^2_{\omega(t)}\\
&\leq \rho^2\left(|\nabla \Gamma|^2_{\omega(t)}+|\ov{\nabla} \Gamma|^2_{\omega(t)}\right)+CSe^t,
\end{split}\]
and so
$$\left(\frac{\de}{\de t}-\Delta\right) (\rho^2 S)\leq CSe^t,$$
$$\left(\frac{\de}{\de t}-\Delta\right) (e^{-t}\rho^2 S)\leq CS.$$
Next, on $B_1(0)$ we define
$$\omega_t=\omega_E^{(m)}+e^{-t}\omega_E^{(n-m)},$$
where $\omega_E^{(m)}$ and $\omega_E^{(n-m)}$ denote the Euclidean metrics on the two factors of $\mathbb{C}^n=\mathbb{C}^m\times\mathbb{C}^{n-m}$. Thanks to \eqref{equiv3} we have that
\begin{equation}\label{nice}
C^{-1}\omega_t\leq \omega(t)\leq C\omega_t,
\end{equation}
on $U$ for all $t\geq 0$. Note that the covariant derivative of $\omega_t$ just equals $\nabla^E$, independent of $t$, and that $\omega_t$ is flat.
Then, as in \eqref{gettt}, we can compute
\[
\begin{split}
\left(\frac{\de}{\de t}-\Delta\right) \tr{\omega_{t}}{\omega(t)}&=  -\tr{\omega_{t}}{\omega}+e^{-t}g_{t}^{i\ov{q}} \, g_{t}^{p\ov{j}}\, (g^{(n-m)}_E)_{p\ov{q}} \, g_{i\ov{j}} -g_{t}^{i\ov{\ell}}\, g^{p\ov{j}} g^{k\ov{q}} \nabla^E_i g_{k\ov{j}} \nabla^E_{\ov{\ell}}g_{p\ov{q}}\\
&\leq -g_{t}^{i\ov{\ell}}\, g^{p\ov{j}} g^{k\ov{q}} \nabla^E_i g_{k\ov{j}} \nabla^E_{\ov{\ell}}g_{p\ov{q}}\\
& \leq  -C^{-1}S,
\end{split}
\]
using \eqref{nice} and the fact that $e^{-t}\omega_E^{(n-m)}\leq \omega_t$. It follows that if we take $C_0$ large enough, then we have
$$ \left(\frac{\de}{\de t}-\Delta \right)\left(e^{-t}\rho^2 S+C_0\tr{\omega_{t}}{\omega(t)}\right)  \leq 0.$$
Note that we have $\tr{\omega_{t}}{\omega(t)}\leq C$, thanks to \eqref{nice}. Since $\rho=0$ on the boundary of $B_{1}(0)$, the maximum principle then gives
that $e^{-t}\rho^2 S+C_0\tr{\omega_{t}}{\omega(t)}\leq C$ on $B_1(0)\times[0,\infty)$, and so
$\sup_{B_{1/2}(0)}S\leq Ce^t,$ as required.
\end{proof}

The following improvement is due to Zhang and the author \cite{TZ} (and in fact it also gives another proof of Theorem \ref{fiberc3}):
\begin{theorem}\label{fibercinf}
For every $k\geq 1$ there is a constant $C_k>0$ such that for every $y\in Y$ and all $t\geq 0$ we have
\begin{equation}\label{fibcinf}
\|e^t\omega(t)|_{X_y}\|_{C^k(X_y,\omega_0|_{X_y})}\leq C_k.
\end{equation}
\end{theorem}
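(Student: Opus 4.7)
We proceed by induction on $k$, with Theorem \ref{fiberc3} providing the base case $k=1$; suppose the bound has been established for all indices up to $k$, and we aim to establish it for $k+1$. Fix a point $x_0\in X$ and take local product coordinates $(z',z'')$ on a unit ball $B\subset X$ centered at $x_0$ as given by Lemma \ref{adj}, so that $f:B\to B^m$ is the projection $(z',z'')\mapsto z'$, and let $\omega_E$ denote the Euclidean metric. A standard covering argument reduces the theorem to showing, for each such chart, a uniform local bound
\begin{equation*}
\sup_{B_{1/2}}|\nabla^{E,k+1}_{z''}\omega(t)|_{\omega(t)}^{2}\leq C e^{kt},
\end{equation*}
where $\nabla^{E,k+1}_{z''}$ denotes $(k+1)$-st order Euclidean partial derivatives in fiber directions only. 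Combined with the block structure of $\omega(t)$ along fibers (from Proposition \ref{equiv2}, which restricted to a fiber gives $\omega(t)|_{X_y}\sim e^{-t}\omega_E|_{X_y}$), this local bound on $S_{k+1}:=|\nabla^{E,k+1}_{z''}\omega(t)|_{\omega(t)}^{2}$ translates into $|\nabla^{E,k+1}_{z''}(e^{t}\omega(t))|_{\omega_0}\leq C$ on the fiber slice through $x_0$, which is the claim after patching.

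To obtain the local assertion, we extend the Yau--Calabi argument used for $S=S_1$ in Theorem \ref{fiberc3}. Differentiating the flow equation $(k+1)$ times in fiber directions and computing $(\partial_t-\Delta)S_{k+1}$ along the K\"ahler--Ricci flow produces an inequality of the schematic form
\begin{equation*}
\left(\frac{\partial}{\partial t}-\Delta\right)S_{k+1}\leq CS_{k+1}-|\nabla\nabla^{E,k+1}_{z''}\omega|_{\omega}^{2}-|\overline{\nabla}\nabla^{E,k+1}_{z''}\omega|_{\omega}^{2}+\mathcal{R}_k,
\end{equation*}
where $\mathcal{R}_k$ collects polynomial expressions in fiber derivatives of $\omega(t)$ of order at most $k$, curvature terms of $\omega(t)$, and derivatives in base directions of $\omega(t)$ and of the reference data. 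By the inductive hypothesis, Proposition \ref{equiv2}, and the parabolic Schwarz Lemma (Lemma \ref{schwarz}), each term in $\mathcal{R}_k$ can be dominated by $C(1+S_{k+1})e^{kt}$. Applying the maximum principle to a weighted quantity of the form $e^{-kt}\rho^{2(k+1)}S_{k+1}+A\,\tr{\omega_t}{\omega(t)}$---where $\omega_t=\omega_E^{(m)}+e^{-t}\omega_E^{(n-m)}$ is the same flat reference metric used in the proof of Theorem \ref{fiberc3}, $\rho$ is a smooth cutoff supported in $B$ with $\rho\equiv 1$ on $B_{1/2}$, and $A$ is taken sufficiently large---and absorbing the positive $CS_{k+1}$ term into the negative contribution $-\tr{\omega_t}{\omega(t)}$ arising from the evolution of the second summand (exactly as at the end of the proof of Theorem \ref{fiberc3}) yields the desired bound $\rho^{2(k+1)}S_{k+1}\leq Ce^{kt}$.

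The principal obstacle is the bookkeeping required to bound $\mathcal{R}_k$. Every application of $\Delta_{\omega(t)}$ contracts with $g^{i\bar j}$, whose entries in these coordinates are $O(1)$ in base-base indices, $O(e^{t})$ in fiber-fiber indices, and $O(e^{t/2})$ in mixed indices (the latter via Cauchy--Schwarz from Proposition \ref{equiv2}). Consequently, terms involving base-direction derivatives of $\omega(t)$ come with large prefactors, and one must verify that the scaling $\omega(t)\sim f^{*}\omega_Y+O(e^{-t})$ in the base, together with the smooth $t$-dependence and $C^\infty$ bounds of $\hat\omega_t$, precisely compensates to leave a clean $e^{kt}$-weighted inequality for $S_{k+1}$. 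An alternative, arguably more transparent route is to restrict the parabolic complex Monge--Amp\`ere equation \eqref{ma2} to a fiber and run a parabolic Schauder/Evans--Krylov bootstrap, using the inductive hypothesis to control the higher fiber derivatives of the volume-form ratio on the right-hand side; either route delivers the inductive step and completes the proof.
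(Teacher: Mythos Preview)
Your approach is genuinely different from the paper's, and as written it does not close. The paper does \emph{not} proceed by induction on $k$ or by deriving evolution inequalities for higher-order quantities. Instead it uses a parabolic rescaling trick: near $x_0$ one introduces the stretching maps $F_t(y,z)=(ye^{-t/2},z)$ on $B_t\times B$ and considers the rescaled metrics $\omega_t(s)=e^tF_t^*\omega(se^{-t}+t)$ for $-1\leq s\leq 0$. These satisfy $\partial_s\omega_t(s)=-\Ric(\omega_t(s))-e^{-t}\omega_t(s)$, and an explicit computation shows that $F_t^*(e^tf^*\omega_Y+\omega_0)$ converges smoothly on compact subsets of $\mathbb{C}^m\times B$ to a fixed K\"ahler metric, so $C^{-1}\omega_E\leq\omega_t(s)\leq C\omega_E$ uniformly. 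The local higher-order estimates of Theorem~\ref{higher} then give $\|\omega_t(0)\|_{C^k(K,g_E)}\leq C_k$ for every compact $K$, and since $F_t$ restricts to the identity on the fiber slice $\{0\}\times B\subset X_{y_0}$, this pulls back directly to \eqref{fibcinf}. No induction, and no tracking of base-versus-fiber derivative scalings, is needed.

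The gap in your argument is precisely the step you flag as ``the principal obstacle.'' Your inductive hypothesis controls only \emph{fiber-direction} Euclidean derivatives of $\omega(t)$, but the operator $\Delta_{\omega(t)}$ differentiates in all directions. Consequently $\mathcal{R}_k$ contains base-direction derivatives of $\omega(t)$ of arbitrary order and full curvature terms $\mathrm{Rm}(\omega(t))$; none of these are controlled by the inductive hypothesis, by Proposition~\ref{equiv2} (which is purely $C^0$), or by Lemma~\ref{schwarz}. Indeed, in the generality of Theorem~\ref{fibercinf} the curvature of $\omega(t)$ need not be bounded at all (cf.\ the remark following Theorem~\ref{smooth}), so the assertion that ``each term in $\mathcal{R}_k$ can be dominated by $C(1+S_{k+1})e^{kt}$'' is unsupported. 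Your alternative route---restricting \eqref{ma2} to a fiber and bootstrapping---fails for the same reason: the equation does not restrict to a closed parabolic equation on $X_y$, since $\dot\vp$ and $\Omega$ depend on the full $n$-dimensional volume form. The paper's rescaling device is exactly what makes the mixed scalings tractable, by converting the anisotropic estimate $\omega(t)\sim\hat\omega_t$ into a uniform one to which the black-box Theorem~\ref{higher} applies.
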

\begin{proof}
Given a point $x_0\in X$, let $y_0=f(x_0)$.
To prove \eqref{fibcinf} we choose local product coordinates on an open set $U\ni x_0$ and on $f(U)\ni y_0$, centered at these points as in the proof of Theorem \ref{fiberc3}, and let $\omega_E$ be the Euclidean metric on $U$ in these coordinates. We may assume that $f(U)$ is the unit ball in $\mathbb{C}^m$, and $U$ is the product of the unit balls in $\mathbb{C}^m$ and $\mathbb{C}^{n-m}$.

For each $t\geq 0$ let $B_t=B_{e^{t/2}}(0)\subset\mathbb{C}^m$, let $B=B_1(0)\subset \mathbb{C}^{n-m}$, and define rescaling holomorphic maps
$$F_t:B_t\times B\to U=B_0\times B,\quad F_t(y,z)=(ye^{-t/2},z).$$
These maps are all equal to the identity when restricted to $\{0\}\times B$, which is a ``vertical'' chart contained in the fiber $X_{y_0}$.
Thanks to \eqref{equiv3} we have
$$C^{-1}(f^*\omega_Y+e^{-t}\omega_0)\leq \omega(t)\leq C(f^*\omega_Y+e^{-t}\omega_0),$$
on $U$, and so the metrics
$$\omega_t(s):=e^t F_t^*\omega(se^{-t}+t),\quad -1\leq s\leq 0,$$
on $B_t\times B$ satisfy
$$C^{-1}F_t^*(e^tf^*\omega_Y+\omega_0)\leq \omega_t(s)\leq CF_t^*(e^tf^*\omega_Y+\omega_0),$$
and
$$\frac{\de}{\de s}\omega_t(s)=-\Ric(\omega_t(s))-e^{-t}\omega_t(s), \quad -1\leq s\leq 0.$$
It is readily verified, using product coordinates as above, that the metrics
$F_t^*(e^tf^*\omega_Y+\omega_0)$ converge smoothly on compact subsets of $\mathbb{C}^{m}\times B$ to a limiting K\"ahler metric.
Indeed, if we write
$$f^*\omega_Y(y,z)=\mn\sum_{\alpha,\beta=1}^m (g_Y)_{\alpha\ov{\beta}}(y)dy_\alpha\wedge d\ov{y}_\beta,$$
\[\begin{split}
\omega_0(y,z)&=\mn\sum_{\alpha,\beta=1}^m (g_0)_{\alpha\ov{\beta}}(y,z)dy_\alpha\wedge d\ov{y}_\beta+2\mathrm{Re}\left(\mn\sum_{\alpha=1}^m\sum_{i=1}^{n-m} (g_0)_{\alpha\ov{i}}(y,z)dy_\alpha\wedge d\ov{z}_i\right)\\
&+\mn\sum_{i,j=1}^{n-m} (g_0)_{i\ov{j}}(y,z)dz_i\wedge d\ov{z}_j,
\end{split}\]
then we have
\[\begin{split}
F_t^*(e^tf^*\omega_Y+\omega_0)(y,z)&=\mn\sum_{\alpha,\beta=1}^m ((g_Y)_{\alpha\ov{\beta}}(ye^{-t/2})+e^{-t}(g_0)_{\alpha\ov{\beta}}(ye^{-t/2},z))dy_\alpha\wedge d\ov{y}_\beta\\
&+2e^{-t/2}\mathrm{Re}\left(\mn\sum_{\alpha=1}^m\sum_{i=1}^{n-m} (g_0)_{\alpha\ov{i}}(ye^{-t/2},z)dy_\alpha\wedge d\ov{z}_i\right)\\
&+\mn\sum_{i,j=1}^{n-m} (g_0)_{i\ov{j}}(ye^{-t/2},z)dz_i\wedge d\ov{z}_j
\end{split}\]
which converges smoothly on compact subsets of $\mathbb{C}^{m}\times B$ to
$$\mn\sum_{\alpha,\beta=1}^m (g_Y)_{\alpha\ov{\beta}}(0)dy_\alpha\wedge d\ov{y}_\beta+\mn\sum_{i,j=1}^{n-m} (g_0)_{i\ov{j}}(0,z)dz_i\wedge d\ov{z}_j,$$
which is a smooth K\"ahler metric.
This implies that
$$C^{-1}\omega_E\leq \omega_t(s)\leq C\omega_E,$$
for all $t\geq 0, -1\leq s\leq 0$, where $\omega_E$ is a Euclidean metric on $\mathbb{C}^{m}\times B$. We can therefore apply the local higher order estimates in Theorem \ref{higher} (note that the coefficient $e^{-t}$ of $e^{-t}\omega_t(s)$ in the evolution of $\omega_t(s)$ is uniformly bounded) and obtain that for every compact set $K\subset \mathbb{C}^m\times B$ there are constants $C_k$ such that
$$\|\omega_t(s)\|_{C^k(K,g_E)}\leq C_k,$$
for all $t\geq 0, -\frac{1}{2}\leq s\leq 0$. Setting $s=0$ we obtain
$$\|e^tF_t^*\omega(t)\|_{C^k(K,g_E)}\leq C_k,$$
and since $F_t$ is the identity when restricted to $\{0\}\times B$, which is identified with $X_{y_0}\cap U$, we obtain \eqref{fibcinf} after a simple covering argument.
\end{proof}

We will also need the following elementary result:

\begin{lemma}\label{triv}
Let $F:X\times [0,\infty)\to\mathbb{R}$ be a smooth function such that
\begin{equation}\label{un}
|\nabla (F|_{X_y})|_{g_0|_{X_y}}\leq C,
\end{equation}
for all $y\in Y, t\geq 0$, such that
\begin{equation}\label{du}
\int_{X_y} (F|_{X_y}) \omega_{\rm SRF}^{n-m}=0,
\end{equation}
for all $y\in Y, t\geq 0$, and such that
\begin{equation}\label{tr}
\sup_X F(x,t)\leq h(t),
\end{equation}
for all $t\geq 0,$ where $h(t)$ is a positive function with $h(t)\to 0$ as $t\to \infty$. Then we have
\begin{equation}\label{qu}
\sup_X |F(x,t)|\leq Ch(t)^{\frac{1}{2n+1}},
\end{equation}
for all $t$ sufficiently large.
\end{lemma}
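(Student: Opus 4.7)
The plan is to combine the zero-mean condition (\ref{du}) and the one-sided bound (\ref{tr}) to get an $L^1$ bound on each fiber, and then use the fiberwise Lipschitz bound (\ref{un}) to upgrade this to an $L^\infty$ bound via a standard ``$L^1$-plus-Lipschitz implies $L^\infty$'' argument.

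First, I would observe that if at time $t$ the supremum of $|F|$ on $X$ is achieved at a positive value of $F$, then the conclusion follows immediately from (\ref{tr}) and the fact that $h(t)\le 1$ for $t$ large (so $h(t)\le h(t)^{1/(2n+1)}$). So I may assume the supremum $M:=\sup_X|F(\cdot,t)|$ is achieved at some point $(x_0,t)$ with $F(x_0,t)=-M$; set $y_0=f(x_0)$. Next, from (\ref{du}) and (\ref{tr}) applied on the fiber $X_{y_0}$, writing $F=F^+-F^-$ and using $\int_{X_{y_0}}F^+\omega_{\rm SRF}^{n-m}=\int_{X_{y_0}}F^-\omega_{\rm SRF}^{n-m}$ together with $F^+\le h(t)$ pointwise, I obtain
\[
\int_{X_{y_0}}|F|\,\omega_{\rm SRF}^{n-m}\;\le\;2h(t)\,V,
\]
where $V$ is a uniform upper bound on the volume of the fibers.

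Second, using the fiberwise gradient bound (\ref{un}), on the fiberwise ball $B_r(x_0)\subset X_{y_0}$ of radius $r$ (measured in $g_0|_{X_{y_0}}$) I have $F(x)\le -M+Cr$ for $x\in B_r(x_0)$. Taking $r=\min(M/(2C),r_0)$, where $r_0$ is a uniform radius below which Euclidean-type volume comparison holds on the fibers, yields $|F|\ge M/2$ on $B_r(x_0)$. The fibers $(X_y,g_0|_{X_y})$ vary smoothly with $y\in Y$ over the compact base (and are all diffeomorphic by Ehresmann's theorem since $S=\emptyset$), so there exist uniform constants $c,r_0>0$ with $\mathrm{vol}_{\omega_{\rm SRF}|_{X_y}}(B_r(x_0))\ge c\,r^{2(n-m)}$ for $r\le r_0$, independently of $y,x_0$. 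Combining with the $L^1$ bound,
\[
\tfrac{M}{2}\,c\,r^{2(n-m)}\;\le\;\int_{B_r(x_0)}|F|\,\omega_{\rm SRF}^{n-m}\;\le\;2h(t)V.
\]

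Third, I split into two cases. If $M\le 2Cr_0$, then $r=M/(2C)$ and the inequality becomes $M^{2(n-m)+1}\le C'h(t)$, yielding $M\le C''h(t)^{1/(2(n-m)+1)}$. If instead $M>2Cr_0$, then $r=r_0$ and the inequality gives the stronger bound $M\le C'''h(t)$. In either case, since $2(n-m)+1\le 2n+1$ and $h(t)\le 1$ for $t$ large, one has $h(t)^{1/(2(n-m)+1)}\le h(t)^{1/(2n+1)}$, so $M\le C\,h(t)^{1/(2n+1)}$, which is \eqref{qu}.

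The only genuinely nontrivial point is the uniform volume comparison for small fiberwise balls across all fibers, and this is handled by the smooth variation of the submersion $f$ over compact $Y$; everything else is an elementary consequence of the zero-mean condition and the Lipschitz bound. The rest of the argument is just the standard interpolation between $L^1$ and Lipschitz norms adapted to the fiberwise setting.
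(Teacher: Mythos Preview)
Your proof is correct and follows essentially the same approach as the paper: both use the fiberwise gradient bound to control $F$ on a small ball around a minimum point, combine this with the zero-mean condition and the upper bound $F\le h(t)$ to derive a polynomial inequality in the oscillation, and invoke a uniform lower bound on the volume of small fiberwise balls. The only cosmetic differences are that the paper argues by contradiction along a sequence rather than directly, and uses the cruder volume bound $\mathrm{vol}(B_r)\ge C^{-1}r^{2n}$ (valid since $r\le 1$) in place of your sharper $r^{2(n-m)}$, which is why the paper lands directly on the exponent $\tfrac{1}{2n+1}$ while you first obtain $\tfrac{1}{2(n-m)+1}$ and then weaken.
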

\begin{proof}
Thanks to \eqref{tr}, it is enough to show that
$$\inf_X F(x,t)\geq -Ch(t)^{\frac{1}{2n+1}}.$$
If this fails, then we can find $t_k\to\infty$ and $x_k\in X$ such that
$$F(x_k,t_k)\leq -kh(t_k)^{\frac{1}{2n+1}}.$$
If we let $y_k=f(x_k)$, then thanks to \eqref{un} we have that for all $x$ in the $g_0|_{X_{y_k}}$-geodesic ball $B_r(x_k)$ in $X_{y_k}$ centered at $x_k$, of radius
$$r=\min\left(\frac{kh(t_k)^{\frac{1}{2n+1}}}{2C},\frac{1}{2C}\right)\leq \frac{1}{2C},$$
we have
$$F(x,t_k)\leq -\frac{kh(t_k)^{\frac{1}{2n+1}}}{2},$$
and so using \eqref{du}, \eqref{tr} we get
$$0=\int_{X_{y_k}} F(x,t_k) \omega_{\rm SRF}^{n-m}(x)\leq -\frac{kh(t_k)^{\frac{1}{2n+1}}}{2}\int_{B_r(x_k)}\omega_{\rm SRF}^{n-m}+Ch(t_k).$$
But the metrics $\omega_{\rm SRF}|_{X_{y_k}}$ are all uniformly equivalent to each other, and since $r\leq \frac{1}{2C}$ we have
$$\int_{B_r(x_k)}\omega_{\rm SRF}^{n-m}\geq C^{-1}r^{2n}\geq \min\left(C^{-1}k^{2n}h(t_k)^{\frac{2n}{2n+1}},C^{-1}\right),$$
and so we obtain that either
$k^{2n+1}\leq C,$ or $k\leq Ch(t_k)^{\frac{2n}{2n+1}}$, both of which are impossible for $k$ large.
\end{proof}

We can now prove \eqref{conve2p}.
\begin{theorem}\label{trk}
For any given $y\in Y$ we have
\begin{equation}\label{conve2p1}
e^t\omega(t)|_{X_y}\to \omega_y,
\end{equation}
in $C^\infty(X_y)$, where $\omega_y$ is the unique Ricci-flat K\"ahler metric on $X_y$ in the class $[\omega_0]|_{X_y}$. The convergence in the $C^0$ norm is exponentially fast.
\end{theorem}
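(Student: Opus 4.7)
The plan is to reduce the claim to a fiberwise complex Monge--Amp\`ere equation whose right-hand side tends to $1$ exponentially fast in $C^0(X)$, and then invoke Yau's uniqueness theorem. Recall from the proof of Lemma \ref{improve} that, setting $\psi(t)=e^t(\vp(t)-\underline{\vp}(t))$, on each fiber we have
\[
e^t\omega(t)|_{X_y}=\omega_{\rm SRF}|_{X_y}+\ddbar(\psi(t)|_{X_y})=\omega_y+\ddbar(\psi(t)|_{X_y}).
\]
Defining $F_t$ on $X_y$ by $(e^t\omega(t)|_{X_y})^{n-m}=F_t\,\omega_y^{n-m}$, the goal becomes to show $F_t\to 1$ in $C^0(X)$ at an exponential rate.

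To compute $F_t$, observe that $f^*\omega_Y^m$ has rank $m$, so wedging with it kills the base and mixed parts of $\omega(t)^{n-m}$; hence at each point of $X_y$ we have $(\omega(t)|_{X_y})^{n-m}\wedge f^*\omega_Y^m=\omega(t)^{n-m}\wedge f^*\omega_Y^m$, while $\omega_y^{n-m}\wedge f^*\omega_Y^m=\omega_{\rm SRF}^{n-m}\wedge f^*\omega_Y^m=\Omega/\binom{n}{m}$. Combined with the flow equation $\omega(t)^n=e^{\vp+\dot\vp}e^{-(n-m)t}\Omega$ from \eqref{ma2} and the general identity $\binom{n}{m}\omega(t)^{n-m}\wedge f^*\omega_Y^m=(\mu_1\cdots\mu_m)\omega(t)^n$, where $\mu_1,\dots,\mu_m$ are the nonzero eigenvalues of $f^*\omega_Y$ with respect to $\omega(t)$, these identities yield
\[
F_t=e^{\vp+\dot\vp}\mu_1\cdots\mu_m.
\]
By Lemma \ref{base}, $\mu_1+\cdots+\mu_m=\tr{\omega(t)}{f^*\omega_Y}\leq m+Ce^{-t/8}$, so AM--GM gives $\mu_1\cdots\mu_m\leq 1+Ce^{-t/8}$, and together with Lemma \ref{c0} this yields the one-sided bound $\sup_X(F_t-1)\leq Ce^{-t/8}$.

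For the two-sided bound I will apply Lemma \ref{triv} to $F_t-1$. Since $[e^t\omega(t)|_{X_y}]=[\omega_y]$ is fixed in cohomology and $\omega_{\rm SRF}^{n-m}|_{X_y}=\omega_y^{n-m}$, we have
\[
\int_{X_y}(F_t-1)\,\omega_{\rm SRF}^{n-m}=\int_{X_y}(e^t\omega(t)|_{X_y})^{n-m}-\int_{X_y}\omega_y^{n-m}=0,
\]
and Theorem \ref{fibercinf} provides a uniform $C^1$ bound on $F_t$ along the fibers, since $F_t$ is a smooth function of the entries of $e^t\omega(t)|_{X_y}$. Therefore Lemma \ref{triv} applies and produces $\sup_X|F_t-1|\leq Ce^{-\delta t}$ for some $\delta>0$ and all $t$ sufficiently large.

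Finally, Theorem \ref{fibercinf} and Arzel\`a--Ascoli give that every sequence $t_k\to\infty$ admits a subsequence along which $e^{t_k}\omega(t_k)|_{X_y}$ converges in $C^\infty(X_y)$ to a K\"ahler metric $\tilde\omega$ in $[\omega_y]$; the $C^0$ convergence of $F_{t_k}$ to $1$ forces $\tilde\omega^{n-m}=\omega_y^{n-m}$, and Yau's uniqueness theorem gives $\tilde\omega=\omega_y$, yielding the full $C^\infty(X_y)$ convergence. For the exponential $C^0$ rate on the metric itself, write $e^t\omega(t)|_{X_y}=\omega_y+\ddbar u_t$ with $u_t$ normalized to have zero $\omega_y^{n-m}$-average; Kolodziej's $C^0$ stability estimate for the Monge--Amp\`ere equation $(\omega_y+\ddbar u_t)^{n-m}=F_t\omega_y^{n-m}$ gives $\|u_t\|_{C^0(X_y)}\leq C\|F_t-1\|_{C^0}^\beta$ for some $\beta>0$, and interpolation with the uniform $C^k$ bounds of Theorem \ref{fibercinf} then yields exponential $C^0$ decay of $\ddbar u_t$, hence of $e^t\omega(t)|_{X_y}-\omega_y$. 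The main obstacle is precisely the step through Lemma \ref{triv}, which is what upgrades the one-sided eigenvalue bound of Lemma \ref{base} into two-sided exponential control on $F_t-1$.
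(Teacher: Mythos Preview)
Your argument is correct, and the first half---constructing $F_t$, verifying the hypotheses of Lemma \ref{triv}, and obtaining the two-sided bound $\sup_X|F_t-1|\leq Ce^{-\delta t}$---is essentially identical to the paper's proof. The difference lies in how you pass from control of the volume form ratio $F_t$ to exponential $C^0$ convergence of the metric itself.

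The paper proceeds by introducing a second auxiliary function
\[
G|_{X_y}=\frac{(e^t\omega(t)|_{X_y})\wedge(\omega_{\rm SRF}|_{X_y})^{n-m-1}}{(\omega_{\rm SRF}|_{X_y})^{n-m}}=\frac{1}{n-m}\tr{\omega_y}{(e^t\omega(t)|_{X_y})},
\]
observes via AM--GM that $G^{n-m}\geq F_t$, and applies Lemma \ref{triv} a second time to $1-G$. This gives simultaneous exponential control of $\det A$ and $\tr A$ (where $A$ is $e^t\omega(t)|_{X_y}$ relative to $\omega_y$), and then the elementary linear-algebra Lemma \ref{matrix} yields $\|A-\mathrm{Id}\|\leq Ce^{-\eta t/2}$ directly.

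Your route instead goes through Ko\l odziej's $C^0$ stability estimate for the potential $u_t$, followed by interpolation against the uniform higher-order bounds of Theorem \ref{fibercinf}. This is perfectly valid (and the constants are uniform in $y$ since the family is smooth and $Y$ is compact), but it imports a substantially deeper external result. The paper's approach is more elementary and self-contained, staying entirely within the maximum-principle toolbox already developed; yours is more conceptual in that it treats the passage from volume-form control to metric control as a black box. Both buy the same conclusion, but if the goal is expository self-containment, the paper's double application of Lemma \ref{triv} plus Lemma \ref{matrix} is preferable.
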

\begin{proof}
We compute
\[\begin{split}
\frac{(e^t\omega(t)|_{X_y})^{n-m}}{(\omega_{\rm SRF}|_{X_y})^{n-m}}&=e^{(n-m)t}\frac{(\omega(t)|_{X_y})^{n-m}}{(\omega_{\rm SRF}|_{X_y})^{n-m}}\\
&=e^{(n-m)t}\frac{\omega(t)^{n-m}\wedge f^*\omega_Y^m}{\omega_{\rm SRF}^{n-m}\wedge f^*\omega_Y^m}\\
&=e^{(n-m)t}\binom{n}{m}\frac{\omega(t)^{n-m}\wedge f^*\omega_Y^m}{\Omega}\\
&=e^{\vp(t)+\dot{\vp}(t)}\binom{n}{m}\frac{\omega(t)^{n-m}\wedge f^*\omega_Y^m}{\omega(t)^n},
\end{split}\]
and so the function $F:X\times[0,\infty)\to\mathbb{R}$ defined by
$$F=e^{\vp(t)+\dot{\vp}(t)}\binom{n}{m}\frac{\omega(t)^{n-m}\wedge f^*\omega_Y^m}{\omega(t)^n},$$
satisfies
$$F|_{X_y}=\frac{(e^t\omega(t)|_{X_y})^{n-m}}{(\omega_{\rm SRF}|_{X_y})^{n-m}},$$
and so
$$\int_{X_y}(F|_{X_y}) \omega_{\rm SRF}^{n-m}=\int_{X_y}(e^t\omega(t)|_{X_y})^{n-m}=\int_{X_y} \omega_{\rm SRF}^{n-m},$$
so $F-1$ satisfies \eqref{du}. It also satisfies \eqref{un} thanks to \eqref{fibc3}. Now, thanks to Lemma \ref{c0} we have that
\begin{equation}\label{dec}
|e^{\vp(t)+\dot{\vp}(t)}-1|\leq Ce^{-\frac{t}{4}}.
\end{equation}
Choosing local coordinates at a point $x\in X_y$, so that at that point $\omega(t)$ is the identity and $f^*\omega_Y$ is diagonal with eigenvalues $(\lambda_1,\dots,\lambda_m,0,\dots,0),$ then at this point
$$\binom{n}{m}\frac{\omega(t)^{n-m}\wedge f^*\omega_Y^m}{\omega(t)^n}=\prod_{j=1}^m\lambda_j\leq \left(\frac{\sum_{j=1}^m\lambda_j}{m}\right)^m=\left(\frac{\tr{\omega(t)}{(f^*\omega_Y)}}{m}\right)^m,$$
and so \eqref{base2} gives
$$\binom{n}{m}\frac{\omega(t)^{n-m}\wedge f^*\omega_Y^m}{\omega(t)^n}\leq\left(\frac{\tr{\omega(t)}{(f^*\omega_Y)}}{m}\right)^m\leq 1+Ce^{-\eta t},$$
for some uniform $\eta>0$. Combining this with \eqref{dec} gives that
$$F\leq 1+Ce^{-\eta t},$$
everywhere on $X\times[0,\infty)$, which verifies \eqref{tr}. Therefore Lemma \ref{triv} gives us
$$|F-1|\leq Ce^{-\eta t},$$
for some smaller $\eta>0$, i.e.
\begin{equation}\label{dov}
\|(e^t\omega(t)|_{X_y})^{n-m}-(\omega_{\rm SRF}|_{X_y})^{n-m}\|_{C^0(X_y,\omega_0|_{X_y})}\leq Ce^{-\eta t},
\end{equation}
for all $y\in Y$. Next, we compute
\[\begin{split}
(e^t\omega(t)|_{X_y})&\wedge(\omega_{\rm SRF}|_{X_y})^{n-m-1} =e^{t}\frac{\omega(t)\wedge \omega_{\rm SRF}^{n-m-1}\wedge f^*\omega_Y^m}{\omega_{\rm SRF}^{n-m}\wedge f^*\omega_Y^m}(\omega_{\rm SRF}|_{X_y})^{n-m}\\
&=e^{\vp(t)+\dot{\vp}(t)}\binom{n}{m}\frac{\omega(t)\wedge (e^{-t}\omega_{\rm SRF})^{n-m-1}\wedge f^*\omega_Y^m}{\omega(t)^n}(\omega_{\rm SRF}|_{X_y})^{n-m},
\end{split}\]
so the smooth function
$G:X\times[0,\infty)\to\mathbb{R}$ defined by
$$G=e^{\vp(t)+\dot{\vp}(t)}\binom{n}{m}\frac{\omega(t)\wedge (e^{-t}\omega_{\rm SRF})^{n-m-1}\wedge f^*\omega_Y^m}{\omega(t)^n},$$
satisfies
$$G|_{X_y}=\frac{(e^t\omega(t)|_{X_y})\wedge(\omega_{\rm SRF}|_{X_y})^{n-m-1}}{(\omega_{\rm SRF}|_{X_y})^{n-m}},$$
and the arithmetic-geometric mean inequality gives
$$(G|_{X_y})^{n-m}\geq \frac{(e^t\omega(t)|_{X_y})^{n-m}}{(\omega_{\rm SRF}|_{X_y})^{n-m}},$$
and the RHS converges to $1$ exponentially fast thanks to \eqref{dov}. Therefore $1-G$ satisfies \eqref{tr}, and it also satisfies
\eqref{du} (as is simple to verify) and \eqref{un}, thanks to \eqref{fibc3}. Another application of
Lemma \ref{triv} gives us
$$|1-G|\leq Ce^{-\eta t},$$
for some $\eta>0$, i.e.
\begin{equation}\label{dov2}
\|(e^t\omega(t)|_{X_y})\wedge(\omega_{\rm SRF}|_{X_y})^{n-m-1} -(\omega_{\rm SRF}|_{X_y})^{n-m}\|_{C^0(X_y,\omega_0|_{X_y})}\leq Ce^{-\eta t},
\end{equation}
for all $y\in Y$. Therefore if we choose local coordinates along a fiber $X_y$ such that at a given point $\omega_{\rm SRF}|_{X_y}$ is the identity and $e^t\omega(t)|_{X_y}$ is a positive-definite Hermitian matrix $A$, then \eqref{dov} and \eqref{dov2} imply that
$${\rm tr}A\leq n+Ce^{-\eta t},\quad \det A \geq 1-Ce^{-\eta t},$$
and so Lemma \ref{matrix} gives
$$\|A-\mathrm{Id}\|\leq Ce^{-\frac{\eta}{2} t},$$
which implies
$$\|e^t\omega(t)|_{X_y}-\omega_{\rm SRF}|_{X_y}\|_{C^0(X_y,\omega_0|_{X_y})}\leq Ce^{-\frac{\eta}{2} t},$$
for all $y\in Y$ and $t\geq 0$, so the metrics $e^t\omega(t)|_{X_y}$ converge to $\omega_{\rm SRF}|_{X_y}$ exponentially fast. The convergence is smooth thanks to Theorem \ref{fibercinf}.
\end{proof}
\subsection{Completion of the proof of Theorem \ref{colla}}
As we showed earlier, to complete the proof of \eqref{conve1p} it is enough to prove Theorem \ref{baseb}, which we can now do:
\begin{proof}[Proof of Theorem \ref{baseb}]
Recall that by definition
$$\hat{\omega}_t=e^{-t}\omega_{\rm SRF}+(1-e^{-t})f^*\omega_Y,$$
and that thanks to Lemma \ref{base} we have
$$\tr{\omega(t)}{(f^*\omega_Y)}\leq m+Ce^{-\frac{t}{8}}.$$
It follows that to prove \eqref{baseb2} it is enough to show that
\begin{equation}\label{trx}
\tr{\omega(t)}{(e^{-t}\omega_{\rm SRF})}\leq n-m+Ce^{-\eta t}.
\end{equation}
To this end, fix a point $x\in X$ and let $y=f(x)$, and choose local product coordinates near these points.
At the point $x$ we can then consider the $(1,1)$ form $\omega_{\rm SRF}|_{X_y}$ as defined for all tangent vectors to $X$ at $x$ (not just those tangent to the fiber $X_y$) by using the obvious projection in these coordinates, so it makes sense to estimate
$$\tr{\omega(t)}{(e^{-t}\omega_{\rm SRF}|_{X_y})}=\tr{(e^{t}\omega(t)|_{X_y})}{(\omega_{\rm SRF}|_{X_y})}\leq n-m+Ce^{-\eta t},$$
thanks to Theorem \ref{trk}.
Lastly, we need to estimate the difference
$$\tr{\omega(t)}{(e^{-t}\omega_{\rm SRF}-e^{-t}\omega_{\rm SRF}|_{X_y})},$$
and to do this we write in local product coordinates at $x$
$$\omega_{\rm SRF}-\omega_{\rm SRF}|_{X_y}=\mn\sum_{\alpha,\beta=1}^m h_{\alpha\ov{\beta}}dy_\alpha\wedge d\ov{y}_\beta+2\mathrm{Re}\left(
\mn\sum_{\alpha=1}^m\sum_{j=1}^{n-m} h_{\alpha\ov{j}}dy_\alpha\wedge d\ov{z}_j\right),$$
where we use greek indices for the base coordinates and latin indices for the fiber coordinates.
The term involving $h_{j\ov{k}}$ is not present because $\omega_{\rm SRF}-\omega_{\rm SRF}|_{X_y}$ vanishes when restricted to $X_y$.
Therefore
$$\tr{\omega(t)}{(e^{-t}\omega_{\rm SRF}-e^{-t}\omega_{\rm SRF}|_{X_y})}\leq \left|g^{\alpha\ov{\beta}}h_{\alpha\ov{\beta}}\right|+2
\left|g^{\alpha\ov{j}}h_{\alpha\ov{j}}\right|\leq Ce^{\frac{t}{2}},$$
because thanks to \eqref{equiv3} the terms $g^{\alpha\ov{\beta}}$ are uniformly bounded, and the terms $g^{\alpha\ov{j}}$ are bounded by $Ce^{\frac{t}{2}}$ by Cauchy-Schwarz (since $g^{j\ov{k}}$ is of the order of $e^t$). This completes the proof of \eqref{trx}.
\end{proof}

Lastly, to complete the proof of Theorem \ref{colla}, we need to show that $(X,\frac{\omega(t)}{t})$ converge to $(Y,\omega_Y)$ in the Gromov-Hausdorff topology as $t\to\infty$.
Recall that since $f$ is a submersion everywhere, Ehresmann's Theorem \cite[Theorem 2.4]{Ko} implies that $f$ is a smooth fiber bundle. Then
the Gromov-Hausdorff convergence follows from \eqref{conve1p} and the following (cf. \cite[Lemma 9.1]{TWY0}):

\begin{theorem}\label{gh} Let $\pi:M\to B$ be a smooth fiber bundle, where $(M,g_M)$ and $(B,g_B)$ are closed Riemannian manifolds.
If $g(t), t\geq 0,$ is a family of Riemannian metrics on $M$ with $\|g(t)-\pi^*g_B\|_{C^0(M,g_M)}\to 0$ as $t\to\infty$, then
$(M,g(t))$ converges to $(B,g_B)$ in the Gromov-Hausdorff sense as $t\to\infty$.
\end{theorem}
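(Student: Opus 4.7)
The plan is to verify directly the four defining conditions (un1)--(un4) of Gromov--Hausdorff convergence. I set $F_t := \pi : M \to B$ and choose $G_t : B \to M$ to be any set-theoretic section of $\pi$, which exists because $\pi$ is surjective; since $\pi \circ G_t = \mathrm{id}_B$, condition (un4) is automatic. The key preliminary is the uniform collapse of fibers: for a vertical vector $v \in T_xM$ (i.e. $d\pi v=0$) we have $\pi^*g_B(v,v)=0$, so the closeness hypothesis yields $g(t)(v,v) \leq \ve_t\, g_M(v,v)$, where $\ve_t := \|g(t)-\pi^*g_B\|_{C^0(M,g_M)} \to 0$. Integrating along any fiber-contained path gives $\diam(\pi^{-1}(y), g(t)) \leq \sqrt{\ve_t}\,\diam(M, g_M)$ uniformly in $y\in B$, and since $x$ and $G_t(\pi(x))$ lie in the same fiber this verifies condition (un3).

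For the upper bound in condition (un1) I use horizontal lifts. Fix a smooth Ehresmann distribution $H \subset TM$ (which exists by local triviality together with a partition of unity). Given $x_1,x_2\in M$, let $\sigma$ be a minimizing $g_B$-geodesic from $\pi(x_1)$ to $\pi(x_2)$ and $\tilde\sigma$ its horizontal lift starting at $x_1$. Smooth compactness yields $|\tilde\sigma'|_{g_M} \leq C|\sigma'|_{g_B}$, and the inequality $\pi^*g_B \leq g(t)+\ve_t g_M$ combined with $\sqrt{a+b}\leq\sqrt a+\sqrt b$ gives
\[
L_{g(t)}(\tilde\sigma) \leq L_{g_B}(\sigma) + \sqrt{\ve_t}\,L_{g_M}(\tilde\sigma) \leq d_B(\pi(x_1),\pi(x_2)) + C\sqrt{\ve_t}.
\]
Appending a fiber-contained curve of $g(t)$-length $\leq \sqrt{\ve_t}\,\diam(M,g_M)$ joining $\tilde\sigma(1)$ to $x_2$ produces $d_{g(t)}(x_1,x_2) \leq d_B(\pi(x_1),\pi(x_2)) + o(1)$ uniformly.

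For the lower bound in (un1), for any smooth curve $\gamma$ from $x_1$ to $x_2$ the same tensor inequality gives
\[
L_{g_B}(\pi\circ\gamma) = \int\sqrt{\pi^*g_B(\gamma',\gamma')}\,ds \leq L_{g(t)}(\gamma) + \sqrt{\ve_t}\,L_{g_M}(\gamma),
\]
and since $\pi\circ\gamma$ joins $\pi(x_1)$ to $\pi(x_2)$, its $g_B$-length is at least $d_B(\pi(x_1),\pi(x_2))$. Applied to a $g(t)$-minimizing geodesic $\gamma^*$ this reduces the lower bound to a uniform bound $L_{g_M}(\gamma^*)\leq C$ independent of $t$. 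This is the main obstacle: because $g(t)$ collapses in the fiber direction, $g_M$ and $g(t)$ are not uniformly comparable on vertical vectors, and a priori a $g(t)$-minimizer could have unbounded $g_M$-length. I plan to obtain the bound via the $g_M$-orthogonal horizontal/vertical decomposition, exploiting $\pi^*g_B \geq c\, g_M$ on horizontal vectors (from smooth compactness of the submersion $\pi$) to control the horizontal contribution by $L_{g_B}(\pi\circ\gamma^*)$, and comparing $\gamma^*$ to the competitor obtained by horizontally lifting $\pi\circ\gamma^*$ and appending a short fiber-contained arc: any excessive vertical contribution to $L_{g_M}(\gamma^*)$ would then contradict the minimality of $\gamma^*$ once combined with the fiber diameter estimate. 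Granting this, $d_{g(t)}(x_1,x_2) \geq d_B(\pi(x_1),\pi(x_2)) - C\sqrt{\ve_t}$ follows, and condition (un2) is an immediate consequence of (un1) applied at the points $G_t(y_1), G_t(y_2)$.
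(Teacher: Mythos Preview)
Your overall scheme --- $F=\pi$, $G$ a set-theoretic section, verify the four conditions --- is exactly the paper's, and your treatment of \eqref{un3}, \eqref{un4}, and the upper half of \eqref{un1} via an Ehresmann connection is correct and parallels the paper's use of local trivializations. You are also right that the lower half of \eqref{un1} reduces to controlling $L_{g_M}(\gamma^*)$ for a $g(t)$-minimizer $\gamma^*$; the paper in fact glosses over exactly this step, simply writing $L_{\pi^*g_B}(\gamma)\leq L_t(\gamma)+\ve$ from ``$g(t)\to\pi^*g_B$'' without further justification, so you have been more scrupulous than the paper here. But your competitor argument does not close the gap. The competitor you describe (horizontal lift of $\pi\circ\gamma^*$ plus a fiber arc) has $g(t)$-length bounded by $L_{g_B}(\pi\circ\gamma^*)(1+o(1))+o(1)$, and this quantity depends only on the \emph{horizontal} component of $(\gamma^*)'$. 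Minimality of $\gamma^*$ against this competitor therefore yields nothing beyond your already-established upper bound and imposes no constraint on $\int|v|_{g_M}$: large vertical winding costs almost nothing in $g(t)$, so minimality cannot forbid it.

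Indeed the gap is not repairable under the stated hypotheses: the theorem is false as written. On $M=\mathbb{R}^2/\mathbb{Z}^2$, $B=S^1$, $\pi(x,y)=x$, $g_M=dx^2+dy^2$, $g_B=dx^2$, take for even integers $t$
\[
g(t)=\bigl(dx+\tfrac{1}{t}\,dy\bigr)^2+\tfrac{1}{t^3}\,dy^2.
\]
Then $\|g(t)-\pi^*g_B\|_{C^0(M,g_M)}=O(1/t)\to 0$, yet the straight segment $s\mapsto(s/2,\,-st/2)$, $s\in[0,1]$, joins $(0,0)$ to $(1/2,-t/2)\equiv(1/2,0)$ with $g(t)$-length $\tfrac12 t^{-1/2}\to 0$ while $d_B(0,1/2)=\tfrac12$; one checks similarly that $\diam(M,g(t))\to 0$, so the Gromov--Hausdorff limit is a point, not $S^1$. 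Some additional hypothesis controlling the vertical behavior of $g(t)$ (beyond mere $C^0$ closeness to the degenerate tensor $\pi^*g_B$) is required.
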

\begin{proof}
For any $y\in B$ we denote by $E_y=\pi^{-1}(y)$ the fiber over $y$. Fix $\ve>0$, denote by $L_t$ the length of a curve in $M$ measured with respect to $g(t)$,
and by $d_t$ the induced distance function on $M$. Similarly we have $L_B, d_B$ on $B$.
Let $F=\pi:M\to B$ and define a map $G:B\to M$ by sending every point $y\in B$ to some chosen point in $M$ on the fiber $E_y$. The map $G$ will in general be discontinuous, and it
satisfies $F\circ G=\mathrm{Id}$, so
\begin{equation}\label{gh1}
d_B(y,F(G(y)))=0.
\end{equation}
On the other hand since $g(t)|_{E_y}$ goes to zero, we have that for any $t$ large and for any $x\in M$
\begin{equation}\label{gh2}
d_t(x, G(F(x)))\leq \ve.
\end{equation}
Next, given two points $x_1,x_2\in M$ let $\gamma:[0,L]\to B$ be a unit-speed minimizing geodesic in $B$ joining $F(x_1)$ and $F(x_2)$.
Since the bundle $\pi$ is locally trivial, we can cover the image of $\gamma$ by finitely many open sets $U_j, 1\leq j\leq N,$  such that
$\pi^{-1}(U_j)$ is diffeomorphic to $U_j\times E$ (where $E$ is the fiber of the bundle) and there is a subdivision $0=t_0<t_1<\dots<t_N=L$ of $[0,L]$ such that $\gamma([t_{j-1},t_j])\subset U_j$.
Fix a point $e\in E$, and use the trivializations to define $\ti{\gamma}_j(s)=(\gamma(s),e)$, for $s\in [t_{j-1},t_j]$, which are curves in $M$ with the
property that
$$|L_t(\ti{\gamma}_j)-L_B(\gamma|_{[t_{j-1},t_j]})|\leq \ve/N,$$
as long as $t$ is sufficiently large (because $g(t)\to\pi^*g_B$).
The points $\ti{\gamma}_{j}(t_j)$ and $\ti{\gamma}_{j+1}(t_j)$ lie in the same fiber of $\pi$, so we can join them by a curve contained in
this fiber with $L_t$-length at most $\ve/2N$ (for $t$ large). We also join $x_1$ with $\ti{\gamma}_1(0)$ and
$x_2$ with $\ti{\gamma}_N(L)$ in the same fashion. Concatenating these ``vertical'' curves and the curves $\ti{\gamma}_j$, we obtain a piecewise
smooth curve $\ti{\gamma}$ in $M$ joining $x_1$ and $x_2$, with $\pi(\ti{\gamma})=\gamma$ and
$|L_t(\ti{\gamma})-d_B(F(x_1),F(x_2))|\leq 2\ve.$ Therefore,
\begin{equation}\label{gh3}
d_t(x_1,x_2)\leq L_t(\ti{\gamma})\leq d_B(F(x_1),F(x_2))+2\ve.
\end{equation}
Since $F\circ G=\mathrm{Id},$ we also have that for all $t$ large and for all $y_1, y_2\in B$,
\begin{equation}\label{gh4}
d_t(G(y_1),G(y_2))\leq d_B(y_1,y_2)+2\ve.
\end{equation}
Given now two points $x_1,x_2\in M$, let $\gamma$ be a unit-speed minimizing $g(t)$-geodesic joining them. If we denote by $L_{\pi^*g_B}(\gamma)$ the length of $\gamma$
using the degenerate metric $\pi^*g_B$, then we have for $t$ large,
\begin{equation}\label{gh5}
d_B(F(x_1),F(x_2))\leq L_B(F(\gamma))=L_{\pi^*g_B}(\gamma)\leq L_t(\gamma)+\ve =d_t(x_1,x_2)+\ve,
\end{equation}
where we used again that $g(t)\to\pi^*g_B$. Obviously this also implies that for all $t$ large and for all $y_1, y_2\in B$,
\begin{equation}\label{gh6}
d_B(y_1,y_2)\leq d_t(G(y_1),G(y_2))+\ve.
\end{equation}
Combining \eqref{gh1}, \eqref{gh2}, \eqref{gh3}, \eqref{gh4}, \eqref{gh5} and \eqref{gh6} we get the required Gromov-Hausdorff convergence.
\end{proof}

\subsection{Smooth collapsing when the general fibers are tori}
Having completed the proof of Theorem \ref{colla}, we now show under the same assumptions that if we assume that the generic fiber $X_y$ of $f$ is biholomorphic to the quotient of a complex torus by a holomorphic free action of a finite group, then the collapsing in \eqref{conve1} is in the smooth topology. More precisely, we show:

\begin{theorem}[\cite{FZ,Gi,GTZ,HT,TZ}]\label{smooth}
Let $(X,\omega_0)$ be a compact K\"ahler manifold with $K_X$ semiample and $0<\kappa(X)<n$, and let $f:X\to Y$ be the fibration as in Theorem \ref{colla}, and assume that for some $y\in Y\backslash S'$ the fiber $X_y=f^{-1}(y)$ is biholomorphic to a finite quotient of a torus. Let $\omega(t),t\in[0,\infty)$ be the solution of the K\"ahler-Ricci flow \eqref{krf} starting at $\omega_0$. Then as $t\to\infty$ we have
\begin{equation}\label{conve3}
\frac{\omega(t)}{t}\to f^*\omega_Y,
\end{equation}
in $C^\infty_{\mathrm{loc}}(X\backslash S),$ where $\omega_Y$ is the same K\"ahler metric on $Y\backslash S'$ as in Theorem \ref{colla}. Furthermore, the metrics $\frac{\omega(t)}{t}$ have locally uniformly bounded curvature tensor on compact sets of $X\backslash S$.
\end{theorem}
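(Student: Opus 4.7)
The plan is to leverage the crucial new input available when the fibers are finite quotients of tori: the unique Ricci-flat K\"ahler metric $\omega_y$ on each smooth fiber $X_y$ is in fact \emph{flat}. This rigidity on the fibers is what allows the $C^0$ collapsing from Theorem \ref{colla} to be upgraded to $C^\infty_{\rm loc}$, whereas in the general case only exponential $C^k$ convergence along the fibers (Theorem \ref{fibercinf}) is known.

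First, I would reduce to a purely local statement over a small ball $U \subset Y \backslash S'$. By Ehresmann's theorem $f: f^{-1}(U) \to U$ is a smooth torus-quotient bundle, and after passing to an unramified finite cover of $f^{-1}(U)$ (which corresponds to unwinding the finite group quotients on the fibers, using that $\pi_1(U)=0$ after shrinking), we may assume all fibers are honest complex tori $X_y = \mathbb{C}^{n-m}/\Lambda_y$. Using the smooth family of flat Ricci-flat metrics $\omega_y$ on the fibers $X_y$ (whose smoothness in $y$ follows from Theorem \ref{fibercinf} combined with the uniqueness of the flat representative), one constructs a smooth closed real $(1,1)$ form $\omega_{\rm SF}$ on $f^{-1}(U)$ restricting to $\omega_y$ on each fiber; this is the ``semi-flat form'' of Greene--Shapere--Vafa--Yau. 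The natural reference for the rescaled flow $\frac{\omega(t)}{t}$ is then $\eta_t := f^*\omega_Y + \tfrac{1}{t}\omega_{\rm SF}$, and one rewrites the K\"ahler--Ricci flow as a parabolic complex Monge-Amp\`ere equation for a potential $\psi(t)$ with $\frac{\omega(t)}{t} = \eta_t + \ddbar \psi(t)$.

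Next, I would establish uniform $C^k_{\rm loc}$ bounds for $\psi(t)$ on $f^{-1}(U)$ via a rescaling/blow-up argument in the spirit of Hein--Tosatti and Tosatti--Zhang. Fix any $x_0 \in f^{-1}(U)$ with $y_0 = f(x_0)$, and choose local product coordinates $(w_1,\dots,w_m,z_1,\dots,z_{n-m})$ as in Lemma \ref{adj}. On the parabolic cylinder $B_r(x_0) \times [t-1,t]$ we dilate base coordinates by a factor comparable to $1$ and fiber coordinates by a factor $\sim t^{1/2}e^{t/2}$ (consistent with $\omega(t) \sim t f^*\omega_Y + e^{-t}\omega_{\rm SRF}$ from Theorem \ref{baseb}); this is analogous to the rescaling in the proof of Theorem \ref{fibercinf} but with the base also visible. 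The rescaled metrics have two-sided bounds against a fixed Euclidean metric, and thanks to the flatness of $\omega_y$, the limit under any subsequential Cheeger--Gromov limit is forced to be a trivial product of $\mathbb{C}^m$ with a flat torus, with the evolving metric converging to this flat Ricci-flat limit. The local parabolic estimates of Theorem \ref{higher}, applied to the rescaled flow, then yield uniform $C^k$ bounds on compact subsets, and unwinding the rescaling gives $\|\psi(t)\|_{C^k(K,g_0)} \leq C_{k,K}$ for any compact $K \subset X \backslash S$. Hence $\frac{\omega(t)}{t} \to f^*\omega_Y$ in $C^\infty_{\rm loc}(X\backslash S)$, and the curvature of $\frac{\omega(t)}{t}$ is locally uniformly bounded.

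The main obstacle is the base/mixed derivative control: the fiberwise estimates of Theorem \ref{fibercinf} follow by an essentially autonomous rescaling on each fiber, but once one tries to differentiate in base directions the Monge-Amp\`ere equation couples fiber and base in a way that is not obviously tame, since the reference $\eta_t$ is highly anisotropic (of size $1$ on the base, $\tfrac{1}{t}$ on the fibers). The flatness of $\omega_y$ is precisely what makes the blow-up limits rigid enough to extract estimates: it kills the Kodaira--Spencer-type obstructions that would otherwise propagate base fluctuations into the fibers (equivalently, $\omega_{\rm WP}$ plays no obstructive role because the semi-flat form $\omega_{\rm SF}$ is itself an honest flat representative on each fiber). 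Without this flatness one would only get a weak limiting Ricci-flat K\"ahler metric on the fiber, with no parallel structure, and the blow-up argument would not close; this is why the smooth collapsing theorem is currently restricted to the torus-fiber case, even though it is expected to hold in general.
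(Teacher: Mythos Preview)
Your overall strategy---reduce to a local problem over a small base ball, pass to a finite cover to make the fibers honest tori, build a semi-flat reference $\omega_{\rm SF}$, and then rescale and apply the local parabolic estimates of Theorem~\ref{higher}---matches the paper's approach. However, the heart of the argument is missing, and the way you have framed the rescaling step (as a ``Cheeger--Gromov blow-up with subsequential limits forced by flatness'') obscures a direct mechanism that the paper exploits and that you have not identified.

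The paper does \emph{not} argue by compactness or by extracting limits. Instead it passes to the holomorphic universal cover $p:B\times\mathbb{C}^{n-m}\to U$ (this requires identifying $f^{-1}(B)$ with a quotient of $B\times\mathbb{C}^{n-m}$ by a lattice action, which in turn uses a nontrivial input: Wehler's theorem on isomorphism of families). On this universal cover the semi-flat form has a globally defined potential $\eta$ with the exact homogeneity $\eta(y,\lambda z)=\lambda^2\eta(y,z)$; this is the content of Proposition~\ref{semif} and is the precise manifestation of ``flatness of the fibers'' that makes the argument close. Writing $\lambda_t(y,z)=(y,e^{t/2}z)$, the homogeneity gives the \emph{exact identity} $\lambda_t^*p^*\omega_{\rm SF}=e^t\,p^*\omega_{\rm SF}$, while $\lambda_t^*p^*f^*\omega_Y=p^*f^*\omega_Y$ trivially. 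Combined with the two-sided bound $C^{-1}(e^{-t}\omega_{\rm SF}+f^*\omega_Y)\le\omega(t)\le C(e^{-t}\omega_{\rm SF}+f^*\omega_Y)$ from Proposition~\ref{equiv2}, this gives uniform two-sided Euclidean bounds for $\lambda_t^*p^*\omega(t+s)$ on any fixed compact set of $B\times\mathbb{C}^{n-m}$, for all $t$ and $s\in[-1,0]$. Theorem~\ref{higher} then yields $C^k$ bounds directly, with no limits taken.

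Your sketch breaks at the sentence ``The rescaled metrics have two-sided bounds against a fixed Euclidean metric.'' You work on a ball $B_r(x_0)$ in local product coordinates and stretch the fiber directions; but after stretching by $e^{t/2}$ you need the (pulled-back) reference metric to be defined and uniformly controlled on a domain of fiber-radius $\sim e^{t/2}$, which forces you onto the universal cover. And once there, the only reason $e^{-t}\omega_{\rm SF}$ becomes a fixed metric after stretching is the quadratic homogeneity of $\eta$. Neither of these points is in your outline, and without them the two-sided bound is unjustified. (Also, your scaling factor $t^{1/2}e^{t/2}$ mixes the normalized and unnormalized pictures; in the normalized flow \eqref{krf2} the correct fiber stretch is $e^{t/2}$.) The fix is straightforward once you see it: replace the blow-up/compactness narrative with the explicit universal-cover rescaling and invoke Proposition~\ref{semif} to get the scaling identity.
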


This theorem was proved in \cite{FZ} under the assumption that $X_y$ is biholomorphic to a torus, that $X$ is projective, and the initial class $[\omega_0]$ is in $H^2(X,\mathbb{Q})$, by adapting to this parabolic setting the proof of a similar result for the elliptic complex Monge-Amp\`ere equation in \cite{GTZ}. In the case when $X=Y\times F$ where $c_1(Y)<0$ and $F$ is a finite quotient of a torus, this theorem was proved in \cite{Gi}. The projectivity and rationality assumptions in \cite{FZ} were removed in \cite{HT}, and finally the case when $X_y$ is a finite quotient of a torus was dealt with in \cite{TZ}. We will give a unified treatment of these results, following \cite{GTZ,HT,TZ}.

It is natural to conjecture that in the general setting of Theorem \ref{colla} (i.e. when the fibers $X_y$ are general Calabi-Yau manifolds) the smooth convergence in \eqref{conve3} still holds. On the other hand, the local uniform boundedness of the curvature of $\frac{\omega(t)}{t}$ is false when $X_y$ is not a quotient of a torus. Indeed thanks to \eqref{conve2} the metrics $\omega(t)|_{X_y}$ converge smoothly to $\omega_{\rm SRF}|_{X_y}$, the unique Ricci-flat K\"ahler metric on $X_y$ in the class $[\omega_0|_{X_y}].$ But the metric $\omega_{\rm SRF}|_{X_y}$ is not flat, since otherwise $X_y$ would be a finite quotient of a torus by \cite[Corollary V.4.3]{KN} and \cite[Theorem IX.7.9]{KN2}. It follows easily that the largest bisectional curvature of $\omega_{\rm SRF}|_{X_y}$ (among unit vectors) is strictly positive, and so the same is true for $\omega(t)|_{X_y}$ for all $t$ large. Since the bisectional curvature decreases in submanifolds, the same is also true for $\omega(t)$ (at points on $X_y$), and so the maximum of the curvature of $\frac{\omega(t)}{t}$ on $X_y$ blows up to infinity as $t\to\infty.$

\begin{proof}
As in the proof of Theorem \ref{colla} we assume that $\omega(t)$ satisfies instead the normalized flow \eqref{krf2}.
The statements that we need to prove are local on the base $Y\backslash S'$, so it is enough to prove that for every sufficiently small open subset $B\subset Y\backslash S'$, given any $k\geq 0$ there are constants $C_k>0$ such that on the preimage $U=f^{-1}(B)$ we have
\begin{equation}\label{cinfty}
\|\omega(t)\|_{C^k(U,g_0)}\leq C_{k},
\end{equation}
and
\begin{equation}\label{curvv}
\sup_U |\mathrm{Rm}(\omega(t))|_{\omega(t)}\leq C_0,
\end{equation}
for all $t\geq 0$.
Let us first give the proof of these in the case when $X_y$ is in fact biholomorphic to a torus for some $y\in Y\backslash S'$. Then, using Ehresmann's Theorem \cite[Theorem 2.4]{Ko} (which gives that $f$ is a locally trivial smooth fiber bundle over $Y\backslash S'$) and the fact $Y\backslash S'$ is connected, we immediately conclude that all fibers $X_y,y\in Y\backslash S'$ are diffeomorphic to a torus. But a compact K\"ahler manifold which is diffeomorphic to a torus must be in fact biholomorphic to a torus, as follows easily using the Albanese map, and we conclude that all fibers $X_y,y\in Y\backslash S'$ are biholomorphic to tori, say $X_y\cong\mathbb{C}^{n-m}/\Lambda_y$, where $\Lambda_y$ is a lattice in $\mathbb{C}^{n-m}$. Since $f$ is a holomorphic submersion over $Y\backslash S'$, we may choose a basis $v_1(y),\dots, v_{2n-2m}(y)$ of the lattice $\Lambda_y$ which varies holomorphically in $y\in B$, for any sufficiently small $B\subset Y\backslash S'$. We can then construct another family $f'$ of tori over $B$, by taking the quotient of $B\times\mathbb{C}^{n-m}$ by the holomorphic free $\mathbb{Z}^{2n-2m}$ action given by
$$(\ell_1,\dots,\ell_{2n-2m})\cdot (y,z)=\left(y,z+\sum_{i=1}^{2n-2m} \ell_i v_i(y)\right),$$
where $y\in B, z\in \mathbb{C}^{n-m}$ and $\ell_i\in\mathbb{Z}$. Note that while the choice of the generating vectors $v_i(y)$ is not unique, the quotient does not depend on this choice. This gives us a holomorphic submersion $f':U'\to B$ with fiber $f'^{-1}(y)$ biholomorphic to $X_y$, for all $y\in B$. A theorem of Wehler \cite[Satz 3.6]{Weh} then shows that the families $f$ and $f'$ are locally isomorphic, so up to shrinking $B$ there is a biholomorphism $U'\to U$, which is compatible with the projections to $B$. Composing the quotient map
$B\times\mathbb{C}^{n-m}$ with this biholomorphism, we obtain a local biholomorphism $p:B\times\mathbb{C}^{n-m}\to U$ such that $f\circ p (y,z)=y$ for all $(y,z)$. The map $p$ is thus the universal covering of $U$.

The following is the key tool we need:

\begin{proposition}[\cite{GSVY, GTZ,HT}]\label{semif}
Up to shrinking $B$, on $U=f^{-1}(B)$ there is a closed semipositive definite real $(1,1)$ form $\omega_{\rm SF}$ which is semi-flat in the sense that $\omega_{\rm SF}|_{X_y}$ a flat K\"ahler metric on $X_y$ for all $y\in B$, and such that $p^*\omega_{\rm SF}=\ddbar\eta$ where $\eta\in C^\infty(B\times\mathbb{C}^{n-m},\mathbb{R})$ satisfies
\begin{equation}\label{scaling}
\eta(y,\lambda z)=\lambda^2\eta(y,z),
\end{equation}
for all $(y,z)\in B\times\mathbb{C}^{n-m}$ and $\lambda\in\mathbb{R}$.
\end{proposition}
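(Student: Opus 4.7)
The proposition is a classical construction going back to Greene--Shapere--Vafa--Yau \cite{GSVY}, and my plan is to write down $\eta$ explicitly on the universal cover after normalizing the fiberwise lattice, and then verify the three required properties (scaling, descent, semipositivity) by direct computation. First, shrinking $B$ if necessary, I would choose $n-m$ of the holomorphic generators, say $v_1(y), \ldots, v_{n-m}(y)$, that span $\mathbb{C}^{n-m}$ as a $\mathbb{C}$-vector space at every $y \in B$; this is possible because the loci where a given subset fails to be a $\mathbb{C}$-basis are analytic, so pointwise one may find a subset that works, and then shrink $B$ to a neighborhood where it continues to work. Applying the $y$-dependent holomorphic fiberwise linear change of coordinates that sends $v_j(y)$ to the standard basis vector $e_j$, I reduce to the case where the lattice is $\mathbb{Z}^{n-m} + \tau(y)\mathbb{Z}^{n-m}$ for a holomorphic $(n-m) \times (n-m)$ matrix $\tau(y)$ whose imaginary part is positive definite by the Riemann bilinear relations.

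In these coordinates, I would define
\[
\eta(y, z) = \pi\,(\mathrm{Im}\,z)^{T}\bigl(\mathrm{Im}\,\tau(y)\bigr)^{-1}(\mathrm{Im}\,z).
\]
The scaling $\eta(y, \lambda z) = \lambda^{2}\eta(y, z)$ for real $\lambda$ is immediate since $\mathrm{Im}\,z$ is $\mathbb{R}$-linear in $z$. Invariance under $z \mapsto z + e_k$ is immediate since $\mathrm{Im}\,e_k = 0$. Under $z \mapsto z + \tau^{k}(y)$, where $\tau^{k}$ denotes the $k$-th column of $\tau$, expanding the quadratic and using the identity $(\mathrm{Im}\,\tau)^{-1}(\mathrm{Im}\,\tau^{k}) = e_k$ gives
\[
\eta(y, z+\tau^{k}(y)) - \eta(y, z) = 2\pi(\mathrm{Im}\,z)_k + \pi\,\mathrm{Im}(\tau^{kk}(y)).
\]
Both summands are pluriharmonic on $B \times \mathbb{C}^{n-m}$, since $(\mathrm{Im}\,z)_k = \mathrm{Re}(-\sqrt{-1}\,z^{k})$ and $\mathrm{Im}(\tau^{kk}(y)) = \mathrm{Re}(-\sqrt{-1}\,\tau^{kk}(y))$ are real parts of functions holomorphic in $(y, z)$. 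Hence $\ddbar\eta$ is invariant under every lattice translation and descends to a closed real $(1,1)$ form $\omega_{\rm SF}$ on $U$.

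Finally, $\ddbar\eta$ restricted to a fiber $\{y\} \times \mathbb{C}^{n-m}$ computes directly to $\tfrac{\pi}{2}\sqrt{-1}\,\bigl(\mathrm{Im}\,\tau(y)\bigr)^{-1}_{jk}\,dz^{j} \wedge d\bar z^{k}$, a flat K\"ahler metric. For global semipositivity I would compute the full Hermitian matrix of $\ddbar\eta$ in local coordinates using that $\tau(y)$ is holomorphic (so $\partial_{\bar y}\tau = 0$): writing $\eta = \pi\,u^{T}Mu$ with $u = \mathrm{Im}\,z$ and $M = (\mathrm{Im}\,\tau)^{-1}$, the $(1,1)$-Hessian assembles into a block Gram matrix with respect to the positive-definite weight $M$, and its semipositivity can be checked by verifying that every holomorphic disk in $B \times \mathbb{C}^{n-m}$ gives a subharmonic pullback (done by a short direct computation exploiting the explicit form of $\partial u, \partial M$). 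The main obstacle I expect is the normalization step: the selection of $n-m$ lattice generators spanning $\mathbb{C}^{n-m}$ is only possible locally on $B$, which is why the proposition is inherently local; once past this and the coordinate change, the remaining steps are explicit algebra with the period matrix.
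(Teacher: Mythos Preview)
Your approach is the same as the paper's: both write down the explicit potential $\eta$ as a quadratic form in $\mathrm{Im}\,z$ with coefficient matrix $(\mathrm{Im}\,\tau)^{-1}$ (your formula agrees with the paper's up to a harmless constant, since $(z_i-\bar z_i)(z_j-\bar z_j)=-4\,\mathrm{Im}\,z_i\,\mathrm{Im}\,z_j$), and the paper likewise declines to give full details, deferring to \cite{GTZ,HT}.

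That said, there is a genuine gap in your normalization step. After sending $v_1,\dots,v_{n-m}$ to the standard basis, the remaining generators form a matrix $\tau(y)$ with $\mathrm{Im}\,\tau$ merely \emph{invertible}; nothing in your choice forces it to be symmetric or positive definite. Your descent computation silently uses symmetry of $(\mathrm{Im}\,\tau)^{-1}$: without it the cross term $(\mathrm{Im}\,\tau^k)^T(\mathrm{Im}\,\tau)^{-1}\,\mathrm{Im}\,z$ does not reduce to $(\mathrm{Im}\,z)_k$, and the resulting expression involves the real-analytic matrix $(\mathrm{Im}\,\tau)^T(\mathrm{Im}\,\tau)^{-1}$, which is not pluriharmonic in $(y,z)$. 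Likewise, fiber-positivity needs $\mathrm{Im}\,\tau>0$. Invoking ``the Riemann bilinear relations'' does not close this: those relations yield $\tau\in\mathfrak{H}_{n-m}$ only when the torus carries an integral polarization \emph{and} the lattice basis is chosen symplectic for it. Your basis is selected only so that the first $n-m$ vectors form a $\mathbb{C}$-basis, with no reference to any polarization, so even in the projective setting of \cite{GTZ} the argument is incomplete as written; there one must instead pick a symplectic basis for the polarization induced by $[\omega_0|_{X_y}]$ and check it varies holomorphically. In the general K\"ahler case the fibers need not be abelian varieties at all, and producing a holomorphic $Z:B\to\mathfrak{H}_{n-m}$ for which $\ddbar\eta$ still descends is precisely the extra content of \cite{HT} that the paper flags as ``easier under the rationality assumption.'' Your semipositivity sketch is also thin, but that becomes a routine block computation once $\tau\in\mathfrak{H}_{n-m}$ is in hand.
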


This was proved in \cite[Section 3]{GTZ} when $X$ is projective and $[\omega_0]$ is rational, following the recipe in \cite{GSVY}, and was then proved in \cite{HT} in general. We will not prove this here, but just say that the function $\eta$ is given explicitly by
$$\eta(y,z)=-\frac{1}{4}\sum_{i,j=1}^{n-m} \left(\mathrm{Im}Z(y)\right)^{-1}_{ij} (z_i-\ov{z}_i)(z_j-\ov{z}_j),$$
where $Z$ is a holomorphic period map from $B$ to the Siegel upper half space $\mathfrak{H}_{n-m}$ of symmetric $(n-m)\times (n-m)$ complex matrices with positive definite imaginary part (so $\left(\mathrm{Im}Z(y)\right)^{-1}$ in this formula is well-defined). The key reason why this can be done is that $\mathfrak{H}_{n-m}$ classifies complex tori which are polarized by a K\"ahler class. We refer the reader to \cite{GTZ,HT} for the details of the construction of $Z$ (which is easier under the rationality assumption) and of why this $\eta$ satisfies our requirements.

Now, recall than thanks to Proposition \ref{equiv2} (or rather its generalization to the case when $S\not =\emptyset$) we have that
$$C^{-1}(e^{-t}\omega_{\rm SF}+f^*\omega_Y)\leq \omega(t)\leq C(e^{-t}\omega_{\rm SF}+f^*\omega_Y),$$
where we used that on $U$ we have that $\omega_{\rm SF}+f^*\omega_Y$ is a K\"ahler metric. For $t\geq 0$ let
$\lambda_t:B\times\mathbb{C}^{n-m}\to B\times\mathbb{C}^{n-m}$ be given by
$$\lambda_t(y,z)=(y,ze^{t/2}),$$
which is a ``stretching in the fiber directions'' (compare this with the maps $F_t$ in Theorem \ref{fibercinf} which were instead shrinking the base directions).
Then the metrics
$$\omega_t(s):=\lambda_t^*p^*\omega(s+t),\quad -1\leq s\leq 0,$$
on $B\times\mathbb{C}^{n-m}$ satisfy
$$C^{-1}(e^{-t}\lambda_t^*p^*\omega_{\rm SF}+\lambda_t^*p^*f^*\omega_Y)\leq \omega_t(s)\leq C(e^{-t}\lambda_t^*p^*\omega_{\rm SF}+\lambda_t^*p^*f^*\omega_Y),$$
for all $t\geq 0, -1\leq s\leq 0$, and
$$\frac{\de}{\de s}\omega_t(s)=-\Ric(\omega_t(s))-\omega_t(s), \quad -1\leq s\leq 0.$$
But we have that $f\circ p\circ\lambda_t=f\circ p$, so $\lambda_t^*p^*f^*\omega_Y=p^*f^*\omega_Y$, and
$$\lambda_t^*p^*\omega_{\rm SF}=\lambda_t^*\ddbar\eta=\ddbar(\eta\circ\lambda_t)=e^t\ddbar\eta=e^t p^*\omega_{\rm SF},$$
since
$$\eta\circ\lambda_t(y,z)=\eta(y,ze^{t/2})=e^t\eta(y,z),$$
thanks to \eqref{scaling}. Therefore we conclude that on $B\times\mathbb{C}^{n-m}$ we have
$$C^{-1}p^*(\omega_{\rm SF}+f^*\omega_Y)\leq \omega_t(s)\leq Cp^*(\omega_{\rm SF}+f^*\omega_Y),$$
and so for each given compact set $K\subset B\times\mathbb{C}^{n-m}$ there is a constant $C_K$ such that on $K$ we have
$$C^{-1}_K\omega_E\leq \omega_t(s)\leq C_K\omega_E,$$
for all $t\geq 0, -1\leq s\leq 0$, where $\omega_E$ is a Euclidean metric on $B\times\mathbb{C}^{n-m}$.
We can therefore apply the local higher order estimates in Theorem \ref{higher} and obtain that for every compact set $K\subset B\times\mathbb{C}^{n-m}$ there are constants $C_{K,k}$ such that
$$\|\omega_t(s)\|_{C^k(K,g_E)}\leq C_{K,k},$$
for all $t\geq 0, -\frac{1}{2}\leq s\leq 0$. Setting $s=0$ we obtain
\begin{equation}\label{cinfty2}
\|\lambda_t^*p^*\omega(t)\|_{C^k(K,g_E)}\leq C_{K,k},
\end{equation}
and we still have
$$\lambda_t^*p^*\omega(t)\geq C^{-1}\omega_E,$$
on $K\times[0,\infty)$.
In particular, this gives
\begin{equation}\label{curvv2}
\sup_K |\mathrm{Rm}(\lambda_t^*p^*\omega(t))|_{\lambda_t^*p^*\omega(t)}\leq C,
\end{equation}
for all $t\geq 0$. If now $K'\subset U\subset X\backslash S$ is a compact set which is small enough so that $K=p^{-1}(K')\subset B\times\mathbb{C}^{n-m}$ is compact and $p$ is a biholomorphism on $K$ (note that such compact sets $K'$ cover $U$) then we have
$$\sup_{K'}|\mathrm{Rm}(\omega(t))|_{\omega(t)}=\sup_K |\mathrm{Rm}(p^*\omega(t))|_{p^*\omega(t)}=\sup_{\lambda_{1/t}(K)} |\mathrm{Rm}(\lambda_t^*p^*\omega(t))|_{\lambda_t^*p^*\omega(t)},$$
where $\lambda_{1/t}$ is the inverse map of $\lambda_t$. But the compact sets $\lambda_{1/t}(K)$ are all contained in a fixed compact set of $B\times\mathbb{C}^{n-m}$, and so
from \eqref{curvv2} and a covering argument we easily obtain \eqref{curvv}. Also, \eqref{cinfty2} easily implies that
$$\|p^*\omega(t)\|_{C^k(K,g_E)}\leq C_{K,k},$$
for any given compact set $K\subset B\times\mathbb{C}^{n-m}$ (in fact, \eqref{cinfty2} is a much stronger bound). Since $p$ is a local biholomorphism, this (and another covering argument) proves \eqref{cinfty}, and completes the proof of Theorem \ref{smooth} when $X_y$ is a torus.

If now the fiber $X_y$ is just biholomorphic to a finite quotient of a torus, for some $y\in Y\backslash S'$ (and therefore for all such $y$, by the same argument as before using Ehresmann's Theorem), then we choose again a sufficiently small open set $B\subset Y\backslash S'$ such that $f$ is a locally trivial smooth fiber bundle over $B$, and so there is a diffeomorphism $\Psi:B\times F\to f^{-1}(B)$, compatible with the projections to $B$, where $F$ is the smooth manifold underlying $X_y$. We use $\Psi$ to pull back the complex structure on $f^{-1}(B)$ to a complex structure $J$ on $B\times F$, which is in general different from the product complex structure on $B\times X_y$. This way, the map $\Psi$ becomes a biholomorphism (where here and in the following we always use the complex structure $J$ on $B\times F$). If we now let $\ti{F}\to F$ be a smooth finite covering map with $\ti{F}$ a torus, then the map
$p:B\times\ti{F}\to B\times F$ is a smooth finite covering (hence a local diffeomorphism), and so we can use it to pull back the complex structure $J$ to a complex structure $\ti{J}$ on $B\times\ti{F}$. This way $p$ is also a local biholomorphism, and so pulling back a K\"ahler metric on $f^{-1}(B)$ via $\Psi\circ p$ we obtain a compatible K\"ahler metric on $B\times \ti{F}$. Then the projection $\pi:B\times \ti{F}$ is by construction equal to $f\circ \Psi\circ p$, and so it is holomorphic, and clearly a proper submersion. This implies that its fibers
$\ti{X}_y=\pi^{-1}(y)$ are all compact complex submanifolds of $B\times \ti{F}$ (with the complex structure $\ti{J}$), and so they are also K\"ahler, and each $\ti{X}_y$ is diffeomorphic to the torus $\ti{F}$. As remarked earlier, this implies that all fibers $\ti{X}_y$ are in fact biholomorphic to complex tori $\mathbb{C}^{n-m}/\Lambda_y$. Therefore the family $\pi$ over $B$ has torus fibers, and pulling back the solution $\omega(t)$ of the K\"ahler-Ricci flow via the holomorphic finite covering map $\Psi\circ p$ we obtain a solution
$p^*\Psi^*\omega(t)$ of the K\"ahler-Ricci flow on $B\times\ti{F}$. We can then apply Proposition \ref{semif} to $B\times \ti{F}$ and get a semi-flat form $\omega_{\rm SF}$ with the same properties, and from Proposition \ref{equiv2} we again have
$$C^{-1}(e^{-t}\omega_{\rm SF}+\pi^*\omega_Y)\leq p^*\Psi^*\omega(t)\leq C(e^{-t}\omega_{\rm SF}+\pi^*\omega_Y),$$
on $B\times\ti{F}$ for all $t\geq 0$. Then the rest of the argument above goes through, and we obtain \eqref{cinfty} and \eqref{curvv} on $B\times\ti{F}$ for the metrics
$p^*\Psi^*\omega(t)$. Since $\Psi\circ p$ is a holomorphic finite covering, these estimates immediately imply those for $\omega(t)$ on $f^{-1}(B)$.
\end{proof}

\section{Some open problems}\label{sectopen}

In this closing section, we collect some well-known open problems on the K\"ahler-Ricci flow (in addition to the conjectures that we have already discussed in Section \ref{sectfin}),
related to the material discussed in these notes.\\

\subsection{Diameter bounds}

Diameter bounds for solutions of the K\"ahler-Ricci flow as we approach a singularity are not easy to get. In general we expect:

\begin{conjecture}\label{diam}
Let $X$ be a compact K\"ahler manifold and $\omega(t)$ a solution of the K\"ahler-Ricci flow \eqref{krf} defined on a maximal time interval $[0,T)$ with $T<\infty$. Then there is a constant $C>0$ such that
$$\diam(X,\omega(t))\leq C,$$
for all $t\in [0,T)$.
\end{conjecture}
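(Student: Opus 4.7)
The plan is to split Conjecture \ref{diam} into the non-collapsed case ($\int_X\alpha^n>0$) and the collapsed case ($\int_X\alpha^n=0$), since the geometric behaviors and the techniques available are quite different. In both cases the strategy is to convert the cohomological information on the limiting class $[\alpha]=[\omega_0]-2\pi T c_1(X)$, together with the parabolic Monge-Amp\`ere equation \eqref{ma3}, into metric-geometric control up to $t=T$.

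For non-collapsed singularities, the first step would be to upgrade the barrier estimates from Theorem \ref{ct} and Lemmas \ref{uno}--\ref{tre} to the uniform bound $|\vp(t)|\leq C$ predicted by Conjecture \ref{lowerbd} (equivalently Conjecture \ref{bpf}). Combined with the already-known $\dot\vp\leq C$ and $R\geq -C$, this gives two-sided $L^\infty$ control on $\dot\vp+\vp$, hence two-sided $L^\infty$ control on the Monge-Amp\`ere density $\omega(t)^n/\Omega$. Feeding this into the recent diameter estimates of Guo-Phong-Song-Sturm-Wang type for complex Monge-Amp\`ere equations (which require only an $L^p$ bound on the right-hand side together with uniform cohomological normalization) would then yield $\diam(X,\omega(t))\leq C$ on $[0,T)$. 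The main obstacle in this case is Conjecture \ref{lowerbd} itself.

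For collapsed singularities, one would first want to establish the Fano fibration structure predicted by Conjecture \ref{fincol}, giving $f:X\to Y$ with $[\omega_0]=\lambda c_1(X)+f^*[\omega_Y]$. Granting this, a parabolic Schwarz Lemma as in Lemma \ref{schwarz} should give $\omega(t)\geq C^{-1}f^*\omega_Y$, controlling base-direction distances uniformly by $\diam(Y,\omega_Y)$. For the fiber direction one would need to show that for every $y$, $\diam(f^{-1}(y),\omega(t)|_{f^{-1}(y)})$ stays uniformly bounded, which is plausible because $[\omega(t)]|_{X_y}=(T-t)$ times a Fano class shrinks to zero as $t\to T$. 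A base-geodesic-lifting argument analogous to the proof of Theorem \ref{gh}, lifting a minimizing geodesic in $(Y,\omega_Y)$ using a local smooth trivialization of $f$ away from singular fibers and connecting by short fiber-direction curves, would then combine these into $\diam(X,\omega(t))\leq C$.

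The main obstacle is certainly the collapsed case, especially in the presence of singular fibers (i.e.\ points where $f$ fails to be a submersion): the lifting argument above breaks down, and one has essentially no uniform control of $\omega(t)$ near such fibers. Even the simplest nontrivial instance, $n=2$ and $X$ an appropriate minimal elliptic surface, is wide open, as noted at the end of Section \ref{sectinf}. A realistic expectation is that the non-collapsed case becomes accessible once Conjecture \ref{lowerbd} (or \ref{bpf}) is resolved, whereas the collapsed case will likely require a substantially new idea -- perhaps a form of pseudolocality at the singular time, or a way to quantitatively compare $\omega(t)$ with $f^*\omega_Y$ uniformly across the singular fibers -- before Conjecture \ref{diam} can be established in full generality.
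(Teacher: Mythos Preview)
The statement under review is a \emph{conjecture} in the paper, placed in Section~\ref{sectopen} (``Some open problems''), and the paper gives no proof of it. The paper's treatment consists only of remarks on known special cases: blowups of finitely many points (Song--Weinkove, giving the general $n=2$ non-collapsed case), the Fano case $[\omega_0]=\lambda c_1(X)$ (Perelman), and certain projective-bundle Fano fibrations (Song--Sz\'ekelyhidi--Weinkove). So there is no ``paper's own proof'' to compare against.

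Your proposal is not a proof either, and you are upfront about this: it is a strategy outline that explicitly hinges on other open conjectures (Conjecture~\ref{lowerbd}/\ref{bpf} in the non-collapsed case, Conjecture~\ref{fincol} in the collapsed case) and on analytic ingredients (Guo--Phong--Song--Sturm--Wang type diameter bounds) that postdate the paper. As such it is a reasonable research plan, and your identification of the singular-fiber difficulty in the collapsed case is exactly the obstruction the paper flags. But it should be clearly labeled as a heuristic outline conditional on several unresolved conjectures, not as a proof; in particular, even granting Conjecture~\ref{fincol}, the uniform fiber-diameter bound and the lifting argument near singular fibers are themselves open problems of comparable difficulty to the conjecture itself.
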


This conjecture is known in the case when the limiting class $[\alpha]=[\omega_0]-2\pi Tc_1(X)$ is equal to $\pi^*[\omega_Y]$ where $\pi:X\to Y$ is the blowup of a compact K\"ahler manifold $Y$ at finitely many distinct point and $\omega_Y$ is a K\"ahler metric on $Y$, thanks to \cite{SW}, and this is in fact the general case when $n=2$ and the singularity is noncollapsed. The conjecture is also known when $X$ is Fano and $[\omega_0]=\lambda c_1(X)$ for some $\lambda>0$, since as we mentioned earlier Perelman proved that in this case $\diam(X,\omega(t))\leq C(T-t)^{\frac{1}{2}}$ (see \cite{ST}), and it is also proved in \cite{SSW} for some special Fano fibrations (also discussed earlier).

In the case of infinite time solutions, we expect:
\begin{conjecture}\label{diam2}
Let $X$ be a compact K\"ahler manifold and $\omega(t)$ a solution of the K\"ahler-Ricci flow \eqref{krf} defined on $[0,\infty)$. Then there is a constant $C>0$ such that
$$\diam\left(X,\frac{\omega(t)}{t}\right)\leq C,$$
for all $t\geq 0$.
\end{conjecture}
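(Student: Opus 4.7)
The plan is to split into cases via the Abundance Conjecture and use the asymptotic analyses already carried out in Theorems \ref{ena}, \ref{dva}, \ref{tri}, and \ref{colla} to isolate the genuinely hard cases. Assuming $K_X$ is semiample (as predicted by Abundance), the easy cases are dispatched as follows. If $\kappa(X)=0$ then $c_1(X)=0$ by Lemma \ref{cycy}, and Theorem \ref{ena} gives smooth convergence of $\omega(t)$ to a Ricci-flat metric on $X$, so $\diam(X,\omega(t)/t)\to 0$. If $\kappa(X)=n$ and $-c_1(X)\in \mathcal{C}_X$, Theorem \ref{dva} gives smooth convergence of $\omega(t)/t$ on all of $X$ to a K\"ahler-Einstein metric, immediately yielding the diameter bound. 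Finally, if $0<\kappa(X)<n$ and the Iitaka fibration $f\colon X\to Y$ is a submersion ($S=\emptyset$), Theorem \ref{colla} gives Gromov-Hausdorff convergence $(X,\omega(t)/t)\to(Y,\omega_Y)$, which is all that is needed.

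This leaves two genuinely hard cases: (a) $-c_1(X)$ is nef and big but not ample (the setting of Theorem \ref{tri} with $\Null(-c_1(X))\neq\emptyset$), and (b) $0<\kappa(X)<n$ with $S\neq\emptyset$. In each case we only have smooth convergence of $\omega(t)/t$ on the complement of a proper analytic subvariety $Z$, and the issue is to rule out diameter blow-up along paths that approach $Z$. In case (a), the strategy would be to show that $(X,\omega(t)/t)$ converges in the Gromov-Hausdorff sense to the metric completion of $(X\setminus\Null(-c_1(X)),\omega_\infty)$, where $\omega_\infty$ is the singular K\"ahler-Einstein metric produced by Theorem \ref{tri}. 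That completion is conjecturally a compact metric space homeomorphic to the canonical model of $X$, and recent progress on uniform Sobolev and diameter estimates for degenerate complex Monge-Amp\`ere equations (in the spirit of Guo-Phong-Song-Sturm) provides a framework; one would need to adapt those elliptic arguments to the parabolic setting, using the uniform potential and volume-form bounds from Lemmas \ref{uno}, \ref{tre}, and \ref{c0} (in their long-time versions) as inputs, to extract a diameter bound directly from the Monge-Amp\`ere equation \eqref{ma2a} without appealing to curvature comparison.

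In case (b), the additional difficulty is the simultaneous fiber collapse and base degeneration near $S$. The natural two-step plan is: first, prove a diameter bound for the metric completion of $(Y\setminus S',\omega_Y)$ by applying the same style of integral arguments to the twisted Monge-Amp\`ere equation \eqref{twke} on the base, so that $(Y,\omega_Y)$ makes sense as a compact metric space of finite diameter; second, upgrade the fiberwise $C^\infty$ bounds of Theorem \ref{fibercinf} and the $C^0$ base convergence of Lemma \ref{base} to a uniform Gromov-Hausdorff estimate over all of $X$, via a version of Theorem \ref{gh} that tolerates the analytic set $S$ of singular fibers. The crucial missing analytic input is a uniform estimate $\tr{\omega(t)/t}{(f^*\omega_Y)}\le C$ that holds globally on $X$ (including across $S$), rather than the localized bound of Lemma \ref{base}; such an estimate will presumably require replacing $\omega_Y$ by a form with mild singularities along $S'$, modeled on the singular Weil-Petersson-type metrics of Song-Tian.

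The main obstacle, common to both cases, is the absence of any pointwise curvature bound for $\omega(t)/t$ near the degenerate locus, which precludes a direct application of Cheeger-Colding theory and forces one to extract metric information by purely integral methods from the parabolic Monge-Amp\`ere equation itself. A secondary but still serious obstacle is that Abundance is not known for minimal K\"ahler manifolds, so an unconditional proof of Conjecture \ref{diam2} would require either a proof of Abundance in this setting or an analytic argument that reconstructs the fibration structure from the flow (as in \cite{TZ2} in low dimensions). I expect the case (b) analysis near $S$ to be the sharpest technical challenge, since it couples the two degenerations and prevents a clean reduction either to a purely elliptic problem on the base or to a smooth fiber-bundle situation.
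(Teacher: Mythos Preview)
This statement is a conjecture, not a theorem: the paper does not prove it and explicitly places it among the open problems in Section~\ref{sectopen}. There is therefore no proof in the paper to compare your proposal against; what the paper offers is only the brief status report in the paragraph following the conjecture.

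Your proposal is also not a proof, and you clearly recognize this---it is a research outline. Your case analysis is correct and in fact more finely subdivided than the paper's discussion: you isolate the subcases $-c_1(X)\in\mathcal{C}_X$ (handled by Theorem~\ref{dva}) and $0<\kappa(X)<n$ with $S=\emptyset$ (handled by the Gromov--Hausdorff statement in Theorem~\ref{colla}) as already settled, whereas the paper treats all of $\kappa(X)=n$ as one block (citing \cite{GSW} for $n=2$ and \cite{TiZ2} for $n\le 3$, with further progress in \cite{Gu}) and declares the intermediate Kodaira dimension case open even for $n=2,\kappa(X)=1$ without separating out the submersion subcase. Your identification of the two hard cases (a) and (b), and of the central obstacles (no curvature control near the degenerate locus; Abundance unknown in the K\"ahler category), matches the state of the art as described in the paper.

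One minor correction to your outline: a global Schwarz-type bound $\tr{\omega(t)}{(f^*\eta)}\le C$ with respect to a \emph{smooth} metric $\eta$ on $Y$ is already available from \cite{SoT,SoT2} even across $S$; the genuine difficulty in case (b) is not obtaining such a trace bound but rather that the target metric $\omega_Y$ itself is singular along $S'$, so the ``missing input'' is really control of $\omega_Y$ near $S'$ and of the fiber geometry over it---exactly what you point to in the next sentence about singular Weil--Petersson-type metrics.
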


Recall that the existence of an infinite time solution is equivalent to $K_X$ being nef. As mentioned earlier, the Abundance Conjecture for K\"ahler manifolds would imply that $K_X$ is semiample, so in particular $\kappa(X)\geq 0$. Assuming $K_X$ is semiample, if $\kappa(X)=0$ then $X$ is Calabi-Yau (by Lemma \ref{cycy}) and we even have that  $\diam(X,\omega(t))\leq C,$ thanks to Theorem \ref{ena}. If $\kappa(X)=n$ then $K_X$ is nef and big, and in this case Conjecture \ref{diam2} is proved in \cite{GSW} when $n=2$ and in \cite{TiZ2} when $n\leq 3$ (see also \cite{Gu} for further progress).
Lastly, when $0<\kappa(X)<n$ (this is the setup of Theorem \ref{colla}), Conjecture \ref{diam2} seems to be open even in the case when $n=2,\kappa(X)=1$.

\subsection{Volume growth}

The growth of the total volume of $X$ as $t\to\infty$ for an infinite time solution is a delicate issue as well. Indeed, the following conjecture is equivalent to the Abundance Conjecture in the general K\"ahler case:

\begin{conjecture}\label{vol2}
Let $X$ be a compact K\"ahler manifold and $\omega(t)$ a solution of the K\"ahler-Ricci flow \eqref{krf} defined on $[0,\infty)$. Then $\kappa(X)\geq 0$ and
there is a constant $C>0$ such that
\begin{equation}\label{vol4}
C^{-1}t^{\kappa(X)}\leq \vol(X,\omega(t))\leq Ct^{\kappa(X)},
\end{equation}
for all $t\geq 0$.
\end{conjecture}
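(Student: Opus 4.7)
The starting point is the cohomological identity $[\omega(t)] = [\omega_0] + 2\pi t c_1(K_X)$, which gives
$$\vol(X,\omega(t)) = \frac{1}{n!}\int_X([\omega_0]+2\pi t c_1(K_X))^n = \sum_{k=0}^n a_k t^k,$$
with $a_k = \frac{(2\pi)^k}{k!(n-k)!}\int_X c_1(K_X)^k\wedge[\omega_0]^{n-k}$. Since $K_X$ is nef by assumption, Lemma \ref{nef2} shows that for every $\varepsilon>0$ the class $c_1(K_X)+\varepsilon[\omega_0]$ is K\"ahler, so $\int_X(c_1(K_X)+\varepsilon[\omega_0])^k\wedge[\omega_0]^{n-k}>0$; letting $\varepsilon\downarrow 0$ yields $a_k\geq 0$. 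Setting $\nu(K_X):=\max\{k : a_k>0\}$ (the numerical dimension, which is $\geq 0$ because $a_0>0$), one obtains
$$C^{-1}t^{\nu(K_X)}\leq\vol(X,\omega(t))\leq C t^{\nu(K_X)}, \quad t\geq 1.$$
Thus Conjecture \ref{vol2} reduces to the purely cohomological equality $\nu(K_X)=\kappa(X)$ together with $\kappa(X)\geq 0$.

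First I would verify the easier half $\kappa(X)\leq\nu(K_X)$, which holds for any nef class once $\kappa(X)\geq 0$: fixing $\ell$ so that $\Phi_{|K_X^{\otimes\ell}|}$ has image of dimension $\kappa(X)$, a log-resolution $\mu:\tilde X\to X$ decomposes $\mu^* K_X^{\otimes\ell}=M+F$ with $M$ base-point-free of Iitaka dimension $\kappa(X)$ and $F$ effective, and intersecting with $\mu^*[\omega_0]^{n-\kappa(X)}$ forces $a_{\kappa(X)}>0$. The substantive content of the conjecture is therefore the reverse inequality $\nu(K_X)\leq\kappa(X)$, combined with the non-vanishing statement $\kappa(X)\geq 0$.

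My plan is to reduce these to the Abundance Conjecture, which predicts that $K_X$ is semiample; under this assumption the argument becomes elementary. Fix $\ell$ with $K_X^{\otimes\ell}$ base-point-free and consider the resulting holomorphic map $f:X\to Y\subset\mathbb{CP}^N$ with $\dim Y=\kappa(X)$ and $f^*\mathcal{O}_{\mathbb{CP}^N}(1)\cong K_X^{\otimes\ell}$. Then $\eta:=\frac{1}{\ell}f^*\omega_{\rm FS}$ is a smooth semipositive $(1,1)$-form representing $2\pi c_1(K_X)$, and being pulled back from a variety of dimension $\kappa(X)$ satisfies $\eta^{\kappa(X)+1}\equiv 0$ identically on $X$. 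This forces $a_k=\frac{1}{k!(n-k)!}\int_X\eta^k\wedge\omega_0^{n-k}=0$ for all $k>\kappa(X)$, giving $\nu(K_X)\leq\kappa(X)$; non-vanishing $\kappa(X)\geq 0$ is automatic from semiampleness.

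The main obstacle is therefore the Abundance Conjecture in the K\"ahler setting, which is a major open problem. The reduction is essentially bidirectional: \eqref{vol4} forces $\nu(K_X)=\kappa(X)\geq 0$, which is the numerical-plus-nonvanishing half of abundance and (modulo standard base-point-free-type conjectures from the minimal model program) should imply semiampleness. Any unconditional proof of Conjecture \ref{vol2} appears to require extracting either sections of $K_X^{\otimes\ell}$, or a smooth semipositive representative of $c_1(K_X)$ with vanishing top power, directly from the long-time behavior of $\omega(t)$ --- perhaps by refining the scalar curvature estimates of \cite{SoT4} or the collapsing analysis of Section \ref{sectinf} --- which remains beyond current techniques.
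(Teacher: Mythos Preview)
Your analysis is correct and essentially matches the paper's: both identify Conjecture \ref{vol2} as equivalent to the Abundance Conjecture for K\"ahler manifolds by expanding $\vol(X,\omega(t))$ as a polynomial in $t$, recognizing the top nonvanishing degree as the numerical dimension, and deriving \eqref{vol4} from semiampleness via the Iitaka fibration. The one point where the paper is sharper is the reverse implication: rather than your hedge about ``standard base-point-free-type conjectures,'' the paper invokes \cite[Theorem 5.5]{Nak} (the K\"ahler extension of \cite[Corollary 6.1.13]{KMM}) to conclude that $K_X$ nef with $\nu(K_X)=\kappa(X)$ already forces $K_X$ semiample, so the equivalence with Abundance is unconditional rather than modulo further conjectures.
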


Since the Abundance Conjecture in the K\"ahler case is now known for $n\leq 3$ by \cite{CHP}, so is this conjecture. Indeed, by the Abundance Conjecture we have that $K_X$ is semiample, and then as explained in Section \ref{sectinf} we get a fiber space $f:X\to Y$ onto a normal projective variety of dimension $\kappa(X)\geq 0$, such that $K_X^{\otimes \ell}=f^*L$ for an ample line bundle $L$ on $Y$. This implies that $c_1(K_X)^p=0$ for all $p>\kappa(X)$, and so
\[\begin{split}
\vol(X,\omega(t))&=\int_X (\omega_0+2\pi tc_1(K_X))^n\\
&=c t^{\kappa(X)}\int_X\omega_0^{n-\kappa(X)}\wedge c_1(K_X)^{\kappa(X)}+O(t^{\kappa(X)-1}),
\end{split}\]
where $c>0$ and
$$\int_X\omega_0^{n-\kappa(X)}\wedge c_1(K_X)^{\kappa(X)}=\frac{1}{\ell^{\kappa(X)}}\int_X \omega_0^{n-\kappa(X)}\wedge f^*c_1(L)^{\kappa(X)}>0.$$
The fact that conversely Conjecture \ref{vol2} implies the Abundance Conjecture follows easily from \cite[Theorem 5.5]{Nak} (which is the extension of \cite[Corollary 6.1.13]{KMM} to the K\"ahler case), since \eqref{vol4} implies that $K_X$ is abundant (i.e. its numerical dimension is equal to $\kappa(X)$).

We also have the following simple observation, related to Conjecture \ref{vol2}:
\begin{proposition}
Let $X$ be a compact K\"ahler manifold and $\omega(t)$ a solution of the K\"ahler-Ricci flow \eqref{krf} defined on $[0,\infty)$.
Then there is a constant $C>0$ such that
\begin{equation}\label{vol3}
\vol(X,\omega(t))\leq C,
\end{equation}
if and only if $X$ is Calabi-Yau.
\end{proposition}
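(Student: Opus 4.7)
My plan is as follows. The backward implication is trivial: if $c_1(X)=0$ then the cohomology class $[\omega(t)]=[\omega_0]-2\pi tc_1(X)=[\omega_0]$ is constant along the flow, so $\vol(X,\omega(t))=\frac{1}{n!}\int_X[\omega_0]^n$ does not depend on $t$.

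For the forward direction, the starting point is that the volume is a polynomial in $t$:
$$n!\vol(X,\omega(t))=\int_X\left([\omega_0]-2\pi tc_1(X)\right)^n=\sum_{k=0}^n\binom{n}{k}(-2\pi t)^k\int_X\omega_0^{n-k}\wedge c_1(X)^k.$$
Since $\vol(X,\omega(t))$ is strictly positive for all $t\geq 0$, a bounded positive polynomial on $[0,\infty)$ must be constant, so every coefficient with $k\geq 1$ vanishes. In particular,
$$\int_X\omega_0^{n-1}\wedge c_1(X)=0,\qquad\int_X\omega_0^{n-2}\wedge c_1(X)^2=0.$$

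Next, I would invoke the Hodge--Riemann bilinear relations in bidegree $(1,1)$, which yield the inequality
$$\left(\int_X\omega_0^{n-1}\wedge\alpha\right)^2\geq\int_X\omega_0^n\cdot\int_X\omega_0^{n-2}\wedge\alpha^2$$
for every real $(1,1)$-class $\alpha$ on the compact K\"ahler $n$-fold $X$, with equality if and only if $\alpha$ is a real multiple of $[\omega_0]$. (This is standard: decompose $\alpha=c[\omega_0]+\alpha'$ with $\alpha'$ primitive with respect to $\omega_0$, so that $c=\int_X\omega_0^{n-1}\wedge\alpha/\int_X\omega_0^n$; expanding both sides and using that the quadratic form $\beta\mapsto\int_X\omega_0^{n-2}\wedge\beta^2$ is negative definite on the primitive subspace of $H^{1,1}(X,\mathbb{R})$ gives the inequality, and equality forces $\alpha'=0$.) Plugging $\alpha=c_1(X)$, both sides vanish, so we are in the equality case and $c_1(X)=c[\omega_0]$ for some $c\in\mathbb{R}$; then $c\int_X\omega_0^n=\int_X\omega_0^{n-1}\wedge c_1(X)=0$ forces $c=0$, i.e.\ $c_1(X)=0$.

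The only nontrivial ingredient is the Hodge--Riemann inequality recalled above; in particular the flow enters only to guarantee the cohomological identity $[\omega(t)]=[\omega_0]-2\pi tc_1(X)$ for all $t\geq 0$, and the nefness of $K_X$ (which ensures long-time existence) plays no further role in the argument.
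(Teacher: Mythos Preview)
Your proof is correct and follows the same overall strategy as the paper: expand the volume as a polynomial in $t$, extract the vanishing of $\int_X\omega_0^{n-1}\wedge c_1(X)$ and $\int_X\omega_0^{n-2}\wedge c_1(X)^2$, and conclude $c_1(X)=0$ via the Hodge--Riemann bilinear relations. The one notable difference is how you obtain the second vanishing: you observe directly that a bounded polynomial on $[0,\infty)$ must be constant, so \emph{all} coefficients with $k\ge 1$ vanish, in particular the $k=2$ one. The paper instead extracts only the $k=1$ vanishing from boundedness and then deduces the $k=2$ vanishing from the Khovanskii--Teissier inequality for nef classes, explicitly using that $c_1(K_X)$ is nef. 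Your route is more elementary and, as you note, makes transparent that the nefness of $K_X$ is used only to guarantee long-time existence; the paper's detour through Khovanskii--Teissier is unnecessary here, though it has the virtue of showing that the single condition $\int_X\omega_0^{n-1}\wedge c_1(K_X)=0$ already forces $c_1(K_X)=0$ once $K_X$ is nef.
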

\begin{proof}
If $X$ is Calabi-Yau then $\vol(X,\omega(t))$ is clearly constant. Conversely, if \eqref{vol3} holds then expanding
$$\vol(X,\omega(t))=\int_X (\omega_0+2\pi tc_1(K_X))^n,$$
we see that we must have
$$\int_X\omega_0^{n-1}\wedge c_1(K_X)=0.$$
Since $c_1(K_X)$ is nef, the Khovanskii-Teissier inequality for nef classes (see e.g. \cite{FX})
$$\int_X\omega_0^{n-1}\wedge c_1(K_X)\geq\left(\int_X\omega_0^{n-2}\wedge c_1(K_X)^2\right)^{\frac{1}{2}}\left(\int_X\omega_0^n\right)^{\frac{1}{2}},$$
implies that $\int_X\omega_0^{n-2}\wedge c_1(K_X)^2=0.$ The result now follows from the Hodge-Riemann bilinear relations on K\"ahler manifolds, proved in \cite{DN}. Indeed, following their notation, we set
$\omega_1=\dots=\omega_{n-1}:=\omega_0$, so that the condition $\int_X\omega_0^{n-1}\wedge c_1(K_X)=0$ says that $c_1(K_X)\in P^{1,1}(X)$, while the condition
$\int_X\omega_0^{n-2}\wedge c_1(K_X)^2=0$ says that $Q(c_1(K_X),c_1(K_X))=0$. Since by \cite[Theorem A]{DN} the bilinear form $Q$ is positive definite on $P^{1,1}(X)$, this
implies that $c_1(K_X)=0$, and so $X$ is Calabi-Yau.
\end{proof}

\subsection{Singularity types} Following \cite{Ha2} we say that a solution $\omega(t)$ of the K\"ahler-Ricci flow \eqref{krf} on a compact K\"ahler manifold $X$, defined on a maximal time interval $[0,T), T<\infty$, develops a {\em type I} singularity at time $T$ if we have
$$\sup_{X\times[0,T)} (T-t)|{\rm Rm}(\omega(t))|_{\omega(t)}<+\infty,$$
and a {\em type IIa} singularity if
$$\sup_{X\times[0,T)} (T-t)|{\rm Rm}(\omega(t))|_{\omega(t)}=+\infty.$$
While type I singularities are easy to construct, this is not the case for type IIa singularities. The first compact examples, for the Ricci flow on Riemannian manifolds, were constructed in \cite{GZ}. Since these examples are not K\"ahler, this leaves open the following:

\begin{problem}
Construct a type IIa finite time singularity of the K\"ahler-Ricci flow on a compact K\"ahler manifold.
\end{problem}

For example, when $X$ is Fano and $[\omega_0]=\lambda c_1(X)$ for some $\lambda>0$, then the singularity being type I is equivalent to the curvature remaining uniformly bounded for all $t\geq 0$, after we renormalize the flow to have constant volume (the normalized flow exists for all $t \geq 0$). It seems very likely that there exist Fano manifolds where the normalized flow does not have uniformly bounded curvature, but no examples have been found yet.

On the other hand, in the Fano case Perelman has proved a uniform scalar curvature bound (see \cite{ST}), which in the unnormalized flow translates to the estimate
\begin{equation}\label{scal2}
R(t)\leq \frac{C}{T-t},
\end{equation}
on $X\times[0,T)$. It is not known whether \eqref{scal2} holds for all finite time singularities of the K\"ahler-Ricci flow, but see \cite{Zh} for partial results.

We now discuss infinite time solutions, and their singularity types ``at infinity''. Again following \cite{Ha2}
we say that a solution $\omega(t)$ of the K\"ahler-Ricci flow \eqref{krf} on a compact K\"ahler manifold $X$, defined for all $t\geq 0$, develops a {\em type III} singularity at infinity if we have
$$\sup_{X\times[0,\infty)} t|{\rm Rm}(\omega(t))|_{\omega(t)}<+\infty,$$
and a {\em type IIb} singularity if
$$\sup_{X\times[0,\infty)} t|{\rm Rm}(\omega(t))|_{\omega(t)}=+\infty.$$
A simple scaling argument shows that type III is equivalent to the solution of the normalized flow \eqref{krf2a} having uniformly bounded curvature for all $t\geq 0$, and type IIb to its negation. When $n=1$ it follows from work of Hamilton \cite{Ha3} that all infinite time solutions are type III. In the case of the Ricci flow on real $3$-dimensional compact Riemannian manifolds, all infinite time solutions are type III thanks to \cite{Ba}. However, in the K\"ahler case when $n=2$ there are type IIb solutions. It is enough to take $X$ a $K3$ surface, and $\omega$ a Ricci-flat K\"ahler metric on $X$, which exists thanks to Yau \cite{Ya}. Then $\omega$ cannot be flat since $\chi(X)=24\neq 0$, so $\sup_X|{\rm Rm}(\omega)|_{\omega}=c>0$.
Then $\omega(t)\equiv\omega$ is a static solution of the K\"ahler-Ricci flow \eqref{krf}, and
$$\sup_{X\times[0,\infty)} t|{\rm Rm}(\omega(t))|_{\omega(t)}=\sup_{t\in[0,\infty)} ct=+\infty,$$
so this solution is type IIb.

The following theorem was proved in \cite{TZ}:
\begin{theorem}\label{singt}
Let $X^n$ be a compact K\"ahler manifold with $K_X$ semiample, and consider a solution of the K\"ahler-Ricci flow \eqref{krf} (which necessarily exists for all positive time).
\begin{enumerate}
\item Assume $\kappa(X)=0$. Then the solution is type III if and only if $X$ is a finite unramified quotient of a torus
\item Assume $\kappa(X)=n$. Then the solution is type III if and only if $K_X$ is ample
\item Assume $0<\kappa(X)<n$, and let $X_y$ be any smooth fiber of the fibration $f:X\to Y$ defined by sections of $K_X^{\otimes\ell}$, for $\ell$ large.
If $X_y$ is not a finite unramified quotient of a torus then the solution is type IIb, while if $X_y$ is a finite unramified quotient of a torus and $S=\emptyset$ (i.e. $Y$ is smooth and $f$ is a submersion)
then the solution is type III.
\end{enumerate}
In particular, in all these cases the singularity type does not depend on the initial metric.
\end{theorem}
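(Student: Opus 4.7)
The proof splits naturally by $\kappa(X)$. In each regime the large-$t$ behaviour of $\omega(t)$ is pinned down by one of the convergence theorems already proved (Theorems \ref{ena}, \ref{dva}, \ref{tri}, \ref{colla}, \ref{smooth}), so the task reduces to (i) translating smooth convergence of the appropriately rescaled flow into a uniform bound on $t|\mathrm{Rm}(\omega(t))|_{\omega(t)}$, and (ii) identifying the geometric obstruction that forces type IIb in the remaining cases. The classical input, used twice, is Bieberbach's theorem (see e.g.\ Kobayashi--Nomizu): a compact flat K\"ahler manifold is exactly a finite unramified quotient of a complex torus. Throughout I use the scaling identity $|\mathrm{Rm}(cg)|_{cg}=c^{-1}|\mathrm{Rm}(g)|_g$, which gives $t|\mathrm{Rm}(\omega(t))|_{\omega(t)}=|\mathrm{Rm}(\omega(t)/t)|_{\omega(t)/t}$.

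\textbf{The easy directions.} If $\kappa(X)=0$, Lemma \ref{cycy} yields $c_1(X)=0$ and Theorem \ref{ena} gives smooth convergence $\omega(t)\to\omega_\infty$ to the Ricci-flat representative of $[\omega_0]$; hence $t|\mathrm{Rm}(\omega(t))|_{\omega(t)}$ stays bounded iff $\omega_\infty$ is flat, iff $X$ is a finite unramified torus quotient. If $\kappa(X)=n$ and $K_X$ is ample, Theorem \ref{dva} gives smooth convergence $\omega(t)/t\to\omega_\infty$ on all of the compact $X$, so $|\mathrm{Rm}(\omega(t)/t)|$ is uniformly bounded, i.e.\ type III. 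If $0<\kappa(X)<n$, $X_y$ is a finite torus quotient and $S=\emptyset$, Theorem \ref{smooth} gives a locally uniform curvature bound for $\omega(t)/t$ on compact subsets of $X\setminus S=X$, global by compactness, and again type III follows.

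\textbf{Nonflat fibre case (hard half of (3)).} If $0<\kappa(X)<n$ and the smooth fibre $X_y$ is not a finite unramified torus quotient, Theorem \ref{colla} gives $e^t\omega(t)|_{X_y}\to\omega_y$ in $C^\infty(X_y)$, with $\omega_y$ the Ricci-flat K\"ahler metric in $[\omega_0|_{X_y}]$. By Bieberbach, $\omega_y$ is nonflat, and Ricci-flatness then forces some bisectional curvature of $\omega_y$ to be strictly positive at some $p\in X_y$ (otherwise all bisectional curvatures of $\omega_y$ would be $\le 0$, contradicting $\mathrm{tr}\,\mathrm{Ric}=0$ and nonflatness). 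Curvature scaling gives $|\mathrm{Rm}(\omega(t)|_{X_y})|_{\omega(t)|_{X_y}}(p)\sim c_0\,e^t$, and the K\"ahler Gauss equation (bisectional curvature of a complex submanifold is dominated by that of the ambient manifold in the same directions) transfers this to $|\mathrm{Rm}(\omega(t))|_{\omega(t)}(p)\geq c_1 e^t$ for large $t$. Hence $t|\mathrm{Rm}(\omega(t))|_{\omega(t)}\to\infty$, i.e.\ type IIb.

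\textbf{Main obstacle: $\kappa(X)=n$ with $K_X$ non-ample.} Here Theorem \ref{tri} only gives $C^\infty_{\mathrm{loc}}$ convergence of $\omega(t)/t$ on $X\setminus\mathrm{Null}(-c_1(X))$, which is a proper open set, and one has to rule out type III. Assume for contradiction $|\mathrm{Rm}(\omega(t)/t)|\leq C$. Shi's higher-order local estimates applied to the normalised flow upgrade this to uniform $C^\infty$ bounds on $\omega(t)/t$; since $\vol(X,\omega(t)/t)\to\int_X(-2\pi c_1(X))^n>0$ remains bounded away from $0$ and $\infty$, a Perelman-type $\kappa$-noncollapsing argument for the renormalised flow supplies a uniform injectivity radius lower bound. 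Cheeger--Gromov compactness then extracts a smooth pointed limit $(X_\infty,\omega'_\infty)$ which, by the $C^\infty_{\mathrm{loc}}$ convergence to $\omega_\infty$ on the full-measure open set $X\setminus\mathrm{Null}(-c_1(X))$ together with the volume and diameter control, must coincide with $X$ equipped with a smooth K\"ahler metric in $-2\pi c_1(X)$. This contradicts $K_X$ not being ample. This extraction step, ruling out collapse, bubbling, and mass loss along $\mathrm{Null}(-c_1(X))$, is the technical heart of the argument.
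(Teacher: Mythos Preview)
The paper does not give a self-contained proof of this theorem (it cites \cite{TZ}), but most of the pieces are already in the text: the type III directions in (1), (2), (3) come from Theorems \ref{ena} (plus the exponential convergence remarked afterwards), \ref{dva}, and \ref{smooth}, and the type IIb half of (3) is precisely the argument sketched in the paragraph following Theorem \ref{smooth}. Your treatment of all of these matches. One small omission in (1): when $\omega_\infty$ is flat you need a \emph{rate} to conclude $t|\mathrm{Rm}(\omega(t))|_{\omega(t)}$ stays bounded; smooth convergence alone gives $|\mathrm{Rm}(\omega(t))|\to 0$ but not $O(1/t)$. The exponential convergence noted after Theorem \ref{ena} is what closes this, and you should say so.

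The genuine gap is in the hard half of (2): $K_X$ big, nef, semiample but not ample $\Rightarrow$ type IIb. Your Cheeger--Gromov outline has two problems. First, Shi's estimates control $|\nabla^k\mathrm{Rm}|$ in terms of $|\mathrm{Rm}|$; they do \emph{not} give uniform $C^\infty$ bounds on $\omega(t)/t$ relative to a fixed background metric, which would require $\omega(t)/t$ to be uniformly equivalent to $\omega_0$ --- and that is exactly what fails near $\mathrm{Null}(-c_1(X))$. Second, and more seriously, you invoke ``diameter control'' without producing it: a diameter bound for $\omega(t)/t$ in this regime is precisely Conjecture \ref{diam2}, which the paper records as open for $n\ge 4$. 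Without it the Hamilton--Cheeger--Gromov limit need not be compact, and your identification of the limit with $X$ carrying a smooth K\"ahler metric in $-2\pi c_1(X)$ is not justified.

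The argument in \cite{TZ} bypasses compactness and instead recycles the same submanifold-curvature mechanism you used correctly in (3). Since $K_X$ is semiample and big, $X$ is Moishezon and K\"ahler, hence projective; the Iitaka morphism $f:X\to Y$ is a birational contraction whose exceptional locus equals $\mathrm{Null}(-c_1(X))$, and its positive-dimensional fibres are uniruled (this is the algebro-geometric input, cf.\ \cite{Ka} and the discussion of uniruledness of null loci in Section \ref{sectfin}). Pick a rational curve $C\cong\mathbb{CP}^1$ in a fibre, so $K_X\cdot C=0$ and the area in the normalized metric obeys $\int_C\tilde\omega(t)=e^{-t}\int_C\omega_0\to 0$. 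If the flow were type III, the holomorphic sectional curvature of $\tilde\omega(t)$ would be bounded above by some $C_0$, hence (curvature decreases on complex submanifolds) so would the Gauss curvature of $(C,\tilde\omega(t)|_C)$, and Gauss--Bonnet would force $4\pi=\int_C K_C\,dA\le C_0\cdot\mathrm{Area}(C,\tilde\omega(t))\to 0$, a contradiction.
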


Another proof of (2) was obtained in \cite{Gu}. This theorem leaves open the case when the generic fiber $X_y$ is a finite unramified quotient of a torus, but $f$ is not a submersion everywhere. In this case the solution can be either type IIb or type III, depending on the singularities and multiplicities of the fibers contained in $S$. A complete classification when $n=2$ is obtained in \cite{TZ}, where it is also shown that in general dimensions if any component of singular fiber is uniruled then the solution is of type IIb. It remains to understand what happens when no such component is uniruled.

Considering Theorem \ref{singt}, it is then natural to conjecture:

\begin{conjecture}\label{ssing}
Let $X$ be a compact K\"ahler manifold with $K_X$ nef, so every solution of the K\"ahler-Ricci flow \eqref{krf} exists for all positive time. Then the singularity type at infinity does not depend on the choice of the initial metric $\omega_0$.
\end{conjecture}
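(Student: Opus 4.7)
The natural strategy is to compare the two solutions directly via the parabolic complex Monge-Amp\`ere reduction. Passing to the normalized flow \eqref{krf2a}, the type III condition at infinity becomes equivalent to the uniform boundedness of $|\mathrm{Rm}(\omega(t))|_{\omega(t)}$ for all $t\geq 0$. Following the setup in the proof of Theorem \ref{tri}, I would fix a closed real $(1,1)$ form $\eta$ cohomologous to $-2\pi c_1(X)$ and a smooth positive volume form $\Omega$ with $\mathrm{Ric}(\Omega)=-\eta$, \emph{common to both initial data}. One then writes $\omega(t)=\hat{\omega}_t+\ddbar\vp(t)$ and $\omega'(t)=\hat{\omega}'_t+\ddbar\vp'(t)$, where $\hat{\omega}_t=e^{-t}\omega_0+(1-e^{-t})\eta$ and analogously for $\omega_0'$. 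The difference of reference forms is $\hat{\omega}_t-\hat{\omega}'_t=e^{-t}(\omega_0-\omega_0')$, which decays exponentially, so the two flows are driven by increasingly close data.

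The first technical step is to establish a quantitative comparison of the two potentials. Setting $\psi=\vp-\vp'$, taking the difference of the two Monge-Amp\`ere equations, and applying a maximum principle argument analogous to Theorem \ref{uniq} and the proof of Lemma \ref{c0} should yield $\|\vp(t)-\vp'(t)\|_{C^0}\leq Ce^{-t}$, together with exponentially decaying control on $\dot{\vp}-\dot{\vp}'$. Combined with the parabolic Schwarz lemma (as in Lemma \ref{schwarz}) applied to the identity map between $(X,\omega(t))$ and $(X,\omega'(t))$ in both directions, this should give $C^{-1}\omega'(t)\leq \omega(t)\leq C\omega'(t)$ uniformly in $t$.

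The crucial step is then to transfer the curvature bound from $\omega(t)$ to $\omega'(t)$. Assuming type III for $\omega(t)$, Shi's local estimates (Theorem \ref{higher}) give uniform bounds on all covariant derivatives of $\mathrm{Rm}(\omega(t))$. The plan is to bootstrap these, together with the $C^0$ smallness of $\vp-\vp'$ and the relation $(\omega(t)+\ddbar\psi)^n=e^{\dot{\psi}}\omega(t)^n$ (where the linearization is uniformly elliptic thanks to metric equivalence), into full $C^k$ smallness of $\vp-\vp'$ measured in the $\omega(t)$-geometry. This would produce uniform $C^2$ closeness of the metrics at the right scale, hence a uniform bound on $\mathrm{Rm}(\omega'(t))$. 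As an alternative, one can argue by contradiction: if $\omega'(t)$ were type IIb, one would select a sequence of times and points with $|\mathrm{Rm}(\omega'(t_i))|_{\omega'(t_i)}\to\infty$, rescale parabolically, and extract a nontrivial ancient solution limit; by the uniform equivalence with $\omega(t)$, the same rescaling of $\omega(t)$ should converge to a limit with bounded curvature, leading to a contradiction.

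The main obstacle is precisely the curvature transfer in the third step: $C^0$ equivalence of K\"ahler metrics, even with exponentially small $C^0$ difference of potentials, does not by itself control the Riemann curvature, which involves four derivatives of the potential. Any genuine proof must therefore exploit the parabolic Monge-Amp\`ere structure together with Shi-type higher-derivative estimates in a delicate manner. A further serious complication arises in the collapsing regime of Theorem \ref{colla} where $0<\kappa(X)<n$: the natural geometric scales of the flow vary dramatically between base and fiber directions, the metrics $\omega(t)/t$ degenerate on the base, and the curvature bound must be interpreted relative to the collapsing. In this case the analysis should probably be carried out in the local product coordinates of Lemma \ref{adj} using the semi-flat form of Proposition \ref{semif} when available, and even formulating the correct $C^k$ comparison of $\vp-\vp'$ seems to require a refinement of the estimates underlying Theorems \ref{smooth} and \ref{singt}.
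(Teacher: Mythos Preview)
The statement you are addressing is a \emph{conjecture}, not a theorem: it appears in Section~\ref{sectopen} (``Some open problems''), and immediately after stating it the paper writes that ``this conjecture is only known when $n\leq 2$, thanks to \cite{TZ}.'' There is no proof in the paper to compare your proposal against, because the result is open in general.

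That said, your outline identifies precisely the right obstacle, and it is worth being explicit about why the strategy does not close. The first two steps are plausible (and partly known): a $C^0$ comparison $|\vp-\vp'|\leq Ce^{-t}$ can be obtained by maximum-principle arguments in the spirit of Theorem~\ref{uniq} and the uniqueness argument at the end of the proof of Theorem~\ref{tri}, and in many regimes one can get uniform equivalence $C^{-1}\omega'(t)\leq\omega(t)\leq C\omega'(t)$ (note, however, that the parabolic Schwarz lemma uses a curvature \emph{upper} bound on the target, so applying it ``in both directions'' already presupposes a curvature bound on $\omega'(t)$, which is what you want to prove; you would need to use it only from the type~III side together with a volume-form comparison). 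The genuine gap is your third step. Uniform $C^0$ equivalence of two evolving K\"ahler metrics, even with $\|\vp-\vp'\|_{C^0}\to 0$ exponentially, gives no control on $\mathrm{Rm}(\omega')$: curvature involves two derivatives of the metric, i.e.\ four of the potential, and there is no general mechanism to upgrade $C^0$ closeness to $C^2$ closeness here. Your proposed bootstrap via Theorem~\ref{higher} is circular, since those local estimates require as input a two-sided bound $C^{-1}\omega\leq\omega'(t)\leq C\omega$ against a \emph{fixed} metric, which is exactly the type~III hypothesis for $\omega'$. The rescaling/contradiction variant has the same defect: after parabolic rescaling at the curvature scale of $\omega'(t_i)$, the type~III metric $\omega(t_i)$ has curvature tending to zero and limits to a flat metric, while $\omega'(t_i)$ limits to a nontrivial ancient solution; but a nontrivial ancient K\"ahler--Ricci flow can perfectly well be uniformly $C^0$-equivalent to a flat metric, so no contradiction follows. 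This is exactly why the problem remains open beyond the special cases handled in Theorem~\ref{singt}.
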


As mentioned above, this conjecture is only known when $n\leq 2$, thanks to \cite{TZ}.

\end{document}